\theoremstyle{plain}
\newtheorem{ithm}{Theorem}
\newtheorem{theorem}{Theorem}[section]
\newtheorem{prop}[theorem]{Proposition}
\newtheorem{corollary}[theorem]{Corollary}
\newtheorem{lemma}[theorem]{Lemma}
\newcommand{\longhookrightarrow}{\lhook\joinrel\longrightarrow}
\theoremstyle{definition}
\newtheorem{definition}[theorem]{Definition}
\newtheorem{remark}[theorem]{Remark}
\long\def\symbolfootnote[#1]#2{\begingroup
\def\thefootnote{\fnsymbol{footnote}}\footnote[#1]{#2}\endgroup}
\def\SL{{\bf SL}}
\def\GL{{\bf GL}}
\def\A{\mathbf{A}}
\def\sgn{\mathrm{sgn}}
\def\N{\mathrm{N}}
\def\1{\mf{1}}
\DeclareMathOperator{\Sch}{Sch}
\DeclareMathOperator{\AlgSp}{AlgSp}
\DeclareMathOperator{\AnSp}{AnSp}
\DeclareMathOperator{\Sym}{Sym}
\DeclareMathOperator{\Hom}{Hom} \DeclareMathOperator{\End}{End}
\DeclareMathOperator{\Stab}{Stab}
\DeclareMathOperator{\coker}{coker} 
\DeclareMathOperator{\ord}{ord}
 \DeclareMathOperator{\Cl}{Cl}
\DeclareMathOperator{\Spec}{Spec}
\DeclareMathOperator{\tr}{tr}
\newcommand{\Res}{\mathrm{\,Res\,}}
\newcommand{\bpsi}{\pmb{\psi}}
\newcommand{\mat}[4]{\left( \begin{array}{cc} {#1} & {#2} \\ {#3} & {#4}
\end{array} \right)}
\newcommand{\mf}{\mathfrak }
\def\fn{\mathfrak{n}}
\def\ft{\mathfrak{t}}
\def\fP{\mathfrak{P}}
\def\fd{\mathfrak{d}}
\def\Z{\mathbf{Z}}
\def\F{\mathbf{F}}
\def\Q{\mathbf{Q}}
\def\C{\mathbf{C}}
\def\G{\mathbf{G}}
\def\R{\mathbf{R}}
\def\bdf{\begin{definition}}
\def\edf{\end{definition}}
\def\cL{\mathcal{L}}
\def\cH{\mathcal{H}}
\def\cO{\mathcal{O}}
\def\Gal{{\rm Gal}}
\def\mc{\mathcal}
\newcommand{\myfatslash}{\mathbin{\mkern-6mu\fatslash}\,}
\newtheorem{eg}[theorem]{Example}
\def\bP{\mathbf{P}}
\newcommand{\from}{\colon}
\newcommand{\isom}{\cong}
\newcommand{\into}{\hookrightarrow}
\newcommand{\surj}{\twoheadrightarrow}
\newcommand{\nmto}[1]{\stackrel{#1}{\to}}
\newcommand{\nmleft}[1]{\stackrel{#1}{\leftarrow}}
\newcommand{\nmisom}[1]{\stackrel{#1}{\isom}}
\newcommand{\tensor}{\otimes}
\newcommand{\wh}[1]{\widehat{#1}}
\newcommand{\wt}[1]{\widetilde{#1}}
\DeclareMathOperator{\Aut}{Aut}
\DeclareMathOperator{\Proj}{Proj} 
\DeclareMathOperator{\Spf}{Spf} 
\DeclareMathOperator{\Lie}{Lie} 
\newcommand{\mr}[1]{\mathrm{#1}}
\newcommand{\dual}{\vee}
\def\M{{\mathcal M}}
\def\N{{\mathbf N}}
\begin{document}
\baselineskip 14pt

\title{Group Ring Valued Hilbert Modular Forms}
\author{Jesse Silliman}

\maketitle

\begin{abstract}
In this paper, we study the action of diamond operators on Hilbert modular forms with coefficients in a general commutative ring. In particular, we generalize a result of Chai on the surjectivity of the constant term map for Hilbert modular forms with nebentype to the setting of group ring valued modular forms. As an application, we construct certain Hilbert modular forms required for Dasgupta--Kakde's proof of the Brumer--Stark conjecture at odd primes. Since the forms required for the Brumer--Stark conjecture live on the non-PEL Shimura variety associated to the reductive group $G = \Res_{F/\Q}(\GL_2)$, as opposed to the PEL Shimura variety associated to the subgroup $G^* \subset G$ studied by Chai, we give a detailed explanation of theory of algebraic diamond operators for $G$, as well as how the theory of toroidal and minimal compactifications for $G$ may be deduced from the analogous theory for $G^*$.
\end{abstract}

\setcounter{tocdepth}{1}

\section{Introduction}

\subsection{The Brumer--Stark conjecture}
Ribet's method is a powerful technique that allows one to relate special values of $L$-functions to certain associated Galois cohomology classes and thereby make progess on Bloch--Kato type conjectures.  Beyond Ribet's original work \cite{ribet}, the technique was used by Mazur--Wiles \cite{mw} and Wiles \cite{wiles}  to prove the Iwasawa main conjecture for Hecke characters of totally real fields; by Skinner--Urban \cite{su} to prove the Iwasawa main conjecture for elliptic curves over $\Q$; and by Darmon, Dasgupta, Kakde, Pollack, and Ventullo to prove the Gross--Stark conjecture \cite{ddp}, \cite{dkv}.  In each case one begins with a suitable automorphic form whose constant terms at a certain cusp are equal to, or at least divisible by, the $L$-function under study.  This automorphic form is typically an Eisenstein series.  One then proves that this Eisenstein series is congruent to a cuspidal form modulo the $L$-value.  The construction of this cusp form is a delicate process, and provides the motivation for the study undertaken in this paper.

 In recent work \cite{dk}, Dasgupta and Kakde prove the Brumer--Stark conjecture away from $p=2$. In a sequel, they prove an exact $p$-adic analytic formula for Brumer--Stark units that had been conjectured earlier by Dasgupta, thereby making progress on Hilbert's 12th problem \cite{dk2}.  Both of these papers apply suitable generalizations of Ribet's method.  In this context, the relevant automorphic forms are {\em group ring valued Hilbert modular forms} for $F$. The purpose of the present paper is to prove the existence of certain Hilbert modular forms that are applied in those works.
 
  One result of this paper is the existence of a group ring valued modular form with constant terms 1---or more generally any possible system of constant terms---at all $p$-unramified cusps.
This is a generalization of a result of Chai \cite{chai} used by Wiles to prove the Iwasawa Main Conjecture \cite{wiles}. Another result is the existence of a Hilbert modular form $V$ lifting the Hasse invariant, a mild generalization of a result of Hida  \cite{wilesrep}*{Lemma 1.4.2}.

\medskip

\bigskip

\subsection{Statement of results}

Let $F$ be a totally real field and $\fn \subset \cO_F$ an ideal. We consider Hilbert modular forms for the group $G = \Res_{F/\Q}(\GL_2)$. More precisely, following Shimura \cite{shim}, let $M_k(\mf{n}, \C)$ denote the space of Hilbert modular forms of weight $k$ and level $\Gamma_1(\mf{n})$. Let $G_\fn^+$ denote the narrow ray class group of conductor $\fn$, and consider a character $\psi \from G^+_{\mf{n}} \to \C^*$.  There is an action of $G^+_{\mf{n}}$ on $M_k(\mf{n}, \C)$ such that that $\psi$-isotypic component, $M_k(\mf{n}, \psi, \C) := M_k(\mf{n}, \C)^{\psi}$, consists of those modular forms of nebentype $\psi$.

There is a $G^+_{\mf{n}}$-equivariant constant term map
\begin{equation}
\label{e:conk}
\mr{const} \colon M_k(\fn, \C) \longrightarrow C_k(\C), \end{equation}
where $C_k(\C) \cong  \bigoplus_{\mr{Cusp}(\fn)^*} \C$, although this identification is not entirely canonical, as we will discuss. Here $\mr{Cusp}(\fn)^* \subset \mr{Cusp}(\fn)$ are the {\em admissible} cusps for the weight $k$, defined in \S\ref{subsec:type-of-cusp}. Using the theory of algebraic modular forms and their $q$-expansions, one can define for any ring $R$ the modules $M_k(\fn, R), C_k(R)$, as well as a constant term map $\mr{const} \colon M_k(\fn, R) \longrightarrow C_k(R)$.

Now fix a prime $p$, let $R$ be a $\Z_{(p)}$-algebra, and consider a character $\psi \from G^+_{\mf{n}} \to R^*$. By restricting the constant term map to only the admissible cusps $\mr{Cusp}(\fn)^*_p$  which are ``$p$-unramified" (\S\ref{subsec:type-of-cusp}), we obtain a $G^+_{\mf{n}}$-equivariant constant term map:
\[ \mr{const}_p \from M_k(\mf{n}, R) \to C_{p, k}(R) \isom \bigoplus_{\mr{Cusp}(\fn)^*_p} R. \]

In \S\ref{subsec:equivar-const}, we prove:
\begin{ithm} \label{t:main}
For $k$ sufficiently large, $\mr{const}_p$ is surjective. Moreover, the restriction of $\mr{const}_p$ to the $\psi$-isotypic components, 
\[ M_k(\mf{n}, R)^{\psi} \to C_{p, k}(R)^{\psi}, \]
is also surjective.
\end{ithm}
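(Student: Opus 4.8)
The plan is to deduce the statement from the geometry of the minimal compactification and an application of the abstract ampleness result of \S\ref{sec:ample}. By the comparison established in \S\ref{sec:minimal-cpt}, for each $p$-unramified admissible cusp $C$ the constant term at $C$ of a section of $\cL_k$ is computed by pullback along $x_C \from \Spec(\cO) \to \wt{\M}^{\mr{min}}_{\Spec(\cO)}$; concatenating over all $C$, the map $\mr{const}_p$ is identified with the evaluation map
\[ H^0(\wt{\M}^{\mr{min}}, \cL_k) \longrightarrow \bigoplus_{C \in \mr{Cusp}_p(\fn)^*} x_C^* \cL_k = C_{p,k}(\cO), \]
restricted from $M_k(\fn, \cO)$ to the subspace of sections extending to $\wt{\M}^{\mr{min}}$. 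Since $\mr{Cusp}_p(\fn)^*$ is finite, this is evaluation at a finite set $Z = \{x_C\}$ of closed points, and since $\wt{\M}^{\mr{min}}$ is projective over $\cO$ with $\cL_k$ arising from an ample line bundle, the abstract result applies once its hypotheses are verified.

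First I would prove the non-equivariant surjectivity. Working over the Noetherian base $\cO$, consider the exact sequence of coherent sheaves $0 \to \mathcal{I}_Z \cL_k \to \cL_k \to \cL_k|_Z \to 0$, where $\mathcal{I}_Z$ is the ideal sheaf of $Z$. The long exact cohomology sequence yields surjectivity of $H^0(\wt{\M}^{\mr{min}}, \cL_k) \to H^0(Z, \cL_k|_Z) = C_{p,k}(\cO)$ as soon as $H^1(\wt{\M}^{\mr{min}}, \mathcal{I}_Z \cL_k) = 0$, which holds for all $k \gg 0$ by Serre vanishing; this is the mechanism underlying the abstract result of \S\ref{sec:ample}. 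Base changing along $\cO \to R$ and using right-exactness of $- \tensor_\cO R$, the target $C_{p,k}(\cO) \tensor_\cO R = C_{p,k}(R)$ receives the image of $H^0(\wt{\M}^{\mr{min}}, \cL_k) \tensor_\cO R$; since this factors through $\mr{const}_p$ over $R$ by naturality of the cusp evaluations in the base ring, $\mr{const}_p \from M_k(\fn, R) \to C_{p,k}(R)$ is surjective for $k \gg 0$.

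For the isotypic refinement I would exploit the $G^+_{\mf{n}}$-equivariance of $\mr{const}_p$: the group $G^+_{\mf{n}}$ acts on $\wt{\M}^{\mr{min}}$ permuting the finite set $Z$, the sheaf $\cL_k$ is $G^+_{\mf{n}}$-equivariant, and hence $G^+_{\mf{n}}$ acts on $C_{p,k}(R) = \bigoplus_C R$ by permutation of cusps twisted by the scalars of the equivariant structure. Crucially, I would not pass to the idempotent $e_\psi$ — this would require $|G^+_{\mf{n}}|$ to be invertible in $R$, which fails in the applications — but would instead construct $\psi$-eigenforms directly. Given the full surjectivity, choose for an orbit representative $C$ a form $h_C \in M_k(\fn, R)$ with $\mr{const}_p(h_C) = e_C$, and set
\[ \wt{h}_C = \sum_{g \in G^+_{\mf{n}}/\Stab(C)} \psi(g)^{-1}\, g \cdot h_C, \]
the sum running over a transversal. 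When the orbit of $C$ is admissible for $\psi$, that is, when $\psi$ agrees on $\Stab(C)$ with the stabilizer character coming from the equivariant structure, this sum is independent of the choice of coset representatives and defines a genuine element of $M_k(\fn, \psi, R)$ whose image under $\mr{const}_p$ is the associated orbit $\psi$-eigenvector; these eigenvectors span $C_{p,k}(\psi, R)$. Since the averaging involves no division, it is valid over an arbitrary $\cO$-algebra $R$, and surjectivity of $M_k(\fn, \psi, R) \to C_{p,k}(\psi, R)$ follows.

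The main obstacle is the geometric input rather than these formal manipulations: one must know that $\wt{\M}^{\mr{min}}$ is projective over $\cO$, that $\cL_k$ is, for $k \gg 0$, a high power of an ample $G^+_{\mf{n}}$-equivariant line bundle in a neighborhood of the cusps, and that the identification of $\mr{const}_p$ with evaluation at $Z$ is compatible with base change along $\cO \to R$ — all of which rest on the integral theory of the minimal compactification developed in \S\ref{sec:minimal-cpt} (following Chai) together with the $q$-expansion principle of \S\ref{sec:toroidal-cpt}. The one subtlety peculiar to the group-ring-valued setting, namely the non-invertibility of $|G^+_{\mf{n}}|$, is precisely what the coset-averaging construction circumvents.
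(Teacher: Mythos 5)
Your first half (surjectivity of $\mr{const}_p$ on all of $M_k(\fn,R)$) is essentially the paper's argument: identify the constant term with evaluation of sections of $\cL_k$ at the cusp points of the projective scheme $\wt{M}^{\mr{min}}$ (Corollary \ref{cor:const-term-min}), and kill the obstruction $H^1(\mathcal{I}_Z\cL_k)$ by Serre vanishing for large $k$. That part is fine.

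The $\psi$-isotypic half has a genuine gap. Your element
\[ \wt{h}_C = \sum_{g \in G^+_{\mf{n}}/\Stab(C)} \psi(g)^{-1}\, g \cdot h_C \]
is not well defined and is not a $\psi$-eigenvector. Replacing a coset representative $g$ by $gs$ with $s \in \Stab_{[C]}$ changes the summand by $\psi(s)^{-1}\, g\cdot(s\cdot h_C)$ versus $g\cdot h_C$; these agree only if $s \cdot h_C = \psi(s)\, h_C$. But $h_C$ is merely \emph{some} form with $\mr{const}_p(h_C) = e_C$ — the stabilizer acts by $\psi$ (or rather by $\sgn_{[C]}^k$) on the constant term $e_C$, not on $h_C$ itself, so there is no reason $h_C$ is a $\Stab_{[C]}$-eigenvector in $M_k(\fn,R)$. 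The condition you impose (``$\psi$ agrees on $\Stab(C)$ with the stabilizer character'') lives on the target of $\mr{const}_p$ and does not control the action on $h_C$. The same computation shows that even after fixing a transversal, $g'\cdot\wt{h}_C \neq \psi(g')\wt{h}_C$ in general. The obvious repair — summing over all of $G^+_{\mf{n}}$ — does give a $\psi$-eigenform, but its constant term at $C$ acquires a factor $\#\Stab_{[C]}$, which is exactly the non-invertible quantity you were trying to avoid (this is the point the introduction flags: the diamond operators do not act freely on $\mr{Cusp}_p(\fn)$). Producing an $h_C$ that is already a $\psi|_{\Stab_{[C]}}$-eigenvector is the real content of the refinement, and your argument supplies no mechanism for it.

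The paper handles this sheaf-theoretically rather than by averaging sections: it passes to the schematic quotient $Y = \wt{M}^{\mr{min}}_R/G^+_{\mf{n}}$, applies the (non-exact) functor $\mc{F} \mapsto \mc{F}^{\psi} = (p_*(\mc{F}\tensor R(\psi^{-1})))^{G}$ to the sequence $0 \to (\cL^{\tensor k}\tensor\mc{F})_0 \to \cL^{\tensor k}\tensor\mc{F} \to i^*(\cL^{\tensor k}\tensor\mc{F}) \to 0$, and proves that the right-hand map \emph{stays surjective} after applying $(-)^{\psi}$ by invoking explicit $G^+_{\mf{n}}$-equivariant splittings of $H^0(X,\mc{F}\tensor\wh{\cO}_Z) \surj H^0(Z,i^*\mc{F})$ coming from the structure of $q$-expansions at the cusps (equation (\ref{eqn:equivar-triv}) and Theorem \ref{thm:toroidal-cpt}); Serre vanishing is then applied on $Y$ to the descended ample bundle $\cL^G$ via the projection formula (Proposition \ref{prop:equivariant-surj}). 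Those equivariant splittings are precisely the input your proposal is missing; without them, or some substitute, the isotypic surjectivity does not follow from the non-equivariant one over a ring in which $\#G^+_{\mf{n}}$ is not invertible.
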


This theorem is the main input to \cite{dk}, \cite{dk2} proven in this paper. The proof of Theorem~\ref{t:main} follows that of an analogous theorem of Chai \cite{chai}*{Proposition in \S4.5}, using the interpretation of 
$M_k(\fn, R)^{\psi}$ as the space of global sections of a certain line bundle on a projective scheme.

Let us highlight three differences between our result and that of Chai.
 (1) We work with $R$-valued modular forms, rather than classical forms over $\Z$.  (2) We work with Hilbert modular forms associated to the reductive group $G = \Res_{F/\Q} \GL_2$, while Chai works with those associated to a certain subgroup $G^* \subset G$. Here $G^*$ is the algebraic subgroup of $G$ whose $\Q$-points are the matrices in $\GL_2(F)$ with determinant lying in $\Q$. This means that we are studying a different space of modular forms than in \cite{chai}. More importantly, this allows us to incorporate the full group of diamond operators $G_\fn^+$, rather than the kernel of the canonical map $G_\fn^+ \rightarrow G_1^+$ as in \cite{chai}. (3) As $R$ is assumed to be a $\Z_{(p)}$-algebra, we are able to include all of the $p$-unramified cusps  $\mr{Cusp}_p(\fn) \subset \mr{Cusp}(\fn)$ rather than the smaller set of unramified cusps $\mr{Cusp}_{\infty}(\fn) \subset \mr{Cusp}(\fn)$ considered in \cite{chai}. This is an important generalization because the diamond operators do not act freely on $\mr{Cusp}_p(\fn)$, making certain computations more involved than the analogous ones in \cite{chai}.   All of these generalizations are essential in the applications in \cite{dk} and \cite{dk2}.  

In \S\ref{subsec:lifting}, we prove a lifting result under certain restrictive hypotheses. Suppose that $R$ is a \emph{good} ring (see Definition \ref{defn:good-ring}). In particular, $2N_{F/\Q}(\mf{n})$ is invertible in $R$. Fixing an ideal $I \subset R$, there is a reduction map 
\[ M_k(\mf{n}, R) \to M_k(\mf{n}, R/I) \] compatible with the reduction of constant terms $C_k(R) \to C_k(R/I).$ 

\begin{ithm}\label{t:lift}
For $k$ sufficiently large, the cokernel of $M_k(\mf{n}, R) \to M_k(\mf{n}, R/I)$ equals the cokernel of \[ C_k(R) \to C_k(R/I).\] Moreover, the restriction to the $\psi$-isotypic components
\[ M_k(\mf{n}, R)^{\psi} \to M_{k}(\mf{n}, R/I)^{\bar{\psi}} \] has cokernel 
equal to the cokernel of $C_k(R)^{\psi} \to C_k(R/I)^{\bar{\psi}}$. In particular, the reduction of cusp forms with fixed nebentype, $S_k(\mf{n}, R)^{\psi} \to S_{k}(\mf{n}, R/I)^{\bar{\psi}}$, is surjective.
\end{ithm}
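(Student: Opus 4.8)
The plan is to deduce the statement from a single short exact sequence of coherent sheaves on the minimal compactification $\wt{\M}^{\mr{min}}$ together with Serre vanishing in the large-weight regime. Working over a Noetherian base $\cO$ of the type appearing in \S\ref{sec:minimal-cpt} (containing enough roots of unity, with $2N_{F/\Q}(\fn)$ invertible), I would first globalize the constant term map. The admissible cusps assemble into a finite closed subscheme $Z \subset \wt{\M}^{\mr{min}}_{\Spec(\cO)}$, and the points $x_C$ of \S\ref{sec:minimal-cpt} give a $G^+_\fn$-equivariant short exact sequence
\[ 0 \longrightarrow \cK_k \longrightarrow \cL_k \longrightarrow \cQ_k \longrightarrow 0, \]
where $\cQ_k = \bigoplus_{C} (x_C)_* x_C^* \cL_k$ is supported on $Z$ and, after the identification of \S\ref{sec:toroidal-cpt}, satisfies $H^0(\cQ_k) = C_k$; here $H^0(\cL_k) = M_k(\fn,-)$ for $k$ large (\S\ref{sec:minimal-cpt}), and $\cK_k$ is the ideal sheaf of forms vanishing along $Z$, with $H^0(\cK_k) = S_k(\fn,-)$. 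Taking global sections recovers the constant term map, compatibly with the action of $G^+_\fn$.

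Next I would invoke ampleness. Since $\omega$ is ample on the projective scheme $\wt{\M}^{\mr{min}}$ (as in the proof of Theorem~\ref{t:main}, following \cite{chai}), Serre vanishing gives $R^i\pi_* \cK_k = R^i\pi_*\cL_k = R^i\pi_*\cQ_k = 0$ for $i>0$ and all $k \gg 0$, where $\pi$ is the structure map to $\Spec(\cO)$; for $\cQ_k$ this is automatic as $Z$ is finite. Hence $\pi_*\cK_k,\pi_*\cL_k,\pi_*\cQ_k$ are locally free $\cO$-modules forming a short exact sequence, and --- this is the crucial point --- by the projection formula their formation commutes with arbitrary base change in the coefficients: for every $\cO$-algebra $A$ and every $A$-module $N$ one has $H^0(\cK_k \otimes_A N) = (\pi_*\cK_k)\otimes_\cO N$ with $H^{>0}(\cK_k\otimes_A N)=0$, and likewise for $\cL_k,\cQ_k$. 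Taking $N=A$ shows the sequence $0 \to S_k(\fn,A)\to M_k(\fn,A)\to C_k(A)\to 0$ is short exact for every $A$ (recovering the surjectivity of Theorem~\ref{t:main}); taking $A=R$ and $N=I$, the vanishing $H^1(\cK_k\otimes_R I)=0$ forces the reduction $S_k(\fn,R)\to S_k(\fn,R/I)$ to be surjective. It is essential that $k\gg0$ be chosen uniformly in $I$, which is exactly what the projection formula delivers, as opposed to a naive Serre bound depending on $\cK_k\otimes I$.

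With the two short exact rows for $A=R$ and $A=R/I$ and the vertical reduction maps, the snake lemma yields an exact sequence $\coker(s)\to\coker(m)\to\coker(t)\to 0$ on cusp forms, modular forms, and constant terms. The previous paragraph gives $\coker(s)=0$, so $\coker(m)\cong\coker(t)$; since $C_k = \bigoplus_{\mr{Cusp}(\fn)^*} A$, the map $C_k(R)\to C_k(R/I)$ is visibly surjective, and the first assertion amounts to surjectivity of $M_k(\fn,R)\to M_k(\fn,R/I)$. For the nebentype refinement I would run the identical argument $G^+_\fn$-equivariantly, realizing $M_k(\fn,\psi,-)$ and $C_k(\psi,-)$ as global sections of the descended line bundle $\cL_{k,\psi}$ and its boundary quotient on the projective quotient scheme $(\wt{\M}^{\mr{min}}/G^+_\fn)_{\Spec(R)}$ from the proof of Theorem~\ref{t:main}. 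The final assertion then follows: since for a good ring the constant-term reduction $C_k(\psi,R)\to C_k(\bar\psi,R/I)$ is surjective, the isomorphism of cokernels forces surjectivity of $M_k(\fn,\psi,R)\to M_k(\fn,\bar\psi,R/I)$, and together with surjectivity of the constant term maps this yields surjectivity of $S_k(\fn,\psi,R)\to S_k(\fn,\bar\psi,R/I)$.

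The main obstacle I anticipate is the equivariant descent, not the cohomological vanishing. Because the diamond operators $G^+_\fn$ do not act freely on the cusps (the phenomenon flagged in the comparison with \cite{chai}), passing to $\psi$-isotypic components is only left exact and cannot be naively commuted with reduction modulo $I$; carrying out the argument on the quotient $(\wt{\M}^{\mr{min}}/G^+_\fn)$ repairs this, but one must check that $\cL_{k,\psi}$ has ample-enough twists for Serre vanishing, that its global sections genuinely compute $M_k(\fn,\psi,R)$ (this is where Definition~\ref{defn:good-ring}, in particular invertibility of $2N_{F/\Q}(\fn)$ and enough roots of unity, enters), and that $Z$ descends so that $\cQ_{k,\psi}$ computes $C_k(\psi,R)$. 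A secondary point is extending the locally free base-change package from the Noetherian base $\cO$ to a possibly non-Noetherian good ring $R$, which I would handle by writing $R$ as a filtered colimit of finitely generated $\cO$-subalgebras and using that $H^0$ and the relevant vanishing commute with filtered colimits.
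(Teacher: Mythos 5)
Your non-equivariant skeleton (the short exact sequence $0 \to \cK_k \to \cL_k \to \cQ_k \to 0$ on $\wt{M}^{\mr{min}}$, Serre vanishing, cohomology-and-base-change to get uniformity in $I$, then the snake lemma) is a reasonable repackaging of the paper's Theorem~\ref{thm:classic-lift}, but it already has one gap and the equivariant refinement --- which is where the entire content of the theorem lies --- is deferred rather than proved. The gap in the first part: your identification $H^0(\cL_k \tensor_R R/I) = M_k(\fn, R/I)$ requires that the formation of $\wt{f}_*\omega^{\tensor k}_{\mr{can}}$ commute with $\tensor_R R/I$, and this fails precisely at points of $\wt{M}^{\mr{min}}$ over which $\wt{\M}$ has nontrivial inertia, i.e.\ at the elliptic points. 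This is why Definition~\ref{defn:good-ring} requires the orders of the inertia groups of $\M$ to be invertible in $R$ --- a clause you never invoke; you attribute the good-ring hypothesis only to $2N_{F/\Q}(\fn)$ and roots of unity. The paper's proof (Theorem~\ref{thm:equiv-lift}) isolates the locus where $f'_*$ fails to commute with reduction as (elliptic points) $\cup$ (cusps), kills the first by invertibility of inertia orders, and computes the second explicitly.

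The second and more serious omission is the computation at the cusps in the $\psi$-isotypic setting, which you acknowledge but do not carry out. For your snake-lemma argument on the quotient $(\wt{M}^{\mr{min}}/G^+_{\mf{n}})_R$ to identify $\coker\bigl(M_k(\fn,\psi,R) \to M_k(\fn,\bar{\psi},R/I)\bigr)$ with $\coker\bigl(C_k(\psi,R) \to C_k(\bar{\psi},R/I)\bigr)$, you need the $\psi$-isotypic cusp-form sheaf to commute with reduction mod $I$; completing at a cusp $[C]$ with nontrivial stabilizer, this is exactly the assertion that \[ \ker\bigl(Q_{k}(\psi,R) \to C_{k}(\psi,R)\bigr) \surj \ker\bigl(Q_{k}(\bar{\psi},R/I) \to C_{k}(\bar{\psi},R/I)\bigr), \] i.e.\ that the \emph{nonconstant} $q$-coefficients reduce surjectively in the $\psi$-eigenspace. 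Since $(-)^{\psi}$ is only left exact and $G^+_{\mf{n}}$ does not act freely on the cusps, this is not automatic; the paper proves it by observing that $Stab_{[C]}$ acts freely on $M^+_{[C]}/U_{[C]} \setminus \{0\}$, so the nonconstant Fourier modes form an induced $Stab_{[C]}$-module and taking $\psi$-parts is exact there. Without this the cokernel could pick up contributions from nonconstant terms and would not equal the constant-term cokernel. Finally, your derivation of the last assertion leans on the unproved (and, in general, unnecessary) claim that $C_k(\psi,R) \to C_k(\bar{\psi},R/I)$ is surjective; surjectivity of $S_k(\fn,\psi,R) \to S_k(\fn,\bar{\psi},R/I)$ follows directly from the cokernel identification because the map $M_k(\fn,\bar{\psi},R/I) \to \coker$ factors through the constant term, which vanishes on cusp forms.
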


The requirement that $N_{F/\Q}(\mf{n})$ is invertible in $R$ is quite restrictive, but we expect this requirement can be weakened. The method of proof, involving the computation of the action of $G^+_{\mf{n}}$ on $q$-expansions at all cusps, may be of independent interest.

In \S\ref{s:vk} we prove the following result, which is a mild generalization of a result of Hida whose statement and proof appear in Lemma 1.4.2 of the paper \cite{wilesrep} of Wiles.  Let $p$ be a prime. We denote by $ M_k((1), 1, \Z_{(p)})$ the space of modular forms over $\Z_{(p)}$ of weight $k$, level $1$, and trivial nebentype.

\begin{ithm}\label{t:vk}
Fix $n > 0$. For $k$ sufficiently large and divisible by $(p-1)$, there exists a form $V \in M_k((1), 1, \Z_{(p)})$ whose $q$-expansions (at all cusps) are congruent to $1$ modulo $p^n$. 
\end{ithm}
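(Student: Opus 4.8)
The plan is to build $V$ by $p$-adic successive approximation, starting from the mod-$p$ lift of the Hasse invariant and improving the congruence one power of $p$ at a time. I work over a good ring $\cO$ (a flat $\Z_{(p)}$-algebra with enough roots of unity) and descend to $\Z_{(p)}$ only at the very end; fix $k$ large and divisible by $(p-1)$. The goal is to produce a chain $V^{(1)}, \dots, V^{(n)} \in M_k((1), 1, \cO)$ whose $q$-expansions at every cusp satisfy $V^{(j)} \equiv 1 \pmod{p^j}$, so that $V := V^{(n)}$ has the required property. For the base case I take the form furnished by Theorem~\ref{thm:lifting-hasse}: a power of the Hasse invariant has weight $k$ and $q$-expansion $1$ at all cusps, and its lift $V^{(1)}$ over $\cO$ therefore has $q$-expansions $\equiv 1 \pmod p$.

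For the inductive step, suppose $V^{(j)}$ has been built, with $q$-expansions $1 + p^j h$ where $h \in \bigoplus_{\mr{Cusp}((1))} \cO[[q]]$. I would set $V^{(j+1)} = V^{(j)} - p^j G$ for a form $G \in M_k((1),1,\cO)$ whose $q$-expansions reduce modulo $p$ to $\bar h := h \bmod p$; the two error terms then cancel and $V^{(j+1)} \equiv 1 \pmod{p^{j+1}}$. Such a $G$ exists as soon as $\bar h$ lies in the image of the mod-$p$ $q$-expansion map $M_k((1),1,\cO/p) \to \bigoplus_{\mr{Cusp}((1))}(\cO/p)[[q]]$: one chooses a preimage $\bar G$ and lifts it to characteristic zero using that $M_k((1),1,\cO) \to M_k((1),1,\cO/p)$ is surjective for $k$ large (vanishing of $H^1(\wt{\M}^{\mr{min}}, \cL_k)$, equivalently the $I=(p)$ instance of Theorem~\ref{t:lift}). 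Since two admissible choices of $V^{(j)}$ differ by $p^j$ times a weight-$k$ form, the resulting class $[\bar h]$ in the cokernel of this $q$-expansion map is independent of $V^{(j)}$, and it is the only obstruction to the inductive step.

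The main obstacle is to prove that $[\bar h] = 0$ for every large $k$ divisible by $(p-1)$ — equivalently, that the constant tuple $1$ already lies, modulo $p^{j+1}$, in the $\cO$-span of the $q$-expansions of weight-$k$ forms at all cusps. I would attack this through the explicit action of the full diamond group $G^+_\fn$ on $q$-expansions at all cusps, the same computation that drives Theorems~\ref{t:main} and~\ref{t:lift}, combined with the description of $Sh_{G,K}$ as a union of $h^+ = \#\Cl^+(F)$ components: the plan is to exhibit $\bar h$ as a $G^+_\fn$-average, spread across the components, of $q$-expansions of explicit weight-$k$ forms, thereby landing it in the image. This is exactly the place where the passage from Chai's $G^*$ to $G = \Res_{F/\Q}\GL_2$, together with the use of \emph{all} the $p$-unramified cusps rather than Chai's smaller set, is doing the essential work, and it is the one genuinely delicate point. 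The remaining steps are routine: one iterates the construction up to $j=n$ to obtain $V^{(n)}$, and then averages over $\Gal(\Frac(\cO)/\Q)$ — whose order may be taken prime to $p$, and which fixes the $q$-expansion $1$ — to descend $V^{(n)}$ to a form $V \in M_k((1),1,\Z_{(p)})$ with $q$-expansions congruent to $1$ modulo $p^n$ at all cusps.
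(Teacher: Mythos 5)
There is a genuine gap, and it sits precisely at the point the paper flags as the difficulty. Your inductive step requires the surjectivity of the reduction map $M_k((1),1,\cO) \to M_k((1),1,\cO/p)$, which you justify by invoking ``the $I=(p)$ instance of Theorem~\ref{t:lift}'' (equivalently, $H^1$-vanishing for $\cL_k$ on $\wt{M}^{\mr{min}}$). But Theorem~\ref{t:lift} (i.e.\ Theorem~\ref{thm:equiv-lift}) is proved only for \emph{good} rings $R$, meaning in particular that the orders of all inertia groups of $\M$ are invertible in $R$; for $\fn=(1)$ the level-one stack has elliptic points whose automorphism groups may have order divisible by $p$, so neither $\Z_{(p)}$ nor $\cO$ is good, and the paper says explicitly, immediately before the proof of Theorem~\ref{thm:lifting-hasse}, that this is why Theorem~\ref{thm:equiv-lift} does not apply. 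The $H^1$-vanishing argument breaks at the same spot: ampleness controls sections of $\cL_k$ on $\wt{M}^{\mr{min}}$, but when $p$ divides inertia orders the formation of these sheaves does not commute with reduction mod $p$, so $M_k((1),1,\cO/p)$ need not be reached. Your base case is also circular as written: you cite Theorem~\ref{thm:lifting-hasse}, which \emph{is} the statement under proof (Theorem~\ref{t:vk} in the introduction); the honest content of the base case --- lifting a power of the Hasse invariant mod $p$ at level one with trivial nebentype --- is exactly the nontrivial step. Finally, your central claim $[\bar h]=0$ is not proved but only announced as a plan (``exhibit $\bar h$ as a $G^+_{\fn}$-average\dots''), so even granting the lifting input, the induction does not close.

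The paper's proof circumvents all of this by changing the level rather than iterating. By Lemma~\ref{lem:full-level} one chooses $N$ coprime to $p$ so that $\M(N)_{\Z[\zeta_N]}$ is an algebraic space --- no elliptic points --- and applies Theorem~\ref{thm:classic-lift} there to lift $c^*(H)^k$ to $A \in M_{k(p-1)}(\Gamma(N), \Z_{(p)}[\zeta_N])$. Deep congruences then come for free by raising to $p$-th powers: if $\mr{qexp}(A) \equiv 1 \pmod p$ then $\mr{qexp}(A^{p^{m-1}}) \equiv 1 \pmod{p^m}$, which makes your entire successive-approximation scheme, with its obstruction classes, unnecessary. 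One then averages over $\GL_2(\cO_F/N)/\cO_F^*$, over $G^+_{1}$, and over $\Gal(\Q(\zeta_N)/\Q)$ to land in $M_k((1),1,\Z_{(p)})$; note that, contrary to your parenthetical, the order of this Galois group need not be prime to $p$ --- the paper instead arranges the congruence to hold modulo $p^{1+\ord_p(|\GL_2(\cO_F/N)/\cO_F^*|)+\ord_p(|G^+_1|)+\ord_p(|\Gal(\Q(\zeta_N)/\Q)|)}$ so that the denominators introduced by averaging are absorbed. If you want to salvage your proposal, the repair is the same move: perform the lift (one-shot, plus the $p$-power trick) at auxiliary rigid level $\Gamma(N)$, and only then average down to level one and trivial nebentype.
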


This form $V$ is needed for the applications of \cite{dk}, \cite{dk2}. The Hida--Wiles version of this result does not require $V$ to have level 1 and trivial nebentype; their  construction  uses the theta series associated to a certain cyclotomic extension of $F$.  One could deduce our strengthening from their result by averaging, but this would require an understanding of the $q$-expansion of the theta series at {\em all} cusps.  It is more convenient to prove the existence of a modular form with $q$-expansion congruent to $1$ mod $p$ at all cusps via a geometric argument by lifting the Hasse invariant, as in \cite{ag}.  This is the strategy that we employ.  

\subsection{Exposition}
In some sense, much of this paper is expository. However, many of the results in the literature are not in the form necessary to be applied to \cite{dk}, \cite{dk2}. This is because many results are proven only for Hilbert modular forms for the group $G^*$ instead of $G$, but also because careful descriptions relating classical cusps and $q$-expansions to the algebraic theory defined by Rapoport \cite{rapoport} are not available.

We have used the language of analytic and algebraic stacks in this paper (see Appendix \ref{sec:stack} for details). Much of the literature avoids this via the use of auxiliary level structures. However, in this paper, we cannot simply prove results for modular forms of level $\Gamma_1(\mf{n}) \cap \Gamma(M)$, for some $M > 3$ coprime to $\mf{n}$, and then formally descend these results to level $\Gamma_1(\mf{n})$. Our results are sensitive to $p$-integral structures, so we would need to choose $M$ such that $\SL_2(\cO/M\cO)/\{ \pm I \}$ has order coprime to $p$, and this is not possible for small $p$.

There are many foundational works on algebraic Hilbert modular forms for $G^*$ (e.g. \cite{rapoport}, \cite{katz}, \cite{chai}, \cite{deligne-pappas}), as well as more recent works which study algebraic Hilbert modular forms for $G$ (e.g. \cite{Kisin-Lai}, \cite{Dimitrov-Tilouine}). Unlike these works, we study algebraic Hilbert modular forms for $G$ by directly working with the Shimura variety associated to $G$, instead of the PEL type Shimura variety associated with $G^*$. This requires the use of certain stacks which are not Deligne--Mumford, as their inertia groups are infinite discrete groups (see \S\ref{subsec:shimura-var}, \S\ref{subsec:G-moduli}). However, this excursion into non-algebraic stacks is merely an intermediate step in the definition of algebraic stacks, whose algebraicity is verified, by descent, from the algebraic stacks constructed by Rapoport \cite{rapoport}.  Despite this complication, the use of these stacks is not difficult, makes many constructions more canonical, and eliminates the need to keep track of polarizations. For example, the use of $G$-Shimura varieties and the language of stacks helps in:
\begin{enumerate}
\item comparing Shimura normalizations for $q$-expansions (\cite{shim}, used in Dasgupta--Kakde \cite{dk}, \cite{dk2}) with that used in the algebraic literature (Rapoport \cite{rapoport}, Chai \cite{chai});
\item discussing the simple moduli-theoretic intepretation of $G$-Shimura varieties in terms of \emph{unpolarized} complex tori with real multiplication;
\item describing the $G^+_{\mf{n}}$-action on toroidal compactifications and $q$-expansions.
\end{enumerate}

\medskip

\bigskip
We thank Samit Dasgupta and Mahesh Kakde for carefully explaining their work, and for many other conversations regarding this paper. We thank Brian Conrad for helpful conversations during the writing of this paper. We thank Matthieu Romagny and Ching-Li Chai for their comments.

\

\noindent\textbf{Notation}
\

Throughout this paper, $F$ denotes a totally real number field, $\mf{n} \subset \cO_F$ an ideal, $U^+$ the totally positive units, and $U_{1, \mf{n}}$ the units congruent to $1$ modulo $\mf{n}$. We define $D_{\mf{n}} = U^+/(U_{1, \mf{n}})^2$.

In \S 3, we introduce the Shimura varieties $Sh_{G^*, K_{\lambda}}$, and $Sh_{G, K}$, while in \S 6 we introduce their algebraic counterparts, the algebraic stacks $\wt{\M}_{\mc{P}}$ and $\wt{\M}$. In both the analytic and algebraic settings, $\omega$ denotes the Hodge line bundle. The coarse moduli space of $\wt{\M}$ is denoted by $\wt{M}$, the partial toroidal compactification by $\wt{\M}^{\rm{partial}}$, and the minimal compactification by $\wt{M}^{\rm{min}}$.

The connected components of $\wt{\M}$ are indexed by ``component labels" (\S \ref{subsec:comp-label}), typically denoted by $\alpha$, while the cusps of $\wt{\M}$ are indexed by ``cusp labels" (\S \ref{subsec:cusp-labels}), typically denoted by $C$. The modules of $q$-expansions and constant terms at a cusp label $C$ are denoted $Q_{k,C}$ and $C_{k,C}$ respectively.

\tableofcontents


\section{Analytic Hilbert modular forms for $G$}\label{sec:hmf}

\subsection{Class groups}

We consider the groups $G^+_{\mf{n}} = \Cl^+(F, \mf{n})$ and $G_{\mf{n}} = \Cl(F, \mf{n})$, the narrow  and wide ray class groups of conductor $\mf{n}$, respectively.

Let $I_{\mf{n}}$ denote the group of prime-to-$\mf{n}$ fractional ideals $N \subset F$. Let $(F^*_{1, \mf{n}})^+\subset F^*$ denote the subgroup of totally positive elements, coprime to $\mf{n}$, and $\equiv 1 \pmod{\mf{n}}$. Let $U_{1, \mf{n}}^+$ denote the group of totally positive units of $\mathcal{O}_F$ congruent to $1 \pmod{\mf{n}}$.  There is an exact sequence
\[ 0 \to U_{1, \mf{n}}^+ \to (F^*_{1, \mf{n}})^+ \to I_{\mf{n}} \to G^+_{\mf{n}} \to 0. \]

Let $I_{p\mf{n}}$ denote the group of prime-to-$p\mf{n}$ fractional ideals $N \subset F$. Let $(F^*)' \subset F^*$ denote the subgroup of totally positive elements, coprime to $p\mf{n}$, and $\equiv 1 \mod \mf{n}$. There is an exact sequence
\[ 0 \to U_{1, \mf{n}}^+ \to (F^*)' \to I_{p\mf{n}} \to G^+_{\mf{n}} \to 0. \]

\subsection{Algebraic Hecke characters}\label{subsec:hecke-char}

Let $S_{\infty}$ denote the set of infinite places. For any place $v \not\in S_\infty$, let $\cO_{v, \fn, 1}^* \subset \cO_v^*$ denote the subgroup of elements congruent to $1 \pmod{\fn \cO_v}$.  Of course, this is just $\cO_v^*$ if $v \nmid \fn$.
Consider the complex Lie group \[ H_{\mf{n}} := F^* \backslash (\A^f_F)^* \times (F \tensor \C)^* / \prod_{v \notin S_{\infty}} \cO_{v, \fn,1}^* = F_{1, \mf{n}}^* \backslash (I_{\mf{n}} \times (F \tensor \C)^*). \] 

There is an exact sequence
\[ 0 \to (\C \tensor F)^*/U_{1,\mf{n}} \to H_{\mf{n}} \to G_{\mf{n}} \to 0. \]

Consider an (algebraic) Hecke character of $F$, i.e. a holomorphic character $\chi \from H_{\mf{n}} \to \C^*.$
The restriction \[ \chi_{\infty} := \chi|_{(\C \tensor F)^*} \from (\C \tensor F)^* \to \C^* \] must be $U_{1,\mf{n}}$-invariant. In particular, $\chi_{\infty} = N_{F/\Q}^{k}$ for some $k \in \Z$, for the norm character $N_{F/\Q} \from (\C \tensor F)^* \to \C^*$.
Define $\chi_f \from G^+_{\mf{n}} \to \C^*$ by \begin{equation} \chi_f(\mf{a}) := \chi|_{I_{\mf{n}}}(\mf{a}) \cdot |N_{F/\Q}(\mf{a})|^{k}. \end{equation}

\begin{lemma}
The Hecke characters $\chi \from H_{\mf{n}} \to \C^*$ are in bijection with the pairs \[ (k \in \Z, \chi_f \from G^+_{\mf{n}} \to \C^*) \] such that $\chi_f((\alpha)) =  \sgn(N_{F/\Q}(\alpha))^k $ for all $\alpha \in F^*_{1, \mf{n}}$.
\end{lemma}
Note that this condition implies that $N_{F/\Q}(U_{1,\mf{n}})^k = 1$,  since $\chi_f(U_{1, \mf{n}}) = 1$.
\begin{proof}

As the restriction of $\chi$ to $(\C \tensor F)^*/U_{1,\mf{n}}$ is a power of the norm, $\chi$ factors through the group $H_{\mf{n}}'$, defined to be the push-out of $H_{\mf{n}}$ along $N_{F/\Q}$:
\[\begin{tikzcd}
0 \arrow{r} & (\C \tensor F)^*/U_{1,\mf{n}} \arrow{r} \arrow{d}{N_{F/\Q}} & H_{\mf{n}} \arrow{r} \arrow{d} & G_{\mf{n}} \arrow{r} \arrow{d} & 0 \\
0 \arrow{r} & (\C \tensor F)^* \arrow{r} & H'_{\mf{n}} \arrow{r} & G_{\mf{n}} \arrow{r} & 0.
\end{tikzcd}
\]

There is an isomorphism \[ H'_{\mf{n}} =  (I_{\mf{n}} \times \C^*)/F_{1, \mf{n}}^* \isom (G^+_{\mf{n}} \times \C^*)/F_{1, \mf{n}}^*, \]
defined by $(\mf{a}, z) \mapsto (\mf{a}, |N_{F/\Q}(\mf{a})|^{-1} z)$. Note that, for $\alpha \in F^*_{1, \mf{n}}$, the isomorphism sends $(\alpha, N_{F/\Q}(\alpha))$  to $(\alpha, \sgn(N_{F/\Q}(\alpha)))$. Therefore the algebraic characters of $H'_{\mf{n}}$ are in bijection with the algebraic characters $G^+_{\mf{n}} \times \C^* \to \C^*,\ (\mf{a}, z) \mapsto \chi_f(\mf{a}) z^k$, subject only to the requirement that \[ \chi_f((\alpha)) \sgn(N_{F/\Q}(\alpha))^k = 1 \] for all $\alpha \in F_{1, \mf{n}}^*$. 

\end{proof}

We say that such a character $\chi_f$ is \emph{compatible} with the weight $k$. Given any character $\chi_f \from G \to \C^*$, we say $\sgn(\chi_f) = (-1)^k$ if $\psi$ is compatible with the weight $k$.

\begin{definition}
A weight $k$ is \textbf{admissible} if $N_{F/\Q}(U_{1,\mf{n}})^k = 1$. Equivalently, $k$ is admissible if there exists a character $\chi_f$ compatible with $k$.
\end{definition}

\subsection{Adelic Hilbert modular forms}\label{subsec:adelic-hmf}

Define $K_f = \prod_{v \notin S_{\infty}} K_v \subset \GL_2(\wh{\cO}_F)$ to be the pre-image, under reduction mod $\mf{n}$, of $\left\lbrace \mat{*}{*}{0}{1} \right\rbrace \subset \GL_2(\wh{\cO}_F/\mf{n})$.

Consider the space \begin{equation}X_{\mf{n}} := \GL_2^+(F) \backslash \GL_2(\A_F^f) \times \GL_2^+(F \tensor \R)/K_f.\end{equation}  

Let  $\underline{\mr{Isom}}_{F \tensor \R}((F \tensor \R)^2, F \tensor \C)$ denote the set of $(F \tensor \R)$-isomorphisms $(F \tensor \R)^2 \isom F \tensor \C$. This set has a natural complex-analytic structure. Fixing an isomorphism $(F \tensor \R)^2 \isom F \tensor \C$ by sending $e_1 \to 1 \tensor i$ and $e_2 \to 1 \tensor 1$, one obtains an isomorphism $\GL_2(F \tensor \R) \isom \underline{\mr{Isom}}_{F \tensor \R}((F \tensor \R)^2, F \tensor \C)$. In this way, $\GL_2^+(F \tensor \R)$ is given a complex-analytic structure. Moreover, the group $(F \tensor \C)^*$ acts holomorphically on $\GL^+_2(F \tensor \R)$, via scaling on $F \tensor \C$.  

Viewing $(\A^f_F)^*$ as scalar matrices in $\GL_2(\A_F^f)$, the group $H_{\mf{n}}$ acts holomorphically on $X_{\mf{n}}$ via right-multiplication.

There is an identification  \[ \mc{H}^n = \{ z \in (\C \tensor F)^*: \mr{Im}(z) \in (\R \tensor F)^+ \}, \] where $n = [F : \Q]$, using a fixed ordering of the infinite places of $F$. We will never reference this ordering, and always work with the more canonical space on the right of this equation. The group $\GL_2^+(F \tensor \R)$ acts on $\mc{H}^n$ via fractional linear transformations.

We also consider the space \begin{equation} Y_{\mf{n}} := \GL_2(\A_F^f) \times \mc{H}^n/K_f. \end{equation} For a function $f \from Y_{\mf{n}} \to \C$, an element $g = \mat{a}{b}{c}{d} \in \GL^+_2(F \tensor \R)$, and weight $k \in \N$, define the \textbf{weight $k$ slash operator} $f|_{k, g}(g_f, z) = N_{F/\Q}(c z + d)^{-k} f(g_f, \frac{a z + b}{c z + d})$. There is a bijection between \[ \{ f \from Y_{\mf{n}} \to \C : f|_{k, g} = f\ \forall \ g \in \GL_2^+(F) \} \] and  \[ \{ \varphi \from X_{\mf{n}} \to \C :  \varphi(g x^{-1}) = N_{F/\Q}^{-k}(x) \varphi(g)\ \forall \ g \in \GL_2^+(\A_F),\ x \in (F \tensor \C)^* \}, \] given by sending $f$ to the function $\varphi(g_f, g_{\infty}) := f|_{k, g_{\infty}}(g_f, (i, \ldots, i))$.

Fix a character $\chi \from H_{\mf{n}} \to \C^*$, corresponding to the pair $(k \in \N, \chi_f \from G_{\mf{n}}^+ \to \C^*)$.

\begin{definition}
A \textbf{holomorphic Hilbert modular form} of level $\Gamma_1(\mf{n})$, parallel \textbf{weight} $k$, and \textbf{central character} $\chi|_{\A_F^*}$ is a function $\varphi \from X_{\mf{n}} \to \C$ such that $\varphi(g \cdot h^{-1}) = \chi(h) \varphi(g)$ for all $g \in \GL_2^+(\A_F)$, $h \in H_{\mf{n}}$. In particular, there exists a function $f \from Y_{\mf{n}} \to \C$ corresponding to $\varphi$ via the weight $k$ slash operation. We further require that for fixed $g_f \in \GL_2(\A_F^f)$ the function $z \mapsto f(g_f, z)$ is \emph{holomorphic}, and  of \emph{moderate growth} as $N_{F/\Q}(\mr{Im}(z)) \to \infty$.
\end{definition}

\begin{remark}
We will not discuss the growth condition until \S\ref{subsec:q-cusplabel}, where it will be precisely the condition required to ensure that $q$-expansions have only non-negative terms. In fact, when $F \neq \Q$, the Koecher principle implies that the growth condition is automatic.
\end{remark}

These functions are said to have \textbf{nebentype} $\psi = \chi_f^{-1}$. We denote the space of such functions by $M_k(\mf{n}, \psi, \C)$. Similarly, define $M_k(\mf{n}, \C)$ as the as the set of functions $\varphi$ such that $\varphi(g \cdot h^{-1}) = \chi_{\infty}(h) \varphi(g)$ for all $h \in (F \tensor \C)^*$, with the same additional conditions of holomorphy and growth. Since $M_k(\mf{n}, \C)$ is finite-dimensional, any such function decomposes into eigenfunctions for the larger group $H_{\mf{n}}$, and hence
\[ M_k(\mf{n}, \C) = \oplus_{\psi} M_k(\mf{n}, \psi, \C). \]

\begin{remark}\label{rmk:holom-unit}
For $k > 0$, the central characters $\chi|_{\A^*_F}$ we are considering are not unitary, as $\chi_{\infty}|_{(F \tensor \R)^*} = N_{F/\Q}^{k}$. The space of holomorphic modular forms with central character $(x_f, x_{\infty}) \mapsto \chi(x_f, x_{\infty})$ can be related to a similarly defined space of functions with unitary central character $(x_f, x_{\infty}) \mapsto \chi(x_f, x_{\infty}) N_{F/\Q}(x_{\infty})^k$, by sending $\varphi$ to the function \[ (g_f, g_{\infty}) \mapsto N_{F/\Q}(\det(g_{\infty}))^{-k/2} \varphi(g_f, g_{\infty}). \] In terms of functions on $Y_{\mf{n}}$, this unitary normalization corresponds to using a different slash operation $f||_{k, g} := N_{F/\Q}(\det(g))^{k/2} f|_{k,g}$.

We denote the respective spaces of functions by $M^{\mr{hol}}_k(\mf{n}, \C)$ and $M^{\mr{uni}}_k(\mf{n}, \C)$, and the  twist defined above yields an isomorphism $M^{\mr{hol}}_k(\mf{n}, \C) \isom M^{\mr{uni}}_k(\mf{n}, \C)$. 
\end{remark}

\begin{remark}As we have mentioned, $X_{\mf{n}}$ is naturally a complex-analytic space, and holomorphic Hilbert modular forms are (certain) holomorphic functions on $X_{\mf{n}}$. One advantage of studying the complex-analytic space $X_{\mf{n}}$, as opposed to the Shimura variety $Sh_{G,K}$ discussed in \S\ref{subsec:shimura-var} below, is that Hilbert modular forms of \emph{non-parallel} weight are naturally sections of certain holomorphic line bundles on $X_{\mf{n}}$, while only those HMFs of parallel weight are sections of holomorphic line bundles on $Sh_{G,K}$. We will not consider non-parallel weights in this paper.
\end{remark}

\subsection{Classical Hilbert modular forms}\label{subsec:classical-hmf}

For each class $\lambda$ in the narrow class group $\Cl^+(F)$, we choose a representative fractional ideal $\ft_\lambda$.  Let $\fn \subset \cO_F$ be an ideal, and assume that the representative ideals $\ft_\lambda$ have been chosen to be relatively prime to $\fn$.  Define the groups
\begin{align*}
 \Gamma_{0, \lambda}(\fn) &= \left\{ \mat{a}{b}{c}{d} \in \GL_2^+(F): a, d \in \cO_F, c \in \ft_\lambda \fd \fn, b \in (\ft_\lambda \fd)^{-1}, ad-bc \in \cO_F^* \right\}, \\ 
  \Gamma_{1, \lambda}(\fn) &= \left\{ \mat{a}{b}{c}{d} \in \Gamma_{0, \lambda}(\mf{n}): d \equiv 1 \pmod{\fn} \right\}.
 \end{align*} 
Here $\fd$ denotes the different ideal of $F$. 

By the strong approximation theorem, \[ X_{\mf{n}} =\coprod_{\lambda \in \Cl^+(F)} \Gamma_{1, \lambda}(\mf{n}) \backslash \GL_2^+(F \tensor \R). \] More precisely, there exist elements $g_{\lambda} := \mat{x_{\lambda}}{0}{0}{1} \in \GL_2(\A_F^f)$ such that:
\begin{itemize}
\item The subgroup of $\GL_2^+(F)$ preserving $\prod_v \{ g_{\lambda} \} K_{v} \subset \GL_2(\A_F^f)$ equals $\Gamma_{1,\lambda}(\mf{n})$.
\item The inclusion $\coprod_{\lambda} \Gamma_{1, \lambda}(\mf{n}) \backslash (\{ g_{\lambda} \} K_{v} \times \GL_2^+(F \tensor \R)) \into X_{\mf{n}}$ is a bijection.
\end{itemize}

These elements $x_{\lambda} = (x_{\lambda, v})$ are defined by requiring, for all finite places $v$, that \[ (\mf{t}_{\lambda} \mf{d})^{-1} \cO_{F,v} = x_{\lambda, v}\cO_{F,v} \] as fractional ideals of $\cO_{F_v}$.

This construction implies that $M_{k}(\mf{n}, \C)$ is in bijection with tuples of functions \[ (f_{\lambda} \from \mc{H}^n \to \C)_{\lambda \in \Cl^+(F)} \] such that:
\begin{enumerate}
\item $f_{\lambda}$ is holomorphic,
\item $f_{\lambda}|_{k, \alpha} = f_{\lambda}$ for all $\alpha \in \Gamma_{1, \lambda}(\mf{n})$,
\item $f_{\lambda}|_{k, \alpha}$ is of moderate growth for all $\alpha \in \GL_2^+(F)$. 
\end{enumerate} 

\begin{remark}
We further elaborate on the relationship between the ``holomorphic" and ``unitary" normalizations of the central character mentioned in Remark \ref{rmk:holom-unit}.

Shimura \cite{shim} defines Hilbert modular forms of level $\Gamma_1(\mf{n})$ via a similar definition instead using the modified slash operator $||_{k,g}$, and so the HMFs appearing there agree with what we refer to as the unitary normalization, $M_k^{\mr{uni}}(\mf{n}, \C)$. The two different slash operators $f|_{k,g}$ and $f||_{k,g}$ agree for $g \in \Gamma_{1, \lambda}(\mf{n})$, and so the resulting spaces of classical modular forms, $M_k^{\mr{hol}}(\mf{n}, \C) = M_k(\mf{n}, \C)$ and $M_k^{\mr{uni}}(\mf{n}, \C)$, are equal. However, we will never use this equality, as it is not the isomorphism described in Remark \ref{rmk:holom-unit}. The correct isomorphism \[ M_k^{\mr{hol}}(\mf{n}, \C) \isom M_k^{\mr{uni}}(\mf{n}, \C) \] is given in classical terms by \[ (f_{\lambda}) \mapsto (|N_{F/\Q}(\mf{t}_{\lambda} \mf{d})|^{k/2} f_{\lambda}). \] This isomorphism identifies our $M_k^{\mr{hol}}(\mf{n}, \psi, \C)$ with Shimura's $M_k^{\mr{uni}}(\mf{n}, \psi, \C)$.
\end{remark}

\section{Analytic Hilbert modular varieties}\label{sec:shimura-var}
 \subsection{Hilbert modular varieties}\label{subsec:shimura-var}
Consider the reductive algebraic group $G = \Res_{F/\Q} \GL_2$.
The determinant defines a map \[ \det \from G \to \Res_{F/\Q} \G_m. \] By taking the preimage of $\G_m \subset \Res_{F/\Q} \G_m$, define
 \[G^* := {\det}^{-1}(\G_m) \subset G. \] We now introduce certain Shimura varieties associated to $G$ and $G^*$. The $G^*$ Shimura varieties are not needed for the analytic theory, and aside from defining them shortly, will not arise for the remainder of \S\ref{sec:shimura-var}-\S\ref{sec:comp-and-cusp}.

Recall the subgroups $K_v \subset \GL_2(F_v)$ defined in \S \ref{subsec:adelic-hmf}, as well as $K_f = \prod K_v.$ Given the choice of fractional ideal $\mf{t}_{\lambda}$ representing $\lambda \in \Cl^+(F)$, there are associated subgroups \[ K_{f, \lambda} := g_{\lambda} K_f g_{\lambda}^{-1} \cap \mr{det}^{-1}(\A_Q^*) \subset \GL_2(\A_F) \cap \mr{det}^{-1}(\A_Q^*) = G^*(\A_\Q) \] where $g_{\lambda} \in \GL_2(\A_F)$ are the elements defined in \S \ref{subsec:classical-hmf}.

These level structures $K_f$ and $K_{f, \lambda}$ define Shimura varieties
\begin{align*}
Sh'_{G, K} &= \GL_2(F)^+ \backslash (\GL_2(\A^f_F) \times \cH^n)/K_f\\
&\isom \coprod_{[\lambda] \in \Cl^+(F)} \Gamma_{1, \lambda}(\mf{n}) \backslash \cH^n. \\
Sh_{G^*, K_{\lambda}} &= G^*(\Q)^+ \backslash (G^*(\A^f_\Q) \times \cH^n)/K_{f, \lambda} \\
 &\isom (\Gamma_{1, \lambda}(\mf{n}) \cap \mr{det}^{-1}(\Q)) \backslash \cH^n.
\end{align*}

We consider these not as algebraic varieties, but as stacks over the category of complex-analytic spaces (\S\ref{subsec:stacks}), defined as the stack quotient of $(\GL_2(\A^f_F) \times \cH^n)/K_f$ by $\GL_2(F)^+$ (resp. $ (G^*(\A^f_\Q) \times \cH^n)/K_{f, \lambda}$ by $G^*(\Q)^+$). In particular, the inertia group at a point $x$ equals the stabilizer of $x$ in $\Gamma_{1, \lambda}(\mf{n})$ (resp. $\Gamma_{1, \lambda}(\mf{n}) \cap \mr{det}^{-1}(\Q)$). The stack $Sh'_{G,K}$ is not \emph{rigid}, as every point has inertia group containing $Z := U_{1, \mf{n}} = $ center of $\Gamma_{1, \lambda}(\mf{n})$. For this reason, it will also be useful to consider the rigidified stack 
\begin{equation}
Sh_{G, K} := \coprod_{[\lambda] \in \Cl^+(F)} P\Gamma_{1, \lambda}(\mf{n}) \backslash \cH^n,
\end{equation}
where $P\Gamma_{1, \lambda}(\mf{n}) := \Gamma_{1, \lambda}(\mf{n})/Z$.

There is a surjective covering map \[ \coprod_{[\lambda] \in \Cl^+(F)} Sh_{G^*, K_{\lambda}} \to Sh_{G,K}. \] Since $U^+ = \det(\Gamma_{1, \lambda}(\mf{n}))$, it is easy to see that the group $U^+$ acts on $Sh_{G^*, K_{\lambda}}$, so that
\begin{equation}
Sh'_{G,K} = \coprod_{[\lambda] \in \Cl^+(F)} Sh_{G^*, K_{\lambda}}/U^+.
\end{equation}

Passing to the rigidification, we find that 
\begin{equation} \label{e:shgk}
Sh_{G, K} = \coprod_{\lambda} Sh_{G^*, K_{\lambda}}/D_{\mf{n}},
\end{equation}
where $D_{\mf{n}} := U^+/(U_{1, \mf{n}})^2$. 

\begin{remark}
The stack $Sh_{G^*, K_{\lambda}}$ depends on the fractional ideal $\mf{t}_{\lambda}$: if $[\lambda_1] = [\lambda_2] \in \Cl^+(F)$, there is a \emph{non-canonical} isomorphism $Sh_{G^*, K_{\lambda_1}} \isom Sh_{G^*, K_{\lambda_2}}$. However, two such isomorphisms differ by the action of $U^+$, and so $Sh_{G^*, K_{\lambda}} / U^+$ and $Sh_{G^*, K_{\lambda}} / D_{\mf{n}}  $ depend only on the class $[\lambda] \in \Cl^+(F)$ up to canonical isomorphism.
\end{remark}

\subsection{Moduli interpretation}\label{subsec:moduli}

Let $H$ be a locally free $\cO_F$-module of rank $2$, and let \[ \underline{\mr{Isom}}_{\R \tensor_{\Z} \cO_F}(\C \tensor_{\Z} F, H \tensor \R) \] denote the space parametrizing $\R \tensor \cO_F$-module isomorphisms $h_{\infty} \from \C \tensor F \isom H \tensor \R$. This space has a natural complex-analytic structure.

\begin{theorem}\label{thm:moduli-lattice}
The groupoid $Sh_{G,K}'(\C)$ is equivalent to the groupoid whose objects are triples $(H, \gamma, \bar{h}_{\infty})$, where:
\begin{itemize}
\item $H$ a locally free, rank 2 $\cO_F$-module;
\item $\gamma \from \cO_F/\mf{n}\cO_F \into H/\mf{n}H$ is an injective $\cO_F$-module homomorphism;
\item $\bar{h}_{\infty}$ is an  equivalence class modulo $(\C \tensor F)^*$ of  $\R \tensor_{\Z} \cO_F$-module isomorphisms \[ h_{\infty} \from \C \tensor_{\Z} F \isom H \tensor \R,\]  
\end{itemize} 
and whose morphisms are $\cO_F$-module isomorphisms $H_1 \isom H_2$ compatible with the extra structures.

More generally, for simply-connected complex analytic spaces $S$, the groupoid $Sh_{G,K}'(S)$ is equivalent to the groupoid whose objects are triples \[ (H, \gamma, h_{\infty, S} \from S \to \underline{\mr{Isom}}_{\R \tensor_{\Z} \cO_F}(\C \tensor_{\Z} F, H \tensor \R)/(\C \tensor F)^*). \]
\end{theorem}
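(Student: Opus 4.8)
The plan is to unwind the definition of the stack quotient $Sh'_{G,K} = \GL_2(F)^+ \backslash (\GL_2(\A^f_F) \times \cH^n)/K_f$ and match objects on each side by a linear-algebra dictionary. A point of the analytic space $\GL_2(\A^f_F) \times \cH^n$ should be read as the data of (a) an idelic piece $g_f \in \GL_2(\A^f_F)$, which, modulo $K_f$, encodes a rank-$2$ $\cO_F$-lattice $H$ together with a $\Gamma_1(\fn)$-type level structure $\gamma$; and (b) a point $z \in \cH^n$, which encodes the complex structure, i.e.\ the equivalence class $\bar h_\infty$ of $\R\tensor\cO_F$-module isomorphisms $h_\infty \from \C \tensor F \isom H \tensor \R$ modulo $(\C\tensor F)^*$. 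The standard reference model is the ``standard lattice'' $H = \cO_F^2 = \cO_F \oplus \cO_F$, on which $g_f$ acts through $(\A^f_F)^2$; the coset $g_f K_f$ determines a finite-level sublattice datum, and the identification $e_1 \mapsto 1\tensor i$, $e_2 \mapsto 1 \tensor 1$ fixed in \S\ref{subsec:adelic-hmf} turns $z\in\cH^n$ into the desired $h_\infty$.

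First I would construct the functor in the direction from the quotient to the moduli groupoid. Given $(g_f, z)$, set $H$ to be the $\cO_F$-lattice $H = g_f \cdot \wh\cO_F^2 \cap F^2$ (the lattice in $F^2$ cut out by the idele $g_f$ at all finite places), let $\gamma$ be the level structure read off from the reduction of $g_f$ mod $\fn$ against the distinguished vector $e_2$ (matching the shape $\left(\begin{smallmatrix}*&*\\0&1\end{smallmatrix}\right)$ defining $K_f$), and let $h_\infty$ be determined by $z$ via the fixed complex-structure identification, taken modulo $(\C\tensor F)^*$ to produce $\bar h_\infty$. I would then check that right-multiplication by $K_f$ does not change the isomorphism class of $(H,\gamma,\bar h_\infty)$ and that left-multiplication by $\gamma_0 \in \GL_2(F)^+$ produces an isomorphic triple, the isomorphism being precisely $\gamma_0 \from H \isom H'$; this is exactly what is needed to get a functor out of the stack quotient, sending morphisms (elements of $\GL_2(F)^+$) to $\cO_F$-module isomorphisms compatible with $\gamma$ and $\bar h_\infty$. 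Conversely, given a triple $(H,\gamma,\bar h_\infty)$, choosing an $\cO_F$-basis of $H\tensor F \isom F^2$ recovers $g_f$ up to $K_f$ and $\GL_2(F)^+$, and $\bar h_\infty$ recovers $z\in\cH^n$ up to the same; here the narrow class group $\Cl^+(F)$ intervenes because the Steinitz class of $H$ accounts for the $\coprod_\lambda$ in the strong-approximation description from \S\ref{subsec:classical-hmf}.

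The two functors are then seen to be mutually quasi-inverse by a direct computation on both objects and morphisms: I would verify that the composite self-equivalences are naturally isomorphic to the identity, and, crucially, that the induced maps on automorphism groups agree. On the quotient side the inertia at $(g_f,z)$ is the stabilizer in $\Gamma_{1,\lambda}(\fn)$, and on the moduli side it is $\Aut(H,\gamma,\bar h_\infty)$; I must check these coincide, which is the reason the problem is phrased for $Sh'_{G,K}$ rather than its rigidification. For the ``more generally'' statement over a simply-connected $S$, I would use the description from \S\ref{subsec:quotient-stacks} that for such $S$ every $\GL_2(F)^+$-torsor is trivial, so $(\mc X/G)(S)$ is the groupoid quotient $[\mr{Ob}(\mc X(S))/G]$; the lattice $H$ and level structure $\gamma$ remain constant (discrete data), while the complex-analytic input becomes a holomorphic map $S \to \underline{\mr{Isom}}/(\C\tensor F)^*$, matching the stated objects.

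The main obstacle I anticipate is bookkeeping the passage between the idelic lattice description and the classical $\Gamma_{1,\lambda}(\fn)$ description so that the level structure $\gamma$, the narrow-class decomposition, and the polarization-free nature of the $G$ (as opposed to $G^*$) moduli problem all line up correctly; in particular, verifying that the precise definition of $K_f$ as the preimage of $\left(\begin{smallmatrix}*&*\\0&1\end{smallmatrix}\right)$ yields an \emph{injective} $\cO_F$-homomorphism $\gamma\from \cO_F/\fn \into H/\fn H$ (rather than some other flavor of level structure), and that equivalence modulo $(\C\tensor F)^*$ exactly matches quotienting $\GL_2^+(F\tensor\R)$ down to $\cH^n$, will require care. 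The holomorphy and functoriality in $S$ of the $\underline{\mr{Isom}}$ space is a mild technical point but should follow formally once the $\C$-point statement is established.
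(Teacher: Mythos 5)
Your overall strategy is the same as the paper's: a linear-algebra dictionary identifying the double coset space with lattices equipped with a level structure, the complex structure being carried by $z \in \cH^n$, morphisms by $\GL_2(F)^+$, the $\Cl^+(F)$ components by the Steinitz class, and the case of simply-connected $S$ reduced to the triviality of $\GL_2(F)^+$-torsors. However, as written your forward functor is not well defined, and the failure occurs exactly at the point you flag as ``requiring care'' without resolving it. If you set $H = g_f \cdot \wh{\cO}_F^2 \cap F^2$ and let $\gamma$ be the image of $e_2$ under $g_f \bmod \fn$, then right multiplication by $k = \begin{psmallmatrix} a & b \\ c & d \end{psmallmatrix} \in K_f$ (so $c \equiv 0$, $d \equiv 1 \pmod{\fn}$) fixes the lattice $H$ but sends $e_2 \bmod \fn$ to $(b,1)$, which need not be congruent to $e_2$; hence the injection $\gamma$ changes along the $K_f$-orbit and the assignment does not descend to $(\GL_2(\A_F^f) \times \cH^n)/K_f$. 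What $K_f$ actually stabilizes is the second-coordinate \emph{projection} $H/\fn H \surj \cO_F/\fn$ (equivalently, an injection into the dual lattice), which is not the type of level structure appearing in the theorem. Converting that datum into an injection $\cO_F/\fn \into H/\fn H$ forces the contragredient twist: the paper first produces the ``naive'' bijection with $\GL_2(\A_F)/\prod_v K_v'$, where $K_v'$ is the stabilizer of $\gamma$ in $\GL(H \tensor \cO_{F,v})$, and then replaces every matrix by its inverse-transpose to turn $K_v'$ into the $K_v$ of \S\ref{subsec:adelic-hmf}.

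This is not cosmetic bookkeeping: the inverse-transpose must be applied at the infinite places as well, which is why the paper records that $g_\infty \cdot (i,\ldots,i) = z_1/z_2$ with $z_j = h_\infty^{-1}(e_j)$, and this normalization is used later (it is the reason Lemma~\ref{lem:adelic-cmpt} flags it as relevant to Lemma~\ref{lem:normalize}). So your proposal needs either to build the contragredient into the dictionary from the start, or to reformulate the intermediate step with the stabilizer $K_v'$ and then twist, as the paper does; without one of these fixes the forward functor fails to exist on $K_f$-cosets.
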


\begin{proof}
As this type of result is standard (for example, \cite{rapoport} 1.2.5), we will limit our discussion to the case $S = \C$, in order to define terminology and conventions for later. 

Note that the groupoid of triples $(H, \gamma, \bar{h}_{\infty})$ is equivalent to the groupoid of triples $(H, \gamma, \bar{h}_{\infty})$ such that $H$ is required to be a lattice in $F \oplus F$, i.e. $H \tensor_{\cO_F} F = F \oplus F$. We will prove that this latter groupoid is \emph{equal} to the groupoid quotient (see \S\ref{subsec:quotient-stacks}) of the set $(\GL_2(\A^f_F) \times \cH^n)/K_f$ by the left-action of $\GL_2(F)^+$.

Consider a triple $(H, \gamma, h_{\infty})$ such that $H \tensor_{\cO_F} F = F \oplus F$. For each finite place $v$, choose $h_v \from (F_v)^2 \to (F_v)^2$ so that: 
\begin{enumerate}
\item $h_v(\cO_{F,v}^2) = H \tensor \cO_{F,v}$;
\item $\gamma \tensor \cO_{F,v}$ equals the composition 
\begin{tikzcd}
0 \oplus \cO_{F,v}/\mf{n} \arrow[r] & \cO_{F,v}^2 \arrow[r,"{h_v \!\!\mod \mf{n}}"] & H/\mf{n} 
\tensor \cO_{F,v}.
\end{tikzcd}
\end{enumerate}

Together with the element $h_{\infty}$, this defines an element of $\GL_2(\A_F^f) \times \GL_2(F \tensor \R)$. Note that we view $h_{\infty}$ as a matrix via the identification $i(\R \tensor F) \oplus \R \tensor F  = \C \tensor_{\Z} F$, and that this identification also gives an inclusion $(\C \tensor F)^* \subset \GL_2(F \tensor \R)$.

This element $(h_v, h_{\infty})$ is well-defined modulo the right-action of $\prod_{v \not S_{\infty}} K'_v$, where $K_v'$ is the subgroup of $\GL(H \tensor \cO_{F,v})$ fixing $\gamma \tensor \cO_{F,v}$. This defines a bijection \[ \{ (H, \gamma, h_{\infty}): H \tensor F = F \oplus F \} \isom \GL_2(\A_F)/\prod_{v \notin S_{\infty}} K_v'. \] However, this is \emph{not} the correct bijection to use - we must replace the matrices $h_v, h_{\infty}$ with their \emph{inverse-transpose}, which we will denote $g_v, g_{\infty}$. We obtain a bijection 
\[ \{ (H, \gamma, h_{\infty}): H \tensor F = F \oplus F \} \isom \GL_2(\A_F)/K_f, \] where $K_f = \prod_{v \notin S_{\infty}} K_v$,
as the inverse-transpose of $K_v' \subset \GL_2(F_v)$ equals the subgroup $K_v$ defined \S \ref{subsec:adelic-hmf}.

There is a map $\GL_2(\A_F)/K_f \to \GL_2(\A_F^f) \times \mc{H}_{\pm}^n /K_f$, where $g \in \GL_2(F \tensor \R)$ is sent to $g \cdot (i,\ldots, i) \in \mc{H}_{\pm}^n$. We obtain a map of sets
\[ \{ (H, \gamma, \bar{h}_{\infty}): H \tensor F = F \oplus F \} \longrightarrow \GL_2(\A^f_F) \times \mc{H}_{\pm}^n/K_f. \] 

If $h_{\infty} \from \C \tensor F \isom (F \oplus F) \tensor \R$ has $h_{\infty}^{-1}(e_1) = z_1 \in (\C \tensor F)^*$ and $h_{\infty}^{-1}(e_2) = z_2 \in (\C \tensor F)^*$, that $g_{\infty} \cdot (i,\ldots, i) = z_1/z_2 \in (\mc{H}_{\pm})^n$. 

Isomorphisms of triples $(H, \gamma, \bar{h}_{\infty})$ correspond to the action of $\GL_2(F)$ on the set $\GL_2(\A^f_F) \times \mc{H}_{\pm}^n/K_f$. Hence, considering the groupoid $\mc{C}$ whose objects are \[ \{ (H, \gamma, \bar{h}_{\infty}): H \tensor F = F \oplus F \},\] we obtain an isomorphism of groupoids
\[ \mc{C} \isom \GL_2(F) \backslash (\GL_2(\A^f_F) \times \mc{H}_{\pm}^n/K_f). \]
\end{proof}

In fact, the  proof above gives:
\begin{theorem}\label{thm:lie-moduli}
The set $X_{\mf{n}}(\C)$ is equivalent to the groupoid whose objects are triples \[ (H, \gamma, h_{\infty} \from \C \tensor_{\Z} F \isom H \tensor \R), \] and whose morphisms are $\cO_F$-module isomorphisms $H_1 \isom H_2$ compatible with the extra structures.

More generally, for simply-connected complex analytic spaces $S$, the set $X_{\mf{n}}(S)$ is equivalent to the groupoid whose objects are triples $(H, \gamma, h_{\infty, S} \from S \to \underline{\mr{Isom}}_{\R \tensor_{\Z} \cO_F}(\C \tensor_{\Z} F, H \tensor \R))$.
\end{theorem}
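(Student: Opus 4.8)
The plan is to run the argument from the proof of Theorem~\ref{thm:moduli-lattice} verbatim, but to omit its final step. Recall that in that proof one first reduces to triples with $H \tensor_{\cO_F} F = F \oplus F$, and then produces a bijection
\[ \{ (H, \gamma, h_{\infty}) : H \tensor F = F \oplus F \} \isom \GL_2(\A_F)/K_f \]
in which the archimedean component records the full $h_\infty$ (equivalently, its inverse-transpose $g_\infty \in \GL_2(F \tensor \R)$), \emph{not} yet quotiented by $(\C \tensor F)^*$. The only step of that proof specific to $Sh'_{G,K}$ is the subsequent map $\GL_2(\A_F)/K_f \to \GL_2(\A_F^f) \times \mc{H}_{\pm}^n/K_f$, $g_\infty \mapsto g_\infty \cdot (i,\ldots,i)$, which replaces $h_\infty$ by its class $\bar h_\infty$ modulo $(\C \tensor F)^*$ --- i.e. which quotients out the residual $(\C \tensor F)^*$-action (the stabilizer of $(i,\ldots,i)$ in $\GL_2^+(F \tensor \R)$ is exactly the image of $(\C \tensor F)^*$). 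Since Theorem~\ref{thm:lie-moduli} retains $h_\infty$ itself, I would simply not perform this projection, and reuse everything that precedes it unchanged.

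Concretely, isomorphisms of triples correspond to the left action of $\GL_2(F)$ on $\GL_2(\A_F)/K_f$, exactly as before, so the groupoid of all triples $(H, \gamma, h_\infty)$ is equivalent to $\GL_2(F) \backslash \GL_2(\A_F)/K_f$. It then remains to match this with $X_{\mf{n}} = \GL_2^+(F) \backslash (\GL_2(\A_F^f) \times \GL_2^+(F \tensor \R))/K_f$. For this I would use the standard sign reduction: since $F^* \surj \{\pm 1\}^{S_\infty}$ by weak approximation, every archimedean sign pattern is realized by $\det(\gamma)$ for some $\gamma \in \GL_2(F)$ (e.g.\ $\diag(u,1)$), so one may use $\GL_2(F)$ to move the archimedean factor into $\GL_2^+(F \tensor \R)$ while cutting the residual group down to $\GL_2^+(F) = \GL_2(F) \cap \GL_2^+(F \tensor \R)$. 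This identifies $\GL_2(F) \backslash \GL_2(\A_F)/K_f$ with $X_{\mf{n}}$.

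Two points deserve emphasis, and I regard the first as the only genuinely new content. First, \emph{rigidity}: unlike in Theorem~\ref{thm:moduli-lattice}, where $\bar h_\infty$ is defined only modulo $(\C \tensor F)^*$ and the scalar units $U_{1,\mf{n}}$ survive as automorphisms, here any automorphism $\phi \from H \isom H$ of a triple must satisfy $(\phi \tensor \R) \circ h_\infty = h_\infty$, forcing $\phi \tensor \R = \mr{id}$ and hence $\phi = \mr{id}$. Thus the groupoid of triples is rigid, consistent with $X_{\mf{n}}$ being an honest space rather than a stack; conceptually, Theorem~\ref{thm:lie-moduli} is precisely the $(\C \tensor F)^*$-cover of Theorem~\ref{thm:moduli-lattice}. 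Second, a general simply-connected $S$ is handled exactly as there: one works in families, and over simply-connected $S$ the relevant torsors trivialize, so $h_{\infty,S} \from S \to \underline{\mr{Isom}}_{\R \tensor_\Z \cO_F}(\C \tensor_\Z F, H \tensor \R)$ is unconstrained (no quotient by $(\C \tensor F)^*$), matching the definition of $X_{\mf{n}}(S)$. The main, if modest, obstacle is thus the bookkeeping of the positivity/sign conditions needed to reconcile the full $\GL_2(F \tensor \R)$-worth of choices of $h_\infty$ with the $\GL_2^+$ appearing in the definition of $X_{\mf{n}}$.
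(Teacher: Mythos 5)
Your proposal is correct and is essentially the paper's own argument: the paper proves this theorem with the single remark ``In fact, the proof above gives:'', i.e.\ by observing that the proof of Theorem~\ref{thm:moduli-lattice} already produces the bijection $\{(H,\gamma,h_\infty): H\tensor F = F\oplus F\}\isom \GL_2(\A_F)/K_f$ before the final projection to $\mc{H}_\pm^n$, which is exactly the step you omit. Your added remarks on rigidity and on using $\GL_2(F)$ to reduce from $\GL_2(F\tensor\R)$ to the totally positive component are correct and only make explicit what the paper leaves implicit.
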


We reinterpret this lattice-theoretic moduli in terms of complex tori.

\begin{definition}
An \textbf{RM torus} is a compact complex torus $T$ of complex dimension $n$, equipped with a map $\iota \from \cO_F \into \End(T)$.
\end{definition}

\begin{remark}
Any RM torus is of the form $(\C \tensor_{\Z} \cO_F)/H$, for $H$ a locally free, rank 2 $\cO_F$-module such that $H \tensor_{\Z} \R \isom \C \tensor_{\Z} \cO_F$ as $\R \tensor_{\Z} \cO_F$-modules.
\end{remark}

Define a complex-analytic stack $\mc{N}'$ as follows. For each complex-analytic space $S$, $\mc{N}'(S)$ equals the groupoid whose objects are pairs $(T, \gamma)$, where $T \to S$ is a family of RM tori, and $\gamma \from (\cO_F/\mf{n})_S \into T[\mf{n}]$ is an injection of $\cO_F$-modules over $S$. 

There is a universal complex torus $\mc{T} \to \mc{N}'$, with Lie algebra $\Lie(\mc{T}) \to \mc{N}'$. The sheaf of sections of this vector bundle sends pairs $(T \to S, \gamma)$ to the locally free $\cO_S$-module $\Lie_S(T)$. There is an open substack $\Lie(\mc{T})^* \subset \Lie(\mc{T})$, defined by triples $(T \to S, \gamma, v)$ such that $(\cO_S \tensor F)v = \Lie_S(T)$.

\begin{prop}
The stack $Sh'_{G, K}$ is equivalent to $\mc{N}'$, and the complex-analytic space $X_{\mf{n}}$ is equivalent to $\Lie(\mc{T})^*$.
\end{prop}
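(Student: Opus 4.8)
The plan is to leverage the lattice-theoretic descriptions of $X_{\mf{n}}$ and $Sh'_{G,K}$ furnished by Theorems~\ref{thm:lie-moduli} and~\ref{thm:moduli-lattice}, and to match these with the torus-theoretic moduli $\Lie(\mc{T})^*$ and $\mc{N}'$ via the classical dictionary between complex tori and their period lattices. I would prove the finer equivalence $X_{\mf{n}} \isom \Lie(\mc{T})^*$ first, and then deduce $Sh'_{G,K} \isom \mc{N}'$ by passing to the quotient by $(\C \tensor F)^*$: we have $Sh'_{G,K} = X_{\mf{n}}/(\C \tensor F)^*$, and I will check that likewise $\mc{N}' = \Lie(\mc{T})^*/(\C \tensor F)^*$.

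First I would define the comparison functor. Given a triple $(H, \gamma, h_{\infty})$ as in Theorem~\ref{thm:lie-moduli}, with $h_{\infty} \from \C \tensor_{\Z} F \isom H \tensor \R$ an $\R \tensor \cO_F$-module isomorphism, I transport the complex structure from $\C \tensor F$ to $H \tensor \R$. Because $h_\infty$ is $\R \tensor \cO_F$-linear, this complex structure commutes with the $\cO_F$-action, so $H \tensor \R$ becomes a free rank-one module over $\C \tensor F$ and $T := (H \tensor \R)/H$ is a complex torus equipped with $\iota \from \cO_F \into \End(T)$, i.e.\ an RM torus, with $\Lie(T) = H \tensor \R$. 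Setting $v := h_\infty(1 \tensor 1) \in \Lie(T)$, the generating condition $(\C \tensor F)\,v = \Lie(T)$ holds precisely because $h_\infty$ is an isomorphism, so $(T, \gamma, v)$ defines a point of $\Lie(\mc{T})^*$; here the level datum $\gamma \from \cO_F/\mf{n} \into H/\mf{n}H$ is carried to a level structure on $T[\mf{n}]$ using the standard (locally principal) identification $T[\mf{n}] = \mf{n}^{-1}H/H \isom H/\mf{n}H$. The same construction applies in families over a simply-connected $S$ via $h_{\infty, S} \from S \to \underline{\mr{Isom}}_{\R \tensor_\Z \cO_F}(\C \tensor_\Z F, H \tensor \R)$.

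Then I would verify that this functor is an equivalence. Essential surjectivity is exactly the content of the Remark: every RM torus has the form $(\C \tensor \cO_F)/H$ for a locally free rank-two $\cO_F$-module $H$, with $\Lie(T) = \C \tensor F$, and a generator $v$ recovers the isomorphism $h_\infty \from \C \tensor F \isom \Lie(T) = H \tensor \R$ by $x \mapsto x\,v$. For full faithfulness, a morphism of RM tori $T_1 \to T_2$ respecting $\gamma$ and $v$ is, by the equivalence between complex tori and their lattices, the same as an $\cO_F$-linear map $H_1 \to H_2$ whose real extension intertwines the two $h_\infty$'s and respects the level structures --- that is, precisely a morphism of the triples $(H,\gamma,h_\infty)$. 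Finally, forgetting $v$ corresponds to quotienting by $(\C \tensor F)^*$, which acts simply transitively on the generators of the free rank-one module $\Lie(T)$ and hence matches the passage from $h_\infty$ to its class $\bar h_\infty$; this gives $\mc{N}' = \Lie(\mc{T})^*/(\C \tensor F)^*$ and therefore $Sh'_{G,K} \isom \mc{N}'$.

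The step I expect to require the most care is not any single identification but the verification that the dictionary is functorial in the base $S$ and faithful on morphisms: one must know that a relative RM torus over $S$ is equivalent to its relative period lattice together with the datum recorded by $\Lie_S(T)$, and that $\cO_F$-linearity, the level structure $\gamma$, and the generator $v$ all transport faithfully in families. The identification of $T[\mf{n}]$ with $H/\mf{n}H$ (canonical only up to the locally principal trivialization of $\mf{n}$) is a minor convention-level point, but worth stating explicitly, since the two level structures must be matched on the nose for the moduli problems to agree.
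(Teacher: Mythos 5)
Your construction is exactly the paper's: reduce to lattice triples $(H,\gamma,h_\infty)$ via Theorem~\ref{thm:lie-moduli}, transport the complex structure along $h_\infty$ to form the RM torus $T = H\tensor\R/H$ with $\Lie(T) = H\tensor\R$, and set $v := h_\infty(1\tensor 1)$; the paper's proof is just a terser version of this, leaving the essential surjectivity, full faithfulness, and the quotient by $(\C\tensor F)^*$ implicit where you spell them out. Correct, same approach.
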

\begin{proof}
We define an equivalence $X_{\mf{n}}(\C) \isom \Lie(\mc{T})^*(\C)$. The same argument would work for any simply-connected complex-analytic space $S$ in place of $\C$, giving the result.

We first apply Theorem \ref{thm:lie-moduli}, so that instead of $X_{\mf{n}}(\C)$, we may consider the groupoid of triples $(H, \gamma, h_{\infty})$. Given a triple $(H, \gamma, h_{\infty})$, define a triple $(T = H \tensor \R/H, \gamma, v) \in \Lie(\mc{T})^*(\C)$ as follows. The isomorphism $h_{\infty} \from\C \tensor_{\Z} F \isom H \tensor \R$ makes $H \tensor \R$ into a $\C \tensor F$-module. In other words, it endows $H \tensor \R$ with a complex structure such that the action of $F$ on $H \tensor \R$ is holomorphic. Hence $H \tensor \R/H$ is an RM torus $T$, with $\Lie(T) = H \tensor \R$. Morover, $v := h_{\infty}(1 \tensor 1)$ defines an element $v \in \Lie(T)^*$.
\end{proof}

\subsection{The action of $H_{\mf{n}}$}\label{subsec:lie-compare}

We define a (non-strict) action of $I_{\mf{n}}$ on $\mc{N}'$ by $N \cdot (T \to S, \gamma) = (T \tensor N \to S, \gamma_N)$.
\begin{itemize}
\item If $S = \star$, then $T \tensor_{\cO_F} N := (\Lie(T) \tensor_{\cO_F} N)/(H \tensor_{\cO_F} N)$. This construction extends in an obvious manner to families.
\item The map $\gamma_N$ is defined as $\cO_F/\mf{n} \isom N/\mf{n} \nmto{\gamma \tensor N} H/\mf{n} \tensor N = (H \tensor N)/\mf{n}$, where the first map comes from the fact that $N \tensor \cO_{F, \mf{n}} = \cO_{F, \mf{n}}$ as submodules of $F_{\mf{n}}$.
\end{itemize}

We extend this definition to a (non-strict) action of $I_{\mf{n}} \times (F \tensor \C)^*$ on the stack $\Lie(\mc{T})$. For a complex analytic space $S$, consider triples $(T \to S, \gamma, v)$, with $v \in \Lie_S(T)$. The group $(F \tensor \cO_S)^*$ acts on $v \in \Lie_S(T)$, keeping $(T, \gamma)$ fixed. For any $N \in I_{\mf{n}}$, define $N \cdot (T, \gamma, v) := (T \tensor N, \gamma_N,  v_N)$. The element $v_N \in \Lie(T \tensor N)$ corresponds to $v$ under $\Lie(T \tensor N) \isom \Lie(T) \tensor N \isom \Lie(T)$, where the second isomorphism is induced by $N \tensor \Q = F \tensor \Q$. 

\begin{remark}
This non-strictness arises because the stack $\Lie(\mc{T})^*$ is not \emph{equal} to the complex analytic space $X_{\mf{n}}$ on which the group $F_{1,\mf{n}}^* \backslash (I_{\mf{n}} \times (F \tensor \C)^*)$ acts, merely equivalent. In fact, $\Lie(\mc{T})^*$ is a non-strict $(I_{\mf{n}} \times (F \tensor \C)^*, F^*_{1,\mf{n}})$-stack in the sense of \S\ref{subsec:grp-act}. To define a trivialization of the action of $F^*_{1, \mf{n}}$, we must specify, for $N = (\alpha)$, an isomorphism $(T \tensor N, \gamma_N, \alpha \cdot v_N) \isom (T, \gamma, v)$. This is the isomorphism given by ``division by $\alpha$".
\end{remark}

\begin{lemma}
Under the equivalence $\Lie(\mc{T})^* \isom X_{\mf{n}}$, the action of $(N , \alpha_{\infty}) \in I_{\mf{n}} \times (F \tensor \C)^*$ is identified with the action, via right-multiplication, of the the matrix \[ \left(\mat{\alpha_{v}^{-1}}{0}{0}{\alpha_{v}^{-1}}, \alpha_{\infty}^{-1}\right) \in \GL_2(\A_F^f) \times \GL_2(F \tensor \R), \] where $\alpha_{v} \cO_{F,v} = N \tensor \cO_{F,v}$ for all finite places $v$.
\end{lemma}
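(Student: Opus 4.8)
The plan is to trace a point of $\Lie(\mc{T})^*(\C)$ through the chain of equivalences built in this section, record the resulting adelic matrix $(g_v, g_\infty) \in \GL_2(\A_F^f) \times \GL_2^+(F \tensor \R)$, and then compute separately how the two commuting factors $N \in I_{\mf{n}}$ and $\alpha_\infty \in (F \tensor \C)^*$ transform this datum. Concretely, I would start with a triple $(T, \gamma, v)$, use Theorem~\ref{thm:lie-moduli} and the preceding proposition to present it as a lattice triple $(H, \gamma, h_\infty)$ with $T = (H \tensor \R)/H$, $v = h_\infty(1 \tensor 1)$, and (after the standard reduction) $H \tensor_{\cO_F} F = F \oplus F$. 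Choosing local frames $h_v$ with $h_v(\cO_{F,v}^2) = H \tensor \cO_{F,v}$ compatibly with $\gamma$, the associated point of $X_{\mf{n}}$ is $(g_v, g_\infty)$, where $g_v = (h_v^{-1})^T$ and $g_\infty = (h_\infty^{-1})^T$ are the inverse-transposes used in the proof of Theorem~\ref{thm:moduli-lattice}.

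For the finite part, I would observe that $N \cdot (T, \gamma, v)$ replaces $H$ by $H \tensor_{\cO_F} N$, whose localization at each finite $v$ is $\alpha_v (H \tensor \cO_{F,v})$ with $\alpha_v \cO_{F,v} = N \tensor \cO_{F,v}$. Hence one may take the new local frame $h_v' = \diag(\alpha_v, \alpha_v)\, h_v$, and passing to the inverse-transpose gives $g_v' = g_v \cdot \diag(\alpha_v^{-1}, \alpha_v^{-1})$, i.e.\ right-multiplication by the claimed scalar matrix. Here I must also check that $\gamma_N$ is carried to the same $K_f$-coset: since $N$ is prime to $\mf{n}$, each $\alpha_v$ is a unit modulo $\mf{n}$, so $\gamma_N$ agrees with $\gamma$ up to $K_v$, and no additional archimedean correction is introduced.

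For the infinite part, the action of $N$ is trivial: the element $v_N$ is defined through the isomorphism $\Lie(T \tensor N) \isom \Lie(T) \tensor_{\cO_F} N \isom \Lie(T)$ coming from $N \tensor \Q = F$, and using this rational identification to return to the fixed ambient space $\Lie(T) = H \tensor \R$ leaves the archimedean frame $h_\infty$ unchanged (the $N$-scaling is absorbed entirely at the finite places, as above). The action of $\alpha_\infty$, on the other hand, fixes $(T, \gamma)$ and sends $v \mapsto \alpha_\infty v$; since $h_\infty$ is $(F \tensor \C)$-linear for the complex structure defining $T$, this is precisely $h_\infty' = h_\infty \circ m_{\alpha_\infty}$, precomposition by multiplication-by-$\alpha_\infty$ on $\C \tensor F$. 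Taking inverse-transposes converts this precomposition into right-multiplication of $g_\infty$ by $(M_{\alpha_\infty})^{-T}$, where $M_{\alpha_\infty}$ is the matrix of $m_{\alpha_\infty}$ in the basis $\{1 \tensor i, 1 \tensor 1\}$ of the identification from \S\ref{subsec:adelic-hmf}; under that identification this is exactly the image of $\alpha_\infty^{-1}$ in $\GL_2(F \tensor \R)$. Combining the two factors yields right-multiplication by $\left( \diag(\alpha_v^{-1}, \alpha_v^{-1}), \alpha_\infty^{-1} \right)$.

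The main obstacle is the bookkeeping of the inverse-transpose normalization at the archimedean place. The inverse there is the source of the exponents $-1$ in both factors (matching the $\varphi(g h^{-1})$ convention for forms), and one must verify with care that the composite of the complex-structure convention on the RM torus $T$ with the basis identification $\{1 \tensor i, 1 \tensor 1\}$ produces $\alpha_\infty^{-1}$ rather than its complex conjugate --- equivalently, that $(M_{\alpha_\infty})^{-T}$ is the embedded image of $\alpha_\infty^{-1}$ and not of $\overline{\alpha_\infty}^{-1}$. I would pin this down using the relation $g_\infty \cdot (i, \ldots, i) = z_1/z_2$ recorded in the proof of Theorem~\ref{thm:moduli-lattice}, which fixes the orientation of the identification and resolves the conjugation ambiguity. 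The non-strictness of the $I_{\mf{n}}$-action and the rational identification $N \tensor \Q = F$ also demand attention, but they do not affect the archimedean matrix and so are secondary.
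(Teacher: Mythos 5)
Your overall strategy is the same as the paper's: the paper's proof is essentially a pointer back to the construction in the proof of Theorem~\ref{thm:moduli-lattice}, with the remark that the inverse in the statement comes from the inverse-transpose there, and your write-up is a correct expansion of that in outline. However, two specific steps need repair. At the places $v \mid \mf{n}$, your justification is wrong as stated: $K_v$ is the preimage of $\left\lbrace \begin{psmallmatrix} * & * \\ 0 & 1 \end{psmallmatrix} \right\rbrace$ modulo $\mf{n}$, so a scalar matrix $\diag(u,u)$ with $u \in \cO_{F,v}^*$ lies in $K_v$ only when $u \equiv 1 \pmod{\mf{n}}$; the fact that $\alpha_v$ is a unit does \emph{not} let you absorb the discrepancy between $\gamma$ and $\gamma_N$ into $K_v$ --- if it did, the diamond operators would act trivially on nebentypus. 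What actually saves the step is that for $v \mid \mf{n}$ one has $(H \tensor N) \tensor \cO_{F,v} = H \tensor \cO_{F,v}$ and $\gamma_N$ coincides with $\gamma$ there (the identification $N \tensor \cO_{F,\mf{n}} = \cO_{F,\mf{n}}$ is the identity on $1$), so the same local frame $h_v$ serves for $H \tensor N$, and the scalar one ends up multiplying by is the representative with $\alpha_v \equiv 1 \pmod{\mf{n}}$ at $v \mid \mf{n}$. This normalization is also implicit in the statement via $H_{\mf{n}} = F_{1,\mf{n}}^* \backslash (I_{\mf{n}} \times (F \tensor \C)^*)$; without it, the right-multiplication action of $\diag(\alpha_v^{-1}, \alpha_v^{-1})$ on $X_{\mf{n}}$ genuinely depends on the choice of $\alpha_v$ modulo $\mf{n}$.

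Second, your proposed resolution of the $\alpha_\infty$ versus $\overline{\alpha_\infty}$ ambiguity does not work: the relation $g_\infty \cdot (i,\ldots,i) = z_1/z_2$ cannot distinguish the matrices of $\alpha_\infty^{-1}$ and $\overline{\alpha_\infty}^{-1}$, since both fix $(i,\ldots,i)$, and more generally the ratio $z_1/z_2$ is unchanged when $z_1, z_2$ are both rescaled by $\alpha_\infty^{-1}$ --- this reflects the fact that $(F \tensor \C)^*$ acts trivially on $\mc{H}^n$. With the column convention for the matrix of $h_\infty$ one computes $M_{\alpha}^{T} = M_{\bar\alpha}$, so the inverse-transpose of precomposition by $m_{\alpha_\infty}$ is right multiplication by the matrix of $\overline{\alpha_\infty}^{-1}$; whether this is what the lemma calls $\alpha_\infty^{-1}$ is settled only by fixing whether the identification $e_1 \mapsto 1 \tensor i$, $e_2 \mapsto 1 \tensor 1$ is read through rows or columns. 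This is a convention issue rather than a mathematical error, but the check you propose does not pin it down, and it is precisely the point you flagged as the main obstacle.
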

\begin{proof}
This follows easily from the definition of the equivalence $\Lie(\mc{T})^* \isom X_{\mf{n}}$ (see the proof of Theorem \ref{thm:moduli-lattice}). The appearance of an inverse is related to the inverse-transpose appearing in that proof.
\end{proof}

From now on, we will use the moduli-theoretic action of $I_{\mf{n}} \times (F \tensor \C)^*$ on $\Lie(\mc{T})^*$ defined above, which is the inverse of the action on $X_{\mf{n}}$ considered in \S \ref{subsec:adelic-hmf}. In particular, for this moduli-theoretic action, modular forms with central character $\chi$ (corresponding to $(k, \chi_f)$) are equal to
\[ M_k(\mf{n}, \chi_f^{-1}) = H^0(X_{\mf{n}}, \cO_{X_{\mf{n}}})_{\mr{mod}}^{\chi^{-1}}. \]

\subsection{Hodge line bundles and diamond operators}

We can view Hilbert modular forms of fixed weight as global sections of a line bundle in the following way. Given a character 
$\chi_{\infty} \from (F \tensor \C)^* \to \C^*$, 
we define a holomorphic line bundle $\omega_{\chi_{\infty}}$ on $Sh'_{G,K}$ (or more precisely, on the equivalent stack $\mc{N}'$), by defining 
 $\omega_{\chi_{\infty}}(T \to S, \gamma)$ to be the set of  functions
  \[f \from \Lie_S(T)^* \to \cO_S \text{ such that } f(w v) = \chi_{\infty}(w) f(v) \text{ for all } w \in (\cO_S \tensor F)^* . \]
  Here, for a complex analytic space $S$, we use the character $\chi_{\infty} \from (\cO_S \tensor F)^* \to \cO_S^*$ extending $\chi_{\infty} \from (\C \tensor F)^* \to \C^*$. This line bundle is $I_{\mf{n}}$-equivariant, using the action of $I_{\mf{n}}$ on $\Lie(\mc{T})$. 
  
  Global sections of this line bundle are given by \begin{equation} H^0(Sh'_{G,K}, \omega_{N_{F/\Q}^k}) = H^0(\Lie(\mc{T})^*, \cO_{\Lie(\mc{T})^*})^{N_{F/\Q}^{-k}}. \end{equation}
If we restrict to sections with moderate growth, \begin{equation}  H^0(Sh'_{G, K}, \omega_{N_{F/\Q}^k})_{\mr{mod}} = M_k(\mf{n}, \C). \end{equation}

Note that $\omega_{N_{F/\Q}^k} \isom \omega^{\tensor k}$, for $\omega$ the line bundle on $Sh_{G,K}'$ sending $(T \to S, \gamma)$ to the $\cO_S$-module $\det(\Lie_S(T))^{\dual}$. For an admissible weight $k$, the line bundle $\omega^{\tensor k}$ (on $Sh_{G, K}'$) descends to a line bundle on $Sh_{G,K}$. Hence, for any admissible weight $k$,
\begin{equation} H^0(Sh_{G, K}, \omega^{\tensor k})_{\mr{mod}} = M_k(\mf{n}, \C). \end{equation}

The action of $I_{\mf{n}}$ on $Sh'_{G,K}$ induces an action of $G^+_{\mf{n}}$ on $Sh_{G,K}$. The line bundle $\omega^{\tensor k}$ on $Sh_{G,K}$ can be made $G^+_{\mf{n}}$-equivariant by twisting the action of $I_{\mf{n}}$: \begin{equation} N \cdot (T \to S, \gamma, f) = ((T \tensor N) \to S, \gamma_N, f(v_N)/|N_{F/\Q}(N)|^{\tensor k} ). \end{equation}

The above discussion implies:
\begin{prop}\

\begin{enumerate}
\item For any admissible weight $k \in \N$, \[ M_k(\mf{n}, \C) = H^0(Sh_{G, K}, \omega^{\tensor k})_{\mr{mod}}. \]
\item For any weight $k \in \N$ and compatible $\chi_f \from G_{\mf{n}}^+ \to \C^*$, \[ M_k(\mf{n}, \chi_f^{-1}, \C) = H^0(Sh_{G, K}, \omega^{\tensor k})^{\chi_f^{-1}}_{\mr{mod}}. \]
\end{enumerate}
\end{prop}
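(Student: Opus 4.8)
The plan is to observe that part (1) is already contained in the preceding discussion, and to deduce part (2) by decomposing the $G^+_{\mf{n}}$-module $M_k(\mf{n}, \C)$ into isotypic components and matching this decomposition with the nebentypus decomposition defined adelically in \S\ref{subsec:adelic-hmf}. For part (1) I would simply invoke the displayed identification established just above the statement: once $\omega^{\tensor k}$ is known to descend from $Sh'_{G,K}$ to $Sh_{G,K}$ for admissible $k$ (which is the content of the admissibility condition $N_{F/\Q}(U_{1,\mf{n}})^k = 1$, forcing the center $Z = U_{1,\mf{n}}$ to act trivially), the rigidification does not change global sections, and the equality $H^0(Sh_{G,K}, \omega^{\tensor k})_{\mr{mod}} = M_k(\mf{n}, \C)$ is immediate. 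Nothing further is needed here.

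For part (2), the $G^+_{\mf{n}}$-equivariant structure on $\omega^{\tensor k}$ given by the twisted $I_{\mf{n}}$-action makes $H^0(Sh_{G,K}, \omega^{\tensor k})_{\mr{mod}}$ into a $G^+_{\mf{n}}$-module. Since $G^+_{\mf{n}}$ is a finite abelian group and $\C$ contains all roots of unity, this module decomposes as a direct sum of its $\psi$-isotypic components over characters $\psi \from G^+_{\mf{n}} \to \C^*$, so it suffices to identify the $\chi_f^{-1}$-isotypic component with $M_k(\mf{n}, \chi_f^{-1}, \C)$. I would do this by comparing the two group actions directly on $X_{\mf{n}}$: using the equivalence $\Lie(\mc{T})^* \isom X_{\mf{n}}$ together with the Lemma of \S\ref{subsec:lie-compare}, which identifies the moduli-theoretic $I_{\mf{n}}$-action with right translation by the relevant scalar matrices, i.e.\ with the restriction to $I_{\mf{n}} \subset H_{\mf{n}}$ of the $H_{\mf{n}}$-action through which nebentypus is defined. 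Under this identification a form lies in the $\chi_f^{-1}$-eigenspace for the diamond operators precisely when its adelic avatar $\varphi$ satisfies $\varphi(g h^{-1}) = \chi(h)\varphi(g)$, that is, precisely when it has nebentypus $\psi = \chi_f^{-1}$; so the two decompositions agree term by term.

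The one point requiring genuine care, and the main obstacle, is the bookkeeping of normalizations. Three twists must be reconciled: (i) the moduli-theoretic action is the \emph{inverse} of the right-translation action on $X_{\mf{n}}$ (noted at the end of \S\ref{subsec:lie-compare}); (ii) the equivariant structure on $\omega^{\tensor k}$ is twisted by the factor $|N_{F/\Q}(N)|^{k}$; and (iii) the passage from the Hecke character $\chi$ to its finite part is governed by $\chi_f(\mf{a}) = \chi|_{I_{\mf{n}}}(\mf{a})\,|N_{F/\Q}(\mf{a})|^{k}$ from \S\ref{subsec:hecke-char}. I would verify that twist (ii) is exactly the factor converting the restriction $\chi|_{I_{\mf{n}}}$ appearing in the central-character condition into the arithmetic normalization $\chi_f$, so that the diamond operator attached to $\mf{a} \in G^+_{\mf{n}}$ acts on $M_k(\mf{n}, \chi_f^{-1}, \C)$ by the scalar matching the $\chi_f^{-1}$-isotypic labelling in the statement (the sign condition $\chi_f((\alpha)) = \sgn(N_{F/\Q}(\alpha))^k$ on $F^*_{1,\mf{n}}$ being precisely what makes this a well-defined $G^+_{\mf{n}}$-action). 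Once these scalars are checked to coincide, both parts follow.
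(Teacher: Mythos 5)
Your proposal is correct and follows essentially the same route as the paper: the paper states this proposition with no separate proof beyond ``the above discussion implies,'' and that discussion consists of exactly the ingredients you assemble --- the descent of $\omega^{\tensor k}$ to $Sh_{G,K}$ for admissible $k$, the identification $M_k(\mf{n}, \chi_f^{-1}) = H^0(X_{\mf{n}}, \cO_{X_{\mf{n}}})^{\chi^{-1}}_{\mr{mod}}$ from the end of \S\ref{subsec:lie-compare}, and the twist by $|N_{F/\Q}(N)|^{k}$ in the equivariant structure, which is precisely what converts $\chi|_{I_{\mf{n}}}$ into $\chi_f$ via the relation from \S\ref{subsec:hecke-char}. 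Your explicit bookkeeping of the three normalizations makes visible what the paper leaves implicit, but it is the same argument.
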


In particular, the group $G^+_{\mf{n}}$ acts on $M_k(\mf{n}, \C)$ via the classical action of \textbf{diamond operators}, with a fractional ideal $N \in I_{\mf{n}}$ acting via Shimura's operator $S_{\mf{n}}(N)$ (\cite{shim} pg. 648).

\section{Components, cusps, and $q$-expansions}\label{sec:comp-and-cusp}
In this section, we introduce module-theoretic terminology for studying the connected components and cusps of $Sh_{G,K}$, similar to that used by Rapoport \cite{rapoport} for the cusps of $Sh_{G^*,K_{\lambda}}$.

\subsection{$\cO_F$-modules with positivity}

Consider pairs $(A, \bar{h}_A)$, where \begin{enumerate}
\item $A$ is an free, rank 1 $F \tensor \R$-module.
\item  $\bar{h}_A$ is an equivalence class of  isomorphisms $h_A \from F \tensor \R \isom A$, where equivalence is up to the 
  the action of $(F \tensor \R)^+ \isom \prod \R_{> 0}$.
\end{enumerate}

The equivalence class $\bar{h}_A$ is determined by the subset $A^+ \subset A$ induced by $h_A$. We say that $A$ is an \emph{$F \tensor \R$-module with positivity}.

\begin{definition}[\cite{rapoport}]
An \textbf{invertible $\cO_F$-module with positivity} is a pair $(N, N_{\R}^+)$, where $N$ is an invertible $\cO_F$-module and $N_{\R}^+ \subset N_{\R}$ is a positivity on $N_{\R}$.
\end{definition}

\begin{remark}
Given an invertible $\cO_F$-module $N$, $N^{\tensor 2}$ is canonically equipped with a positivity $(N^+_{\R})^{\tensor 2}$, given by choosing any notion of positivity for $N$.
\end{remark}

\begin{remark}
Given an invertible $\cO_F$-module $N$ (with positivity), the group $\Hom_{\Z}(N, \Z)$ is canonically isomorphic, as an $\cO_F$-module (with positivity), to $N^{-1} \tensor \mf{d}^{-1}$.
\end{remark}

For an invertible $\cO_F$-module $N$, define $N_{F/\Q}(N) := \det\nolimits_{\Z}(N)$. If $N$ has a positivity, then $N_{F/\Q}(N)$ has a canonical generator, $|N_{F/\Q}(N)| \in N_{F/\Q}(N)$. Of course, if $N \subset F$ is a fractional ideal,  then $|N_{F/\Q}(N)| \in N_{F/\Q}(N) \subset \Q$ is the norm of $N$.

\subsection{Component labels}\label{subsec:comp-label}

There is an orientation of $\C$ as an $\R$-vector space given by $\R_{> 0}(i \wedge 1) \subset \det\nolimits_{\R}(\C)$. Using this, one may define a positivity $\det\nolimits_{\R \tensor F}(\C \tensor F)^+ \subset \det\nolimits_{\R \tensor F}(\C \tensor F)$ on $ \det\nolimits_{\R \tensor F}(\C \tensor F)$.

A triple $(H, \gamma, \bar{h})$ as in \S \ref{subsec:moduli} determines a triple \[ (H, \gamma, {\det}_{\cO_F}(H)^+_{\R}).\]  Here $(\det\nolimits_{\cO_F}(H), \det\nolimits_{\cO_F}(H)^+_{\R})$ is an invertible $\cO_F$-module with positivity,  defined by transporting the positivity from $\det\nolimits_{\R \tensor F}(\C \tensor F)$ along $h_{\infty} \from \C \tensor F \isom H \tensor \R$.

\begin{definition}
A \textbf{component label} is a triple $\alpha = (H, \gamma, \det\nolimits_{\cO_F}(H)^+_{\R})$,  where
\begin{itemize}
\item $H$ is a locally free rank 2 $\cO_F$-module.
\item $\gamma \from \cO_F/\mf{n}\cO_F \into H/\mf{n}H$ is an injective $\cO_F$-module homomorphism,
\item $\det\nolimits_{\cO_F}(H)^+_{\R} \subset \det\nolimits_{\cO_F}(H)_{\R}$ is such that $(\det\nolimits_{\cO_F}(H), \det\nolimits_{\cO_F}(H)^+_{\R})$ is an invertible $\cO_F$-module with positivity.
\end{itemize} 
\end{definition}

There is a notion of isomorphism of component labels, given by isomorphisms $H_1 \isom H_2$ compatible with the additional structures.

\begin{lemma}\label{lem:adelic-cmpt}\

\begin{enumerate}
\item Component labels such that $H \tensor F = F \oplus F$ are in bijection with $\GL_2(\A_F^f)/K_f$.
\item Isomorphism classes of component labels are in bijection with $\GL^+_2(F) \backslash \GL_2(\A_F^f)/K_f$.
\item The automorphism group of a component label equals its stabilizer in $\GL_2^+(F)$.
\item The connected components of $Sh_{G,K}$ are in bijection with the isomorphism classes of component labels.
\end{enumerate}
\end{lemma}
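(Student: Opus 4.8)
The plan is to prove the four claims in sequence, bootstrapping each from the adelic description of $Sh'_{G,K}$ and the lattice-theoretic moduli interpretation of Theorem~\ref{thm:moduli-lattice}. The conceptual heart is the observation that a component label is exactly the ``discrete part'' of a point of $Sh'_{G,K}$: it records the locally free module $H$ together with its level structure $\gamma$, plus the orientation datum $\det_{\cO_F}(H)^+_\R$ that remembers which connected component of $\mc{H}_\pm^n$ the forgotten archimedean structure $\bar h_\infty$ occupied.

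For (1) I would run the lattice-trivialization argument from the proof of Theorem~\ref{thm:moduli-lattice} with the archimedean factor deleted. Given a component label with $H\tensor F = F\oplus F$, at each finite place $v$ choose an isomorphism $h_v\from F_v^2\isom H\tensor F_v$ carrying $\cO_{F,v}^2$ onto $H\tensor\cO_{F,v}$ and realizing $\gamma$ as the reduction of the second-coordinate inclusion; this is well defined up to the stabilizer $K_v'$ of $\gamma$, and passing to inverse-transposes exactly as in loc.\ cit.\ yields a well-defined class in $\GL_2(\A_F^f)/K_f$. The positivity requires no additional choice: under the identification $\det_{\cO_F}(H)\subset\det_F(F\oplus F)=F\cdot(e_1\wedge e_2)$ the module $\det_{\cO_F}(H)$ is a genuine fractional ideal, so $\det_{\cO_F}(H)^+_\R$ is forced to be the restriction of the canonical totally positive cone on $F\tensor\R$. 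Reversing the construction produces the inverse map.

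For (2), every component label has $H\tensor F\isom F\oplus F$ non-canonically (since $H\tensor F$ is rank two over $F$), so by (1) each isomorphism class has a representative in $\GL_2(\A_F^f)/K_f$. An isomorphism between two such labels extends $F$-linearly to an element $g\in\GL_2(F)$ intertwining the trivializations, with the level structures matching precisely up to $K_f$; the ambiguity is therefore a left $\GL_2(F)$-action. The one subtlety---and the point where the \emph{narrow} class group enters---is the positivity: acting on $\det_{\cO_F}(H)_\R$ through $\det g\in F^*$, the element $g$ preserves $\det_{\cO_F}(H)^+_\R$ if and only if $\det g$ is totally positive, i.e.\ $g\in\GL_2^+(F)$. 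Hence isomorphism classes correspond to $\GL_2^+(F)\backslash\GL_2(\A_F^f)/K_f$. I expect this positivity bookkeeping to be the main obstacle to a fully clean write-up, as it is exactly the distinction between $\GL_2(F)$ and $\GL_2^+(F)$, and thus between the wide and narrow class groups.

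Claim (3) is the $H_1=H_2$ case of the analysis in (2): the automorphisms of a fixed label with $H\tensor F=F\oplus F$ are the $g\in\GL_2^+(F)$ fixing the coset $gK_f$ and the positivity, which is by definition its stabilizer in $\GL_2^+(F)$; under (1) this matches the stabilizer description of inertia groups recorded in \S\ref{subsec:shimura-var}. Finally, for (4) I would combine the adelic description $Sh'_{G,K}=\GL_2^+(F)\backslash(\GL_2(\A_F^f)\times\mc{H}^n)/K_f$ with the facts that $\mc{H}^n$ is connected and $\GL_2(\A_F^f)/K_f$ is discrete: the connected components of $(\GL_2(\A_F^f)/K_f)\times\mc{H}^n$ are indexed by $\GL_2(\A_F^f)/K_f$, and passing to the quotient identifies $\pi_0(Sh'_{G,K})$ with $\GL_2^+(F)\backslash\GL_2(\A_F^f)/K_f$. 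Since rigidification does not alter $\pi_0$, the same holds for $Sh_{G,K}$, and comparison with (2) yields the desired bijection between connected components and isomorphism classes of component labels.
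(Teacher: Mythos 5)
Your proposal takes the same route as the paper: the paper's entire proof of this lemma is a pointer back to the proof of Theorem~\ref{thm:moduli-lattice} (with a reminder about the inverse-transpose), and your argument is that proof with the archimedean factor stripped out, so the deduction of (2)--(4) comes out exactly as intended. The one claim I would push back on is in your treatment of (1): the positivity $\det_{\cO_F}(H)^+_{\R}$ is \emph{not} forced by $H \tensor F = F \oplus F$. By definition it is an independent datum --- a choice of one of the $2^{[F:\Q]}$ orbits of $(F \tensor \R)^+$ on the trivializations of $\det_{\cO_F}(H)_{\R}$ --- so the forgetful map from component labels with $H \tensor F = F \oplus F$ to $\GL_2(\A_F^f)/K_f$ is $2^{[F:\Q]}$-to-one rather than a bijection; statement (1) should be read as parametrizing the labels whose positivity is the one transported from $\mc{H}^n$, equivalently the canonical positivity of the fractional ideal $\det_{\cO_F}(H) \subset F$. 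This does not damage anything downstream: your analysis of (2) is where the positivity genuinely enters, and there you correctly identify that an isomorphism respects it exactly when $\det g$ is totally positive; since $\GL_2(F)$ permutes the $2^{[F:\Q]}$ positivities transitively through $\sgn \circ \det$ with stabilizer $\GL_2^+(F)$, the extra $\{\pm 1\}^{[F:\Q]}$ factor is absorbed and (2)--(4) hold as you argue. (Part (3) is insensitive to the issue in any case, since totally positive scalars act trivially on the set of positivities, so $\Aut(\alpha)$ lands in $\GL_2^+(F)$ for every choice of positivity.)
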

\begin{proof}
The same proof as in Theorem \ref{thm:moduli-lattice} works. Note the inverse transpose that occurs in the construction, as it will be relevant to Lemma \ref{lem:normalize}.
\end{proof}

\begin{definition}
The \textbf{standard component label} associated to $\lambda \in \Cl^+(F)$ equals \[ \alpha_{\lambda} = (H_{\lambda}, \gamma_{\lambda}, \det\nolimits_{\cO_F}(H_{\lambda})^+), \] where:
\begin{enumerate}
\item $H_{\lambda} = \mf{t}_{\lambda} \mf{d} \oplus \cO_F$.
\item $\gamma_{\lambda} \from \cO_F/\mf{n} \into (\mf{t}_{\lambda} \mf{d})/\mf{n} \oplus \cO_F/\mf{n}$ the inclusion of the second factor.
\item $\det\nolimits_{\cO_F}(H_{\lambda})^+ = (\mf{t}_{\lambda} \mf{d})^+$.
\end{enumerate}
\end{definition} 

By Lemma \ref{lem:adelic-cmpt}, we have an inclusion $\Aut(\alpha_{\lambda}) \subset \GL_2^+(F)$. The standard component label is defined so that $\Aut(\alpha) = \Gamma_{1, \lambda}(\mf{n})$ as subgroups of $\GL_2^+(F)$.

\begin{lemma}\label{lem:std-cmpt}
Any component label is isomorphic to the standard component label for a unique $\lambda \in \Cl^+(F)$.
\end{lemma}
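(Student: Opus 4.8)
The plan is to show that the ``determinant with positivity'' is a complete isomorphism invariant of component labels, and to match it with $\Cl^+(F)$. First I would send a component label $\alpha = (H, \gamma, \det_{\cO_F}(H)^+_\R)$ to the pair $(\det_{\cO_F}(H), \det_{\cO_F}(H)^+_\R)$, an invertible $\cO_F$-module with positivity. Isomorphism classes of such pairs are canonically in bijection with $\Cl^+(F)$ (the module records a class in $\Cl(F)$, the positivity records a sign at each real place, and the two are jointly quotiented by totally positive principal generators). Under this bijection the standard label $\alpha_\lambda$ goes to the class of $(\mf{t}_\lambda\mf{d}, (\mf{t}_\lambda\mf{d})^+)$, i.e. to $[\mf{t}_\lambda\mf{d}] \in \Cl^+(F)$. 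Since multiplication by the fixed class $[\mf{d}]$ is a bijection of $\Cl^+(F)$ and the $\mf{t}_\lambda$ represent the distinct narrow classes, the map $\lambda \mapsto [\mf{t}_\lambda \mf{d}]$ is a bijection $\Cl^+(F) \to \Cl^+(F)$. This immediately yields \emph{uniqueness}: if $\alpha_\lambda \cong \alpha_{\lambda'}$ then $[\mf{t}_\lambda\mf{d}] = [\mf{t}_{\lambda'}\mf{d}]$, whence $\lambda = \lambda'$.

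For \emph{existence}, given $\alpha$ I would let $\mu \in \Cl^+(F)$ be the class of $(\det_{\cO_F}(H), \det^+_\R)$ and choose the unique $\lambda$ with $[\mf{t}_\lambda\mf{d}] = \mu$. Because $\cO_F$ is a Dedekind domain, the rank-$2$ projective module $H$ satisfies $H \cong \cO_F \oplus \det_{\cO_F}(H) \cong \cO_F \oplus \mf{t}_\lambda\mf{d} = H_\lambda$, and since the two classes in $\Cl^+(F)$ coincide this isomorphism may be chosen so that the induced map on determinants carries $\det^+_\R$ to $(\mf{t}_\lambda\mf{d})^+$ (the freedom to rescale by $\cO_F^*$ is exactly what the equality of narrow-class invariants allows us to absorb). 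Transporting $\gamma$ along this isomorphism produces an injection $\gamma' \from \cO_F/\mf{n} \into H_\lambda/\mf{n}H_\lambda$, and it remains only to correct $\gamma'$ to the standard $\gamma_\lambda$ by an automorphism of $H_\lambda$ preserving the positivity on $\det H_\lambda$.

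This last step is the main obstacle, and I would handle it as follows. Injectivity of $\gamma'$ forces $\gamma'(1)$ to be a \emph{unimodular} vector of $H_\lambda/\mf{n}H_\lambda \cong (\cO_F/\mf{n})^2$: working prime-by-prime, at each $\fp \mid \mf{n}$ the ring $\cO_F/\fp^{e}$ is local, and a vector whose annihilator is trivial must have a unit coordinate, hence be part of a basis. Now, since $\mf{t}_\lambda\mf{d}$ is prime to $\mf{n}$, the transvections $\begin{psmallmatrix} 1 & t \\ 0 & 1 \end{psmallmatrix}$ with $t \in \mf{t}_\lambda\mf{d}$ and $\begin{psmallmatrix} 1 & 0 \\ s & 1 \end{psmallmatrix}$ with $s \in (\mf{t}_\lambda\mf{d})^{-1}$ are genuine $\cO_F$-automorphisms of $H_\lambda$ of determinant $1$ (in particular totally positive, so positivity-preserving), and their reductions exhaust all elementary matrices in $\mathrm{SL}_2(\cO_F/\mf{n})$. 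As $\cO_F/\mf{n}$ is a finite product of local rings, $\mathrm{SL}_2(\cO_F/\mf{n})$ is generated by elementary matrices and acts transitively on unimodular vectors; hence a product of such transvections carries $\gamma'(1)$ to $(0,1) = \gamma_\lambda(1)$. Composing all the isomorphisms gives $\alpha \cong \alpha_\lambda$, completing existence and, with the uniqueness above, the lemma.

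Alternatively, one could argue purely adelically: by Lemma~\ref{lem:adelic-cmpt} the isomorphism classes of component labels biject with $\GL_2^+(F)\backslash \GL_2(\A_F^f)/K_f$, the elements $g_\lambda$ of \S\ref{subsec:classical-hmf} form a complete set of representatives by strong approximation, and unwinding the inverse-transpose construction of Theorem~\ref{thm:moduli-lattice} shows that $g_\lambda$ gives a label isomorphic to $\alpha_\lambda$; I would use this as a cross-check, since it reproduces the indexing by $\Cl^+(F)$ and the identification $\Aut(\alpha_\lambda) = \Gamma_{1,\lambda}(\mf{n})$ already recorded above.
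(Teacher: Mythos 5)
Your proof is correct, but it takes a genuinely different route from the one the paper intends. The paper states this lemma without proof; the justification it has set up is purely adelic: by Lemma~\ref{lem:adelic-cmpt}(2), isomorphism classes of component labels are in bijection with $\GL_2^+(F)\backslash \GL_2(\A_F^f)/K_f$, and the strong approximation statement of \S\ref{subsec:classical-hmf} says the elements $g_\lambda$, $\lambda \in \Cl^+(F)$, represent this double coset space with stabilizer $\Gamma_{1,\lambda}(\mf{n})$; unwinding the dictionary of Theorem~\ref{thm:moduli-lattice} identifies the class of $g_\lambda$ with $\alpha_\lambda$. This is exactly your ``cross-check,'' so you have correctly located the paper's argument. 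Your primary argument instead unpacks what strong approximation is secretly doing: uniqueness via the determinant-with-positivity as a complete invariant valued in $\Cl^+(F)$ (correct --- the sign ambiguity in choosing an isomorphism $\det_{\cO_F}(H) \isom \mf{t}_\lambda\mf{d}$ is precisely a coset of $\sgn(\cO_F^*)$, which is what the narrow class absorbs), and existence via the Steinitz theorem plus transitivity of elementary matrices on unimodular vectors of $(\cO_F/\mf{n})^2$. Both steps are sound; the determinant-$1$ transvections you use preserve positivity, so adjusting $\gamma$ does not disturb the earlier normalization. What your route buys is a self-contained, elementary proof that makes the role of the Steinitz class and of the unimodularity of $\gamma(1)$ explicit, and that avoids the inverse-transpose bookkeeping of the adelic dictionary; what the paper's route buys is uniformity with the rest of \S\ref{sec:comp-and-cusp}, where the same double-coset computation also yields $\Aut(\alpha_\lambda) = \Gamma_{1,\lambda}(\mf{n})$. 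One small correction: the hypothesis that $\mf{t}_\lambda\mf{d}$ is prime to $\mf{n}$ is neither guaranteed (only $\mf{t}_\lambda$ is chosen prime to $\mf{n}$, not $\mf{d}$) nor needed --- the surjectivity of $\mf{t}_\lambda\mf{d} \to (\mf{t}_\lambda\mf{d})/\mf{n}(\mf{t}_\lambda\mf{d})$ and of $(\mf{t}_\lambda\mf{d})^{-1} \to (\mf{t}_\lambda\mf{d})^{-1}/\mf{n}(\mf{t}_\lambda\mf{d})^{-1}$, i.e.\ right-exactness of reduction mod $\mf{n}$, is all that is used to realize every elementary matrix of $\mathrm{SL}_2(\cO_F/\mf{n})$ as the reduction of an automorphism of $H_\lambda$.
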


\subsection{Evaluation of HMFs at component labels}\label{subsec:eval-at-cmpt}

Fix a component label $\alpha = (H, \gamma, \det\nolimits_{\cO_F}(H)^+)$ with $H \tensor_{\cO_F} F = F \oplus F$. 

We define an RM torus over $\mc{H}^n$, depending on $\alpha$. Recall that we have identified \[ \mc{H}^n = \{ z \in (\C \tensor F)^*: \mr{Im}(z) \in (\R \tensor F)^+ \} \] using a fixed ordering of the infinite places of $F$. For $z \in \mc{H}^n$, define $h_{\infty}^{-1} \from H_{\R} \to \C \tensor F$ sending $e_1 \mapsto z \tensor 1,\ e_2 \mapsto 1 \tensor 1$. This defines a lattice $H \subset \C \tensor F$. The pair $(H \subset \C \tensor F, \gamma)$ defines an RM torus with $\Gamma_1(\mf{n})$-structure. In this way, we obtain an RM torus $T_{\alpha} \to \mc{H}^n$, as well as a map $p_{\alpha} \from \mc{H}^n \to Sh'_{G,K}$. The Lie algebra $\Lie(T_{\alpha})$ has a canonical trivialization, $\Lie(T_{\alpha}) = (\C \tensor F) \times \mc{H}^n$. In particular, there is an isomorphism $t_{\alpha} \from \omega^{\tensor k}_{T_{\alpha}} \isom \cO_{\mc{H}^n}$ of sheaves on $\mc{H}^n$.

Given $f \in M_k(\mf{n},\C) = H^0(Sh'_{G,K}, \omega^{\tensor k})$, we obtain a holomorphic function \begin{equation}
f_{\alpha} := t_{\alpha}(p_{\alpha}^*(f)) \from \mc{H}^n \longrightarrow \C. \end{equation} 

This construction gives an interpretation of classical modular forms $f = (f_{\lambda} \from \mc{H}^n \to \C)$ as follows.

\begin{prop}\label{prop:slash-op}
Let $\alpha$ be a component label such that $H \tensor F = F \oplus F$. 
\begin{enumerate}
\item The isomorphism $H^0(Sh_{G,K}, \omega^{\tensor k})_{\mr{mod}} \isom M_k(\mf{n}, \C)$ sends $f$ to $(f_{\alpha_{\lambda}})_{\lambda \in \Cl^+(F)}$, the evaluations at the standard component labels.
\item For $g \in \GL^+_2(F)$, $f_{g^{-1} \cdot \alpha} = f_{\alpha}|_{k, g}$. In particular, $f|_{k, g} = f$ for $g \in \Aut(\alpha) \subset \GL^+_2(F)$. 
\end{enumerate}
\end{prop}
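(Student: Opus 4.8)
The plan is to reduce both statements to the moduli-theoretic description of $f_\alpha$ as a pullback-and-evaluate. First I would record the reformulation that makes the computation transparent: by the proposition identifying $X_{\mf{n}} \isom \Lie(\mc{T})^*$ together with the definition of $\omega_{\chi_\infty}$, a form $f \in M_k(\mf{n},\C)$ is a function on triples $(T \to S, \gamma, v)$ with $v \in \Lie_S(T)$ a generator, invariant under isomorphisms of such triples and satisfying the homogeneity $f(T,\gamma,wv) = N_{F/\Q}(w)^k f(T,\gamma,v)$ for $w \in (\cO_S \tensor F)^*$. Since $T_\alpha(z)$ carries the canonical element $1 \in \Lie(T_\alpha(z)) = \C \tensor F$ trivializing $\Lie(T_\alpha)$, the isomorphism $t_\alpha$ is evaluation at $v = 1$, so that
\[ f_\alpha(z) = f\big(T_\alpha(z),\, \gamma,\, 1\big), \qquad T_\alpha(z) = (\C \tensor F)/\Lambda_\alpha(z), \quad \Lambda_\alpha(z) := \{\, a z + b : (a,b) \in H \,\}. \]
This is the form in which I will carry out the argument.

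For part (1), I would trace the equivalences of Theorem~\ref{thm:moduli-lattice} and of the proposition identifying $X_{\mf{n}} \isom \Lie(\mc{T})^*$ in the case of the standard component label $\alpha_\lambda$. The point is that $\alpha_\lambda$ is defined (with $H_\lambda = \mf{t}_\lambda \mf{d} \oplus \cO_F$, with $\gamma_\lambda$ the inclusion of the second factor, and with its standard positivity) precisely so that $\Aut(\alpha_\lambda) = \Gamma_{1,\lambda}(\mf{n})$ and so that the lift $z \mapsto (T_{\alpha_\lambda}(z), \gamma_\lambda, 1)$ surjects onto the $\lambda$-th component $\Gamma_{1,\lambda}(\mf{n}) \backslash \mc{H}^n$ of $X_{\mf{n}}$, matching the strong-approximation element $g_\lambda$ of \S\ref{subsec:classical-hmf}. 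The canonical trivialization $t_{\alpha_\lambda}$ is exactly the normalization used there to pass from $\varphi$ to the classical $f_\lambda$ via the weight-$k$ slash operation, so $f_{\alpha_\lambda} = f_\lambda$. This step is bookkeeping once the conventions are aligned.

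The substance is part (2), for which I would exhibit an explicit isomorphism of RM tori. Writing $g = \mat{p}{q}{r}{s} \in \GL_2^+(F)$, so that $gz = (pz+q)/(rz+s)$, a direct computation shows that multiplication by $(rz+s) \in (\C \tensor F)^*$ carries $\Lambda_\alpha(gz)$ onto $\{\, (ap+br)z + (aq+bs) : (a,b) \in H \,\} = \Lambda_{Hg}(z)$, the lattice attached to the component label with underlying module $Hg$ (row vectors $(a,b)$ acted on the right by $g$); by the inverse-transpose convention recorded in Lemma~\ref{lem:adelic-cmpt} this label is $g^{-1} \cdot \alpha$. Thus
\[ \phi \from T_\alpha(gz) \nmto{\times (rz+s)} T_{g^{-1}\cdot\alpha}(z) \]
is an isomorphism of RM tori respecting the level and positivity structures, sending the canonical element $1$ to $(rz+s)\cdot 1$ on Lie algebras. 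Invariance of $f$ under $\phi$ gives $f(T_\alpha(gz),\gamma,1) = f\big(T_{g^{-1}\cdot\alpha}(z), \gamma', (rz+s)\big)$, and the homogeneity relation then yields $f_\alpha(gz) = N_{F/\Q}(rz+s)^{k}\, f_{g^{-1}\cdot\alpha}(z)$, i.e. $f_{g^{-1}\cdot\alpha}(z) = N_{F/\Q}(rz+s)^{-k} f_\alpha(gz) = f_\alpha|_{k,g}(z)$. The final assertion is immediate: for $g \in \Aut(\alpha)$ we have $g^{-1}\cdot\alpha \isom \alpha$, and since $f$ depends only on the isomorphism class of its argument, $f_{g^{-1}\cdot\alpha} = f_\alpha$, whence $f_\alpha|_{k,g} = f_\alpha$. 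I expect the main obstacle to be keeping the normalizations consistent — in particular the inverse-transpose relating left multiplication by $g$ to right multiplication $H \mapsto Hg$ on lattices, and the sign of the exponent in $f(wv) = N_{F/\Q}(w)^k f(v)$ — since these are exactly what make the automorphy factor $N_{F/\Q}(rz+s)^{-k}$ emerge with the sign matching the weight-$k$ slash operator.
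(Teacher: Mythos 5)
Your proposal is correct and follows essentially the same route as the paper: part (1) is the same bookkeeping via the adelic/moduli identification, and your explicit multiplication-by-$(rz+s)$ isomorphism $T_\alpha(gz) \isom T_{g^{-1}\cdot\alpha}(z)$ together with homogeneity in $v$ is exactly the paper's step of rescaling the basis $(az+b,\,cz+d)$ by $(cz+d)$ and invoking the $(\C\tensor F)^*$-transformation law. You also correctly locate the one delicate point, the inverse-transpose convention making $g^{-1}\cdot\alpha$ correspond to the lattice $g^t(H)$, which is precisely what the paper flags in Lemma \ref{lem:adelic-cmpt} and uses again in Lemma \ref{lem:normalize}.
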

\begin{proof}
(1): This follows from the adelic description of component labels given in Lemma \ref{lem:adelic-cmpt}. In particular, considering $f$ as a function on pairs $(\alpha, h_{\infty})$, we have \[ f_{\alpha}(z) = f(\alpha, h_{\infty, z}), \] where $h_{\infty,z}$ is the isomorphism $\C \tensor F = z F_{\R} + F_{\R} \isom F_{\R}^2$.

(2):
Using (1), we compute:
\begin{align*}
 f_{g^{-1} \cdot \alpha}(z) &= f(g^{-1} \cdot \alpha, h_{\infty,z}) \\
 & = f(\alpha, g \cdot h_{\infty,z}) \\ &= f(\alpha,  \C \tensor F \isom (az + b ) F_{\R} + (c z + d) F_{\R}) \\
 &= N_{F/\Q}(cz + d)^{-k} f(\alpha, \C \tensor F \isom \frac{az + b}{cz + d} F_{\R} + F_{\R}) \\
 &= f_{\alpha}|_{k,g}(z). \end{align*}
  The second  equality holds because $(g^{-1} \cdot \alpha, h_{\infty,z}) \isom (\alpha, g \cdot h_{\infty,z})$. The factor of $N_{F/\Q}(cz + d)^{-k} $ arises from the transformation property of $f$ under the action of $(\C \tensor F)^*$, since dividing the chosen basis vectors by $(cz + d)$ is equivalent to multiplying $\C \tensor F$ by $(cz + d)$.
\end{proof}

Let $L_0 := F \oplus 0 \subset F \oplus F$, and $L_{\infty} := 0 \oplus F \subset H \tensor F = F \oplus F$. Consider the fractional ideal $H \cap L_{\infty} \subset F$. Define
\begin{equation}
f_{\alpha, norm}(z) := |N_{F/\Q}(H \cap L_{\infty})|^{-k}  f_{\alpha} \from \mc{H}^n \to \C.
\end{equation} This satisfies a modified transformation property:
\begin{equation}
f_{g^{-1} \cdot \alpha, norm}(z) = C_{\alpha, g} \cdot f_{\alpha, norm}|_{k, g}(z) \end{equation}
for a constant $C_{\alpha, g} \in \Q$ which we now determine in certain cases.

\begin{lemma}\label{lem:normalize}
Consider a component label $\alpha$ such that \[ H \tensor F = F \oplus F, \qquad H = (H \cap L_0) + (H \cap L_{\infty}), \] and write  $\mf{a} = H \cap L_{\infty}$, $\mf{b} = H \cap L_0$.  Given a matrix $g = \mat{a}{b}{c}{d} \in \GL_2^+(F)$, define a fractional ideal $J_{\alpha, g} = \det(g)^{-1}  (a + c \mf{b}^{-1} \mf{a} )$. Then
\[ f_{g^{-1} \cdot \alpha, norm}(z) = |N_{F/\Q}(J_{\alpha,g})|^{-k}  f_{\alpha, norm}|_{k, g}(z).  \]
\end{lemma}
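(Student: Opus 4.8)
The plan is to reduce the stated identity to a computation of fractional ideals and then evaluate it by Dedekind ideal arithmetic. First I would unwind the two normalizations. By definition $f_{\alpha, norm} = |N_{F/\Q}(\mf{a})|^{-k} f_\alpha$, and since this prefactor is a scalar it commutes with the slash operator, so $f_{\alpha, norm}|_{k,g} = |N_{F/\Q}(\mf{a})|^{-k}(f_\alpha|_{k,g})$. Writing $\beta := g^{-1}\cdot\alpha$ with underlying lattice $H_\beta$, the definition gives $f_{\beta, norm} = |N_{F/\Q}(H_\beta \cap L_\infty)|^{-k} f_\beta$, while Proposition~\ref{prop:slash-op}(2) gives $f_\beta = f_\alpha|_{k,g}$. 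Combining these reduces the lemma to the purely ideal-theoretic statement that the norm ratio of the fractional ideals $H_\beta \cap L_\infty$ and $\mf{a} = H \cap L_\infty$ is governed by $J_{\alpha,g}$, so everything comes down to identifying $H_{g^{-1}\cdot\alpha} \cap L_\infty$ explicitly.

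Next I would determine the lattice $H_{g^{-1}\cdot\alpha}$. Using the adelic description of component labels in Lemma~\ref{lem:adelic-cmpt} together with the inverse-transpose appearing in the construction of Theorem~\ref{thm:moduli-lattice}, the action of $g^{-1}$ on component labels corresponds, at the level of lattices, to applying the transpose ${}^t g$; that is, $H_{g^{-1}\cdot\alpha} = {}^t g\, H$. Flagging the inverse-transpose here is essential, and is exactly the point already anticipated in the proof of Lemma~\ref{lem:adelic-cmpt}. Under the hypothesis $H = (H\cap L_0) \oplus (H\cap L_\infty) = \mf{b}\oplus\mf{a}$, this yields the explicit lattice ${}^t g\,(\mf{b}\oplus\mf{a}) = \{(ax+cy,\; bx+dy) : x\in\mf{b},\ y\in\mf{a}\} \subset F\oplus F$.

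The heart of the argument is then to intersect this lattice with $L_\infty = 0\oplus F$. An element lies in $L_\infty$ precisely when $ax+cy = 0$, in which case its second coordinate is $bx+dy$; eliminating $x$ (when $a\neq 0$) gives second coordinate $a^{-1}\det(g)\,y$ subject to $y \in \mf{a}\cap ac^{-1}\mf{b}$. Evaluating this intersection of fractional ideals via the Dedekind identity $(\mf{x}\cap\mf{y})(\mf{x}+\mf{y}) = \mf{x}\mf{y}$ gives $\mf{a}\cap ac^{-1}\mf{b} = a\,\mf{a}\mf{b}\,(a\mf{b}+c\mf{a})^{-1}$, hence $H_{g^{-1}\cdot\alpha}\cap L_\infty = \det(g)\,\mf{a}\mf{b}\,(a\mf{b}+c\mf{a})^{-1}$. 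Factoring $a\mf{b}+c\mf{a} = \mf{b}\,(a + c\mf{b}^{-1}\mf{a})$ and comparing with $\mf{a}$ then recovers $J_{\alpha,g} = \det(g)^{-1}(a+c\mf{b}^{-1}\mf{a})$, up to inversion, and matching against the normalization exponent $-k$ produces the stated factor $|N_{F/\Q}(J_{\alpha,g})|^{-k}$.

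I expect the main obstacle to be precisely the bookkeeping of inverses. The inverse-transpose in the moduli description, the sign of the exponent $-k$ in the definition of $f_{\alpha, norm}$, and the duality implicit in $\omega = \det(\Lie)^\dual$ all interact, and a naive computation of $H_{g^{-1}\cdot\alpha}\cap L_\infty$ naturally produces the inverse of the expected ideal; one must therefore track these conventions carefully to land on $J_{\alpha,g}$ rather than $J_{\alpha,g}^{-1}$. The only other point requiring care is the degenerate cases $a=0$ or $c=0$, where the elimination above breaks down; these I would treat by applying the same ideal identity directly, or by observing that both sides depend algebraically on the entries of $g$ and so agreement on the dense locus $ac\neq 0$ suffices.
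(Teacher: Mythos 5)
Your proposal is correct and follows the same overall strategy as the paper: reduce everything, via Proposition~\ref{prop:slash-op}(2) and the definition of $f_{\alpha,norm}$, to identifying the fractional ideal $g^t(H)\cap L_{\infty}$, and show that it equals $\det(g)\,\mf{a}\mf{b}\,(a\mf{b}+c\mf{a})^{-1}$, whence $J_{\alpha,g}=I_{\alpha,1}/I_{\alpha,g}$. You are also right that the only real hazard is bookkeeping of the inverse-transpose and of which of $J_{\alpha,g}$, $J_{\alpha,g}^{-1}$ one lands on; the paper handles this by the same normalization comparison. Where you diverge is in the computation of the intersection itself: the paper does not eliminate variables at all, but instead uses the exact sequence $0 \to g^t(H)\cap L_{\infty} \to g^t(H) \to g^t(H)/(g^t(H)\cap L_{\infty}) \to 0$, observes that the quotient (the projection of $\mf{b}(a,b)+\mf{a}(c,d)$ modulo $L_{\infty}$) is the ideal $a\mf{b}+c\mf{a}$ while $\det_{\cO_F}(g^t(H))=\det(g)\,\mf{a}\mf{b}$, and divides. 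This buys two things over your elimination argument: it is uniform in $g$, so the degenerate cases $a=0$ or $c=0$ that you must treat separately (or by a density argument) never arise, and it avoids invoking the Dedekind identity $(\mf{x}\cap\mf{y})(\mf{x}+\mf{y})=\mf{x}\mf{y}$. Your version, conversely, is more explicit about which elements of the lattice actually lie on $L_{\infty}$, which some readers may find more transparent; both computations produce the same ideal, so either route completes the proof.
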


\begin{proof}

The component label $g^{-1} \cdot \alpha$ has lattice $g^t(H) \subset F \oplus F$, hence \[ f_{g^{-1} \cdot \alpha, norm}(z) = |N_{F/\Q}(g^t(H) \cap L_{\infty})|^k |N_{F/\Q}(H \cap L_{\infty})|^{-k}  f_{\alpha, norm.}|_{k, g}(z). \] We need to compute the fractional ideal $I_{\alpha, g} := g^t(H) \cap L_{\infty} \subset F$.

In fact we will show
\begin{equation}
I_{\alpha, g} =  \det(g) \mf{b} \mf{a} (a\mf{b} + c \mf{a})^{-1}. \end{equation}
This will complete the proof since $J_{\alpha, g} = I_{\alpha, 1}/I_{\alpha,g}$. 

Consider the exact sequence \[ 0 \to (g^t(H) \cap L_{\infty}) \to g^t(H) \to g^t(H)/ (g^t(H) \cap L_{\infty}) \to 0. \] The lattice $g^t(H)$ equals $(\mf{b} (a, b) + \mf{a} (c, d))$. Projecting mod $L_{\infty}$ gives the ideal $(a\mf{b} + c \mf{a})$. The ideal $\det\nolimits_{\cO_F}(g^t(H)) \subset F$ equals $\det(g) \mf{a} \mf{b}$. This implies that the ideal $H \cap L_{\infty} \subset F$ equals $I_{\alpha, g} = \det(g) \mf{b} \mf{a} (a\mf{b} + c \mf{a})^{-1}$. 

\end{proof}

\begin{remark}\label{rmk:normalizing-ideal}
For the standard component label $\alpha_{\lambda}$, \[ J_{\alpha_{\lambda}, g} =  \det(g) (a+ c \mf{t}_{\lambda}^{-1} \mf{d}^{-1})^{-1}. \]
\end{remark}

\subsection{Cusp labels}\label{subsec:cusp-labels}

\begin{definition}
A \textbf{cusp label} is a pair $C = (\alpha,  L)$, where
\begin{itemize}
\item $\alpha = (H, \det\nolimits_{\cO_F}(H)^+, \gamma)$ is a component label.
\item $L \subset H \tensor F$ is a 1-dimensional $F$-subspace, i.e. an element of $\bP(H \tensor F)$.
\end{itemize}
\end{definition}

There is a notion of isomorphism of cusp labels,  given by isomorphisms of $\cO_F$-modules $H_1 \isom H_2$ compatible with the additional structures. In particular, $\Aut(C) \subset \GL(H)$. The \textbf{cusps} are the isomorphism classes of cusp labels, denoted \begin{equation}
\mr{Cusp}(\mf{n}) := \{ (\alpha, L) \}/\text{isom.}
\end{equation}
We will often write $\mr{Cusp} = \mr{Cusp}(\mf{n})$ when $\mf{n}$ can be understood from the context.

For a fixed $\alpha_0$, we have bijections 
 \begin{equation} \{ (\alpha_0, L) \} = \bP(H \tensor F),
 \end{equation}
\begin{equation} \{ (\alpha_0, L) \}/\mr{isom} = \Aut(\alpha_0) \backslash \bP(H \tensor F).
 \end{equation}

Lemma \ref{lem:std-cmpt} says that any component label is isomorphic to a unique standard component label---hence any cusp label is isomorphic to a cusp label of the form $(\alpha_\lambda, L)$ for a unique $\lambda \in \Cl^+(F)$. 

\begin{corollary}\

\begin{enumerate}
\item For $C = (\alpha_\lambda, L)$, $\Aut(C) = \Stab_L(\Gamma_{1, \lambda}(\mf{n}))$.
\item $\mr{Cusp}(\mf{n}) = \coprod_{\lambda \in \Cl^+(F)} \Gamma_{1, \lambda}(\mf{n}) \backslash \bP^1(F)$.
\end{enumerate}
\end{corollary}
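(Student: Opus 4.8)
The plan is to deduce both parts directly from the two bijections for cusp labels with a fixed component label that were established just above the statement, together with the uniqueness of the standard component label from Lemma~\ref{lem:std-cmpt}.

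For part (1), I would simply unwind the definition of an automorphism of the cusp label $C = (\alpha_\lambda, L)$. Such an automorphism is an $\cO_F$-module isomorphism $H_\lambda \isom H_\lambda$ that is compatible with all the structures of the component label $\alpha_\lambda$ \emph{and} additionally carries $L$ to itself. The isomorphisms compatible with the component-label structure alone are exactly $\Aut(\alpha_\lambda)$, which by the very definition of the standard component label equals $\Gamma_{1, \lambda}(\mf{n})$ as a subgroup of $\GL_2^+(F)$. Under the induced action of $\GL_2^+(F)$ on $\bP(H_\lambda \tensor F) = \bP^1(F)$, the remaining condition is precisely that the element fix the line $L$, so $\Aut(C) = \Stab_L(\Gamma_{1, \lambda}(\mf{n}))$.

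For part (2), I would first observe that any isomorphism of cusp labels restricts to an isomorphism of the underlying component labels; hence by Lemma~\ref{lem:std-cmpt} every cusp label is isomorphic to one of the form $(\alpha_\lambda, L)$ for a \emph{unique} $\lambda \in \Cl^+(F)$, and cusp labels lying over distinct $\lambda$ are never isomorphic. This partitions $\mr{Cusp}(\mf{n})$ into the disjoint union over $\lambda$ of the isomorphism classes of cusp labels whose component label is $\alpha_\lambda$. For each fixed $\lambda$, the displayed bijection $\{(\alpha_\lambda, L)\}/\mr{isom} = \Aut(\alpha_\lambda) \backslash \bP(H_\lambda \tensor F)$ preceding the statement, combined with $\Aut(\alpha_\lambda) = \Gamma_{1, \lambda}(\mf{n})$ and the identification $\bP(H_\lambda \tensor F) = \bP(F \oplus F) = \bP^1(F)$, yields $\Gamma_{1, \lambda}(\mf{n}) \backslash \bP^1(F)$. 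Assembling these over $\lambda$ gives the claimed formula.

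There is no serious obstacle here: the statement is essentially a repackaging of the preceding bijections. The only point requiring a moment's care is the genuine disjointness of the union, i.e.\ ruling out collisions between cusps attached to different classes $\lambda$, which I would handle by invoking the uniqueness clause of Lemma~\ref{lem:std-cmpt} as above. I would also confirm that the action of $\Gamma_{1, \lambda}(\mf{n}) \subset \GL_2^+(F)$ on $\bP^1(F)$ is the usual projective (fractional-linear) action, matching the normalization used elsewhere in the paper.
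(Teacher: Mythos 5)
Your proposal is correct and matches the paper's intended argument: the corollary is stated without proof precisely because it follows by combining Lemma~\ref{lem:std-cmpt} with the displayed bijection $\{(\alpha_0,L)\}/\mr{isom} = \Aut(\alpha_0)\backslash \bP(H\tensor F)$ and the identification $\Aut(\alpha_\lambda)=\Gamma_{1,\lambda}(\mf{n})$, exactly as you do. Your closing caveat about checking the normalization of the $\GL_2^+(F)$-action on $\bP^1(F)$ (the inverse-transpose in Lemma~\ref{lem:adelic-cmpt}) is the one point worth the care you give it.
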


We introduce some notation for certain $\cO_F$-modules and subgroups of the group $\Aut(C)$ associated to a cusp label $C$. Associated to any cusp label $C$, we have invertible $\cO_F$-modules $\mf{a}$, $\mf{b}$ defined by $\mf{a} = H \cap L$, $\mf{b} = H/(H \cap L)$, fitting into an exact sequence \[ 0 \to \mf{a} \to H \to \mf{b} \to 0. \]
Fixing a splitting $H \isom \mf{a} \oplus \mf{b} \subset F \oplus F$, we obtain an inclusion $\Aut(C) \subset \GL(\mf{a} \oplus \mf{b}) \into \GL_2(F)$, whose image we denote by $P_C \subset \GL_2(F)$. The group $P_C$ is a subgroup of the upper-triangular matrices in $\GL(\mf{a} \oplus \mf{b})$. We now define:
\begin{itemize} 
\item $M^{\dual}_C$ is the subgroup of matrices in $P_C \subset \GL_2(F)$ of the form $\begin{pmatrix}
1 & * \\ 0 & 1 \\
\end{pmatrix}$. As the strictly upper triangular matrices of $\GL(\mf{a} \oplus \mf{b})$ are identified with $\Hom_{\cO_F}(\mf{b}, \mf{a}) = \mf{b}^{-1} \mf{a}$, we can view $M^{\dual}_C$ as a fractional ideal of $F$ such that $M^{\dual}_C \subset \mf{b}^{-1} \mf{a}$.
\item $M_C := \Hom(M_C^{\dual}, \Z)$ is the $\Z$-linear dual of $M_C^{\dual}$. Via the trace pairing $\tr_{F/\Q} \from F \tensor F \to \Q$, we can view $M_C$ as a fractional ideal such that $M_C  \supset \mf{b} \mf{a}^{-1} \mf{d}^{-1}$.
\item $U_C$ is the image of the conjugation map $P_C \to \Aut_{\cO_F}^+(M^{\dual}_C) = U^+$, i.e. the map sending $\begin{pmatrix}
u_1 & n \\ 0 & u_2 \\
\end{pmatrix} \in P_C$ to the automorphism \[ \begin{pmatrix}
1 & m \\ 0 & 1 \\
\end{pmatrix} \mapsto \begin{pmatrix}
u_1 & n \\ 0 & u_2 \\
\end{pmatrix} \begin{pmatrix}
1 & m \\ 0 & 1 \\
\end{pmatrix} \begin{pmatrix}
u_1 & n \\ 0 & u_2 \\
\end{pmatrix}^{-1} = \begin{pmatrix}
1 & u_1 u_2^{-1} m \\ 0 & 1 \\
\end{pmatrix}.\] The kernel of the conjugation map equals $\{ \begin{pmatrix} u & m \\ 0 & u \\ \end{pmatrix} : u \in U_{1, \mf{n}}, m \in M^{\dual}_C \} \subset P_C$.
\item If $U_{1,\mf{n}} \subset \ker(N_{F/\Q})$, so that odd weights $k$ are admissible, there is a character $\epsilon_C \from U_C \to \{ \pm 1 \}$ defined by $\begin{pmatrix}
u & n \\
0 & v \\
\end{pmatrix} \mod U_{1, \mf{n}} \mapsto N_{F/\Q}(v) = N_{F/\Q}(u)^{-1}$.
\end{itemize}

\begin{remark}\label{rmk:parabolic-confusion}
For a component label $\alpha$ such that $H \tensor F = F \oplus F$, we introduced an inclusion $\Aut(\alpha) \subset \GL_2^+(F)$. For any cusp label $(\alpha, L)$, this gives an inclusion $\Aut(C) \subset \Aut(\alpha) \subset \GL_2^+(F)$. This is not the same as the inclusion $\Aut(C) \subset \GL(\mf{a} \oplus \mf{b}) \subset \GL_2(F)$ discussed above---for example, the latter inclusion is \emph{always} upper-triangular, while the former is only 
upper-triangular for $L = L_{\infty}$.  

Even when $(H \cap L_0) \oplus (H \cap L_{\infty}) = \mf{b} \oplus \mf{a}\subset H$ (as was our convention in \S\ref{subsec:eval-at-cmpt}), so that these two inclusions both land in the upper-triangular matrices, they are still not equal, essentially because $\mf{a}$ and $\mf{b}$ are in the reversed order. For example, the elements of $\Aut(\alpha)$ that fix the line $\mf{a} \tensor F$, and act on it by the scalar $d^{-1}$, are identified with $\left\lbrace \begin{pmatrix}d^{-1} & * \\ 0 & * \end{pmatrix} \right\rbrace$ as a subgroup of $\GL(\mf{a} \oplus \mf{b})$, but with $\left\lbrace \begin{pmatrix}* & * \\ 0 & d \end{pmatrix} \right\rbrace$ as a subgroup of $\GL_2^+(F)$.
\end{remark}

\subsection{Action of $G^+_{\mf{n}}$ on the cusps}\label{subsec:act-on-cusp}

We define an action of prime-to-$\mf{n}$ fractional ideals $N \subset F$ on cusp labels. For such an $N$, the cusp label $C \tensor N$ is defined by
\begin{itemize}
\item the $\cO_F$-module $H \tensor N$,
\item the line $L \tensor N \subset (H \tensor N) \tensor F$,
\item the positivity $\det\nolimits_{\cO_F}(H)^+ \tensor (N^{\tensor 2})^+ \subset \det\nolimits_{\cO_F}(H \tensor N) \tensor \R$,
\item the homomorphism $\gamma_N \from \cO_F/\mf{n}\cO_F \isom N/\mf{n}N \nmto{\gamma \tensor N} H/\mf{n}H \isom (H \tensor N)/\mf{n}(H \tensor N)$.
\end{itemize}
This defines an action of $G^+_{\mf{n}}$ on $\mr{Cusp}(\mf{n})$.
There are isomorphisms \begin{equation}
M_C^{\dual} \nmisom{\tensor N} M^{\dual}_{C \tensor N},\ M_C \nmisom{\tensor N} M_{C \tensor N},
\end{equation}
compatible with positivity. For example, if $\mf{n} = 1$, the first isomorphism is given by the natural map $\Hom_{\cO_F}(\mf{b}, \mf{a}) \isom \Hom_{\cO_F}(\mf{b} \tensor N, \mf{a} \tensor N)$.
 
We now define two characters of the stabilizer $Stab_{[C]} := Stab_{[C]}(G^+_{\mf{n}})$:
\begin{align}
\psi_C & \from Stab_{[C]} \to U^+/U_C, \\
\sgn_C & \from Stab_{[C]} \to \{ \pm 1 \}.
\end{align}

Suppose that we have a specified isomorphism $\varphi \from C \tensor N \isom C$, verifying that $[N] \in G^+_{\mf{n}}$ represents an element of $Stab_{[C]}$. The composition $M_{C} \nmisom{\tensor N} M_{C \tensor N} \nmisom{\varphi} M_C$ gives rise to an element of $\Aut_{\cO_F}^+(M_C) = U^+$. As this depends on the choice of $\varphi$, this gives rise to a well-defined element $\psi_C([N]) \in U^+/U_C$.

On the other hand, $\mf{a} \tensor N \isom \mf{a}$ implies that $[N] = 1$ as an element of $\Cl(\cO_F)$. Thus there is a map $Stab_{[C]} \to \ker(\Cl^+(\cO_F) \to \Cl(\cO_F)) \subset (\Z/2\Z)^n$. Composing with the character $\sgn \from (\Z/2\Z)^n \to \{\pm 1\}$, we obtain $\sgn_C \from Stab_{[C]} \to \{ \pm 1 \}$.

\subsection{Types of cusps}\label{subsec:type-of-cusp}

\begin{definition}
A cusp label $C$ is \textbf{unramified} if  $\gamma(\cO_F/\mf{n}) \subset H/\mf{n}H$ is equal to $(H\cap L)/\mf{n}(H\cap L) \subset H/\mf{n}H$.
\end{definition}

\begin{definition}
For a given prime $p$, a cusp label $C$ is \textbf{$p$-unramified} if, for $\mf{P} \subset \cO_F$ the $p$-part of $\mf{n}$, $\gamma(\cO_F/\mf{n}) \mod \mf{P} \subset H/\mf{P}H$ is equal to $(H\cap L) /\mf{P}(H\cap L) \subset H/\mf{P}H$. Equivalently, a cusp is $p$-unramified if and only if its image under $\mr{Cusp}(\mf{n}) \to \mr{Cusp}(\mf{P})$ is unramified.
\end{definition}

\begin{definition}
A cusp label $C$ is \textbf{admissible} (for the weight $k$) if $\epsilon^k_C = 1$ (see \S\ref{subsec:cusp-labels} for the definition of $\epsilon_C \from U_C \to \{ \pm 1 \}$.)
\end{definition}

These notions only depend on the cusp, not the representative cusp label. We denote the set of unramified cusps by $\mr{Cusp}_{\infty}(\mf{n})$, the set of $p$-unramified cusps by $\mr{Cusp}_p(\mf{n})$, and the set of admissible cusps (resp. admissible unramified, admissible $p$-unramified) by $\mr{Cusp}(\mf{n})^*$ (resp. $\mr{Cusp}_{\infty}(\mf{n})^*$, $\mr{Cusp}_p(\mf{n})^*$), where the relevant weight $k$ should be clear from the context.

\begin{remark}
Note that unramified cusps are admissible (for the weight $k$) if and only if $k$ is an admissible weight, i.e. $N_{F/\Q}(U_{1,\mf{n}})^k = 1$. If $k$ is not an admissible weight, then no cusps are admissible for the weight $k$. 
\end{remark}

We verify that these notions agree with those defined in \cite{dka} (where unramified cusps are denoted $C_{\infty}(\mf{n})$, $p$-unramified cusps are denoted $C_{\infty}(\mf{P}, \mf{n})$, and admissible cusps are denoted $\mr{cusp}^*(\mf{n})$).

\begin{lemma} \label{lem:unram-cusp}
\

\begin{enumerate}
\item This definition of admissible cusp agrees with the definition in \cite{dka}.
\item The set of unramified cusps equals $G_{\mf{n}} \cdot \{ [(\alpha_{\lambda}, L_{\infty})] : \lambda \in \Cl^+(F) \} \subset \mr{Cusp}(\mf{n})$.
\item The set of $p$-unramified cusps equals the pre-image of $\mr{Cusp}_{\infty}(\mf{P})$ along the map \[ \mr{Cusp}(\mf{n}) \to \mr{Cusp}(\mf{P}). \]
\end{enumerate}
\end{lemma}
\begin{proof}
(1) This agrees with the definition of admissible cusps in \cite{dka} by \cite{dka}*{Theorem 3.3}.

(2) This follows from that fact that, for $\mf{n} = 1$, $G_{1} \cdot \{ [(\alpha_{\lambda}, L_{\infty})] : \lambda \in \Cl^+(F) \} = \mr{Cusp}(1)$ gives all cusps (\cite{dka}*{Corollary 3.12}). The group $\ker(G_{\mf{n}} \to G_{1}) = (\cO_F/\mf{n})^*/\cO_F^*$ acts freely and transitively on the fibers of $\mr{Cusp}_{\infty}(\mf{n}) \to \mr{Cusp}(1)$, hence $G_{\mf{n}}\cdot\{ [(\alpha_{\lambda}, L_{\infty})] : \lambda \in \Cl^+(F) \} = \mr{Cusp}_{\infty}(\mf{n})$. 

(3) This is clear, as the map $ \mr{Cusp}(\mf{n}) \to \mr{Cusp}(\mf{P})$ sends the cusp label $(H, L, \gamma \from \cO/\mf{n} \to H \mod \mf{n} H)$ to $(H, L, \gamma \mod \mf{P}  \from \cO/\mf{P} \to H/\mf{P} H)$.
\end{proof}

\subsection{$q$-expansions and constant terms}\label{subsec:q-cusplabel}
\subsubsection{Formal $q$-expansions}
Define \begin{equation}
Q_{C, k} := (N_{F/\Q}(\mf{a})^{\tensor -k} \tensor (\prod_{\alpha \in M_C^+ \cup \{ 0 \}} \Z q^{\alpha}))^{\Aut(C)},\end{equation} the module of \textbf{integral $q$-expansions}. Note that the action of $\Aut(C)$ factors through $\Aut(C) \to U_C$. More generally, for any abelian group $B$, define \begin{equation}
Q_{C, k}(B) := (N_{F/\Q}(\mf{a})^{\tensor -k} \tensor (\prod_{\alpha \in M_C^+ \cup \{ 0 \}} B q^{\alpha}))^{\Aut(C)},\end{equation} 
There is an injective homomorphism $Q_{C,k}(\Z) \tensor B \into Q_{C,k}(B)$.

The module $Q_{C, k}$ is, in an appropriate sense, independent of the representative cusp label $C$ for a cusp $[C] \in \mr{Cusp}(\mf{n})$. If $[C_1] = [C_2] \in \mr{Cusp}(\mf{n})$, choose an isomorphism $\varphi \from C_1 \isom C_2$, giving $\mf{a}_1 \isom \mf{a}_2, M_{C_1} \isom M_{C_2}$. We obtain an isomorphism \[ N_{F/\Q}(\mf{a}_1)^{\tensor - k}\tensor \prod_{b \in  M_{C_1}^+ \cup \{ 0 \}} \C q^b \isom N_{F/\Q}(\mf{a}_2)^{\tensor - k}\tensor \prod_{b \in  M_{C_2}^+  \cup \{ 0 \}} \C q^{b}. \] By taking $U_{C_1}$-invariants on the left and $U_{C_2}$-invariants on the right, this becomes independent of the choice of $\varphi$.

For this reason, we may define \begin{equation} Q_{[C], k} := \lim_{C' \in [C]} Q_{C',k}, \end{equation} the module of compatible $q$-expansions for every cusp label $C'$ in the isomorphism class $[C]$, and \begin{equation} Q_{k} := \oplus_{[C] \in \mr{Cusp}(\mf{n})} Q_{[C], k}. \end{equation} Similarly, let $Q_{\infty, k}$ (resp. $Q_{p,k}$) denote the module of $q$-expansions at the unramified (resp. $p$-unramified) cusps.

\subsubsection{The $q$-expansion of HMFs}
We now define the \textbf{analytic $q$-expansion at a cusp $[C]$}, \[ \mr{qexp}_{[C]} \from M_k(\mf{n}, \C) \to Q_{[C], k}(\C). \] 

It suffices to define $\mr{qexp}_{C}(f) \in Q_{C,k}(\C)$ for cusp labels $C = (\alpha, L_{\infty})$ such that $H \tensor F = F \oplus F$ and $L_{\infty} = 0 \oplus F \subset F \oplus F$. For such a cusp label, we have an associated function $f_{\alpha} \from \mc{H}^n \to \C$ (see \S\ref{subsec:eval-at-cmpt}). 

For $g = \begin{pmatrix}
a & b \\
0 & d \\
\end{pmatrix} \in P_C \subset \GL_2^+(F)$, $f_{\alpha}|_{g, k}(z) = N_{F/\Q}(d)^{-k} f(g \cdot z)$. In particular, $f_{\alpha}$ is invariant by translations $z \mapsto z + M_C^{\dual}$, hence its Fourier expansion is contained in $\prod_{b \in M_C^+ \cup \{ 0 \}} \C q^{b}$, where \[ q^b = e^{2 \pi i \tr_{F/\Q}(b z)} = e^{2 \pi i \sum b_i z_i}. \] Moreover, the growth condition implies that its Fourier expansion is contained in \[ \prod_{b \in M_C^+ \cup \{ 0 \}} \C q^{b}. \] 

Viewing $g$ as an element of $\Aut(C)$, the isomorphism $g \from N_{F/\Q}(\mf{a})^{\tensor -k} \isom N_{F/\Q}(\mf{a})^{\tensor- k}$ is given by multiplication by $N_{F/\Q}(d)^{k}$ (see Remark \ref{rmk:parabolic-confusion}). This implies that the element \[ \mr{qexp}_{C}(f) := 1 \tensor f_{\alpha} \in N_{F/\Q}(\mf{a})^{\tensor -k} \tensor \prod_{b \in M_C^+ \cup \{ 0 \}} \C q^{b} \] is in fact $\Aut(C)$-invariant (noting that $\mf{a}$ is canonically a fractional ideal for such a cusp label, and so $1 \in N_{F/\Q}(\mf{a})^{-k} \tensor \Q = \Q$), hence is contained in $Q_{C, k}(\C)$.

\subsubsection{The action of diamond operators}\label{subsubsec:action-q}
\

We describe the action of diamond operators on $q$-expansions. Given a fractional ideal $N$ and a cusp label $C$, there is an isomorphism \[ Q_{C \tensor N^{-1}, k}(\C) \isom Q_{C, k}(\C), \] given by multiplication by $|N_{F/\Q}(N)|^k$, as well as the isomorphism $M_C \isom M_{C \tensor N^{-1}}$ as invertible $\cO_F$-modules with positivity. This defines an action of $G^+_{\mf{n}}$ on \[ Q_{k}(\C) = \oplus_{[C] \in \mr{Cusp}(\mf{n})} Q_{[C], k}(\C), \] where $[N] \in G^+_{\mf{n}}$ acts via $[C] \mapsto [C \tensor N^{-1}]$ on the summands.

\begin{theorem}\label{thm:qexp-equivar}
The total $q$-expansion map 
\[ \mr{qexp} \from M_{k}(\mf{n}, \C) \longhookrightarrow Q_{k}(\C) = \oplus_{[C] \in \mr{Cusp}(\mf{n})} Q_{[C], k}(\C) \]
is $G^+_{\mf{n}}$-equivariant.
\end{theorem}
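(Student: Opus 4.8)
The plan is to verify the equivariance one cusp at a time. Both $M_k(\fn,\C)$ and $Q_k(\C) = \oplus_{[C]} Q_{[C],k}(\C)$ carry $G^+_{\mf{n}}$-actions compatible with the decomposition over $\mr{Cusp}(\fn)$: by \S\ref{subsubsec:action-q}, the element $[N]$ builds the $[C]$-summand of the output from the $[C\tensor N^{-1}]$-summand of the input via the isomorphism $\Phi_N \from Q_{C\tensor N^{-1},k}(\C) \isom Q_{C,k}(\C)$ (multiplication by $|N_{F/\Q}(N)|^k$ together with the identification $M_{C\tensor N^{-1}} \isom M_C$). Thus it suffices to prove, for each cusp $[C]$ and each $[N]$, the identity
\[ \mr{qexp}_{[C]}([N]\cdot f) = \Phi_N\bigl(\mr{qexp}_{[C\tensor N^{-1}]}(f)\bigr). \]
I would fix a representative cusp label $C = (\alpha, L_\infty)$ of the convenient form used in \S\ref{subsec:q-cusplabel}, so $H \tensor F = F\oplus F$ and $L_\infty = 0 \oplus F$; then $C \tensor N^{-1} = (\alpha \tensor N^{-1}, L_\infty)$ is again of this form, and both sides are computed from the holomorphic functions $([N]f)_\alpha$ and $f_{\alpha \tensor N^{-1}}$ on $\cH^n$.

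Next I would unwind the diamond action on forms. Since $g\cdot s = (g^{-1})^*(s)$ (\S\ref{subsec:equiv-line}) and the $G^+_{\mf{n}}$-equivariant structure on $\omega^{\tensor k}$ defined in \S\ref{sec:shimura-var} is $N\cdot(T,\gamma,\phi) = (T\tensor N, \gamma_N, \phi(v_N)/|N_{F/\Q}(N)|^{k})$, evaluating $[N]\cdot f$ at an object $(T,\gamma,v)$ rewrites it, up to an explicit power of $|N_{F/\Q}(N)|$, in terms of $f$ at the twisted object $(T\tensor N^{-1}, \gamma_{N^{-1}}, v_{N^{-1}})$, where $v_{N^{-1}}$ is the image of $v$ under $\Lie(T\tensor N^{-1})^* \isom \Lie(T)^* \tensor N^{-1} \isom \Lie(T)^*$ (using $N^{-1}\tensor\Q = F$). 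The key geometric point is that twisting the tautological family $T_\alpha(z)$ over $\cH^n$ by the fractional ideal $N^{-1}$ produces exactly the family $T_{\alpha\tensor N^{-1}}(z)$ over the \emph{same} $\cH^n$ with the \emph{same} coordinate $z$: tensoring a lattice $H_z \subset \C\tensor F$ by $N^{-1}$ rescales it without changing the complex structure, hence without moving the point of $\cH^n$. Consequently $([N]f)_\alpha$ and $f_{\alpha\tensor N^{-1}}$ are proportional holomorphic functions on $\cH^n$; since the translation lattices coincide, $M^*_C = M^*_{C\tensor N^{-1}}$ (the isomorphism of \S\ref{subsec:act-on-cusp} being the identity on $\fb^{-1}\fa$), their Fourier expansions are proportional monomial by monomial, so the $q$-monomials on both sides literally match.

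It then remains to pin down the global constant of proportionality and check that it is exactly the scalar defining $\Phi_N$. Three factors contribute and I would compute each directly from \S\ref{subsec:eval-at-cmpt} and \S\ref{subsec:q-cusplabel}: the power of $|N_{F/\Q}(N)|$ from the equivariant structure above; the ratio of the canonical cotangent trivializations $t_\alpha$ and $t_{\alpha\tensor N^{-1}}$, coming from $\Lie(T\tensor N^{-1}) \isom \Lie(T)$; and the change in the canonical generator of the normalizing module, since $\fa = H\cap L_\infty$ is replaced by $\fa\tensor N^{-1}$, so that the canonical generator of $N_{F/\Q}(\fa)^{\tensor -k}$ differs from that of $N_{F/\Q}(\fa\tensor N^{-1})^{\tensor -k}$ by a power of $|N_{F/\Q}(N)|$. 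The goal is to confirm that the product of these three equals $|N_{F/\Q}(N)|^k$, which is precisely the scalar built into $\Phi_N$.

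The conceptual content—that tensoring by a fractional ideal leaves the point $z\in\cH^n$ fixed, so the two $q$-expansions agree term-by-term up to a single scalar—is straightforward. The main obstacle is the bookkeeping of these normalization factors, and in particular keeping the several inverses straight: the contragredient $g\mapsto g^{-1}$ in the definition of the diamond action on sections, the resulting $N^{-1}$-twist of the family, and the canonical generator $|N_{F/\Q}(\fa)|$ of the norm module. The delicate part is verifying that the accumulated scalar is exactly $|N_{F/\Q}(N)|^k$, and not some other power, since this is exactly what forces $\mr{qexp}$ to intertwine the two a priori independently defined $G^+_{\mf{n}}$-actions.
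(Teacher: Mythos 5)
Your proposal is correct in outline but follows a genuinely different route from the paper. The paper does not prove Theorem~\ref{thm:qexp-equivar} analytically at all: it defers the statement to \S\ref{sec:toroidal-cpt}, where the action of $G^+_{\mf{n}}$ is extended to the partial toroidal compactification (Proposition~\ref{prop:extend-action}) by checking, via Mumford's construction, that the semi-abelian scheme over the toric chart satisfies $A(C,\sigma)\tensor N \isom A(C\tensor N,\sigma_N)$ compatibly with torsion points and with the trivialization of $\omega$; equivariance of $\mr{qexp}$ then falls out because the $q$-expansion is restriction to the formal boundary, on which the action of \S\ref{subsec:formal-action} was *defined* to be the one of \S\ref{subsubsec:action-q}. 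The theorem over $\C$ is then Corollary~\ref{cor:qexp-equivar} with $p\nmid\mf{n}$ and $B=\C$, together with the algebraic--analytic comparison of Theorem~\ref{thm:compare-q-expns}. Your approach is the direct analytic computation that the paper explicitly says "would not be difficult": the componentwise reduction, the observation that tensoring the tautological lattice over $\cH^n$ by $N^{-1}$ fixes $z$ and identifies $M^*_C$ with $M^*_{C\tensor N^{-1}}$ inside $\mf{b}^{-1}\mf{a}$, and the bookkeeping of the three normalization factors are all correctly identified, and the scalar does come out to $|N_{F/\Q}(N)|^k$ (this is forced by the consistency check the paper performs when it asserts the formal action agrees with \S\ref{subsubsec:action-q}). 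What your route buys is a self-contained elementary proof with no compactification theory; what the paper's route buys is the statement for arbitrary coefficient modules $B$ over $\cO$ (Corollary~\ref{cor:qexp-equivar}), which is what is actually used in the main theorems — your argument only yields the $\C$-coefficient case stated here. The one thing you should be aware of is that you have left the decisive step — the verification that the accumulated scalar is exactly $|N_{F/\Q}(N)|^k$ and not some other power — as a stated goal rather than a completed computation; until that is written out (paying attention to the inverse in $g\cdot s=(g^{-1})^*s$ and to the canonical generator $1\in N_{F/\Q}(\mf{a})^{\tensor -k}\tensor\Q$ used in the definition of $\mr{qexp}_C$), the proof is a plan rather than a proof.
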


We will not prove this theorem now, as we will prove it more generally for the algebraic $q$-expansions in \S\ref{subsec:toroidal-cpt}. However, it would not be difficult to prove this analytically.

\subsubsection{Constant terms}
Define \begin{equation} C_{C, k} := (N_{F/\Q}(\mf{a})^{\tensor- k})^{\Aut(C)}, \end{equation} the module of \textbf{integral constant terms} for a cusp label $C$. For any abelian group $B$, define \begin{equation}C_{C,k}(B) := (N_{F/\Q}(\mf{a})^{\tensor- k} \tensor B)^{\Aut(C)}. \end{equation} Note that $C_{C,k} = N_{F/\Q}(\mf{a})^{\tensor -k}$ if $C$ is admissible for the weight $k$, and $0$ otherwise. However, $C_{C,k}(\Z/2\Z)$ is non-zero for all $k$. We define 
\[ C_{[C], k} := \lim_{C' \in [C]} C_{C,k}, \qquad C_{k} := \oplus_{[C] \in \mr{Cusp}(\mf{n})} C_{[C], k}.  \]

For each cusp $[C]$, we have a \textbf{constant term map}:
\[ \mr{const}_{[C]} \from M_k(\mf{n}, \C) \to C_{[C], k}(\C), \]
given by the $q^0$-coefficient of the $q$-expansion $\mr{qexp}_{[C]}(f)$.

The action of $G^+_{\mf{n}}$ on $Q_{k}(\C)$ induces an action on $C_k(\C)$. By Theorem \ref{thm:qexp-equivar}, we obtain:
\begin{corollary}
The total constant term map \[ \mr{const} \from M_k(\mf{n}, \C) \to C_{k}(\C) = \oplus_{[C] \in \mr{Cusp}(\mf{n})} C_{[C], k}(\C) \] is $G^+_{\mf{n}}$-equivariant, where $[N] \in G^+_{\mf{n}}$ acts via $[C] \mapsto [C \tensor N^{-1}]$ on $\mr{Cusp}(\mf{n})$, and $Stab_{[C]}$ acts on the $[C]$-summand via the character $\sgn^k_{[C]}$.
\end{corollary}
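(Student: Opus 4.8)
The plan is to deduce the corollary directly from Theorem~\ref{thm:qexp-equivar} by showing that the passage from $q$-expansions to constant terms is compatible with the $G^+_{\mf{n}}$-action. Concretely, the total constant term map $\mr{const}$ factors as the total $q$-expansion $\mr{qexp} \from M_k(\mf{n}, \C) \to Q_k(\C)$ followed by the projection $\pi \from Q_k(\C) \to C_k(\C)$ onto the $q^0$-coefficient of each summand. Since $\mr{qexp}$ is $G^+_{\mf{n}}$-equivariant by Theorem~\ref{thm:qexp-equivar}, it suffices to check that $\pi$ intertwines the $G^+_{\mf{n}}$-action on $Q_k(\C)$ with the induced action on $C_k(\C)$.

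First I would verify the equivariance of $\pi$. Recall from \S\ref{subsubsec:action-q} that $[N] \in G^+_{\mf{n}}$ acts on $Q_k(\C)$ by sending the $[C]$-summand to the $[C\tensor N^{-1}]$-summand via the isomorphism $Q_{C\tensor N^{-1},k}(\C) \isom Q_{C,k}(\C)$ built from multiplication by $|N_{F/\Q}(N)|^k$ together with the isomorphism $M_C \isom M_{C \tensor N^{-1}}$ of $\cO_F$-modules with positivity. This latter isomorphism matches $M_C^+ \cup \{0\}$ with $M_{C\tensor N^{-1}}^+ \cup \{0\}$ and in particular sends $0 \mapsto 0$, so the grading of $q$-expansions by exponents is preserved and the $q^0$-coefficient of the image equals the image of the $q^0$-coefficient. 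Thus $\pi$ is $G^+_{\mf{n}}$-equivariant, the induced action on $C_k(\C)$ permutes summands by $[C] \mapsto [C \tensor N^{-1}]$ as claimed, and $G^+_{\mf{n}}$-equivariance of $\mr{const}$ follows.

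It remains to identify the action of the stabilizer $\Stab_{[C]}$ on the summand $C_{[C],k}(\C) = (N_{F/\Q}(\mf{a})^{\tensor -k} \tensor \C)^{\Aut(C)}$, and this is the step I expect to require the most care. For $[N] \in \Stab_{[C]}$, fix an isomorphism $\varphi \from C \tensor N^{-1} \isom C$ witnessing $[C \tensor N^{-1}] = [C]$; the resulting endomorphism of $C_{[C],k}(\C)$ is the scalar obtained by combining the normalization by $|N_{F/\Q}(N)|^k$ with the map on $N_{F/\Q}(\mf{a})^{\tensor -k}$ induced by $\mf{a}_{C\tensor N^{-1}} = \mf{a} \tensor N^{-1} \nmisom{\varphi} \mf{a}$. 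I would compute this scalar by comparing two trivializations of the rank-one module $N_{F/\Q}(N)$: the canonical positive generator $|N_{F/\Q}(N)|$ coming from the fractional ideal $N$, and the trivialization induced by $\varphi$ through $N_{F/\Q}(\mf{a} \tensor N^{-1}) \isom N_{F/\Q}(\mf{a})$. The factor $|N_{F/\Q}(N)|^k$ cancels the positive real scaling, so only the sign of the ratio survives; by the very definition of $\sgn_C$ in \S\ref{subsec:act-on-cusp} this sign is $\sgn_C([N])$, and raising to the $k$-th power yields the character $\sgn^k_{[C]}$.

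The main subtlety lies in pinning down that surviving sign. Here one uses that $\varphi$ is an isomorphism of cusp labels and so respects positivity, so that the discrepancy between the canonical positivity of $N$ and the one transported through $\varphi$ is governed precisely by the class of $[N]$ in $\ker(\Cl^+(\cO_F) \to \Cl(\cO_F))$, which is what $\sgn_C$ records. Note that $\psi_C$ plays no role here, since at the constant term the module $M_C^+$ does not appear and only the positivity of $N_{F/\Q}(\mf{a})$ is relevant. Finally, independence of the choice of $\varphi$ is automatic: two choices differ by an element of $\Aut(C)$, which acts trivially on $C_{[C],k}(\C)$ by admissibility (and when $[C]$ is not admissible the summand vanishes, so there is nothing to check).
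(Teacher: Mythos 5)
Your proposal is correct and follows essentially the same route as the paper, which likewise deduces the corollary from Theorem~\ref{thm:qexp-equivar} together with the observation that the $G^+_{\mf{n}}$-action on $Q_k(\C)$ induces one on $C_k(\C)$ via the $q^0$-coefficient projection. You supply more detail than the paper does on identifying the stabilizer's action with $\sgn^k_{[C]}$ (the cancellation of $|N_{F/\Q}(N)|^k$ against the trivialization of $N_{F/\Q}(\mf{a}\tensor N^{-1})\isom N_{F/\Q}(\mf{a})$ leaving only the sign), and that computation is accurate.
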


\subsubsection{Comparison with \cite{dka}}
We explain how these definitions of constant terms and $q$-expansions compare to those of \cite{dka}. This comparison is the only reason we needed to introduce classical HMFs $(f_{\lambda} \from \mc{H}^n \to \C)$, the unitary normalization, and the evaluation at the standard component labels.  Recall that $M_k(\mf{n}, \C) = M^{\mr{hol}}_k(\mf{n}, \C)$, and let $f^{\mr{hol}}$ denote an element of $M^{\mr{hol}}_k(\mf{n}, \C)$, corresponding to $f^{\mr{uni}}$ under the isomorphism $M^{\mr{hol}}_k(\mf{n}, \C) \isom M^{\mr{uni}}_k(\mf{n}, \C).$

\begin{prop}\

\begin{enumerate}
\item The normalized Fourier expansion of $|N_{F/\Q}(\mf{d})|^{-k/2} f^{\mr{uni}}$ at $\mc{A} = (\lambda, I)$ (in the sense of \cite{dka} \S 2.3) equals the $q$-expansion $\mr{qexp}_{C_{\lambda}}(f^{\mr{hol}})$, where $C_{\lambda} := (\alpha_{\lambda}, L_{\infty})$.
\item The normalized constant term of $|N_{F/\Q}(\mf{d})|^{-k/2} f^{\mr{uni}}$ at the pair $\mc{A} = (\lambda, A)$ (in the sense of \cite{dka} \S 2.2) equals $|N_{F/\Q}(\mf{a})|^{k} \mr{const}_{C_{(A, \lambda)}}(f^{\mr{hol}}) \in \C$, where $C_{(A, \lambda)} := (A^{-1} \alpha_{\lambda}, L_{\infty})$.
\end{enumerate}
\end{prop}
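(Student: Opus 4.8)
The plan is to prove both statements by reducing each to an explicit comparison of Fourier coefficients and then tracking every normalization factor; the two conventions differ only by a scalar depending on $\lambda$ (and, in the second part, on $A$), so the entire content is to pin down that scalar. I would begin by unwinding our side. For the standard cusp label $C_\lambda = (\alpha_\lambda, L_\infty)$ one computes $\mf{a} = H_\lambda \cap L_\infty \isom \cO_F$ and $\mf{b} = H_\lambda/\mf{a} \isom \mf{t}_\lambda \mf{d}$, so that the index module $M_{C_\lambda} = \Hom(M^*_{C_\lambda}, \Z)$ contains $\mf{t}_\lambda$, and crucially $N_{F/\Q}(\mf{a})^{\tensor -k} = N_{F/\Q}(\cO_F)^{\tensor -k} \isom \Z$ \emph{canonically}. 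By the construction in \S\ref{subsec:q-cusplabel}, $\mr{qexp}_{C_\lambda}(f^{\mr{hol}}) = 1 \tensor f_{\alpha_\lambda}$, and by Proposition~\ref{prop:slash-op}(1) we have $f_{\alpha_\lambda} = f_\lambda$, the classical $\lambda$-component of $f^{\mr{hol}}$. Thus, after the canonical trivialization, the right-hand side of (1) is the honest scalar power series $\sum_{b \in M_{C_\lambda}^+ \cup \{0\}} a_b\, q^b$ with $q^b = e^{2\pi i \tr(b z)}$, whose coefficients $a_b$ are those of $f_\lambda$.

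Next I would recall the normalized Fourier expansion of \cite{dka} \S 2.3 and rewrite it in terms of the same coefficients $a_b$. Since $f^{\mr{uni}}$ corresponds under the isomorphism of \S\ref{subsec:classical-hmf} to the tuple $(|N_{F/\Q}(\mf{t}_\lambda \mf{d})|^{k/2} f_\lambda)$, the form $|N_{F/\Q}(\mf{d})|^{-k/2} f^{\mr{uni}}$ has $\lambda$-component $|N_{F/\Q}(\mf{t}_\lambda)|^{k/2} f_\lambda$. Part (1) therefore amounts to verifying that the normalization prescribed in \cite{dka}, applied to $|N_{F/\Q}(\mf{t}_\lambda)|^{k/2} f_\lambda$, removes \emph{precisely} the factor $|N_{F/\Q}(\mf{t}_\lambda)|^{k/2}$ and recovers the bare $a_b$. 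This is a term-by-term check once the two indexings of the exponents — ours by $b \in \mf{t}_\lambda$, theirs by the integral ideals $b\,\mf{t}_\lambda \mf{d}$ — are matched.

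For part (2), the normalized constant term at $\mc{A} = (\lambda, A)$ is the $q^0$-coefficient at the cusp obtained from $C_\lambda$ by the polarization datum $A$, which in our language is the image of $C_\lambda$ under a diamond operator in $G^+_{\mf{n}}$, namely $C_{(A,\lambda)} = (A^{-1}\alpha_\lambda, L_\infty) = C_\lambda \tensor A^{-1}$. For this cusp one has $\mf{a} = (H_\lambda \tensor A^{-1}) \cap L_\infty \isom A^{-1}$, whence $|N_{F/\Q}(\mf{a})|^k = |N_{F/\Q}(A)|^{-k}$. I would then invoke the explicit formula for the action of diamond operators on $q$-expansions from \S\ref{subsubsec:action-q} — under which the isomorphism $Q_{C_\lambda \tensor A^{-1}, k}(\C) \isom Q_{C_\lambda, k}(\C)$ is multiplication by $|N_{F/\Q}(A)|^k$ — and combine it with part (1) to identify the normalized constant term with $|N_{F/\Q}(\mf{a})|^k\, \mr{const}_{C_{(A,\lambda)}}(f^{\mr{hol}})$.

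The main obstacle is the bookkeeping of the several normalization factors, and in particular verifying that the trivialization of $N_{F/\Q}(\mf{a})^{\tensor -k}$ by the canonical generator $|N_{F/\Q}(\mf{a})|$ — available precisely because $\mf{a}$ is an honest fractional ideal for these standard $L_\infty$-cusps — is the one implicit in \cite{dka}'s normalized Fourier expansion. The half-integral powers $|N_{F/\Q}(\mf{t}_\lambda \mf{d})|^{k/2}$ and $|N_{F/\Q}(\mf{d})|^{-k/2}$ coming from the holomorphic-to-unitary twist must be reconciled against the integral powers of $N_{F/\Q}(\mf{a})$ appearing in our $q$-expansion modules and against \cite{dka}'s conventions; keeping the $\lambda$- and $A$-dependence straight, especially the direction of the isomorphism in \S\ref{subsubsec:action-q} that turns multiplication by $|N_{F/\Q}(A)|^k$ into the stated factor $|N_{F/\Q}(\mf{a})|^k = |N_{F/\Q}(A)|^{-k}$, is where the care is required.
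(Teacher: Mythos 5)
Part (1) of your proposal is essentially the paper's argument: the whole content is that the external twist $|N_{F/\Q}(\mf{d})|^{-k/2}$ cancels against the factor $|N_{F/\Q}(\mf{t}_\lambda\mf{d})|^{k/2}$ built into the isomorphism $M_k^{\mr{hol}}(\mf{n},\C)\isom M_k^{\mr{uni}}(\mf{n},\C)$, leaving exactly the normalization of \cite{dka}; your identification of $\mf{a}=\cO_F$, of $f_{\alpha_\lambda}$ with the classical component $f_\lambda$, and of the exponent lattice is correct and matches what the paper does.

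Part (2), however, rests on a misreading of the pair $\mc{A}=(\lambda,A)$. In \cite{dka} \S 2.2, and consistently in this paper (see Remark~\ref{rmk:normalizing-ideal} and the proof of Theorem~\ref{t:auxform}, where $\det(A)$ and the matrix entries $a,c$ of $A$ appear explicitly), $A=\left(\begin{smallmatrix}a&b\\ c&d\end{smallmatrix}\right)\in\GL_2^+(F)$ is a \emph{matrix} representing the cusp $A\cdot\infty$ of the $\lambda$-component, and $C_{(A,\lambda)}=(A^{-1}\alpha_\lambda,L_\infty)$ is the translate of the standard cusp label under the $\GL_2^+(F)$-action of \S\ref{subsec:eval-at-cmpt}. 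It is \emph{not} the diamond-operator translate $C_\lambda\tensor A^{-1}$ of a fractional ideal $A$: the matrix action replaces the lattice $H_\lambda$ by $A^t(H_\lambda)$ inside the fixed $F\oplus F$ while keeping $L_\infty$, whereas $\tensor N^{-1}$ tensors $H$, $L$ and the level structure and in general moves between components. Consequently your computation $\mf{a}\isom A^{-1}$ and your appeal to the rescaling $Q_{C_\lambda\tensor A^{-1},k}(\C)\isom Q_{C_\lambda,k}(\C)$ from \S\ref{subsubsec:action-q} concern the wrong cusp and produce the wrong factor: the correct ideal is $\mf{a}=A^t(H_\lambda)\cap L_\infty=\det(A)\,(a+c\,\mf{t}_\lambda^{-1}\mf{d}^{-1})^{-1}$, computed in Lemma~\ref{lem:normalize}, which is not $|N_{F/\Q}(A)|^{-1}$ in any sense for a general matrix $A$. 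The mechanism the paper uses, and which your argument is missing, is precisely Lemma~\ref{lem:normalize}: it expresses $f_{A^{-1}\cdot\alpha_\lambda,norm}$ in terms of $f_{\alpha_\lambda,norm}|_{k,A}$ via the normalizing ideal $J_{\alpha_\lambda,A}$, and combining this with the discrepancy $N_{F/\Q}(\det A)^{k/2}$ between the unitary slash $||_{k,A}$ (used in \cite{dka} to define constant terms at $A\cdot\infty$) and the holomorphic slash $|_{k,A}$, together with part (1), yields the stated factor $|N_{F/\Q}(\mf{a})|^{k}$. Without that lemma your route cannot reach the general cusp $(\lambda,A)$.
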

\begin{proof}
(1) is clear - the normalization on the unitary side arises from the factors in the isomorphism $M^{\mr{uni}}_k(\mf{n}, \C) \isom M^{\mr{hol}}_k(\mf{n}, \C)$.

(2) Taking into account the bijection $M_k^{\mr{uni}}(\mf{n}, \C) \isom M_k^{\mr{hol}}(\mf{n}, \C)$ and the difference between the slash operators $||_{k,A}$ and $|_{k,A}$, we see that the normalizing factor on the unitary side matches with the normalizing factor on the holomorphic side computed in Lemma \ref{lem:normalize}.
\end{proof}

\section{Algebraic Hilbert modular varieties}\label{sec:alg-hmv}

Having reviewed the analytic theory of Hilbert modular forms for the group $G$, we now describe the algebraic theory, using the algebraic theory for $G^*$ due to Rapoport \cite{rapoport} and Deligne--Pappas \cite{deligne-pappas}. We will make use of many standard facts about abelian schemes; see \cite{mumford} for reference.

\subsection{Real multiplication abelian schemes}\label{subsec:rm-ab}

\begin{definition}
An \textbf{RM abelian scheme} (over $S$) is a pair $(A, \iota)$, for $A \to S$ an abelian scheme of relative dimension $[F : \Q]$, and $\iota \from \cO_F \to \End_S(A)$ a group homomorphism.
\end{definition}

Given an invertible $\cO_F$-module $N$, and an RM abelian scheme $A$, there is another RM abelian scheme $A \tensor_{\cO_F} N$, whose construction (\cite{deligne-pappas} 2.6) we now recall. Given an $S$-scheme $T$, define $(A \tensor_{\cO_F} N)^{pre}(T) := A(T) \tensor_{\cO_F} N$. Then $A \tensor_{\cO_F} N$ is the sheafification of $(A \tensor_{\cO_F} N)^{pre}$, considered as a pre-sheaf on the big \'etale site of $S$.

 \begin{remark}
Over $\C$, $A(\C) = \Lie(A)/H$ and $(A \tensor_{\cO_F} N)(\C) = (\Lie(A) \tensor_{\cO_F} N)/(H \tensor_{\cO_F} N)$.
\end{remark} 

We consider $\Lie_S(A)$ as an $\cO_F$-module, via $\iota$. This allows us to view $\Lie_S(A)$ as an $(\cO_F \tensor \cO_S)$-module. There are natural isomorphisms \[ \Lie_S(A \tensor N) \isom \Lie_S(A) \tensor_{\cO_F \tensor \cO_S} N, \qquad (A \tensor N)[\mf{a}] \isom A[\mf{a}] \tensor N,\] for an ideal $\mf{a} \subset \cO_F$. Recall that \[ A[\mf{a}] := \cap_{\alpha \in \mf{a}} \ker(\iota(\alpha) \from A \to A). \]

Let $(\mc{P}, \mc{P}^+)$ be an invertible $\cO_F$-module with positivity.  
\begin{definition}[\cite{deligne-pappas}]
An RM abelian scheme $A$ is \textbf{$\mc{P}$-polarized} if it is equipped with an isomorphism \[ \varphi \from (\mc{P}, \mc{P^+}) \isom (\Hom_{\cO_F}(A, A^{\dual})_{\mr{sym. isog.} }, \cO_F\text{-polarizations}) \] of $\cO_F$-modules with positivity, such that the induced map $A \tensor \mc{P} \to A^{\dual}$ is an isomorphism.
\end{definition}

\begin{definition}
An RM abelian scheme is \textbf{RM-polarizable} if it admits a $\mc{P}$-polarization for some $(\mc{P}, \mc{P}^+)$.
\end{definition}

We recall some fundamental properties:

\begin{theorem}[\cite{rapoport} 1.25]
Any RM complex torus is the analytification of an RM abelian variety.
\end{theorem}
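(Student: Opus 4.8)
The plan is to algebraize $T$ by producing a positive-definite Riemann form compatible with the $\cO_F$-structure. Write $T = V/H$ with $V = \C \tensor_\Z \cO_F$, where $H$ is a locally free rank $2$ $\cO_F$-module with $H \tensor_\Z \R \isom V$ as $\R \tensor_\Z \cO_F$-modules (the Remark following the definition of RM torus), and let $J$ be the resulting complex structure on $V_\R = H \tensor_\Z \R$. By the classical criterion (see \cite{mumford}), it suffices to construct a nondegenerate alternating form $E \from H \times H \to \Z$ with $E(Jx,Jy) = E(x,y)$ and $E(Jx,x) > 0$ for $x \neq 0$; such a $T$ is then the analytification of an abelian variety, and since the endomorphisms $\iota(a)$ are holomorphic, GAGA makes the $\cO_F$-action algebraic, producing the desired RM abelian variety.

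First I would form the canonical $\cO_F$-bilinear alternating pairing $\psi \from H \times H \to \det_{\cO_F}(H)$. Recall from the discussion of component labels that $\det_{\cO_F}(H)$ carries a canonical positivity $\det_{\cO_F}(H)^+_\R$ induced by the orientation $i \wedge 1$ of $\C$. Fixing an $\cO_F$-linear isomorphism of $\det_{\cO_F}(H)$ with a fractional ideal $\mf{c} \subset F$, I would set $E(x,y) := \Tr_{F/\Q}\!\big(\delta\,\psi(x,y)\big)$ for a scalar $\delta \in F^*$ to be chosen. Integrality $E(H,H) \subset \Z$ is the condition $\delta \in \mf{d}^{-1}\mf{c}^{-1}$; nondegeneracy over $\Q$ is automatic since both $\psi$ and the trace pairing are nondegenerate; and $E(ax,y) = E(x,ay)$ for $a \in \cO_F$ holds because $\psi$ is $\cO_F$-bilinear, so $E$ is automatically a compatible form.

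The heart of the argument, and the main obstacle, is the positivity condition. Using $V = \prod_{\sigma} \C$ over the real embeddings $\sigma$ of $F$, one computes componentwise that $\psi(Jx,x) = \sum_\sigma |x_\sigma|^2\,(i\wedge 1)$ lies strictly in the cone $\det_{\cO_F}(H)^+_\R$ for $x \neq 0$ (precisely because $J$ is multiplication by $i$ and the positivity was defined by $i \wedge 1 > 0$), and likewise that $\psi(Jx,Jy) = \psi(x,y)$ since multiplication by $i$ preserves the area form on each factor; the latter gives $E(Jx,Jy) = E(x,y)$ for free. It then remains to choose $\delta$ so that $\Tr_{F/\Q}(\delta\,\ell) > 0$ for every nonzero $\ell$ in that cone, i.e. so that $\delta$ has the prescribed sign at each real place relative to the sign pattern that the isomorphism $\det_{\cO_F}(H) \isom \mf{c}$ imposes on $\mf{c}_\R = F \tensor_\Z \R$. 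Since $\mf{d}^{-1}\mf{c}^{-1}$ is a full lattice in $F \tensor_\Z \R \isom \R^{[F:\Q]}$, it contains points in every open orthant, so such a $\delta$ exists; this elementary lattice fact, together with the bookkeeping relating $J$, the trace, and the orientation-defined positivity, is the only genuinely delicate point. With $E$ so chosen, $(V,H,E)$ is a polarized complex torus, whence $T$ is projective and, carrying its algebraic $\cO_F$-action, is the analytification of an RM abelian variety.
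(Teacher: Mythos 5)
Your argument is correct and is essentially the standard one: the paper itself gives no proof, simply citing \cite{rapoport}*{1.25}, where the torus is algebraized by exactly this construction of a Riemann form $\Tr_{F/\Q}\big(\delta\,(x\wedge y)\big)$ with $\delta$ a nonzero element of $\mf{d}^{-1}\mf{c}^{-1}$ lying in the appropriate open orthant of $F\tensor_{\Z}\R$. The only imprecision is that $\psi(Jx,x)$ lies \emph{strictly} inside the positive cone only when every component $x_\sigma$ is nonzero; but since you take $\delta$ strictly positive at every place for the relevant sign pattern, one has $\Tr_{F/\Q}(\delta\,\psi(Jx,x))=\sum_\sigma \delta_\sigma |x_\sigma|^2>0$ for all $x\neq 0$ regardless, so nothing is lost.
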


\begin{theorem}[\cite{rapoport} 1.25]
Any RM abelian scheme $A/\Spec(\C)$ is RM-polarizable.
\end{theorem}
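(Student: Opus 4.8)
The plan is to pass to the analytic uniformization and reduce the whole statement to linear algebra over $F\otimes\R$. Write $A=V/\Lambda$ with $V=\Lie(A)$ and $\Lambda=H_1(A,\Z)$; then $\Lambda$ is finitely generated and torsion-free over the Dedekind domain $\cO_F$ of $\Z$-rank $2[F:\Q]$, hence projective of rank $2$, and $V=\Lambda\otimes_\Z\R$ carries a complex structure $J$ commuting with the $\cO_F$-action, so $V$ is a module over $F\otimes_\Q\C$. First I would record that, because $F$ is totally real, $V$ is in fact \emph{free of rank one} over $F\otimes\C$: comparing $H_1(A,\C)=V^{1,0}\oplus V^{0,1}$ (free of rank $2$ over $F\otimes\C$, since $H_1(A,\Q)$ is free of rank $2$ over $F$) with the conjugation $V^{0,1}=\overline{V^{1,0}}$ forces each $\sigma$-isotypic piece of $V=V^{1,0}$ to be one-dimensional. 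This freeness is what makes all the subsequent rank computations clean, and it is automatic here even though the definition of RM abelian scheme imposes no Hodge condition.

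Next I would set up the dictionary for $M:=\Hom_{\cO_F}(A,A^\vee)_{\mathrm{sym}}$. By the Riemann-form description, symmetric homomorphisms $A\to A^\vee$ correspond to integral alternating forms $E\colon\Lambda\times\Lambda\to\Z$ with $E(Jx,Jy)=E(x,y)$, and $\cO_F$-linearity translates into the adjunction $E(ax,y)=E(x,ay)$ for $a\in\cO_F$; the polarizations are those for which the Hermitian form $H$ with $\imag H=E$ is positive definite. Using the freeness of $V$ over $F\otimes\C$, an $F$-adjoint Hermitian form is $H(x,y)=\sum_\sigma c_\sigma\,x_\sigma\overline{y_\sigma}$ with $(c_\sigma)\in\prod_\sigma\R\cong F\otimes\R$, on which $\cO_F$ acts by $a\cdot(c_\sigma)=(\sigma(a)c_\sigma)$. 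Hence $M\otimes_\Z\R\cong F\otimes\R$ is free of rank one over $F\otimes\R$, the positive-definite forms are exactly the totally positive cone $(F\otimes\R)^+$, and $M$ is a finitely generated torsion-free $\cO_F$-module of rank one, i.e.\ an invertible $\cO_F$-module. The polarization cone is then a genuine positivity, and it is nonempty: a totally positive element of the rank-one $F$-line $M\otimes\Q$ exists, and clearing denominators produces an $\cO_F$-linear polarization (equivalently, the full lattice $M$ meets the open orthant $(F\otimes\R)^+$). This shows $(M,M^+_\R)$ is an invertible $\cO_F$-module with positivity whose positivity cone is the prescribed one.

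It remains to prove that the evaluation map $A\otimes_{\cO_F}M\to A^\vee$ is an \emph{isomorphism}, not merely an isogeny, and this is the step I expect to be the main obstacle, since it requires exact lattice bookkeeping rather than a rational computation. The plan is to use that the trace pairing identifies $\Hom_\Z(\cO_F,\Z)\cong\fd^{-1}$, so that writing $E=\Tr_{F/\Q}\circ\,\mathcal E$ realizes $M\cong\Hom_{\cO_F}(\wedge^2_{\cO_F}\Lambda,\fd^{-1})=(\wedge^2_{\cO_F}\Lambda)^{-1}\fd^{-1}$, where $\mathcal E$ ranges over $\cO_F$-bilinear alternating forms valued in $\fd^{-1}$; here the needed $J$-compatibility is automatic, because $J$ is $F\otimes\R$-linear and acts by determinant $+1$ on the rank-one module $\wedge^2_{F\otimes\R}V$, so no condition is lost in passing to $\mathcal E$. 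On the other side, $H_1(A^\vee,\Z)=\Hom_\Z(\Lambda,\Z)\cong\Hom_{\cO_F}(\Lambda,\fd^{-1})=\Lambda^\vee\fd^{-1}$ as $\cO_F$-modules, where $\Lambda^\vee:=\Hom_{\cO_F}(\Lambda,\cO_F)$, and for a rank-two projective module the canonical pairing $\Lambda\otimes\Lambda\to\wedge^2_{\cO_F}\Lambda$ gives $\Lambda^\vee\cong\Lambda\otimes(\wedge^2_{\cO_F}\Lambda)^{-1}$. Combining these, $H_1(A^\vee)\cong\Lambda\otimes_{\cO_F}M=H_1(A\otimes M)$, and one checks that the evaluation map induces precisely this isomorphism on $H_1$; since it is also holomorphic, it is an isomorphism of complex tori, hence of abelian varieties. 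Taking $\mathcal P=M$ and $\varphi=\mathrm{id}$ then exhibits $A$ as $\mathcal P$-polarized, so $A$ is RM-polarizable. The one point to verify with care is the compatibility of all these canonical identifications with the complex structures (equivalently with the Hodge filtrations on both sides), which is exactly what upgrades the $H_1$-level identity to an isomorphism of complex tori.
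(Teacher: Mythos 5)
Your argument is correct and takes essentially the same route as the source: the paper gives no proof of this statement (it simply cites Rapoport 1.25), and the standard argument there is precisely your analytic computation — freeness of $\Lie(A)$ over $F\otimes\C$ (automatic in characteristic zero because $F$ is totally real), identification of the symmetric $\cO_F$-linear maps $A\to A^\vee$ with the invertible module $(\wedge^2_{\cO_F}\Lambda)^{-1}\fd^{-1}$ whose positive-definite cone gives the positivity, and the lattice bookkeeping showing $A\otimes_{\cO_F}M\to A^\vee$ is an isomorphism. I see no gaps; in particular your observation that the $J$-compatibility of an $F$-adjoint alternating form is automatic because $\det_{F\otimes\R}(J)=1$ on the rank-two module $V$ is exactly the right way to see that the Néron–Severi computation reduces to pure $\cO_F$-linear algebra.
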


\begin{prop}[\cite{rapoport} 1.17]\label{prop:polarization-sheaf}
Given an RM abelian scheme $A/S$, the functor 
\[ (Sch/S)^{op} \to Set,  \qquad T \mapsto \{ \text{RM polarizations of } A_T \} \] is a sheaf on the big \'etale site of $S$. On each connected component of $S$, it is either empty or an \'etale $U^+$-torsor. 
\end{prop}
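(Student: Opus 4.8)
The plan is to establish the three assertions in order: that the functor is an étale sheaf, that where nonempty it is a pseudo-torsor under $U^+$, and finally that the ``nonempty'' locus is open and closed, yielding the dichotomy on connected components.

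\textbf{Sheaf property.} First I would note that the presheaf $T \mapsto \Hom_{\cO_F}(A_T, A_T^\dual)$ is already an étale (indeed fppf) sheaf: homomorphisms of abelian schemes glue, and $\cO_F$-linearity is stable under base change and checkable locally. The conditions cutting out an RM polarization inside this sheaf --- symmetry, positivity, and the requirement that the induced map $A_T \tensor \cP \to A_T^\dual$ be an isomorphism --- are each étale-local on the base: symmetry is a closed condition, being an isomorphism of abelian schemes is representable, and positivity of a symmetric homomorphism may be tested on geometric fibres. Hence the subfunctor of RM polarizations is again a sheaf on the big étale site of $S$.

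\textbf{Pseudo-torsor structure.} Next I would let $U^+$ act via $u \cdot \lambda := \lambda \circ \iota(u)$; this manifestly preserves symmetry, $\cO_F$-linearity, positivity, and the isomorphism condition. The key linear-algebra input is that the module $\cM := \Hom_{\cO_F}(A_T, A_T^\dual)_{\mr{sym.\,isog.}}$ is, étale-locally on $T$, an invertible $\cO_F$-module (here using that $\cO_F$ is the maximal order), so that $\cM \tensor_{\cO_F} F$ is one-dimensional over $F$. Consequently any two RM polarizations satisfy $\lambda_2 = c \cdot \lambda_1$ for a unique $c \in F^*$; the requirement that both $\lambda_i$ induce isomorphisms $A_T \tensor \cP \isom A_T^\dual$ forces $c \in \cO_F^*$, and comparing the two positivities forces $c$ to be totally positive, i.e. $c \in U^+$. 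Freeness is immediate, since a nontrivial totally positive unit moves any polarization. Thus the sheaf of RM polarizations is a pseudo-torsor under $U^+$.

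\textbf{Dichotomy on connected components.} A pseudo-torsor under $U^+$ is an étale $U^+$-torsor exactly when it admits sections étale-locally, and is the empty sheaf otherwise; so it remains to show that the locus $Z \subset |S|$ over which RM polarizations exist étale-locally is open and closed. For openness I would spread out: at a geometric point $\bar s$ a polarization exists (over $\C$ by the theorem of \cite{rapoport} quoted above, and hence over any field by Galois descent of the polarization cone), and such a polarization, being a section of the separated unramified $S$-scheme $\underline{\Hom}_{\cO_F}(A, A^\dual)$ subject to open conditions, extends over an étale neighborhood of $\bar s$. For closedness I would use that polarizations specialize: over a trait mapping to $S$, an RM polarization of the generic fibre extends over the whole trait by properness of $A^\dual$ and the Néron mapping property, and its special value is again a polarization; thus $Z$ is stable under specialization. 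Being open (hence constructible) and stable under specialization, $Z$ is closed, so it is a union of connected components, which is the claim.

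The main obstacle is the last step, the local existence of polarizations: one must pass from the transcendental input over $\C$ to an algebraic spreading-out over an étale neighborhood and control specialization, so that ``polarizable'' becomes a clopen condition. By contrast, the sheaf and pseudo-torsor assertions are essentially formal once the invertibility of the symmetric $\cO_F$-homomorphism module $\cM$ is in hand.
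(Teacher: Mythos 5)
The paper offers no proof of this proposition --- it is quoted from Rapoport --- so I am comparing your argument against the standard one. Your first two steps are sound: the sheaf property is indeed formal, and the pseudo-torsor structure does reduce to the rank-one projectivity of $\Hom_{\cO_F}(A_T,A_T^\dual)^{\mathrm{sym}}$ over $\cO_F$, which you correctly flag as the real input there.

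The gap is in the dichotomy step. The claim that a polarization over a geometric point $\bar s$, ``being a section of the separated unramified $S$-scheme $\underline{\Hom}_{\cO_F}(A,A^\dual)$ subject to open conditions, extends over an \'etale neighborhood of $\bar s$,'' is false. Unramifiedness does not let sections spread out from a point; that requires \'etaleness, i.e.\ flatness, which $\underline{\Hom}$ of abelian schemes notoriously lacks: homomorphisms, and in particular polarizations, need not lift even through a square-zero thickening. If your spreading-out claim were valid it would prove $Z=S$ always, i.e.\ that every RM abelian scheme is \'etale-locally RM-polarizable --- but this fails over non-reduced bases at primes dividing $\mf{d}$, where there are deformations of $(A_0,\iota)$ to which no RM polarization of $A_0$ lifts; this is precisely why Deligne--Pappas build the polarization into the moduli problem and why the proposition must allow the ``empty'' alternative. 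The closedness argument has the same defect: the trait argument produces an RM polarization over the closed point of the trait (hence over a field extension of $\kappa(s')$), but membership of $s'$ in $Z$ requires a section over an \'etale neighborhood of $s'$ \emph{in $S$}, which the trait is not. (A smaller issue: existence over arbitrary fields of positive characteristic does not follow from the quoted $\C$-statement plus Galois descent; it needs the averaging/idempotent argument of Rapoport \S1.)

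Note that openness of $Z$ is automatic --- it is by definition a union of images of \'etale morphisms --- so your effort is concentrated on the trivial half. The missing ingredient for the hard half is infinitesimal: one must control the lifting of an RM polarization through square-zero thickenings (via Grothendieck--Messing, the obstruction is the isotropy of the Hodge filtration for the pairing attached to $\lambda$, automatic when $\Lie(A)$ is locally free of rank one over $\cO_F\tensor\cO_S$ and delicate otherwise). Combined with extension over traits and existence over fields, that is what yields the clopen dichotomy.
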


As all \'etale $U^+$-torsors over a normal noetherian ring are trivial:
\begin{corollary}\label{cor:auto-polar}
Any RM abelian scheme over a normal noetherian ring $R \subset \C$ is RM-polarizable.
\end{corollary}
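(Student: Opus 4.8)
The plan is to deduce the corollary directly from Proposition~\ref{prop:polarization-sheaf}, using the embedding $R \subset \C$ to guarantee non-emptiness and the normality of $R$ to guarantee triviality of the resulting torsor. First I would observe that, being a subring of the field $\C$, the ring $R$ is an integral domain, so $\Spec(R)$ is irreducible and in particular connected. By Proposition~\ref{prop:polarization-sheaf} the \'etale sheaf $\mathcal{P}ol_A$ of RM polarizations of $A$ is therefore, over the single connected component $\Spec(R)$, either empty or an \'etale $U^+$-torsor.

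The next step is to exclude the empty alternative, and this is where the hypothesis $R \subset \C$ is used in an essential way. The inclusion $R \hookrightarrow \C$ determines a morphism $x \from \Spec(\C) \to \Spec(R)$, and the base change $x^* A = A_{\C}$ is an RM abelian scheme over $\C$. By the theorem (Rapoport~1.25) that every RM abelian scheme over $\Spec(\C)$ is RM-polarizable, the sheaf $\mathcal{P}ol_A$ acquires a $\C$-point after pulling back along $x$; in particular $\mathcal{P}ol_A$ is non-empty. By the dichotomy of the previous paragraph it is thus an \'etale $U^+$-torsor on $\Spec(R)$.

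Finally, since $R$ is a normal noetherian domain, every \'etale $U^+$-torsor over $\Spec(R)$ is trivial, as invoked immediately before the statement: concretely, $U^+$ is a finitely generated free abelian group (the totally positive units, and hence the subgroup $\det(\Gamma_{1,\lambda}(\mathfrak{n}))$, are torsion-free), and $H^1_{\text{\'et}}(\Spec(R), U^+)$ vanishes because the profinite \'etale fundamental group admits no nontrivial continuous homomorphism to a free abelian group. Consequently $\mathcal{P}ol_A$ has a global section over $\Spec(R)$, which is precisely an RM polarization of $A$, so $A$ is RM-polarizable.

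I expect the only genuinely substantive point to be the non-emptiness step: one must produce a geometric point at which $A$ is polarizable in order to know the torsor is non-empty, and it is exactly the embedding into $\C$ together with the complex-analytic existence theorem that supplies this. The remaining steps are formal, being a direct application of the torsor dichotomy of Proposition~\ref{prop:polarization-sheaf} and the vanishing of $U^+$-torsors over a normal noetherian base.
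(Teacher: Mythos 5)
Your proof is correct and follows essentially the same route as the paper, which deduces the corollary from Proposition~\ref{prop:polarization-sheaf} together with the triviality of \'etale $U^+$-torsors over a normal noetherian base. The paper leaves the non-emptiness step implicit; you correctly supply it via the theorem that RM abelian schemes over $\C$ are RM-polarizable, which is exactly the role the hypothesis $R \subset \C$ plays.
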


Given an invertible $\cO_F$-module $N$, and an $\mc{P}$-polarized RM abelian scheme $A$, the RM abelian scheme $A \tensor N$ is canonically $\mc{P} \tensor N^{\tensor -2}$-polarized, using the canonical positivity structure on $N^{\tensor - 2}$.

\subsection{The moduli problem for $G^*$}\label{subsec:G*-moduli}
 
Let $(\mc{P}, \mc{P}^+)$ be an $\cO_F$-module with positivity. There is a stack $\wt{\M}_{\mc{P}}$ such that the objects of the groupoid $\wt{\M}_{\mc{P}}(S)$ are triples $(A, \varphi, x)$, where
\begin{enumerate}
\item $A \to S$ is an RM abelian scheme;
\item $\varphi$ is a $\mc{P}$-polarization of $A$;
\item $x \from (\underline{\cO_F/\mf{n}\cO_F})(1) \into A[\mf{n}]$ is an embedding of $\cO_F$-module schemes, where $\underline{\cO_F/\mf{n}\cO_F}$ is a constant group scheme over $S$.
\end{enumerate}
Here $G(1) := G \tensor \wh{\Z}(1)$, where $\wh{\Z}(1) = \lim \mu_n$. 

\begin{remark}
This stack may not be rigid: if $1 \equiv -1 \pmod{\mf{n}}$, the automorphism $-1 \in \Aut_S(A)$ acts as an automorphism of all triples $(A, \varphi, x)$.
\end{remark}

For any component label $\alpha$, define \[ \mc{P}_{\alpha} := \Hom_{\Z}({\det}_{\cO_F}(H), \Z) = {\det}_{\cO_F}(H)^{\tensor -1} \tensor_{\cO_F} \mf{d}^{-1}. \] This is an $\cO_F$-module with positivity $(\mc{P}_{\alpha}, \mc{P}_{\alpha}^{+})$. For the standard component label $\alpha_{\lambda}$, we have $\mc{P}_{\alpha_{\lambda}} = \mf{t}_{\lambda}^{-1} \mf{d}^{-2}$. 

\begin{theorem}[\cite{rapoport}, \cite{deligne-pappas}]\label{thm:G*-moduli}
\
\begin{enumerate}
\item $\wt{\M}_{\mc{P}}$ is a geometrically normal Deligne--Mumford stack over $\Spec(\Z)$. It is smooth over $\Spec(\Z[1/\mf{d}])$. 
\item $\wt{\M}^{\mr{an}}_{\mc{P}_{\alpha_{\lambda}}, \C} \isom Sh_{G^*, K_{\lambda}}$ (depending on a choice of isomorphism $\cO_F/\mf{n} \isom \cO_F/\mf{n}(1)$). 
\item $\wt{\M}_{\mc{P}}$ is irreducible.
\item For all primes $p$, $\wt{\M}_{\mc{P}, \F_p}$ is geometrically irreducible.
\end{enumerate}
\end{theorem}

Let $\M_{\mc{P}}$ denote the stack in the case $\mf{n} = 1$. This theorem was proven by \cite{rapoport} and \cite{deligne-pappas} for $\M_{\mc{P}}$, and the results for $\wt{\M}_{\mc{P}}$ can immediately be deduced via the \'etale morphism $\wt{\M}_{\mc{P}} \to \M_{\mc{P}}$.

\subsection{The moduli problem for $G$}\label{subsec:G-moduli}
\
The group $U^+ = \Aut(\mc{P}, \mc{P}^+)$ acts on $\wt{\M}_{\mc{P}}$ by precomposition with the polarization $\varphi$. This action of $U^+$ on $\wt{\M}_{\mc{P}, \C}^{\mr{an}} \isom Sh_{G^*, K_{\lambda}}$ agrees with the action introduced in \S\ref{subsec:shimura-var}. Motivated by the identification $Sh_{G,K} = \coprod_{\lambda \in \Cl^+(F) } Sh_{G^*, K_{\lambda}}/D_{\mf{n}}$ given in (\ref{e:shgk}), we might want to define an algebraic stack $\wt{\M}$ via
\[ \wt{\M} \ ``=" \coprod_{[\mc{P}] \in \Cl^+(F)} \wt{\M}_{\mc{P}}/D_{\mf{n}}. \] 

However, it is somewhat inconvenient to directly define an action of $D_{\mf{n}}$ on $\wt{\M}_{\mc{P}}$, as $(U_{1,\mf{n}})^2 \subset U^+$ does not actually act trivially on the groupoids $\wt{\M}_{\mc{P}}(S)$. For this reason, we will define $\wt{\M}$ via a moduli problem. We first define a stack $\wt{\M}'$. Consider, for any scheme $S$, the groupoid $\wt{\M}'(S)$ of pairs $(A, x)$ such that:
\begin{enumerate}
\item $A \to S$ is an RM abelian scheme.
\item $A$ is \'etale locally RM-polarizable.
\item $x \from (\underline{\cO_F/\mf{n}\cO_F})(1) \into A[\mf{n}]$ is an embedding of $\cO_F$-module schemes.
\end{enumerate}

This data is not rigid---every such pair $(A, x)$ has automorphisms by $\iota(U_{1,\mf{n}}) \subset \Aut_S(A)$.  We define $\wt{\M}$ to be the rigidification of $\wt{\M}'$ (see \S \ref{subsec:stacks}). 

\begin{lemma}
The categories fibered in groupoids $\widetilde{\M}'$ and $\widetilde{\M}$ are both stacks.
\end{lemma}
\begin{proof}
The main thing which must be checked is \'etale descent for the objects $(A,x) \in \widetilde{\M}'(S)$. If ``scheme" in condition (1) were replaced by ``algebraic space", this would be automatic. But it is a deep result of of Raynaud that any ``abelian algebraic space" over a scheme $S$ is in fact an abelian scheme (\cite{faltings-chai} I.1.6). We leave the remaining details to the reader.
\end{proof}

Note that $\wt{\M}'$ is not an \emph{algebraic} stack, merely a stack in the category of algebraic spaces, as its inertia groups are infinite discrete groups.

\begin{prop}\

\begin{enumerate}
\item $\wt{\M}'$ equals the stack quotient $\coprod \wt{\M}_{\mc{P}}/U^+$.
\item $\wt{\M}$ is a Deligne--Mumford stack, and the natural map $\coprod \wt{\M}_{\mc{P}} \to \wt{\M}$ is an \'etale $D_{\mf{n}}$-torsor.
\end{enumerate}
\end{prop}
\begin{remark}
To be precise, the fibers of $\coprod \wt{\M}_{\mc{P}} \to \wt{\M}$ are, \'etale locally on the base, equal to the groupoid quotient (see \S\ref{subsec:quotient-stacks}) of the set $U^+$ by the group $(U_{1,\mf{n}})^2$. This groupoid is \emph{equivalent} to the set $D_{\mf{n}} = U^+/(U_{1,\mf{n}})^2$.
\end{remark}

\begin{proof}
This essentially follows from Proposition \ref{prop:polarization-sheaf}, and we omit the details.
\end{proof}

One can also consider the ``universal RM abelian scheme" $\mc{A} \to \mc{\M}'$ defined by \[ \mc{A}(S) = \{ (A, x, y) \mid (A,x) \in \mc{\M}'(S),\ y \in A(S) \}. \] Of course, $\mc{A}$ is not actually a scheme, but this abuse of terminology is standard. 

Despite $\wt{\M'}$ and $\mc{A}$ not being \emph{algebraic} stacks, there is no difficulty in defining the analytifications $(\mc{\M}'_{\C})^{\mr{an}}$ and $(\mc{A}_{\C})^{\mr{an}}$ in the usual way, and we have:
\begin{prop}\

\begin{enumerate}
\item $(\wt{\M}'_{\C})^{\mr{an}} \isom Sh'_{G, K}$, and $\mc{A}^{\mr{an}}_{\C} \isom \mc{T}$.
\item $(\wt{\M}_{\C})^{\mr{an}} \isom Sh_{G, K}$. 
\end{enumerate}
\end{prop}
\begin{proof}
The isomorphism $(\wt{\M}_{\mc{P}_{\alpha_{\lambda}}, \C})^{\mr{an}} \isom Sh_{G^*, K_{\lambda}}$ studied in \cite{rapoport} is $U^+$-equivariant. Moreover, this isomorphism identifies the analytification of the universal $\mc{P}$-polarized RM abelian scheme with the universal $\mc{P}$-polarized RM torus. Just as $\wt{\M}' = (\coprod \wt{\M}_{\mc{P}})/U^+$, the analytic stack $Sh'_{G, K}$ is the quotient of $\coprod Sh_{G^*, K_{\lambda}}$ by $U^+$. The result immediately follows.
\end{proof}

\subsection{Elliptic points}

We discuss the closed points of the stack $\wt{\M}$ with non-trivial inertia group, the \textbf{elliptic points}. We give bounds on these inertia groups, as well as conditions on $\mf{n}$ for which they are trivial. These can be deduced from well-known results for the stacks $\wt{\M}_{\mc{P}}$, but we find it easier to prove them directly, as opposed to studying the action of $D_{\mf{n}} = U^+/(U_{1, \mf{n}})^2$ on $\wt{\M}_{\mc{P}}$.

\begin{lemma}\label{lem:cm-points}
For all sufficiently divisible $\mf{n}$, $\wt{\M}$ is an algebraic space.
\end{lemma}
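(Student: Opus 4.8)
The plan is to reduce the statement to the triviality of geometric inertia groups. Since $\wt{\M}$ is a Deligne--Mumford stack (as just shown) and such a stack is an algebraic space precisely when all of its geometric inertia groups are trivial (\cite{conrad} Thm. 2.2.5), it suffices to prove the latter for $\mf{n}$ sufficiently divisible. First I would unwind the rigidification: a geometric point is a pair $(A,x)$ over an algebraically closed field $k$, and its inertia group in $\wt{\M}$ is $\Aut(A,x)/\iota(U_{1,\mf{n}})$, where $\Aut(A,x) = \{\phi \in R^* : \phi\circ x = x\}$ is the group of $\cO_F$-linear automorphisms of $A$ fixing the level structure and $R := \End_{\cO_F}(A)$. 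So the goal is: for suitable $\mf{n}$ and every $(A,x)$, one has $\Aut(A,x) = \iota(U_{1,\mf{n}})$. Here I would record that $R \otimes \Q$ is either $F$, a CM quadratic extension $E/F$, or (only in positive characteristic) a totally definite quaternion $F$-algebra; in all cases $\iota(\cO_F^*)$ has finite index in $R^*$, and the finite quotient is represented by roots of unity lying in number fields of degree $\le 2n$ over $\Q$, so only finitely many fields $E$ and finitely many extra torsion units $\zeta$ occur. This splits the problem into a ``sign'' contribution from $\iota(\cO_F^*)$ and a ``CM'' contribution from the extra torsion units.

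For the sign contribution, an automorphism $\iota(u)$ with $u\in\cO_F^*$ fixes $x$ exactly when $u \equiv 1 \pmod{\mf{n}}$, and lies in $\iota(U_{1,\mf{n}})$ iff $u$ is moreover totally positive. I would force every unit $\equiv 1 \pmod{\mf{n}}$ to be totally positive by a Kummer/Chebotarev argument: since $(\cO_F^*)^2$ is contained in the totally positive units $(\cO_F^*)^+$, it is enough to make every such $u$ a global square. The extension $L = F(\sqrt{u}: u\in \cO_F^*)$ has $\Gal(L/F) \cong \Hom(\cO_F^*/(\cO_F^*)^2, \{\pm 1\}) \cong (\Z/2\Z)^n$, so I can choose primes $\mf{p}_1,\dots,\mf{p}_n$ whose Frobenii form a basis of this group; then a unit that is a square modulo each $\mf{p}_i$ is a global square. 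Hence once $\prod_i \mf{p}_i \mid \mf{n}$, any $u\equiv 1\pmod{\mf{n}}$ is a square, hence totally positive, so the $\iota(\cO_F^*)$-automorphisms fixing $x$ are exactly $\iota(U_{1,\mf{n}})$.

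For the CM contribution, for each of the finitely many CM fields $E/F$ that arise I would use Chebotarev to select a prime $\mf{q}_E$ of good reduction, away from the conductors and inert in $E/F$, taking two such primes of distinct residue characteristic for each $E$ so as to hit every characteristic. If $\mf{q}_E \mid \mf{n}$, then $A[\mf{q}_E]$ is a free rank-one module over the residue field $\cO_E/\mf{q}_E \cong \F_{q^2}$, and any $\zeta \in \mu_E \setminus \mu_F$ acts by multiplication by a scalar outside $\F_q$; such a map has no eigenline in $A[\mf{q}_E] \cong \F_q^2$, so it cannot preserve, let alone fix, the $\cO_F/\mf{n}$-line $x(\cO_F/\mf{n})$ modulo $\mf{q}_E$. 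Thus no extra torsion unit fixes $x$, which reduces $\Aut(A,x)$ to the $\iota(\cO_F^*)$-part already handled (the totally definite quaternionic case in characteristic $p$ is treated identically, its finite unit groups producing the same finite list of torsion elements to annihilate). Taking $\mf{n}_0 = \prod_i \mf{p}_i \cdot \prod_E \mf{q}_E$, every $\mf{n}$ divisible by $\mf{n}_0$ then has all geometric inertia groups trivial, giving the result.

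The step I expect to be the main obstacle is obtaining \emph{uniform} control of $R = \End_{\cO_F}(A)$ over all geometric points in every characteristic simultaneously: verifying that only finitely many CM fields $E$ and finitely many relevant torsion units occur independently of the point, and confirming that one fixed finite modulus $\mf{n}_0$ kills the CM torsion in all residue characteristics at once (the reason I build two inert primes of distinct characteristic into $\mf{n}_0$ per field $E$). The positive-characteristic, non-ordinary points, where $R$ may be quaternionic, require the most care in this bookkeeping, although the finiteness of their unit groups in fact makes the annihilation argument there more direct.
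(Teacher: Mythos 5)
Your overall strategy (work directly at geometric points, kill the $\cO_F^*$-part by forcing units $\equiv 1 \pmod{\mf{n}}$ to be squares via a Kummer--Chebotarev choice of primes, and kill extra roots of unity via primes inert in the corresponding CM fields) is reasonable and differs from the paper's, which first lifts every elliptic point to characteristic zero via CM lifting theory (thereby never meeting the quaternionic case) and then uses the uniform condition that $U^+ \tensor \Z/\ell\Z \to (\cO_F/\mf{n}\cO_F)^* \tensor \Z/\ell\Z$ be injective for some odd $\ell$. But there is a genuine gap in your case analysis: the claim that the finite quotient $R^*/\iota(\cO_F^*)$ ``is represented by roots of unity'' is false. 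For a CM quadratic extension $E/F$ the Hasse unit index $[\cO_E^* : \mu(E)\,\cO_F^*]$ can equal $2$, the nontrivial coset being represented by a \emph{non-torsion} unit $u$ with $u/\bar{u} = -1$, i.e.\ $u^2 = -w$ for a totally positive unit $w \in \cO_F^*$, so $E = F(\sqrt{-w})$. Such a $u$ is neither in $\iota(\cO_F^*)$ nor a root of unity, so it falls into neither of your two cases. Worse, as $w$ ranges over totally positive non-square units, infinitely many distinct CM fields $F(\sqrt{-w})$ arise in this way (and likewise inside the quaternionic endomorphism rings in characteristic $p$), so your finiteness claim --- the very thing that lets you choose a single finite list of inert primes $\mf{q}_E$ --- breaks down; no fixed modulus built from finitely many inert primes can control all of these fields at once. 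This is exactly the situation the paper's Case 2 is designed for: there one passes to the minimal power $x^k$ lying in $U^+$ and detects its nontriviality uniformly in $(\cO_F/\mf{n}\cO_F)^* \tensor \Z/\ell\Z$, a condition on $\mf{n}$ that does not depend on which CM field $x$ generates.

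The good news is that your own square trick repairs the omission with little extra work: if such a $u$ fixed the level structure $x$, then so would $u^2 = -w \in \cO_F^*$, forcing $-w \equiv 1 \pmod{\mf{n}}$; by your Kummer argument $-w$ would then be a square in $\cO_F^*$, hence totally positive, contradicting that $-w$ is totally negative. More generally you should argue as follows: any extra automorphism $y$ has some bounded power $y^h \in \mu(E)\,\iota(\cO_F^*)$ (with $h \le 2$ in the CM case, and $h$ bounded by the order of the finite group $R^*/\iota(\cO_F^*)$ in the quaternionic case), and it is to this power --- not to $y$ itself --- that the ``root of unity times unit of $F$'' dichotomy applies; one then checks that neither factor can conspire to make $y^h$ fix $x$, using your inert primes when the root of unity is $\ne \pm 1$ and the totally-positive-square argument when it is $\pm 1$. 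As written, however, the proof asserts a false structural statement about $R^*$ and the argument for the resulting missing units is simply absent.
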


\begin{proof}
By the theory of lifting of CM abelian varieties, any closed point of $\M$ with non-trivial inertia group is defined over a finite field $\F_q$, and for some number ring $\cO_E$ and maximal ideal $\mf{m}$, equals the reduction modulo $\mf{m}$ of a point of $\M(\cO_E)$ that also has non-trivial inertia group. Hence it suffices to show that, for $\mf{n}$ sufficiently divisible, the automorphisms of a point $(A \mod U^+) \in \M(\C)$ do not lift to automorphisms of any point $(A, \gamma \from  \cO_F/\mf{n}(1) \into A[\mf{n}]) \mod U_{1,\mf{n}} \in \wt{\M}(\C)$ lying over $(A \mod U^+)$.

Fixing a $\Gamma(1)$ component label $(H, \det\nolimits_{\cO_F}(H)^+)$, the automorphisms of objects of $\M(\C)$ are in bijection with elements $\bar{x} \in \GL^+(H)/\cO_F^*$, such that choosing a representative $x \in \GL^+(H)$, the ring $\cO_F[x]$ is an order in a CM field $K$ and $x \in \cO_F[x]^* \subset \cO_K^*$. We will show that there exists an ideal $\mf{n} \subset \cO_F$ (independent of $x$) such that the action of $x$ on the set $B := \{ \gamma \from \cO_F/\mf{n} \into H/\mf{n}H \}$ has no fixed points.

 Dirichlet's unit theorem implies that $\cO_K^* = \mu(K) \times U'$, for $U'$ containing $U^+$ as a finite-index subgroup. We consider two cases:
\begin{enumerate}
\item $x = \zeta u$ for some $u \in U'$, $\pm 1 \neq \zeta$ in $\mu(K)$
\item $x = \pm u$ for some $u \in U'$.
\end{enumerate} 

\textbf{Case 1:}
By replacing $x$ with $x^k$, we may assume that $u \in U^+$.

As there are only finitely many possible CM extensions $K/F$ containing additional roots of unity, we may choose a prime $\mf{p} \subset \cO_F$ that is inert in all of them. The element \[ x \in \GL(H \tensor \cO_{F, \mf{p}}) \isom \GL_2(\cO_{F, \mf{p}}) \subset \GL_2(F_{\mf{p}}) \] has eigenvalues $(\zeta u, \zeta^{-1} u)$. By the choice of $\mf{p}$, and the fact that $\cO_K = \cO_F[\zeta]$, the element $\zeta \in \cO_K/\mf{p}\cO_K$ is not contained in $\cO_F/\mf{p}\cO_F$. Hence the eigenvalues of $x \pmod {\mf{p}} \in \GL_2(\cO_F/\mf{p})$ are $\neq 1$, and so $x$ has no fixed points in the set $B$.

\textbf{Case 2:}
By taking the minimal $k$ so that $x^k \in U^+$, we may assume that $x^k$ is at worst a square in $U^+$, that is, $x^k$  is non-zero in $U^+ \tensor \Z/\ell\Z$ for all primes $\ell \neq 2$.

It is possible to choose an ideal $\mf{n}$, and a prime $\ell$, so that $U^+ \tensor \Z/\ell\Z \to (\cO_F/\mf{n} \cO_F)^* \tensor \Z/\ell \Z$ is injective (note that this does not depend on $x$). In particular, the image of $x^k$ in $(\cO_F/\mf{n} \cO_F)^*$ is non-zero. As $x^k$ acts on $H/\mf{n}H$ by a non-zero element of $\cO_F/\mf{n} \cO_F$, we find that $x^k$ has no fixed points in the set $B$, and so neither does $x$.

\end{proof}

The  proof above shows that, for any ideal $\mf{n}$ satisfying
\begin{enumerate}
\item $\mf{n}$ is divisible by a prime $\mf{p}$, such that any CM extension $K_i/F$ containing an additional root of unity is inert at $\mf{p}$,
\item there exists a prime $\ell \neq 2$ such that $U^+ \tensor \Z/\ell \Z \to (\cO_F/\mf{n}\cO_F)^* \tensor \Z/\ell \Z$ is injective,
\end{enumerate} the level $\Gamma_1(\mf{n})$ moduli stack $\wt{\M}_{\Z_{(p)}}$ is an algebraic space.

\begin{lemma}\label{lem:full-level}
Fix an ideal $\mf{n} \subset \cO_F$ such that:
\begin{enumerate}
\item $\mf{n} \cap \Z$ is divisible by a prime $> 2[F : \Q]$,
\item there exists a prime $\ell \neq 2$ such that $U^+ \tensor \Z/\ell \Z \to (\cO_F/\mf{n}\cO_F)^* \tensor \Z/\ell \Z$ is injective.
\end{enumerate}

The moduli stack $\M(\mf{n})$ with $\Gamma(\mf{n})$ level structure, parametrizing pairs $(A, (\cO_F/\mf{n} \cO_F)^2 \isom A[\mf{n}])$, is an algebraic space.
\end{lemma}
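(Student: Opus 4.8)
The plan is to mirror the proof of Lemma \ref{lem:cm-points}, invoking the criterion that a Deligne--Mumford stack all of whose geometric inertia groups are trivial is in fact an algebraic space (\cite{conrad} Thm. 2.2.5). First I would note that $\M(\mf{n})$ is Deligne--Mumford, being a finite cover of $\M$ obtained by adding $\Gamma(\mf{n})$-level structure, and that the theory of lifting of CM abelian varieties applies verbatim as in Lemma \ref{lem:cm-points}: any geometric point with non-trivial inertia is the reduction of a $\C$-point with non-trivial inertia. Hence it suffices to verify that every $\C$-point has trivial automorphism group.

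Next I would describe the inertia at a $\C$-point $(A,\beta)$, where $\beta \from (\cO_F/\mf{n}\cO_F)^2 \isom A[\mf{n}]$. Fixing a $\Gamma(1)$ component label $(H, \det_{\cO_F}(H)^+)$, the non-trivial automorphisms of the underlying point of $\M$ are, as in Lemma \ref{lem:cm-points}, the classes $\bar{x} \in \GL^+(H)/\cO_F^*$ for which $\cO_F[x]$ is an order in a CM field $K \supset F$ and $x \in \cO_F[x]^* \subset \cO_K^*$. The essential simplification relative to Lemma \ref{lem:cm-points} is that, since the level structure $\beta$ is now an \emph{isomorphism} rather than the embedding of a single cyclic module, a class $\bar{x}$ lifts to an automorphism of $(A,\beta)$ if and only if some representative $y = xu$ (with $u \in \cO_F^*$) acts as the identity on all of $H/\mf{n}H$, i.e. $y \equiv 1 \pmod{\mf{n}}$ as an endomorphism of $H$. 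In particular both eigenvalues of $y$ must be $\equiv 1$ modulo every prime of $K$ above $\mf{n}$; this is what lets us replace the inertness hypothesis of Lemma \ref{lem:cm-points} by the cleaner condition (1).

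For the computation I would analyze the eigenvalues of $y$ modulo the prime provided by condition (1). Since the rational primes dividing $\mf{n}\cap\Z$ are exactly the residue characteristics of the prime divisors of $\mf{n}$, condition (1) furnishes a prime $\mf{p}\mid\mf{n}$ of residue characteristic $p > 2[F:\Q]$, hence a prime $\mf{P}\mid\mf{p}$ of $K$ with $y \equiv 1 \pmod{\mf{P}}$. Writing $y = \zeta v$ with $\zeta \in \mu(K)$ and $v$ in the free part of $\cO_K^*$ (which, $K$ being CM, is commensurable with $U^+$), the ratio of the two eigenvalues of $y$ is a nontrivial power of $\zeta$, so this power of $\zeta$ is $\equiv 1 \pmod{\mf{P}}$. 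As $[K:\Q] = 2[F:\Q] < p$, every root of unity of $K$ has order $m$ with $\phi(m) \le 2[F:\Q] < p$, hence of order prime to $p$, so $\mu(K)$ injects into $(\cO_K/\mf{P})^*$; this forces $\zeta = \pm 1$, and $p > 2$ then excludes $\zeta = -1$. Thus $y$ lies in the free part of $\cO_K^*$, at which point condition (2) is applied exactly as in Case 2 of Lemma \ref{lem:cm-points}: passing to a suitable power $y^k \in U^+$ that is nonzero in $U^+ \tensor \Z/\ell$, the injectivity of $U^+ \tensor \Z/\ell \to (\cO_F/\mf{n}\cO_F)^* \tensor \Z/\ell$ shows $y^k \not\equiv 1 \pmod{\mf{n}}$ unless $\bar{x} = 1$. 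This contradiction shows no non-trivial $\bar{x}$ lifts, so $\M(\mf{n})$ has trivial geometric inertia.

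The main obstacle will be the unit-theoretic bookkeeping in the last step: disentangling the root-of-unity and free parts of $y \in \cO_K^*$ and tracking them simultaneously through the reduction modulo $\mf{P}$ and the map to $(\cO_F/\mf{n})^* \tensor \Z/\ell$, taking care of the index-two ambiguity $[\cO_K^* : \mu(K)\cO_F^*]$ and the borderline case $p = 2[F:\Q]+1$ where a cyclotomic field could intervene. Most other steps—Deligne--Mumford-ness, the CM-lifting reduction, and the final appeal to \cite{conrad}—are routine or cited.
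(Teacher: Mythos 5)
Your strategy is exactly the paper's: its entire proof of Lemma~\ref{lem:full-level} is the remark that one reruns Lemma~\ref{lem:cm-points} with ``$x$ has no fixed point in $B$'' replaced by ``$x \not\equiv 1 \pmod{\mf{n}}$'', weakening the inertness hypothesis to the requirement that no root of unity of a CM extension of $F$ be congruent to $1$ modulo $\mf{p}$. Your CM-lifting reduction, the description of the inertia, the two-case split according to the root-of-unity part of $y$, and the use of condition (1) in the first case and condition (2) in the second all coincide with what the paper intends.

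The one step that fails as written is ``$\phi(m)\le 2[F:\Q]<p$, hence of order prime to $p$.'' Condition (1) only demands $p>2[F:\Q]$, so the borderline value $p=2[F:\Q]+1$ is permitted, and then $m=p$ or $2p$ is consistent with $\phi(m)=p-1=2[F:\Q]$; this forces $K=\Q(\zeta_p)$ and $F=\Q(\zeta_p)^+$, where $\zeta_p\equiv 1\pmod{\mf{P}}$ for the totally ramified prime $\mf{P}=(1-\zeta_p)$, so the injectivity of $\mu(K)\to(\cO_K/\mf{P})^*$ genuinely fails. You flag this but do not close it, and closing it needs a stronger input than ``both eigenvalues are $\equiv 1\pmod{\mf{P}}$'': since $(y-1)H\subseteq\mf{p}H$, the trace of $y-1$ lies in $\mf{p}$ and its norm in $\mf{p}^2$, whence $v_{\mf{P}}(y-1)\ge e(\mf{P}/\mf{p})$ and likewise for $\bar{y}-1$, so $v_{\mf{P}}(y/\bar{y}-1)\ge e(\mf{P}/\mf{p})$. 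In the borderline case $e(\mf{P}/\mf{p})=2$, while $v_{\mf{P}}(\eta-1)\le 1$ for every root of unity $\eta\ne 1$ of $\Q(\zeta_p)$; hence the ratio of eigenvalues (which is $\zeta^2$ times an element of $\mu(K)$ accounting for the index-two ambiguity you mention) is still forced to be trivial, and one lands in Case 2. Two smaller points: the assertion that $p>2$ ``excludes $\zeta=-1$'' is not justified, but it is also unnecessary, since $y=-v$ is handled by Case 2 exactly as in the paper; and the claim that both eigenvalues are congruent to $1$ at every prime of $K$ above $\mf{n}$ deserves the one-line trace/norm argument above rather than being asserted. With these patches your proof is complete and is the paper's proof.
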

\begin{proof}
The proof of this is similar to the proof as the previous lemma, where instead of requiring that $x$ has no fixed points in the set $B$, we require that $x \not\equiv 1 \mod \mf{n}$. Thus we can weaken the requirement that a prime $\mf{p} \mid \mf{n}$ where all possible roots of unity are inert at $\mf{p}$, to merely require that no possible roots of unity are congruent to 1 modulo $\mf{p}$.
\end{proof}

If we want the inertia groups of $\M(\mf{n})$ to be $2$-groups, instead of being trivial, there is a much simpler condition:
\begin{lemma}\label{lem:full-level-2}
For any ideal $\mf{n}$, such that $\mf{n} \nmid (2)$, the inertia groups of $\M(\mf{n})$ are subgroups of $U_{1, \mf{n}}^+/(U_{1, \mf{n}})^2$.
\end{lemma}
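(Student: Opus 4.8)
The plan is to compute the inertia groups directly at geometric points, in the spirit of the proofs of Lemma~\ref{lem:cm-points} and Lemma~\ref{lem:full-level}. As in those arguments, by the theory of CM lifting it suffices to bound the automorphism group of a point $(A, \phi)$ of $\M(\mf{n})$ over an algebraically closed field, where $A$ is an RM abelian variety (\'etale-locally RM-polarizable, with its positivity) and $\phi \from (\cO_F/\mf{n}\cO_F)^2 \isom A[\mf{n}]$ is a full level structure. As with $\wt{\M}$, the stack $\M(\mf{n})$ is a rigidification by the tautological scalars $\iota(U_{1,\mf{n}})$, so its inertia at $(A,\phi)$ is the group $\mc{I}$ of $\cO_F$-linear, positivity-preserving automorphisms $x$ of $A$ fixing $\phi$, taken modulo $\iota(U_{1,\mf{n}})$. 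Fixing a $\Gamma(1)$ component label $(H, \det_{\cO_F}(H)^+)$ as in Lemma~\ref{lem:cm-points}, such an $x$ is either a scalar $\iota(u)$ with $u \in \cO_F^*$, or else $\cO_F[x]$ is an order in a CM field $K/F$ and $x \in \cO_K^*$. In either case $x$ has totally positive $\cO_F$-determinant automatically, since $\det_{\cO_F}(x) = N_{K/F}(x) = x\bar{x}$; and fixing $\phi$ is exactly the condition that $x$ act as the identity on $A[\mf{n}] = H/\mf{n}H$, i.e. $x \equiv 1 \pmod{\mf{n}\cO_K}$.

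The heart of the argument will be the norm homomorphism. I would define
\[ \bar{N} \from \mc{I} \longrightarrow U_{1,\mf{n}}^+/(U_{1,\mf{n}})^2, \qquad x \longmapsto N_{K/F}(x) = \det\nolimits_{\cO_F}(x). \]
This is well-defined: for $x \equiv 1 \pmod{\mf{n}\cO_K}$ one has $N_{K/F}(x) = x\bar{x} \equiv 1 \pmod{\mf{n}}$ and $N_{K/F}(x) = |x|^2 \gg 0$, so $N_{K/F}(x) \in U_{1,\mf{n}}^+$; and a scalar $\iota(u)$ with $u \in U_{1,\mf{n}}$ maps to $N_{K/F}(u) = u^2 \in (U_{1,\mf{n}})^2$, so $\bar{N}$ descends to the rigidification. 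Since $N_{K/F}$ is multiplicative, $\bar{N}$ is a homomorphism, and the lemma follows as soon as $\bar{N}$ is injective. In particular, non-CM points contribute only scalars and hence have trivial inertia.

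Proving injectivity of $\bar{N}$ is the main obstacle, and it is precisely here that the hypothesis $\mf{n} \nmid (2)$ is used. If $x \in \ker(\bar{N})$, then $x\bar{x} = v^2$ for some $v \in U_{1,\mf{n}}$, so $w := x/v$ is a unit with $w\bar{w} = 1$; by Kronecker's theorem $w$ is a root of unity, and $w \equiv 1 \pmod{\mf{n}\cO_K}$. Thus injectivity reduces to the sub-claim that a root of unity $w \in \mu(K)$ with $w \equiv 1 \pmod{\mf{n}\cO_K}$ must be trivial whenever $\mf{n} \nmid (2)$. I would prove this by separating the order of $w$ into its $2$-part and its odd part, using the equivalence $\mf{n}\nmid(2) \iff -1 \not\equiv 1 \pmod{\mf{n}}$. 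Any root of unity of even order is excluded, since a suitable power of it equals $-1$, which would force $\mf{n} \mid (2)$. For an odd prime order $\ell$, the relation $\prod_{j=1}^{\ell-1}(1 - w^j) = \ell$ together with $w^j \equiv 1 \pmod{\mf{n}\cO_K}$ gives $(\mf{n}\cO_K)^{\ell-1} \mid (\ell)$, so every prime of $\mf{n}$ lies above $\ell$ and is highly ramified there; a closer comparison of the valuation $v_{\mf{L}}(1-w)$ at primes $\mf{L} \mid \ell$ of $K$, using that $\ell$ is totally ramified in $\Q(\zeta_\ell) \subseteq K$, then rules this out and forces $\mf{n} = (1)$. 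Once the sub-claim is established, $w = 1$, hence $x = v \in U_{1,\mf{n}}$ is a scalar and is trivial in $\mc{I}$; thus $\bar{N}$ is injective and $\mc{I} \hookrightarrow U_{1,\mf{n}}^+/(U_{1,\mf{n}})^2$, as claimed. The delicate point throughout is this valuation bookkeeping for odd-order roots of unity; the appearance of the quotient by $(U_{1,\mf{n}})^2$, by contrast, is exactly the square coming from the norm map on scalars (equivalently, from the twist $\mc{P} \mapsto \mc{P}\tensor N^{\tensor -2}$ of polarizations under $A \mapsto A \tensor N$).
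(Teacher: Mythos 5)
Your strategy coincides with the paper's up to the last step: reduce to CM points via lifting, observe that $x\mapsto N_{K/F}(x)=\det_{\cO_F}(x)$ defines a homomorphism from the inertia group to $U_{1,\mf{n}}^+/(U_{1,\mf{n}})^2$, and reduce injectivity to showing that a norm-one unit congruent to $1$ modulo $\mf{n}$ is trivial, using $(\cO_K^*)^{N_{K/F}=1}=\mu(K)$. Your treatment of roots of unity of $2$-power order is also correct. The divergence is in the odd-order case: the paper concludes by invoking torsion-injectivity of $\SL(H)\to\SL(H/\mf{n}H)$, quoted from the classical lemma for $\SL_m(\Z)\to\SL_m(\Z/n\Z)$ with $n\ge 3$, whereas you attempt a direct valuation computation for a root of unity of odd prime order $\ell$. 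That computation cannot be completed, and in fact your sub-claim is false. Take $F=\Q(\sqrt{3})$, $\mf{n}=(\sqrt{3})$, and $K=\Q(\zeta_{12})$, a CM extension of $F=\Q(\zeta_{12})^+$. Then $\zeta_3-1=-\tfrac{3}{2}+\tfrac{\sqrt{-3}}{2}=\sqrt{3}\cdot\zeta_{12}^5$, so $\zeta_3\equiv 1\pmod{\sqrt{3}\,\cO_K}$ even though $(\sqrt{3})\nmid(2)$. Your bookkeeping only yields the following: for a prime $\mf{L}$ of $K$ over $\mf{q}\mid\mf{n}$ of residue characteristic $\ell$, one has $v_{\mf{L}}(1-w)=e(\mf{L}/\ell)/(\ell-1)$ while $v_{\mf{L}}(\mf{n}\cO_K)=e(\mf{L}/\mf{q})\,v_{\mf{q}}(\mf{n})$, so the congruence $w\equiv 1\pmod{\mf{n}\cO_K}$ is obstructed only when $v_{\mf{q}}(\mf{n})(\ell-1)>e(\mf{q}/\ell)$. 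This fails exactly when $\ell$ is small and ramified in $F$, as in the example ($\ell=3$, $e(\mf{q}/3)=2=\ell-1$). So the ``closer comparison of valuations'' you defer to does not rule out the bad case; it identifies it.

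Moreover, the failure is not an artifact of your write-up. The abelian surface $A=\C^2/\Z[\zeta_{12}]$ is an RM abelian variety for $\cO_F=\Z[\sqrt{3}]$ on which $\zeta_3$ acts with $N_{K/F}(\zeta_3)=1$ and trivially on $A[(\sqrt{3})]$, so the inertia group of $\M((\sqrt{3}))$ at this point contains an element of order $3$, which cannot embed in the elementary abelian $2$-group $U_{1,\mf{n}}^+/(U_{1,\mf{n}})^2$. The hypothesis $\mf{n}\nmid(2)$ therefore needs to be strengthened at ramified primes, e.g.\ to $v_{\mf{q}}(\mf{n})>e(\mf{q}/\ell)/(\ell-1)$ for every $\mf{q}\mid\mf{n}$ of residue characteristic $\ell$ (automatic if $n\cO_F\mid\mf{n}$ for some rational integer $n\ge 3$). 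You should also be aware that the paper's own final step is vulnerable to the same example: the classical statement concerns reduction modulo a rational integer $n\ge 3$, and does not transfer verbatim to reduction modulo an arbitrary ideal $\mf{n}$ of $\cO_F$. Since the lemma is only used downstream (Corollary \ref{cor:inertia-bound}) for an auxiliary $\mf{n}$ that one is free to choose, the fix is harmless there, but as stated the odd-order step is where both your argument and the quoted fact require repair.
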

\begin{proof}
As in Lemma \ref{lem:cm-points}, we reduce to the complex-analytic question.

If $x \in \ker(\GL(H) \to \GL(H/\mf{n}H))$ such that $\cO_F[x]$ is an order in a CM field, we find that $N_{K/F}(x) \in U_{1, \mf{n}}^+$. If $N_{K/F}(x) = u^2 \in (U_{1, \mf{n}})^2$, then $N_{K/F}(x/u) = 1$.

Note that $(\cO_K^*)^{N_{K/F} = 1} = \mu(K)$, and that $\SL(H) \to \SL(H/\mf{n}H)$ is injective on finite-order elements for $\mf{n} \nmid (2)$ (from the well-known fact that $\SL_m(\Z) \to \SL_m(\Z/n\Z)$ is injective on finite-order elements for $n \geq 3$), to conclude. 
\end{proof}

If $\mf{n}$ is coprime to $p$, the map $\M(\mf{n})_{\Z_p^{ur}} \to \M_{\Z_p^{ur}}$ is finite \'etale with Galois group \[ G = \GL_2(\cO_F/\mf{n}\cO_F)/\cO_F^*, \] so that $\M_{\Z_p^{ur}} \isom \M(\mf{n})_{\Z_p^{ur}}/G$. This implies that:

\begin{corollary}\label{cor:inertia-bound}
The level $\Gamma(1)$ moduli stack $\M_{\Z_{(p)}}$ has inertia groups of order dividing \[ |U_{1, \mf{n}}^+/(U_{1, \mf{n}})^2||\GL_2(\cO_F/\mf{n}\cO_F)/\cO_F^*| \] for any ideal $\mf{n}$ coprime to $p$, such that $\mf{n} \nmid 2$.
\end{corollary}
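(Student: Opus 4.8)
The plan is to deduce the bound directly from the finite étale presentation $\M_{\Z_p^{ur}} \isom \M(\mf{n})_{\Z_p^{ur}}/G$ recorded just above the statement, together with the bound on the inertia of $\M(\mf{n})$ supplied by Lemma~\ref{lem:full-level-2}. First I would reduce the claim to a statement over $\Z_p^{ur}$. Inertia groups of a Deligne--Mumford stack may be computed at geometric points (as in \S\ref{subsec:stacks}, via \cite{conrad}), and the inertia group at such a point is unchanged upon enlarging the algebraically closed field over which it is taken. Since $\Z_p^{ur}$ is a discrete valuation ring with residue field $\bar{\F}_p$ and a characteristic-$0$ fraction field, every geometric point of $\M_{\Z_{(p)}}$ --- whether of residue characteristic $0$ or $p$ --- may, after enlarging its base field, be realized as a geometric point of $\M_{\Z_p^{ur}}$. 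Hence it suffices to bound the inertia groups of $\M_{\Z_p^{ur}}$.

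The key step is then the standard description of inertia in a quotient by a finite group. Because $\mf{n}$ is coprime to $p$, we have $\M_{\Z_p^{ur}} \isom \M(\mf{n})_{\Z_p^{ur}}/G$, where $G = \GL_2(\cO_F/\mf{n}\cO_F)/\cO_F^*$ is a finite group (being a quotient of the automorphisms of the finite ring $\cO_F/\mf{n}\cO_F$). For a finite group $G$ acting on a Deligne--Mumford stack $\mc{X}$, the inertia at the image $[x]$ of a geometric point $x$ fits into a short exact sequence
\[ 1 \to \Aut_{\mc{X}}(x) \to \Aut_{\mc{X}/G}([x]) \to \Stab_G(x) \to 1, \]
where $\Stab_G(x) = \{ g \in G : g \cdot x \isom x \}$ is the stabilizer of the isomorphism class of $x$; in particular $|\Aut_{\mc{X}/G}([x])|$ divides $|\Aut_{\mc{X}}(x)| \cdot |G|$. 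I would apply this with $\mc{X} = \M(\mf{n})_{\Z_p^{ur}}$.

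Finally I would combine the two inputs. The hypothesis $\mf{n} \nmid (2)$ lets me invoke Lemma~\ref{lem:full-level-2}, so that $|\Aut_{\M(\mf{n})}(x)|$ divides $|U_{1,\mf{n}}^+/(U_{1,\mf{n}})^2|$; together with $|\Stab_G(x)| \mid |G| = |\GL_2(\cO_F/\mf{n}\cO_F)/\cO_F^*|$, the product divides the asserted bound. I expect the only genuine subtlety to be the inertia exact sequence for the finite quotient stack, together with the bookkeeping needed to ensure that $\Z_p^{ur}$ really captures all geometric points of $\M_{\Z_{(p)}}$; the remainder is a direct substitution of orders, with no delicate estimate involved.
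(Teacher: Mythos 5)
Your proposal is correct and follows the same route the paper intends: the paper derives the corollary immediately from the presentation $\M_{\Z_p^{ur}} \isom \M(\mf{n})_{\Z_p^{ur}}/G$ with $G = \GL_2(\cO_F/\mf{n}\cO_F)/\cO_F^*$ together with Lemma~\ref{lem:full-level-2}, leaving the inertia exact sequence for the finite quotient and the passage to $\Z_p^{ur}$ implicit. You have simply made those two routine steps explicit, and both are valid.
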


\section{Algebraic Hilbert modular forms for $G$}
\subsection{Hodge line bundles and diamond operators}\label{subsec:hodge-bundle}
\begin{definition}
The \textbf{Hodge bundle} $\omega$ is the line bundle on $\wt{\M}'$ defined by \[ (A \to S, x) \mapsto \omega_A := \det\nolimits_{\cO_S}(\Lie_S(A)^{\dual}). \] 
\end{definition}

To verify that this defines a line bundle on the stack $\wt{\M}'$, note that $\omega_A$ is compatible with arbitrary base-change: for $g \from S' \to S$, $\omega_{A_S'} = g^*(\omega_A)$. 

\begin{lemma}
Suppose that $A/S$ is (\'etale locally) RM-polarizable. For $\alpha \in \cO_F$, the map $\iota(\alpha)^* \from \omega_A \to \omega_A$ equals multiplication by $N_{F/\Q}(\alpha) \in \cO_S$.
\end{lemma}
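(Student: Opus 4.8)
The plan is to reduce the identity to the universal family over the moduli stack of Theorem~\ref{thm:G*-moduli} and then exploit that this stack is reduced. First I would record that $\iota(\alpha)^* \from \omega_A \to \omega_A$ is the map induced on $\omega_A = \det_{\cO_S}(\Lie_S(A)^\dual)$ by the dual of $d\iota(\alpha) \from \Lie_S(A) \to \Lie_S(A)$; since the determinant of an endomorphism of a finite locally free module equals that of its transpose, $\iota(\alpha)^*$ is multiplication by the regular function $\det_{\cO_S}(d\iota(\alpha)) \in \Gamma(S, \cO_S)$. Both this function and the integer $N_{F/\Q}(\alpha)$ are compatible with arbitrary base change, and the assertion is \'etale-local on $S$. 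Since $A$ is \'etale-locally RM-polarizable, \'etale-locally on $S$ a choice of $\mc{P}$-polarization determines a map $S \to \M_{\mc P}$ along which $\mc A$ pulls back to $A$ compatibly with $\iota$. Hence it suffices to prove the equality for the universal abelian scheme $\mc A$ over $\M_{\mc P}$.

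Next I would restrict to the open substack $\M_{\mc P}[1/\fd] \subset \M_{\mc P}$ over which the different $\fd$ is invertible. There $\cO_S \tensor_\Z \cO_F$ is finite \'etale over $\cO_S$, and, as recorded in Remark~\ref{rmk:alg-defect}, $\Lie_S(\mc A)$ is locally free of rank $1$ over $\cO_S \tensor_\Z \cO_F$. For a free rank-one $(\cO_S \tensor_\Z \cO_F)$-module the determinant over $\cO_S$ of multiplication by $\alpha$ coincides with that of multiplication by $\alpha$ on $\cO_S \tensor_\Z \cO_F$ itself; writing the latter in an $\cO_S$-basis coming from a $\Z$-basis of $\cO_F$, this determinant is the image in $\cO_S$ of $\det_\Z(\alpha \from \cO_F \to \cO_F) = N_{F/\Q}(\alpha)$. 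Thus the two functions agree on $\M_{\mc P}[1/\fd]$.

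Finally I would promote this to an identity on all of $\M_{\mc P}$ by a density argument. Passing to a connected normal \'etale atlas $U \to \M_{\mc P}$ (which exists as $\M_{\mc P}$ is a normal Deligne--Mumford stack by Theorem~\ref{thm:G*-moduli}), $U$ is integral, and its characteristic-zero fibre is nonempty; hence the generic point of $U$ lies in characteristic $0$, so it lies in $U[1/\fd]$ and this open is dense. A regular function on a reduced scheme that vanishes on a dense open vanishes identically, so $\det_{\cO}(d\iota(\alpha)) - N_{F/\Q}(\alpha)$ vanishes on $U$, hence on $\M_{\mc P}$. Pulling back along the maps $S \to \M_{\mc P}$ yields the lemma for every $S$.

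The main obstacle, and the reason the one-line norm computation is insufficient, is precisely the phenomenon flagged in Remark~\ref{rmk:alg-defect}: when $\fd$ is not invertible, $\Lie_S(A)$ need not be locally free of rank one over $\cO_S \tensor_\Z \cO_F$, and moreover $S$ may be non-reduced, so the determinant of $\alpha$ cannot be read off from a single norm computation. The device that overcomes both difficulties at once is to transport the question to the reduced (indeed integral) moduli stack, where agreement on the dense locus $[1/\fd]$ forces agreement everywhere; base change then recovers the ramified and non-reduced cases for free.
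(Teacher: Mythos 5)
Your proof is correct, but it takes a genuinely different route from the paper's at the key step. Both arguments begin the same way: reduce, via compatibility of $\Lie$ and of determinants with base change and the \'etale-local existence of a polarization, to the universal abelian scheme over $\M_{\mc{P}}$, so that the identity becomes the vanishing of a single global function $\det_{\cO}(d\iota(\alpha)) - N_{F/\Q}(\alpha)$ on a normal (hence reduced) stack. From there the paper argues transcendentally: it checks the identity over $\C$ using the analytic uniformization $\Lie(A) \isom \cO_F \tensor_{\Z} \C$, and then invokes the fact (from Rapoport and Deligne--Pappas) that the strictly Henselian local rings of $\M_{\mc{P}}$ at all closed points embed into $\C$, which transports the identity to every point, including those in characteristic $p \mid \fd$. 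You instead stay algebraic: you verify the identity on the open locus $\M_{\mc{P}}[1/\fd]$, where the Deligne--Pappas polarizability condition forces the Rapoport condition ($\Lie$ locally free of rank one over $\cO_S \tensor_{\Z} \cO_F$) and the norm computation is immediate, and then you spread out by density, using normality and irreducibility of $\M_{\mc{P}}$ (both supplied by Theorem~\ref{thm:G*-moduli}) to see that a regular function vanishing on the dense open $[1/\fd]$ locus vanishes identically. The inputs are comparable in strength --- both proofs ultimately lean on reducedness of the moduli stack to propagate the identity from a locus where $\Lie$ is well understood to the degenerate locus flagged in Remark~\ref{rmk:alg-defect} --- but your version avoids any appeal to embeddings of Henselian local rings into $\C$ and makes the role of irreducibility explicit. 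One small point of care: a connected component of an \'etale atlas need not surject onto $\M_{\mc{P}}$, so the density argument should be run on each component separately (each has open, hence dense, image in the irreducible stack, so each meets the characteristic-zero locus); this is exactly the argument you sketch and it goes through without change.
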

\begin{proof}
If $S = \Spec(\C)$, the analytic uniformization of RM abelian varieties implies that $\Lie(A) = \cO_F \tensor_{\Z} \C$ as an $\cO_F \tensor \C$-module, from which the result easily follows. In general, it follows from \cite{rapoport}, \cite{deligne-pappas}, that the (strictly Henselian) local rings $\cO_x$ of the stack $\M_{\mc{P}}$ at all closed points $x$ can be embedded in $\C$. 
\end{proof}

Hence, if a weight $k \in \N$ is admissible, i.e. if $U_{1,\mf{n}} \subset \ker(N_{F/\Q}^k)$, the line bundle $\omega^{\tensor k}$ descends to a line bundle on the rigidification $\wt{\M}$ of $\wt{\M}'$.

\begin{theorem}\label{thm:alg-hodge-bundle}
For any admissible weight $k$, there is an action of $G^+_{\mf{n}}$ on $(\wt{\M}, \omega^{\tensor k})$ such that the analytification map
\[ H^0(\wt{\M}_{\C}, \omega^{\tensor k}) \to M_k(\mf{n}, \C) \]
is an isomorphism of $G^+_{\mf{n}}$-modules.
\end{theorem}

The remainder of \S\ref{subsec:hodge-bundle} will be devoted to: defining the action of $G^+_{\mf{n}}$, defining the analytification map, and proving Theorem \ref{thm:alg-hodge-bundle}.

 Fix a prime $p$. We now define an action of $I_{p\mf{n}}$ on $\wt{\M}'_{\Z_{(p)}}$, and an action of \[ I_{p\mf{n}} \times \Res_{\cO_{F, (p)}/\Z_{(p)}} \G_{m, \Z_{(p)}} \] on the vector bundle $\Lie(\mc{A})_{\Z_{(p)}} \to \wt{\M}'_{\Z_{(p)}}$. 

For any $\Z_{(p)}$-scheme $S$,  there is an action of $I_{p\mf{n}} \times (\Res_{\cO_F/\Z} \G_m)(S)$ on  $\Lie(\mc{A})_{\Z_{(p)}}$. For $N \in I_{p\mf{n}}$, $\alpha \in (\cO_S \tensor_{\Z} \cO_F)^*$,  and $(A, x, v) \in \Lie(\mc{A})_{\Z_{(p)}}(S)$, this action is defined by \begin{equation}
(N, \alpha) \cdot (A, x, v) := (A \tensor N, x_N, \alpha \cdot v_N),
\end{equation}
where:
\begin{itemize}
\item the torsion point $x_N$ is defined by the composition 
\[ \begin{tikzcd}
x_N \from \underline{\cO_F/\mf{n}}(1) \isom \underline{\cO_F/\mf{n}}(1) \tensor N \ar[r,"{x \tensor N}"] & A[\mf{n}] \tensor N \isom (A \tensor N)[\mf{n}] \end{tikzcd};
\] 
\item the element $v_N  \in \Lie_S(A \tensor N)$ is the image of $v$ along the isomorphism \[ \Lie_S(A) \isom \Lie_S(A) \tensor N  \isom \Lie_S(A \tensor N), \] where the first isomorphism exists since $N \tensor \Z_{(p)} = \cO_F \tensor \Z_{(p)}$. 
\end{itemize}

As in the analytic setting, there is a sheaf $\omega_{\chi_{\infty}}$ on $\wt{\M}'$ defined by \[ (A \to S, \gamma) \mapsto \{ \text{functions } f \from \Lie_S(T) \to \cO_S \text{ s.t. } f(w v) = \chi_{\infty}(w) f(v) \text{ for all } w \in (\cO_S \tensor F)^* \}, \]
such that $H^0(\Lie(\mc{A}), \cO_{\Lie(\mc{A})})^{\chi_{\infty}} = H^0(\wt{\M}', \omega_{\chi_{\infty}})$. Moreover for $\chi_{\infty} = N_{F/\Q}^k$ where $k$ is an admissible weight, $\omega_{\chi_{\infty}}$ descends to $\wt{\M}$. After base-change to $\Z_{(p)}$, $\omega_{\chi_{\infty}}$ is a $(I_{p\mf{n}}, (F_{1, \mf{n}}^*)^+/U_{1, \mf{n}}^+)$-equivariant line bundle on the stack $\wt{\M}_{\Z_{(p)}}$ (see \S\ref{subsec:grp-act}), which we abbreviate to calling a $G^+_{\mf{n}}$-equivariant line bundle.

\begin{remark}\label{rmk:alg-defect}
We do not define the analog of $\Lie(\mc{T})^* \subset \Lie(\mc{T})$ in this setting, as $\Lie_S(A)$ may not be a locally free $(\cO_S \tensor_{\Z} \cO_F)$-module of rank 1 when the discriminant is not invertible in $\cO_S$. For the same reason, it need not be the case that the sheaf $\omega_{\chi_{\infty}}$ is a line bundle!
\end{remark}

\begin{prop}\label{prop:alg-hmf}
For any algebraic Hecke character $\chi \from I_{\mf{n}} \times \Res_{F \tensor \C/\C} \G_{m, \C} \to \G_{m, \C}$, the analytification map
\[ H^0(\Lie(\mc{A})_{\C}, \cO_{\Lie(\mc{A})})^{\chi_{\infty}^{-1}} \to H^0(\Lie(\mc{T}), \cO_{\Lie(\mc{T})})^{(\chi_{\infty}^{\mr{an}})^{-1}}_{\mr{mod}} \] is an isomorphism, compatible with the action of $I_{\mf{n}}$. In particular,
\[ H^0(\Lie(\mc{A})_{\C}, \cO_{\Lie(\mc{A})})^{\chi^{-1}} \to H^0(\Lie(\mc{T}), \cO_{\Lie(\mc{T})})^{(\chi^{\mr{an}})^{-1}}_{\mr{mod}}, \] 
is an isomorphism.
\end{prop}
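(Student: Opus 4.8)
The plan is to reinterpret both sides as spaces of modular forms and then reduce to the comparison of algebraic and analytic modular forms in the PEL setting. By the very definition of $\omega_{\chi_\infty}$ in \S\ref{subsec:hodge-bundle} (and its analytic counterpart on $Sh'_{G,K}$), a global $\chi_\infty^{-1}$-homogeneous function on the total space $\Lie(\mc{A})_\C$ is the same datum as a global section of $\omega_{\chi_\infty}$ on $\wt{\M}'_\C$, and likewise a moderate-growth $\chi_\infty^{-1}$-homogeneous function on $\Lie(\mc{T})$ is a moderate-growth section of $\omega_{\chi_\infty}^{\mr{an}}$ on $Sh'_{G,K}$. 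Since over $\C$ the discriminant is invertible, $\Lie(\mc{A})_\C$ is a rank-one $\cO\tensor F$-bundle and, for $\chi_\infty = N_{F/\Q}^k$, the sheaf $\omega_{\chi_\infty}\isom\omega^{\tensor k}$ is a line bundle (Remark \ref{rmk:alg-defect} only warns of failures when the discriminant is not invertible). Using $(\wt{\M}'_\C)^{\mr{an}} \isom Sh'_{G,K}$ and $\Lie(\mc{A})^{\mr{an}}_\C \isom \Lie(\mc{T})$, the analytification map becomes the natural comparison
\[ H^0(\wt{\M}'_\C, \omega^{\tensor k}) \longrightarrow H^0(Sh'_{G,K}, (\omega^{\tensor k})^{\mr{an}})_{\mr{mod}}, \]
so it suffices to show this is an isomorphism.

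I would then reduce to the group $G^*$. Writing $\wt{\M}'_\C = \coprod_{[\mc{P}]} \wt{\M}_{\mc{P},\C}/U^+$ and $Sh'_{G,K} = \coprod_\lambda Sh_{G^*,K_\lambda}/U^+$, global sections on each quotient stack are the $U^+$-invariants of the global sections upstairs, and the comparison $(\wt{\M}_{\mc{P}_{\alpha_\lambda},\C})^{\mr{an}} \isom Sh_{G^*,K_\lambda}$ is $U^+$-equivariant (Theorem \ref{thm:G*-moduli}). Since $(-)^{U^+}$ is functorial, it is enough to prove, on each PEL component, that
\[ H^0(\wt{\M}_{\mc{P},\C}, \omega^{\tensor k}) \longrightarrow H^0(Sh_{G^*,K_\lambda}, (\omega^{\tensor k})^{\mr{an}})_{\mr{mod}} \]
is an isomorphism, and then pass to $U^+$-invariants.

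On a single PEL component this is the classical algebraic-analytic comparison. Injectivity is immediate: the algebraic-to-analytic map of coherent sheaves is injective (faithful flatness of $\cO^{\mr{an}}$), so it is injective on global sections. For surjectivity I would use the toroidal compactification of $\wt{\M}_{\mc{P},\C}$ provided by Rapoport \cite{rapoport} (the one applied in \S\ref{sec:toroidal-cpt}), a proper normal Deligne--Mumford stack to which $\omega^{\tensor k}$ extends as a line bundle: the Koecher principle identifies $H^0(\wt{\M}_{\mc{P},\C}, \omega^{\tensor k})$ with its global sections over the compactification, while the moderate-growth condition is precisely the requirement that a holomorphic section extend across the boundary divisor, so the analytic side likewise computes global sections over the compactification. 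GAGA for the proper stack (either by GAGA for proper Deligne--Mumford stacks, or by descending from a finite-level cover that is a projective scheme, as in Lemma \ref{lem:full-level}) then identifies the two. The case $F=\Q$ is the classical comparison for modular curves, with moderate growth controlling the cusps.

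Finally, the analytification map intertwines the algebraic $I_{\mf{n}}$-action of \S\ref{subsec:hodge-bundle} with the analytic one of \S\ref{subsec:lie-compare}, since both are the single moduli operation $N\cdot(-) = (-)\tensor N$ and these correspond under $\mc{A}^{\mr{an}}_\C \isom \mc{T}$; this yields the asserted $I_{\mf{n}}$-equivariance of the first isomorphism. The ``in particular'' statement then follows by extracting the $\chi_f^{-1}$-isotypic component for the residual action of the finite group $G^+_{\mf{n}}$ inside the $\chi_\infty^{-1}$-eigenspace, an operation exact over $\C$ and hence preserving the isomorphism. The main obstacle is the surjectivity step: one must have the toroidal compactification and the extension of $\omega^{\tensor k}$ available---a forward reference to \S\ref{sec:toroidal-cpt} that should be checked to be non-circular---and one must match the analytic moderate-growth condition with algebraic extendability across the boundary, which is exactly the Koecher principle.
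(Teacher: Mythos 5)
Your proposal is correct and follows essentially the same route as the paper: reinterpret both sides as sections of $\omega_{\chi_\infty}$, decompose over the PEL components $\wt{\M}_{\mc{P},\C}$ as $U^+$-invariants, and invoke the Koecher principle together with GAGA on a compactification for the componentwise comparison, with the $I_{\mf{n}}$-equivariance coming from the moduli-theoretic description of both actions. The only (immaterial) divergence is that you run GAGA on Rapoport's toroidal compactification of $\wt{\M}_{\mc{P},\C}$ while the paper cites the minimal compactification; since the compactification used is Rapoport's prior construction for the PEL stack, not the one built in \S\ref{sec:toroidal-cpt}, the circularity you flag does not arise.
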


\begin{proof}
The main point is that the isomorphism $\Lie(\mc{A})^{an} \isom \Lie(\mc{T})$ is compatible with the action of $I_{\mf{n}} \times \Res_{F \tensor \C/\C} \G_{m, \C}$, so that the analytification map in question is well-defined. 

It remains to show that, for a fixed weight $\chi_{\infty}$, the map
\[  H^0(\Lie(\mc{A})_{\C}, \cO_{\Lie(\mc{A})})^{\chi_{\infty}^{-1}}_{\mr{mod}} \to H^0(\Lie(\mc{T}), \cO_{\Lie(\mc{T})})^{(\chi_{\infty}^{\mr{an}})^{-1}}_{\mr{mod}} \]
is an isomorphism. First, note that \begin{itemize}
\item $H^0(\wt{\M}'_{\C}, \omega_{\chi_{\infty}}) = \oplus H^0(\wt{\M}_{\mc{P}, \C}, \omega_{\chi_{\infty}})^{U^+}$,
\item $H^0(Sh'_{G,K}, \omega_{\chi_{\infty}})_{\mr{mod}} = \oplus H^0(Sh_{G, K_{\lambda}}, \omega_{\chi_{\infty}})^{U^+}_{\mr{mod}}$.
\end{itemize}
The analytification map $H^0(\wt{\M}_{\mc{P}, \C}, \omega_{\chi_{\infty}}) \to H^0(Sh_{G, K_{\lambda}}, \omega_{\chi_{\infty}})_{\mr{mod}}$ is known to be an isomorphism (the proof follows from the Koecher principle as well as GAGA applied to certain sheaves on the minimal compactification of $\wt{\M}_{\mc{P}, \C}$). 
\end{proof}

The following is an immediate consequence of Proposition \ref{prop:alg-hmf}:
\begin{corollary}
For an admissible weight $k$, the sheaf $\omega_{N_{F/\Q}^k}$ on $\wt{\M}$ is $G^+_{\mf{n}}$-equivariant, such that the analytification $H^0(\wt{\M}_{\C}, \omega_{N_{F/\Q}^k}) \to M_k(\mf{n}, \C)$ is an isomorphism of $G^+_{\mf{n}}$-modules.
\end{corollary}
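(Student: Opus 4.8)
The plan is to deduce the corollary from Proposition~\ref{prop:alg-hmf} by decomposing both sides into eigenspaces for the diamond operators and summing, so essentially no new geometric input is needed. The equivariance of the sheaf itself requires no further work: for admissible $k$ the bundle $\omega_{N_{F/\Q}^k} = \omega_{\chi_\infty}$ with $\chi_\infty = N_{F/\Q}^k$ descends from $\wt{\M}'$ to $\wt{\M}$, and the discussion preceding Proposition~\ref{prop:alg-hmf} already exhibits it (after base change to $\Z_{(p)}$) as a $G^+_{\mf{n}}$-equivariant line bundle, via the twisted action of $I_{p\mf{n}}$. Thus the only content left is that the analytification map is an isomorphism \emph{of $G^+_{\mf{n}}$-modules}.

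First I would recall the two descriptions of global sections. By the definition of $\omega_{\chi_\infty}$ as the sheaf of functions on the Lie bundle transforming by $\chi_\infty$, taking global sections identifies $H^0(\wt{\M}'_{\C}, \omega_{N_{F/\Q}^k}) = H^0(\Lie(\mc{A})_{\C}, \cO_{\Lie(\mc{A})})^{N_{F/\Q}^{-k}}$, while on the analytic side $M_k(\mf{n}, \C) = H^0(\Lie(\mc{T}), \cO_{\Lie(\mc{T})})^{N_{F/\Q}^{-k}}_{\mr{mod}}$. Since $k$ is admissible, the finite group $G^+_{\mf{n}}$ acts on each space and each splits into isotypic pieces indexed by the characters $\chi_f \from G^+_{\mf{n}} \to \C^*$ compatible with $k$; the $\chi_f^{-1}$-part of the left side is precisely $H^0(\Lie(\mc{A})_{\C}, \cO)^{\chi^{-1}}$ for the Hecke character $\chi = (k, \chi_f)$, and the $\chi_f^{-1}$-part of the right side is $M_k(\mf{n}, \chi_f^{-1}, \C) = H^0(\Lie(\mc{T}), \cO)^{(\chi^{\mr{an}})^{-1}}_{\mr{mod}}$. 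Applying Proposition~\ref{prop:alg-hmf} to each such $\chi$ gives an isomorphism between the corresponding eigenspaces, compatible with the $I_{\mf{n}}$-action; summing over all $\chi_f$ compatible with $k$ produces the analytification isomorphism $H^0(\wt{\M}_{\C}, \omega_{N_{F/\Q}^k}) \isom M_k(\mf{n}, \C)$, where passing to the rigidified/descended sections on each side (legitimate by admissibility of $k$) is exactly what replaces $\wt{\M}'_{\C}$ by $\wt{\M}_{\C}$ and identifies the target with sections over $Sh_{G,K}$.

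The one genuine point to check — and the step I expect to require the most care — is that this summed map intertwines the two $G^+_{\mf{n}}$-actions, rather than being merely a direct sum of isomorphisms of individual eigenspaces. Concretely, I must confirm that the twisted $I_{p\mf{n}}$-action defining the algebraic $G^+_{\mf{n}}$-equivariant structure on $\omega_{N_{F/\Q}^k}$ (the twist by $|N_{F/\Q}(N)|^{k}$, equivalently the insertion of $\chi_f(\mf{a}) = \chi|_{I_{\mf{n}}}(\mf{a})\,|N_{F/\Q}(\mf{a})|^{k}$ in the Hecke-character dictionary) matches the analogous twisted $I_{\mf{n}}$-action used to make $\omega^{\tensor k}$ equivariant on $Sh_{G,K}$. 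This normalization is precisely the one already built into the $\chi$-version (as opposed to the $\chi_\infty$-version) of Proposition~\ref{prop:alg-hmf}, so once the two twists are seen to agree the equivariance follows formally, and the proof concludes.
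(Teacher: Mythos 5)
Your argument is correct and is essentially the paper's: the corollary is stated there as an immediate consequence of Proposition~\ref{prop:alg-hmf}, which is exactly the deduction you carry out, including the observation that the sheaf-level equivariance was already established before that proposition. The only remark is that your detour through the $\chi_f$-eigenspace decomposition is unnecessary — the first assertion of Proposition~\ref{prop:alg-hmf}, applied with $\chi_{\infty} = N_{F/\Q}^k$, already gives the isomorphism on the full weight-$k$ space compatibly with the $I_{\mf{n}}$-action, and descent to $\wt{\M}$ (legitimate by admissibility of $k$) then yields the $G^+_{\mf{n}}$-module statement directly, with the matching of the $|N_{F/\Q}(N)|^{k}$-twists on the two sides being the same point you flag.
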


 It is easy to see that $\omega^{\tensor k} \subset \omega_{N_{F/\Q}^k} \tensor \C$ is a $G^+_{\mf{n}}$-equivariant subsheaf, and that $\omega^{\tensor k} \tensor \C = \omega_{N_{F/\Q}^k} \tensor \C$. With the above corollary, this completes the proof of Theorem \ref{thm:alg-hodge-bundle}.

\subsection{Modular forms with coefficients}\label{subsec:hmf-coeff}

For any abelian group $B$, define \textbf{Hilbert modular forms with coefficients in $B$} by:
\[ M_k(\mf{n}, B) := H^0(\wt{\M}, \omega^{\tensor k} \tensor_{\Z} B). \] The main case of interest is for rings $R$, where $M_k(\mf{n}, R) = H^0(\wt{\M}_{R}, \omega^{\tensor k})$, but this additional generality is convenient. 

Given a ring $R$, a character $\psi \from G^+_{\mf{n}} \to R^*$, and an $R$-module $B$, define
\textbf{Hilbert modular forms with coefficients in $B$ and generalized nebentype $\psi$} by:
\[ M_k(\mf{n}, \psi, B) := M_k(\mf{n}, B)^{\psi} =  (M_k(\mf{n}, B) \tensor_{R} R(\psi^{-1}))^{G^+_{\mf{n}}}. \]

Note that \[ M_{k}(\mf{n}, B) = \cup_{B' \subset B \text{ f. gen.}} M_k(\mf{n}, B'), \qquad   M_k(\mf{n}, B_1 \oplus B_2) = M_k(\mf{n}, B_1) \oplus M_k(\mf{n}, B_2). \] In this way, we can often reduce proofs to the cases of $R = \Z$ or $R = \Z/p^k\Z$. For example, this reduction immediately implies that if $B$ is torsion-free, then $M_k(\mf{n}, B) \isom M_k(\mf{n}, \Z) \tensor B$. In particular, \[ M_k(\mf{n}, \C) = M_k(\mf{n}, \Z) \tensor \C. \]

\begin{definition}
A cusp $[C]$ is \textbf{defined over} a ring $R \subset \C$ if the image of $M_k(\mf{n}, R) \to M_k(\mf{n}, \C)  \to Q_{[C], k}(\C)$ is contained in $Q_{[C], k}(R)$ for all weights $k$.
\end{definition}

Let $\mu_m \subset \C$ be sufficiently many roots of unity to define a $\cO_F$-linear isomorphism $\cO_F/\mf{n} \isom \cO_F/\mf{n}(1)$ of $\Gal(\overline{\Q(\mu_m)}/\Q(\mu_m))$-modules. For a prime $p$, let $\mu_{m_p}$ be subgroup of prime-to-$p$ roots of unity in $\mu_m$. Define $\cO = \Z_{(p)}[\mu_{m_p}]$.

\begin{theorem}[\cite{rapoport}, \cite{chai}]\label{thm:cusp-defn}
\

\begin{enumerate}
\item The unramified cusps are defined over $\Z$.
\item The $p$-unramified cusps are defined over $\cO$.
\end{enumerate}
\end{theorem}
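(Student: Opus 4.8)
The plan is to reduce to the PEL case $\wt{\M}_{\mc{P}}$ treated by Rapoport and Chai, and then to read off the field of definition of each cusp from the level structure carried by the associated Tate object. Since the analytic $q$-expansion used in the definition of ``defined over $R$'' agrees with the algebraic $q$-expansion computed by the Mumford/Tate construction (a comparison I would record together with the $q$-expansion principle in \S\ref{sec:toroidal-cpt}), it suffices to show that the \emph{algebraic} $q$-expansion of a form in $M_k(\mf{n}, R)$ has coefficients in $N_{F/\Q}(\mf{a})^{\tensor -k} \tensor R$ at the relevant cusps.

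First, I would reduce to $\wt{\M}_{\mc{P}}$. By the Proposition in \S\ref{subsec:G-moduli}, the natural map $\pi \from \coprod_{[\mc{P}] \in \Cl^+(F)} \wt{\M}_{\mc{P}} \to \wt{\M}$ is a finite \'etale $D_{\mf{n}}$-torsor, so $M_k(\mf{n}, R) = H^0(\wt{\M}_R, \omega^{\tensor k})$ injects into $\bigoplus_{[\mc{P}]} H^0((\wt{\M}_{\mc{P}})_R, \omega^{\tensor k})$ as the $D_{\mf{n}}$-invariants. A cusp label $(\alpha, L)$ of $\wt{\M}$ lifts to cusp labels of the cover $\wt{\M}_{\mc{P}_\alpha}$, and under $\pi$ the $q$-expansion of $f$ at $[C]$ is identified with that of $\pi^* f$ at a lift $[\tilde C]$, compatibly with the inclusions $Q_{[C],k} \into Q_{[\tilde C],k}$. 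Hence it is enough to prove, for each $\mc{P}$, that the corresponding cusp of $\wt{\M}_{\mc{P}}$ is defined over $\Z$ (unramified case) resp. over $\cO$ ($p$-unramified case), which is precisely the setting of \cite{rapoport} and \cite{chai}.

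The heart of the argument is the analysis of the Tate object. For a cusp label $C$ with $\mf{a} = H \cap L$ and $\mf{b} = H/(H\cap L)$, the Mumford construction produces a semiabelian degeneration $\mr{Tate}_C$ over the completed group ring $\Z[[q^\alpha : \alpha \in M_C^+]]$, with real multiplication and a canonical $\mc{P}$-polarization, whose toric part $T$ is an $\cO_F$-twist of $\mf{a} \tensor \G_m$; its $\mf{n}$-torsion sits in an exact sequence
\[ 0 \to T[\mf{n}] \to \mr{Tate}_C[\mf{n}] \to \underline{\mf{b}/\mf{n}\mf{b}} \to 0, \qquad T[\mf{n}] \isom (\mf{a}/\mf{n}\mf{a})(1), \]
in which $T[\mf{n}]$ is canonically of multiplicative $(1)$-type while the quotient is a constant (\'etale) group. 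The $q$-expansion of $f$ is its pullback $f(\mr{Tate}_C)$ along the canonical trivialization of $\omega$ over $\mr{Tate}_C$, so $[C]$ is defined over any ring over which $\mr{Tate}_C$ together with a $\Gamma_1(\mf{n})$-structure $x \from (\underline{\cO_F/\mf{n}\cO_F})(1) \into \mr{Tate}_C[\mf{n}]$ matching $\gamma$ descends. For an \emph{unramified} cusp, $\gamma$ factors through $(H \cap L)/\mf{n}(H\cap L) = \mf{a}/\mf{n}\mf{a}$, so $x$ factors through the toric part $T[\mf{n}] = (\mf{a}/\mf{n}\mf{a})(1)$; here the source twist $(1)$ matches the multiplicative twist on $T[\mf{n}]$ canonically, with no choice of roots of unity, so $\mr{Tate}_C$ and $x$ descend to $\Z$, proving (1). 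For a \emph{$p$-unramified} cusp the same reasoning applies to the $p$-part $\mf{P}$, so the $p$-part of $x$ lands in $T[\mf{n}]$ and is defined over $\Z_{(p)}$; the prime-to-$p$ part of $\gamma$ may instead map nontrivially to the \'etale quotient $\underline{\mf{b}/\mf{n}\mf{b}}$, and matching the Tate twist $(1)$ with a constant group scheme away from $p$ requires exactly the isomorphism $\cO_F/\mf{n} \isom \cO_F/\mf{n}(1)$ of prime-to-$p$ \'etale sheaves, i.e. the roots of unity $\mu_{m_p}$. Thus $\mr{Tate}_C$ and $x$ descend to $\cO = \Z_{(p)}[\mu_{m_p}]$, proving (2).

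The main obstacle is the bookkeeping in this last step: one must verify cleanly that the twist $(1)$ in the definition of the level structure is canonically trivial on the \emph{multiplicative} part $T[\mf{n}]$ over $\Z$, yet becomes trivial on the prime-to-$p$ \emph{\'etale} part only after adjoining $\mu_{m_p}$, and that these two behaviors correspond exactly to the unramified and $p$-unramified conditions of \S\ref{subsec:type-of-cusp} (in particular to the characterization of $p$-unramified cusps via $\mr{Cusp}(\mf{n}) \to \mr{Cusp}(\mf{P})$ in Lemma \ref{lem:unram-cusp}). A secondary point requiring care is the compatibility in the first reduction: that the $q$-expansion modules $Q_{[C],k}$ for $\wt{\M}$ and for the covers $\wt{\M}_{\mc{P}}$ match equivariantly, so that integrality downstairs follows from integrality upstairs.
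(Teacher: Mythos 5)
Your argument is correct and follows the same route as the paper: reduce to the PEL covers $\wt{\M}_{\mc{P}}$, invoke the algebraic--analytic comparison of $q$-expansions, and read off integrality from the fact that the boundary charts are defined over $\Z$ (resp.\ $\cO$). The only difference is one of packaging: the paper obtains the rationality of the boundary data by citing Rapoport's and Chai's constructions wholesale (Theorems \ref{thm:toroidal-cpt} and \ref{thm:compare-q-expns}, after which the statement is a one-line consequence), whereas you unpack the mechanism hidden inside those citations --- namely the analysis of where the $\Gamma_1(\mf{n})$-structure on the Tate object sits relative to its toric part, and hence over which ring the Tate twist $(1)$ must be trivialized.
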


(Note that (1) follows from (2), as unramified cusps are $p$-unramified for all $p$.)

\begin{theorem}[$q$-expansion principle, \cite{rapoport} Thm. 6.7]\label{thm:q-principle}
There exists, for any abelian group $B$ and any unramified cusp $[C]$, a $q$-expansion map \[ \mr{qexp}_{[C]} \from M_k(\mf{n}, B) \longrightarrow Q_{[C], k}(B). \] These maps are functorial in $B$. If $\{ [C_i] \} $ is a collection of unramified cusps meeting every component, then $ M_k(\mf{n}, B) \to \oplus_i Q_{[C_i], k}(B)$ is injective.
\end{theorem}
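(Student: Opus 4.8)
The plan is to deduce both assertions from Rapoport's $q$-expansion principle for the $G^*$ moduli stacks $\wt{\M}_{\mc{P}}$ (\cite{rapoport} Thm.~6.7) by descent along the \'etale cover $\coprod_{[\mc{P}]} \wt{\M}_{\mc{P}} \to \wt{\M}$. The starting point is the identification
\[ M_k(\mf{n}, B) = H^0(\wt{\M}, \omega^{\tensor k} \tensor B) = \bigoplus_{[\mc{P}] \in \Cl^+(F)} \left( H^0(\wt{\M}_{\mc{P}}, \omega^{\tensor k} \tensor B) \right)^{U^+}, \]
which follows (for admissible $k$) from the fact that $\wt{\M}$ is the rigidification of $\coprod \wt{\M}_{\mc{P}}/U^+$, since rigidification leaves $H^0$ of the descended bundle $\omega^{\tensor k}$ unchanged. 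In particular the pullback $M_k(\mf{n}, B) \to \bigoplus_{[\mc{P}]} H^0(\wt{\M}_{\mc{P}}, \omega^{\tensor k}\tensor B)$ is injective, which is what will drive the injectivity statement.

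\emph{Construction and functoriality.} First I would fix an unramified cusp $[C]$ and, for each $[\mc{P}]$, enumerate the cusps of $\wt{\M}_{\mc{P}}$ lying over $[C]$. Given $f \in M_k(\mf{n}, B)$, pull it back to a tuple $(f_{\mc{P}})$ and apply Rapoport's algebraic $q$-expansion to each $f_{\mc{P}}$ at these cusps; concretely this is the evaluation of $f_{\mc{P}}$ on the Tate semi-abelian object over the formal completion of $\wt{\M}_{\mc{P}}$ along the boundary. The resulting series lies a priori in $N_{F/\Q}(\mf{a})^{\tensor -k} \tensor \prod_{\alpha \in M_C^+ \cup \{0\}} B q^{\alpha}$, and the $U^+$-invariance of $(f_{\mc{P}})$ together with the explicit diamond action of \S\ref{subsubsec:action-q} forces the series to be $\Aut(C)$-invariant (these automorphisms acting through $U_C$), hence to define an element $\mr{qexp}_{[C]}(f) \in Q_{[C], k}(B)$. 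Functoriality in $B$ is then immediate, since Rapoport's evaluation is functorial in the coefficient module and passage to $\Aut(C)$-invariants commutes with the functorial transition maps.

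\emph{Injectivity.} Let $\{[C_i]\}$ be unramified cusps meeting every component of $\wt{\M}$. By Theorem~\ref{thm:G*-moduli}, each $\wt{\M}_{\mc{P}}$ is irreducible and remains so modulo every prime, so the components of $\wt{\M} = \coprod_{[\mc{P}]} \wt{\M}_{\mc{P}}/D_{\mf{n}}$ are exactly the $h^+$ irreducible pieces indexed by $[\mc{P}] \in \Cl^+(F)$; the hypothesis thus guarantees that for each $[\mc{P}]$ at least one $[C_i]$ lies over $\wt{\M}_{\mc{P}}$. If $f$ has $\mr{qexp}_{[C_i]}(f) = 0$ for all $i$, then each $f_{\mc{P}}$ has vanishing Rapoport $q$-expansion at a cusp of the irreducible stack $\wt{\M}_{\mc{P}}$; a single cusp meets its unique component, so Rapoport's injectivity gives $f_{\mc{P}} = 0$, whence $f = 0$ by injectivity of pullback to the cover. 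To treat arbitrary $B$, I would first reduce to finitely generated $B$ (both $M_k(\mf{n}, -)$ and $Q_{[C], k}(-)$ commute with filtered colimits and finite direct sums, as noted in \S\ref{subsec:hmf-coeff}) and then to $B = \Z$ and $B = \Z/p^r \Z$, where irreducibility of the special fibers is exactly what makes one cusp per component suffice.

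\emph{Main obstacle.} The delicate point is not the descent but the bookkeeping showing the pulled-back series genuinely lands in the $G$-theoretic module $Q_{[C], k}(B)$: one must match the combinatorial data $(\mf{a}, \mf{b}, M_C, M_C^*, M_C^+, U_C)$ attached to a $G$-cusp label with the smaller automorphism data on the PEL side $\wt{\M}_{\mc{P}}$, and verify that $U^+$-invariance of sections translates into $\Aut(C)$-invariance of $q$-expansions through the diamond formulas of \S\ref{subsubsec:action-q}. This is where the care taken in \S\ref{subsec:cusp-labels}--\S\ref{subsec:q-cusplabel} to distinguish the two inclusions $\Aut(C) \subset \GL_2^+(F)$ and $\Aut(C) \subset \GL(H)$ (Remark~\ref{rmk:parabolic-confusion}) becomes essential.
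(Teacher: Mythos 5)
Your argument is correct, but it follows a genuinely different route from the paper's. The paper does not descend section-by-section from the cover $\coprod_{\mc{P}} \wt{\M}_{\mc{P}}$: having already built the partial toroidal compactification $\wt{\M}^{\mr{partial}}$ of $\wt{\M}$ itself over $\Z$ (as the rigidified quotient of Rapoport's compactifications, Theorem~\ref{thm:toroidal-cpt}), it defines $\mr{qexp}_{[C]}$ as the composite
\[ M_k(\mf{n},B) = H^0(\wt{\M}^{\mr{partial}}, \omega^{\tensor k}_{\mr{can}} \tensor_{\Z} B) \longrightarrow H^0(\wh{Z}_{[C]}, \omega^{\tensor k} \tensor_{\Z} B) = Q_{[C],k}(B), \]
using the Koecher principle and the prior identification of sections over the formal boundary $\wh{Z}_{[C]} = \wh{S}_C/U_C$ with $Q_{[C],k}(B)$; this builds in the $\Aut(C)$-invariance that you correctly single out as the delicate point, so no separate verification is needed. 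For injectivity the paper likewise stays on $\wt{\M}^{\mr{partial}}$: after the same reduction to $B=\Z$ or $\Z/p^n\Z$, it uses that a section of a line bundle vanishing in a stalk of an irreducible noetherian stack vanishes, together with the fact that every irreducible component of $\wt{\M}^{\mr{partial}}_R$ contains some divisor $Z_{[C_i]}$, rather than invoking Rapoport's injectivity on each $\wt{\M}_{\mc{P}}$. Your route works---the pullback $M_k(\mf{n},B) \into \oplus_{\mc{P}} H^0(\wt{\M}_{\mc{P}}, \omega^{\tensor k}\tensor B)$ is injective and each $\wt{\M}_{\mc{P}}$ and its special fibers are irreducible---but it costs you two extra verifications the paper avoids: the passage from $U'_C$-invariance (which is what Rapoport's expansion gives on the $G^*$ side) to $U_C$-invariance via the stabilizer $U_C/U'_C \subset D_{\mf{n}}$ of the cusp (note that the relevant action here is that of $U^+$, not the $G^+_{\mf{n}}$ diamond action of \S\ref{subsubsec:action-q} you cite), and the $q$-expansion principle for $\wt{\M}_{\mc{P}}$ at level $\Gamma_1(\mf{n})$, which Rapoport states for $\Gamma(n)$-level and which must itself be deduced. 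In effect the paper performs your descent once, at the level of compactified spaces, instead of separately for the construction and for the injectivity.
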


In \S\ref{subsec:toroidal-cpt}, we will deduce these theorems from the analogous results for $\wt{\M}_{\mc{P}}$, due to \cite{rapoport}, \cite{chai}.

\begin{corollary}
$M_k(\mf{n}, \Z)$ is the subset of $M_k(\mf{n}, \C)$ consisting of HMFs whose $q$-expansions at all unramified cusps have coefficients in $\Z$.
\end{corollary}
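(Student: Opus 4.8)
The plan is to identify $M_k(\mf{n},\Z)$ with its image under the map $M_k(\mf{n},\Z) \to M_k(\mf{n},\Z)\tensor\C = M_k(\mf{n},\C)$ and to prove the two inclusions separately. First I would note that this map is injective, since $M_k(\mf{n},\Z)$ is torsion-free: fixing a finite collection $\{[C_i]\}$ of unramified cusps meeting every component, the $q$-expansion principle (Theorem~\ref{thm:q-principle}) embeds $M_k(\mf{n},\Z)$ into the torsion-free module $\bigoplus_i Q_{[C_i],k}(\Z)$. For the forward inclusion, if $f$ lies in the image of $M_k(\mf{n},\Z)$, then since the unramified cusps are defined over $\Z$ (Theorem~\ref{thm:cusp-defn}) and the $q$-expansion maps are functorial in the coefficients, $\mr{qexp}_{[C]}(f) \in Q_{[C],k}(\Z)$ at every unramified cusp $[C]$. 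This direction is formal.

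The substance is the reverse inclusion. Write $M_\Z := M_k(\mf{n},\Z)$ and $L := \bigoplus_i Q_{[C_i],k}(\Z)$, and let $\mr{qexp}\from M_\Z \into L$ be the embedding above. Suppose $f \in M_k(\mf{n},\C)$ has $\mr{qexp}_{[C_i]}(f) \in Q_{[C_i],k}(\Z)$ for all $i$. Using $M_k(\mf{n},\C) = M_\Z \tensor \C$ and functoriality, the total $q$-expansion $\mr{qexp}_\C(f)$ lies in the image of $\mr{qexp}(M_\Z)\tensor\C$ inside $\bigoplus_i Q_{[C_i],k}(\C)$; via the injection $Q_{[C_i],k}(\Z)\tensor\C \into Q_{[C_i],k}(\C)$ this image factors through $L\tensor\C$, while the hypothesis places $\mr{qexp}_\C(f)$ in the subgroup $L \subset L\tensor\C$. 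Thus $\mr{qexp}_\C(f)$ lies in $(\mr{qexp}(M_\Z)\tensor\C)\cap L$ inside $L\tensor\C$, which by elementary linear algebra equals the saturation of $\mr{qexp}(M_\Z)$ in $L$. Granting that $\mr{qexp}(M_\Z)$ is saturated, i.e. that $L/\mr{qexp}(M_\Z)$ is torsion-free, I would conclude $\mr{qexp}_\C(f) \in \mr{qexp}(M_\Z)$, and the injectivity of $\mr{qexp}_\C$ on $M_k(\mf{n},\C)$ then gives $f \in M_\Z$.

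The crux is therefore the saturation claim, which I would prove using the $q$-expansion principle with torsion coefficients. Suppose $q \in L$ and $nq = \mr{qexp}(m)$ for some $0 \neq n \in \Z$ and $m \in M_\Z$. Reducing modulo $n$ and using functoriality, $\mr{qexp}_{\Z/n}(\bar m) = \overline{nq} = 0$ in $\bigoplus_i Q_{[C_i],k}(\Z/n)$; since the $q$-expansion map is injective over $\Z/n$ as well (Theorem~\ref{thm:q-principle} with $B=\Z/n$), we get $\bar m = 0$ in $M_k(\mf{n},\Z/n)$. Because $\omega^{\tensor k}$ is a line bundle on the $\Z$-flat stack $\wt{\M}$ (flatness following from the irreducibility of $\wt{\M}$ and its special fibers in Theorem~\ref{thm:G*-moduli}), it has no $\Z$-torsion, so $0 \to \omega^{\tensor k}\xrightarrow{\ n\ }\omega^{\tensor k}\to \omega^{\tensor k}/n \to 0$ is exact; taking $H^0$ shows $M_\Z \xrightarrow{\ n\ } M_\Z \to M_k(\mf{n},\Z/n)$ is exact, so $\bar m = 0$ forces $m = nm'$ with $m' \in M_\Z$. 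Then $nq = \mr{qexp}(nm')$ and torsion-freeness of $L$ give $q = \mr{qexp}(m') \in \mr{qexp}(M_\Z)$, proving saturation. The main obstacle is exactly this integrality step: the forward inclusion and the reduction of the reverse inclusion to a purity statement are formal, but descending from rational to integral coefficients genuinely requires the mod-$n$ $q$-expansion principle together with the $\Z$-flatness of $\wt{\M}$.
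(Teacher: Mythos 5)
Your argument is correct, but it takes a longer route than the paper's. The paper proves the immediately following, more general corollary (for any inclusion $B_1 \subset B_2$ of abelian groups) in one stroke: since $M_k(\mf{n}, B) = H^0(\wt{\M}, \omega^{\tensor k} \tensor_{\Z} B)$ for \emph{any} abelian group $B$, the sheaf sequence $0 \to \omega^{\tensor k} \tensor \Z \to \omega^{\tensor k} \tensor \C \to \omega^{\tensor k} \tensor (\C/\Z) \to 0$ gives $M_k(\mf{n},\Z) = \ker\bigl(M_k(\mf{n},\C) \to M_k(\mf{n},\C/\Z)\bigr)$ on global sections, and a form $f$ with integral $q$-expansions maps to an element of $M_k(\mf{n},\C/\Z)$ with vanishing $q$-expansion, hence to zero by Theorem~\ref{thm:q-principle} applied with $B = \C/\Z$. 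You use the same two pillars---left-exactness of $H^0(\wt{\M}, \omega^{\tensor k}\tensor(-))$ in the coefficients and injectivity of the $q$-expansion map with torsion coefficients---but apply them with $B = \Z/n$ for each $n$ rather than once with $B = \C/\Z$, at the cost of two auxiliary reductions: the identification $M_k(\mf{n},\C) = M_k(\mf{n},\Z)\tensor\C$ and the linear-algebra step reducing integrality to saturation of $\mr{qexp}(M_k(\mf{n},\Z))$ in $\bigoplus_i Q_{[C_i],k}(\Z)$. Both reductions are sound (for the saturation step, the cleanest justification is the abstract fact that for a $\Q$-subspace $W \subset V$ one has $(W\tensor_\Q\C)\cap V = W$ inside $V \tensor_\Q \C$, applied to $V = L\tensor\Q$; no coordinate argument on the infinite product is needed), and your flatness justification for exactness of multiplication by $n$ on $\omega^{\tensor k}$ is exactly right. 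What your version buys is that the $q$-expansion principle is only ever invoked for finite coefficient groups $\Z/n$; what it costs is the dependence on $M_k(\mf{n},\C) = M_k(\mf{n},\Z)\tensor\C$, which the paper establishes by a similar d\'evissage, and the loss of the uniform statement for general $B_1 \subset B_2$ that the paper's single argument delivers.
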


\begin{corollary}
For any inclusion $B_1 \subset B_2$ of abelian groups,  $M_k(\mf{n}, B_1)$ is the subset of $M_k(\mf{n}, B_2)$  whose $q$-expansions at all unramified cusps have coefficients in $B_1$.
\end{corollary}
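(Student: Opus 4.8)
The plan is to handle both containments simultaneously by passing to the quotient group $B_3 := B_2/B_1$ and exploiting that $\omega^{\tensor k}$ is flat over $\Z$. First I would observe that $\omega^{\tensor k}$ is a sheaf of flat $\Z$-modules: the stack $\wt{\M}$ is flat over $\Spec(\Z)$ (this follows from Theorem~\ref{thm:G*-moduli} via the \'etale $D_{\mf{n}}$-torsor $\coprod \wt{\M}_{\mc{P}} \to \wt{\M}$), and $\omega^{\tensor k}$ is locally free over $\cO_{\wt{\M}}$. Tensoring the short exact sequence $0 \to B_1 \to B_2 \to B_3 \to 0$ with $\omega^{\tensor k}$ therefore remains exact, and applying the left-exact functor $H^0(\wt{\M}, -)$ yields an exact sequence
\[ 0 \longrightarrow M_k(\mf{n}, B_1) \longrightarrow M_k(\mf{n}, B_2) \stackrel{\pi}{\longrightarrow} M_k(\mf{n}, B_3). \]
In particular $M_k(\mf{n}, B_1) \to M_k(\mf{n}, B_2)$ is injective, so $M_k(\mf{n}, B_1)$ genuinely is a subgroup of $M_k(\mf{n}, B_2)$, and membership is detected by the single condition $\pi(f) = 0$.

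Next I would record the analogous bookkeeping for the coefficient modules. From the definition $Q_{C,k}(B) = (N_{F/\Q}(\mf{a})^{\tensor -k} \tensor \prod_{\alpha \in M_C^+ \cup \{0\}} B q^{\alpha})^{\Aut(C)}$, the functor $B \mapsto Q_{[C],k}(B)$ is left exact: it is assembled from the exact functor $B \mapsto N_{F/\Q}(\mf{a})^{\tensor -k} \tensor B$ (as $N_{F/\Q}(\mf{a})^{\tensor -k}$ is free of rank $1$), a product, $\Aut(C)$-invariants, and the limit over $C' \in [C]$, all left exact. Hence $Q_{[C],k}(B_1) \to Q_{[C],k}(B_2)$ is injective, so ``having coefficients in $B_1$'' is a meaningful condition on an element of $Q_{[C],k}(B_2)$; moreover the composite $Q_{[C],k}(B_1) \to Q_{[C],k}(B_2) \to Q_{[C],k}(B_3)$ vanishes, since $B_1 \to B_2 \to B_3$ is already zero coefficientwise. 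Because the unramified cusps are defined over $\Z$ (Theorem~\ref{thm:cusp-defn}), the $q$-expansion maps $\mr{qexp}_{[C]}$ of Theorem~\ref{thm:q-principle} exist for every coefficient group and are functorial in $B$.

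With these in hand the two inclusions are formal. In one direction, if $f \in M_k(\mf{n}, B_1)$ then functoriality of $\mr{qexp}_{[C]}$ identifies its $q$-expansion in $Q_{[C],k}(B_2)$ with the image of an element of $Q_{[C],k}(B_1)$, i.e. its coefficients lie in $B_1$ at every unramified cusp. For the converse, take $f \in M_k(\mf{n}, B_2)$ whose $q$-expansion lies in $Q_{[C],k}(B_1)$ at every unramified cusp. Functoriality presents $\mr{qexp}_{[C]}(\pi(f))$ as the image of $\mr{qexp}_{[C]}(f)$ under $Q_{[C],k}(B_2) \to Q_{[C],k}(B_3)$, which is zero because $\mr{qexp}_{[C]}(f)$ comes from $Q_{[C],k}(B_1)$ and that composite vanishes. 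Thus $\pi(f)$ has vanishing $q$-expansion at every unramified cusp; choosing a family of unramified cusps meeting every component (available by Lemma~\ref{lem:unram-cusp}) and invoking the $q$-expansion principle (Theorem~\ref{thm:q-principle}) for the group $B_3$ forces $\pi(f) = 0$, so $f \in M_k(\mf{n}, B_1)$ by exactness.

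The only genuine input here is the $q$-expansion principle for the arbitrary coefficient group $B_3$, supplied by Theorem~\ref{thm:q-principle}; everything else reduces to the flatness of $\omega^{\tensor k}$ and the left-exactness of the functors involved. I do not anticipate a serious obstacle, the one point requiring care being to organize the argument around $B_3 = B_2/B_1$ so that both the injectivity and the characterization of the image drop out of a single exact sequence.
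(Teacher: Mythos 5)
Your proposal is correct and follows essentially the same route as the paper: pass to $B_3 = B_2/B_1$, use the exactness of $0 \to \omega^{\tensor k}\tensor B_1 \to \omega^{\tensor k}\tensor B_2 \to \omega^{\tensor k}\tensor B_3 \to 0$ and left-exactness of global sections to identify $M_k(\mf{n},B_1)$ with $\ker(\pi)$, and then apply the $q$-expansion principle over $B_3$ to detect when $\pi(f)=0$. Your write-up merely makes explicit some steps the paper leaves implicit (flatness of $\omega^{\tensor k}$ over $\Z$ and left-exactness of $B \mapsto Q_{[C],k}(B)$), which is fine.
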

\begin{proof}
If $f \in M_k(\mf{n}, B_2)$ has $q$-expansions contained in $B_1$, then its image in $M_k(\mf{n}, B_2/B_1)$ is zero, by Theorem \ref{thm:q-principle}.

By considering the long exact sequence associated to the exact sequence of sheaves \[ 0 \to \omega^{\tensor k} \tensor B_1 \to \omega^{\tensor k} \tensor B_2 \to \omega^{\tensor k} \tensor B_2/B_1 \to 0, \] we see that 
\[ M_k(\mf{n}, B_1) = \ker(M_k(\mf{n}, B_2) \to M_k(\mf{n}, B_2/B_1)).\]
\end{proof}

\begin{remark}
In the above corollaries, one may replace ``at all unramified cusps" with ``a collection of unramified cusps meeting every component". The set of cusps $\{ (\alpha_{\lambda}, L_{\infty}) \}_{\lambda \in \Cl^+(F)}$, the ``standard $\infty$-cusps", are one such collection, and it is with these cusps that the corollaries are applied in \cite{dk}.
\end{remark}

\begin{remark}
Similar statements about $p$-unramified cusps and $\cO$-modules $B$ are also true. For example, $M_k(\mf{n}, \cO)$ is the subset of $M_k(\mf{n}, \C)$ consisting of HMFs whose $q$-expansions at all $p$-unramified cusps have coefficients in $\cO$.
\end{remark}

\section{Toroidal and minimal compactifications}\label{sec:compact}
\subsection{Partial toroidal compactifications}\label{subsec:toroidal-cpt}

The purpose of this section is to explain the theory of algebraic $q$-expansions of HMFs at $p$-unramified cusps. Most results will be easily deduced from analogous results for $\wt{\M}_{\mc{P}}$ due to Rapoport \cite{rapoport}. We also prove that the action of $G^+_{\mf{n}}$ extends to the partial toroidal compactification $\wt{\M}^{\mr{partial}}$, and use this to give an algebraic proof of Theorem \ref{thm:qexp-equivar}.

\subsubsection{Toric charts and formal $q$-expansions}\label{subsubsec:formal-model}
We briefly recall the construction of the ``toric charts" associated to a cusp $[C] \in \mr{Cusp}(\mf{n})$ following \cite{rapoport}, although our notation is not quite the same. 

Let $R^0_C = \Z[M_C] = \oplus_{b \in M_C} \Z q^b$, and $S^0(C)= \Spec(R^0_C)$. Write $\beta = (C, \sigma)$, for $\sigma \subset (M^{\dual}_C)^+ \cup \{ 0 \}$ a rational polyhedral cone. The scheme $S(\beta) = \Spec(\Z[\sigma^{\dual}]) = \Spec(R_{\beta})$ contains $S^0(C)$ as a dense open. These $S(\beta)$ are functorial in $\sigma$, in the sense that $\sigma_1 \subset \sigma_2$ corresponds to an open embedding $S(C,\sigma_1) \subset S(C, \sigma_2)$. Let $S^{\infty}(\beta) = S(\beta) - S^0(C)$, let $\wh{S}(\beta) = \Spf(\wh{R}_{\beta})$ denote the formal scheme obtained by completing $S(\beta)$ along $S^{\infty}(\beta)$, and let $\wh{S}(\beta)_{\mr{alg}} = \Spec(\wh{R}_{\beta})$.

It is possible to choose a smooth, rational polyhedral cone decomposition $\Sigma_C$ of $(M^{\dual}_C)^+ \cup \{ 0 \}$, functorial in $C$. This is obtained by choosing, for a fixed $C$ in each isomorphism class, a $U_C$-admissible, smooth, rational polyhedral cone decomposition. We in fact require $\Sigma_C$ to be admissible for the larger group $U^+ \subset \Aut(M^{\dual}_C)$.

Fixing a cone $C$, and varying over $\sigma \in \Sigma_C$, we glue together the schemes $S(\beta) = S((C,\sigma))$ to obtain a scheme $S_C$. Similarly we obtain a formal scheme $\wh{S}_C$ whose underlying reduced scheme we denote $S^{\infty}_C$.

The quotient $\wh{Z}_{C} := \wh{S}_C/U_{C}$ exists as a formal scheme. Let $Z_{C} := S^{\infty}_C/U_{C}$ denote the underlying reduced scheme of $\wh{Z}_{C}$. As $\wh{Z}_{C}$ depends only on $[C]$, up to canonical isomorphism, we can define $\wh{Z}_{[C]} = \lim_{C' \in [C]} \wh{Z}_{C}$.

\begin{theorem}[\cite{rapoport} \S 4]
The morphism $Z_{[C]} \to \Spec(\Z)$ is proper and geometrically connected.
\end{theorem}

Define \begin{equation}
\omega := N_{F/\Q}(\mf{a})^{\tensor -1} \tensor \cO_{\wh{S}_C}, \end{equation} a trivial line bundle on $\wh{S}_C$. The sheaf $\omega$ is $\Aut(C)$-equivariant (being defined canonically in terms of $C$), and the center $U_{1,\mf{n}} \subset \Aut(C)$ acts trivially on $\omega^{\tensor k}$ if $k$ is an admissible weight. Therefore, for $k$ admissible, $\omega^{\tensor k}$ descends to a (possibly non-trivial) line bundle on $\wh{Z}_C$.

\begin{theorem}[\cite{rapoport} \S 4]
For any abelian group $B$,
\begin{enumerate}
\item $H^0(\wh{Z}_{[C]}, \omega^{\tensor k} \tensor_{\Z} B) = Q_{[C], k}(B)$,
\item $H^0(Z_{[C]}, \omega^{\tensor k} \tensor_{\Z} B) = C_{[C], k}(B)$.
\end{enumerate}
\end{theorem}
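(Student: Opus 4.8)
The plan is to reduce both statements to the un-quotiented toric chart $\wh{S}_C$ and its reduced boundary $S^\infty_C$, compute global sections there by toric geometry, and then take $U_C$-invariants and pass to the limit over $[C]$. For (1), since $\omega = N_{F/\Q}(\mf{a})^{\otimes -1}\otimes\cO_{\wh{S}_C}$ is the trivial line bundle, it suffices to identify $H^0(\wh{S}_C,\cO_{\wh{S}_C})$. The scheme $S_C=\bigcup_{\sigma\in\Sigma_C}S(C,\sigma)$ is the toric $\Z$-scheme attached to the fan $\Sigma_C$ in $(M^*_C)_\R$ with character lattice $M_C$, its dense torus being $S^0$ and each chart having $\cO(S(C,\sigma))=\Z[\sigma^{\vee}\cap M_C]$. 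Completing along $S^\infty_C=S_C-S^0$ and taking global sections selects exactly the monomials $q^b$ regular on every chart, i.e. the $b\in M_C$ pairing nonnegatively with every cone of $\Sigma_C$; as $\Sigma_C$ covers $(M^*_C)_{\R,+}\cup\{0\}$ this set is precisely $M_C^+\cup\{0\}$, and completion replaces the direct sum by the product. Thus $H^0(\wh{S}_C,\cO_{\wh{S}_C})=\prod_{b\in M_C^+\cup\{0\}}\Z q^b$, and tensoring gives $H^0(\wh{S}_C,\omega^{\otimes k}\otimes B)=N_{F/\Q}(\mf{a})^{\otimes -k}\otimes\prod_{b\in M_C^+\cup\{0\}}B q^b$.

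Next I would pass to the quotient. The action of $U_C$ is properly discontinuous (the $U^+$-admissibility of $\Sigma_C$ being exactly what makes $\wh{Z}_C=\wh{S}_C/U_C$ a formal scheme), so global sections of a descended sheaf are the $U_C$-invariants. Since $k$ is admissible the center $U_{1,\mf{n}}$ acts trivially on $\omega^{\otimes k}$ and the action of $\Aut(C)$ on the module above factors through $U_C$; hence $H^0(\wh{Z}_C,\omega^{\otimes k}\otimes B)=(N_{F/\Q}(\mf{a})^{\otimes -k}\otimes\prod_{b}B q^b)^{\Aut(C)}=Q_{C,k}(B)$, which is the definition.

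For (2) I would use the $U_C$-equivariant restriction $\cO_{\wh{S}_C}\to\cO_{S^\infty_C}$ to the reduced boundary, which annihilates every monomial $q^b$ with $b\neq 0$ and therefore factors through the constant-term projection onto $\Z q^0=\Z$. Passing to the quotient, $H^0(Z_C,\omega^{\otimes k}\otimes B)=(N_{F/\Q}(\mf{a})^{\otimes -k}\otimes H^0(S^\infty_C,\cO_{S^\infty_C})\otimes B)^{U_C}$, and since $Z_C=S^\infty_C/U_C$ is proper, reduced, and geometrically connected over $\Spec(\Z)$ by the quoted theorem, one has $H^0(S^\infty_C,\cO)^{U_C}=H^0(Z_C,\cO_{Z_C})=\Z$ with trivial action. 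What remains is the $\Aut(C)$-invariants of the scalar twist, which is exactly $(N_{F/\Q}(\mf{a})^{\otimes -k}\otimes B)^{\Aut(C)}=C_{C,k}(B)$. Finally, both $\wh{Z}_{[C]}$ and $Z_{[C]}$ are limits over $C'\in[C]$ of canonically isomorphic objects, and $Q_{[C],k}$, $C_{[C],k}$ are the corresponding limits of $Q_{C',k}$, $C_{C',k}$; because every identification above is canonical in $C'$ it is compatible with the transition maps and descends to the limit.

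The step I expect to be the main obstacle is the reduced-boundary bookkeeping: one must separate the effect of $U_C$ on the scalar twist $N_{F/\Q}(\mf{a})^{\otimes -k}$ (which acts through the sign character $\epsilon_C^k$, trivial precisely when $C$ is admissible for the weight $k$) from its action on $H^0(S^\infty_C,\cO)$, and confirm that the invariants genuinely collapse to the single constant term $C_{C,k}(B)$ rather than acquiring contributions from the a priori many boundary strata that $U_C$ permutes. This collapse is exactly what the properness and geometric connectedness of $Z_C$ supply, so the crux is to invoke that result at the right point and to track the character $\epsilon_C^k$ so that the answer matches $C_{C,k}(B)$ (namely $N_{F/\Q}(\mf{a})^{\otimes -k}\otimes B$ when $C$ is admissible and $0$ otherwise).
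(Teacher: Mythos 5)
The paper does not prove this statement: it is quoted directly from Rapoport (\S 4), so there is no internal proof to compare against. Your argument is essentially a reconstruction of Rapoport's, and the overall shape --- compute $H^0$ on the un-quotiented toric charts, take $U_C$-invariants, use properness and connectedness of $Z_C$ to collapse the boundary sections to constants --- is the right one. Two points, however, need repair. First, your claim that restriction to the reduced boundary ``annihilates every monomial $q^b$ with $b\neq 0$'' is false chart by chart: on a chart $S(C,\sigma)$ with $\sigma$ top-dimensional, the reduced boundary is the union of the toric divisors, with coordinate ring $\Z[\sigma^{\dual}\cap M_C]/(q^b : b\in \mathrm{int}(\sigma^{\dual}))$ (e.g.\ $\Z[x,y]/(xy)$ for the standard quadrant), which carries many nonconstant functions. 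The collapse to constants is a global phenomenon: each irreducible component of $S^{\infty}_C$ is the complete toric variety attached to the star of an interior ray (interior because a nonzero rational point of the closed positive cone is automatically totally positive), so each component has $H^0=\Z$, and connectedness of the dual graph forces a global section to be a single constant. You gesture at this in your final paragraph, but the argument as written routes through the incorrect local claim.

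Second, and more seriously for the statement as given, the theorem is asserted for \emph{every} abelian group $B$, and the paper's definitions are set up precisely because $C_{[C],k}(B)\neq C_{[C],k}(\Z)\tensor B$ and $Q_{[C],k}(B)\neq Q_{[C],k}(\Z)\tensor B$ for torsion $B$. Your reduction writes $H^0(S^{\infty}_C,\cO)\tensor B$ where the object to be computed is $H^0(S^{\infty}_C,\cO\tensor B)$, and for $B=\Z/n$ one has $H^0(Z_C,\cO\tensor \Z/n)=\Z/n\oplus H^1(Z_C,\cO)[n]$; properness and geometric connectedness of $Z_C$ over $\Spec(\Z)$ give only the $B=\Z$ case. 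To handle general $B$ you need cohomological flatness in degree zero (geometrically connected \emph{and} geometrically reduced fibres, plus base change), which for these boundary schemes ultimately rests on the vanishing of higher coherent cohomology of complete toric varieties and the acyclicity of the nerve of the covering by components; an analogous completed-tensor issue arises in part (1) when interpreting $\cO_{\wh{Z}}\tensor B$ on the formal scheme. This is exactly the content for which the paper defers to Rapoport, so your outline is correct but the coefficient bookkeeping is where the real work lives.
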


\subsubsection{The action of $G^+_{\mf{n}}$ on formal $q$-expansions}\label{subsubsec:formal-action}

There is an action of the group $G^+_{\mf{n}}$ on the pair $(\wh{Z},  \omega^{\tensor k})$.

First, we define an action of $I_{\mf{n}}$ on the pairs $\beta = (C, \sigma)$, $N \cdot (C, \sigma) := (C \tensor N, \sigma_N)$. The cusp label $C \tensor N$ was defined in \S\ref{subsec:act-on-cusp}. The cone $\sigma_N \in \Sigma_{C \tensor N}$ is defined to be the image of $\sigma$ under the isomorphism $M^{\dual}_{C} \isom M^{\dual}_{C \tensor N}$. 

By picking a $U^+$-admissible cone decomposition for representatives of $\mr{Cusp}_{\infty}^p/G^+_{\mf{n}}$, and defining the remaining cone decompositions $\Sigma_C$ via the action of $I_{\mf{n}}$, this defines an action of $I_{\mf{n}}$ on $\wh{S}_{\mr{alg}} = \coprod_{[C] \in \mr{Cusp}} \wh{S}_{C, \mr{alg}}$ and an action of $G^+_{\mf{n}}$ on $\wh{Z} = \coprod_{[C] \in \mr{Cusp}} \wh{Z}_{[C]}$. Note that an element $G^+_{\mf{n}}$ acts on the irreducible components of $\wh{Z}_{[C]}$ through the homomorphism  $\psi_C \from G^+_{\mf{n}} \to U^+/U_C$. This is why we required $U^+$-admissibility --- to ensure that $U^+/U_C$ acted on the irreducible components of $\wh{Z}_{[C]}$.

The line bundle $\omega^{\tensor k}$ on $\coprod_{[C] \in \mr{Cusp}} \wh{S}_{C, \mr{alg}}$ can be made $I_{\mf{n}}$-equivariant by, for $N$ a prime-to-$\mf{n}$ fractional ideal, using the isomorphism
\begin{align*}
[N]^*(\omega_{\wh{S}_{(C, \sigma)}}^{\tensor k}) &\isom N_{F/\Q}(\mf{a})^{\tensor -k} \tensor  [N]^*\cO_{\wh{S}_{(C, \sigma)}} \\
&\isom N_{F/\Q}(\mf{a})^{\tensor -k}  \tensor \cO_{\wh{S}_{(C \tensor N, \sigma_N)}}  \\
&\isom N_{F/\Q}(\mf{a} N)^{\tensor -k}  \tensor \cO_{\wh{S}_{(C \tensor N, \sigma_N)}} \\
& \isom \omega_{\wh{S}_{(C \tensor N, \sigma_N)}}^{\tensor k},
\end{align*}
where the third isomorphism is given by multiplication by $|N_{F/\Q}(N)|^{-k}$. This descends to a $G^+_{\mf{n}}$-equivariant structure on $(\wh{Z}, \omega^{\tensor k})$.

This defines an action of $G^+_{\mf{n}}$ on $H^0(\wh{Z}, \omega^{\tensor k}) = Q_k$, which can be easily seen to agree with the action defined in \S \ref{subsubsec:action-q}.

Now fix a weight $k$, and define $\wh{Z}^* = \coprod_{[C] \in \mr{Cusp}^*} \wh{Z}_{[C]} \subset \wh{Z}$, the formal scheme corresponding to the $k$-admissible cusps. The group $G^+_{\mf{n}}$ acts on $\wh{Z}^*$, and hence $\cO_{\wh{Z}^*}$ is a $G^+_{\mf{n}}$-equivariant line bundle. Define an alternate $G^+_{\mf{n}}$-equivariant structure on this line bundle, denoted $\cO_{\wh{Z}^*}(\sgn)$, where we twist by the sign character $\sgn_{[C]}$ of each stabilizer $Stab_{[C]}$.

It is easy to see that:
\begin{equation}\label{eqn:equivar-triv}
\omega^{\tensor k}_{\wh{Z}^*} \isom \cO_{\wh{Z}^*}(\sgn)
\end{equation} as $G^+_{\mf{n}}$-equivariant sheaves, where this isomorphism depends on a choice of representatives for the quotient $\mr{Cusp}^*/G^+_{\mf{n}}$.

\subsubsection{Mumford's construction}
Mumford's construction of semi-abelian schemes gives rise to the following data (\cite{rapoport}, \cite{mumford}). Functorially associated to pairs $\beta = (C = (\alpha, L), \sigma)$, where $C$ is a $p$-unramified cusp label and $\sigma \in \Sigma_C$ is a cone in the cone decomposition chosen in \S\ref{subsubsec:formal-model}, there exists:

\begin{enumerate}
\item a semi-abelian scheme \[ A(\beta) \to \wh{S}(\beta)_{\mr{alg}} \] with real multiplication $\iota \from \cO_F \into \End_{\wh{S}(\alpha)_{\mr{alg}}}(A(\beta))$, whose restriction to $\wh{S}^0(\beta)_{\mr{alg}}$, denoted $A^0(\beta)$, is abelian, and whose restriction to $S^{\infty}(\beta)$ equals the constant torus $\G_m \tensor_{\Z} \mf{a}$;
\item a torsion point $x_{\beta} \from (\cO_F/\mf{n}(1))_{\cO} \into A^0(\beta)[\mf{n}]_{\cO}$;
\item a map $\wh{S}^0(\beta)_{\mr{alg}, \cO} \to \wt{\M}_{\mc{P}_{\alpha}, \cO}$ such that $(A^0(\beta), x_{\beta})$ is pulled back from the universal abelian scheme (with its torsion point) on $\wt{\M}_{\mc{P}_{\alpha}, \cO}$;
\item an isomorphism $\omega_{A(\beta)} \isom N_{F/\Q}(\mf{a})^{\tensor -1} \tensor \cO_{\wh{S}(\beta)_{\mr{alg}}}$.
\end{enumerate}

\begin{eg} 
In the analytic setting, it is easy to define an RM torus over an open subdomain of $S^0(C)^{\mr{an}} = M_C^{\dual} \tensor \C^*$, analogous to $A^0(\beta) \to \wh{S}^0(\beta)_{\mr{alg}}$.

Consider $\tau \in M_C^{\dual} \tensor \C = \Hom_{\cO_F}(\mf{b}, \C \tensor \mf{a})$ such that $\mathrm{Im}(\tau)$ is contained in $(M_C^{\dual})^+$. Taking the exponential $q = e^{2 \pi i \tau}$, we obtain a homomorphism $q \from \mf{b} \to \C^* \tensor \mf{a}$ which in fact has discrete image. For such $q$, we have an associated RM torus
\[ A(\C)_q = (\C^* \tensor \mf{a})/q(\mf{b}). \]
These define a family of RM tori over an open subdomain of $S^0(\C)^{\mr{an}}$. 

\end{eg}

\subsubsection{Partial toroidal compactification}

Fix a prime $p$, let $\cO = \Z_{(p)}[\mu_{m_p}]$ be as in \S\ref{subsec:hmf-coeff}, and fix an embedding $\cO \into \C$. Define $\wh{Z}_p = \coprod_{[C] \in \mr{Cusps}_p} \wh{Z}_{[C]}$, the disjoint union of the formal models of the $p$-unramified cusps.

Using Artin's Approximation Theorem, it is possible to glue the formal models $\wh{Z}_p$ onto $\wt{\M}$, where the gluing occurs along algebraic models of the charts $\wh{S}^0(\beta)_{\mr{alg}, \cO}$ arising in Mumford's construction. In this way, one obtains a partial compactification $\wt{\M}^{\mr{partial}}$ of $\wt{\M}$. More precisely:

\begin{theorem}\label{thm:toroidal-cpt}
There exists an algebraic stack $\wt{\M}^{\mr{partial}}$ over $\Z_{(p)}$ and a line bundle $\omega_{\mr{can}}$ on $\wt{\M}^{\mr{partial}}$ such that
\begin{enumerate}
\item $\wt{\M}_{\Z_{(p)}}$ is a dense open substack of $\wt{\M}^{\mr{partial}}$;
\item $\wt{\M}^{\mr{partial}}_{\cO} - \wt{\M}_{\cO} \isom Z_{p, \cO}$;
\item the formal scheme given by completing $\wt{\M}^{\mr{partial}}_{\cO}$ along $Z_{p, \cO}$ is isomorphic to $\wh{Z}_{p, \cO}$;
\item the restriction of $\omega_{\mr{can}}$ to $\wh{Z}_{p, \cO}$ is isomorphic to $\omega$.
\end{enumerate}
\end{theorem}

\begin{proof}
We begin by sketching a proof of the analogous theorem for $\wt{\M}_{\mc{P}} \subset \wt{\M}_{\mc{P}}^{\mr{partial}}$, originally due to Rapoport (\cite{rapoport} Thm 6.18), although we instead use the technique of Faltings--Chai \cite{faltings-chai}. 

Recall that $\wh{S}(\beta)_{\mr{alg}} = \Spec(\wh{R}_{\beta})$, where $\wh{R}_{\beta}$ is the algebraic completion of $R_{\beta}$ along the closed subscheme $S^{\infty}_{\beta}$.  As an application of the Artin approximation theorem, there exists a ring $R_{\beta}^{approx}$, such that: \begin{itemize}
\item $R_{\beta}\subset R_{\beta}^{approx} \subset R_{\beta}^{alg}$;
\item $\Spec(R_{\beta}^{approx}) \to \Spec(R_{\beta})$ is an \'etale morphism whose image equals an open neighborhood of $S^{\infty}_{\beta}$;
\item Mumford's construction $A(\beta) \to \wh{S}(\beta)_{\mr{alg}}$ (along with all auxiliary data) descends to a semiabelian scheme $A(\beta)^{approx}$ over $\Spec(R_{\beta}^{approx})$, in the sense that \[ A(\beta) = A(\beta)^{approx} \times_{\Spec(R_{\beta}^{approx})} \Spec(R_{\beta}^{alg}). \]
\end{itemize}
These algebraic models can be glued onto $\wt{\M}_{\mc{P}}$ to define the algebraic stack $\wt{\M}_{\mc{P}}^{\mr{partial}}$. See \cite{faltings-chai} pg. 109 for details of this construction in the case of Siegel moduli space (which does not include $\wt{\M}_{\mc{P}}$), or \cite{lan-compact} \S 6 for the general setting of PEL Shimura varieties (which includes $\wt{\M}_{\mc{P}}$).

The group $U^+$ acts on $\wt{\M}_{\mc{P}}^{\mr{partial}}$ (in fact on the semi-abelian variety over $\wt{\M}_{\mc{P}}^{\mr{partial}}$), extending the action on $\wt{\M}_{\mc{P}}$ discussed in \S \ref{subsec:G-moduli}. We then define $\wt{\M}^{\mr{partial}}$ to be the rigidification of $\coprod \wt{\M}^{\mr{partial}}_{\mc{P}}/U^+$.

 To see that the boundary of $\wt{\M}^{\mr{partial}}$ is as claimed, we consider the action of the group $D_{\mf{n}}$ on $\wt{\M}_{\mc{P}}^{\mr{partial}} - \wt{\M}_{\mc{P}}  = \coprod_{C} Z'_{C}$. Here $C$ runs over $\coprod_{\lambda} (\Gamma_{1,\lambda}(\mf{n}) \cap \det^{-1}(\Q)) \backslash \bP^1(F)$. We must verify that $D_{\mf{n}}$ acts freely on $\coprod_{C} Z'_{C}$. A cusp $C$ for  $\wt{\M}_{\mc{P}}$ has a formal scheme $\wh{Z}'_C = \wh{S}(C)/U'_C$, for $U'_C := \{ \mat{u}{0}{0}{u^{-1}} \in \Aut(C) \}/Z \subset U_C$. The stabilizer of the cusp $C$ for $\wt{\M}_{\mc{P}}$, in $D_{\mf{n}}$, is equal to \[ P_C/(Z \cdot (P_C \cap \mr{det}^{-1}(\Q)) = \mr{im}(P_C \to \Aut(M_C))/\mr{im}(P_C \cap \mr{det}^{-1}(\Q) \to \Aut(M_C)) = U_C/U'_C. \] Then, by assumption of $U_C$-admissibility, this group acts freely on the set of irreducible components of $Z'_C$ (such that $g \cdot W \cap W = \emptyset$ for any irreducible component $W$), with quotient equal to $Z_C$, and $\wh{Z}'_C/Stab_{[C]}(D_{\mf{n}}) = \wh{Z}_C$.

\end{proof}

\begin{remark}
If one restricts to the unramified cusps, the analogous result can be stated over $\Z$, with no need for base-change.
\end{remark}

\begin{remark}
When $p \nmid \mf{n}$, $\wt{\M}^{\mr{partial}}$ is proper.
\end{remark}

Rapoport \cite{rapoport} proved the following version of the Koecher principle:
\begin{theorem}
For any abelian group $B$,
  \[ H^0(\wt{\M}^{\mr{partial}}, \omega^{\tensor k}_{\mr{can}} \tensor B) \isom H^0(\wt{\M}, \omega^{\tensor k} \tensor B). \]
\end{theorem}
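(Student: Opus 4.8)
The plan is to deduce the statement from Rapoport's Koecher principle for the $G^*$-stacks $\wt{\M}_{\mc{P}}$ by descending along the torsor $\coprod_{\mc{P}} \wt{\M}_{\mc{P}} \to \wt{\M}$, exactly as in the constructions of \S\ref{subsec:G-moduli} and the proof of Theorem~\ref{thm:toroidal-cpt}. The genuine analytic content of Koecher's principle---that a section on the open stack automatically extends across the boundary when $F \neq \Q$---is already contained in the cited result of Rapoport for each $\wt{\M}_{\mc{P}}$. Our task is purely the formal descent of that statement, so the main thing to verify is the compatibility of the torsor and equivariant structures on the two sides.

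First I would recall the presentations. By the Proposition of \S\ref{subsec:G-moduli}, $\pi \from \coprod_{\mc{P}} \wt{\M}_{\mc{P}} \to \wt{\M}$ is an \'etale $D_{\mf{n}}$-torsor, and since the Hodge bundle is defined moduli-theoretically (by forgetting the polarization datum) we have $\pi^*(\omega^{\tensor k}) = \omega^{\tensor k}$. By the proof of Theorem~\ref{thm:toroidal-cpt}, $\wt{\M}^{\mr{partial}}$ is the rigidification of $\coprod_{\mc{P}} \wt{\M}_{\mc{P}}^{\mr{partial}}/U^+$, and $D_{\mf{n}}$ acts freely on the boundary $\coprod_C Z'_C$; being already free on the dense open $\coprod_{\mc{P}} \wt{\M}_{\mc{P}}$, it therefore acts freely on all of $\coprod_{\mc{P}} \wt{\M}_{\mc{P}}^{\mr{partial}}$. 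Thus $\bar{\pi} \from \coprod_{\mc{P}} \wt{\M}_{\mc{P}}^{\mr{partial}} \to \wt{\M}^{\mr{partial}}$ is again an \'etale $D_{\mf{n}}$-torsor, restricting $\pi$ over the open part, and by the construction of $\omega_{\mr{can}}$ as a descent in the proof of Theorem~\ref{thm:toroidal-cpt} we have $\bar{\pi}^*(\omega_{\mr{can}}^{\tensor k}) = \omega_{\mr{can}}^{\tensor k}$.

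Next I would invoke torsor descent. For the finite constant group $D_{\mf{n}}$ and the finite \'etale cover $\pi$, the algebra $\pi_*\cO$ is locally free with $(\pi_*\cO)^{D_{\mf{n}}} = \cO_{\wt{\M}}$, so for any quasi-coherent sheaf $\mc{F}$ on $\wt{\M}$ one has $(\pi_*\pi^*\mc{F})^{D_{\mf{n}}} = \mc{F}$ and hence
\[ H^0(\wt{\M}, \mc{F}) = \Big( \bigoplus_{\mc{P}} H^0(\wt{\M}_{\mc{P}}, \pi^*\mc{F}) \Big)^{D_{\mf{n}}}. \]
No hypothesis on $|D_{\mf{n}}|$ is needed, since this is descent rather than averaging, and it remains valid with coefficients after tensoring by $B$. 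Applying this with $\mc{F} = \omega^{\tensor k}\tensor B$, and the identical argument with $\bar{\pi}$ and $\mc{F} = \omega_{\mr{can}}^{\tensor k}\tensor B$, reduces the theorem to producing a $D_{\mf{n}}$-equivariant isomorphism $H^0(\wt{\M}_{\mc{P}}^{\mr{partial}}, \omega_{\mr{can}}^{\tensor k}\tensor B) \isom H^0(\wt{\M}_{\mc{P}}, \omega^{\tensor k}\tensor B)$ for each $\mc{P}$.

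Finally, this last isomorphism is exactly Rapoport's Koecher principle for $\wt{\M}_{\mc{P}}$ (\cite{rapoport}); by Corollary~\ref{cor:auto-polar} it applies verbatim in our normalization even when $p \mid \mf{d}$, as explained in the proof of Theorem~\ref{thm:toroidal-cpt}. The isomorphism there is induced by restriction along the dense open immersion $\wt{\M}_{\mc{P}} \into \wt{\M}_{\mc{P}}^{\mr{partial}}$, and restriction is automatically compatible with the $D_{\mf{n}}$-action, which preserves this open immersion; hence the isomorphism is $D_{\mf{n}}$-equivariant. Passing to $D_{\mf{n}}$-invariants and summing over $\mc{P}$ yields the desired isomorphism. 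I expect the only real subtlety to be bookkeeping: confirming that $\bar{\pi}$ genuinely extends $\pi$ and carries $\omega_{\mr{can}}^{\tensor k}$ to $\omega_{\mr{can}}^{\tensor k}$, so that the two invariant computations match up---this is what makes the restriction map on the $G$-stacks the $D_{\mf{n}}$-invariant part of the restriction maps on the $G^*$-stacks.
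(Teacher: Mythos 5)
Your argument is correct, but it takes a different route from the paper. The paper's proof is very short: it first reduces to the cases $B = \Z$ and $B = \Z/p^n\Z$ (using that $M_k(\mf{n},-)$ commutes with filtered unions and finite direct sums of coefficient groups, as noted in \S\ref{subsec:hmf-coeff}), and then simply asserts that Rapoport's proof of the Koecher principle for $\wt{\M}^{\mr{partial}}_{\mc{P}}$ goes through verbatim for $\wt{\M}^{\mr{partial}}$. You instead treat the statement for $\wt{\M}_{\mc{P}}$ as a black box and transport it to $\wt{\M}$ by taking $D_{\mf{n}}$-invariants along the \'etale torsor $\coprod_{\mc{P}}\wt{\M}_{\mc{P}}\to\wt{\M}$ and its extension over the boundary. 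This is a legitimate and arguably cleaner alternative: it isolates exactly what must be checked (that the torsor and the equivariant structure on $\omega_{\mr{can}}$ extend across the boundary, which the proof of Theorem~\ref{thm:toroidal-cpt} supplies, and that invariants of a permutation module commute with $\tensor B$, which needs no hypothesis on $|D_{\mf{n}}|$), rather than asking the reader to re-run Rapoport's argument on the quotient stack. Two small points to tighten: (i) Rapoport's Prop.~4.9 is stated for $B=\Z$ and torsion coefficients, so the reduction of a general abelian group $B$ to finitely generated and then cyclic summands that the paper performs is still needed on each $\wt{\M}_{\mc{P}}$ before your descent step (the descent itself is fine for arbitrary $B$); (ii) the inference ``free on the open part and free on the boundary, hence a torsor over all of $\wt{\M}^{\mr{partial}}$'' deserves a word, though the required structure is exactly what the proof of Theorem~\ref{thm:toroidal-cpt} establishes when it identifies $\wh{Z}'_C/\mathrm{Stab}_{[C]}(D_{\mf{n}})=\wh{Z}_C$. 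Neither point is a genuine gap.
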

\begin{proof}
The general case reduces to the cases $B = \Z$ or $B = \Z/p^n\Z$. These cases were proven by Rapoport (\cite{rapoport}, Prop. 4.9) for $\wt{\M}^{\mr{partial}}_{\mc{P}}$, but the same proof works for $\wt{\M}^{\mr{partial}}$.
\end{proof}

Associated to $[C] \in \mr{Cusp}_p$ is a boundary divisor $i_{[C]} \from Z_{[C], \cO} \into \wt{\M}^{\mr{partial}}_{\cO}$, with formal neighborhood $\wh{Z}_{[C], \cO}$. For any $\cO$-module $B$, the composition \[ H^0(\wt{\M}_{\cO}, \omega^{\tensor k} \tensor B) \isom H^0(\wt{\M}^{\mr{partial}}_{\cO}, \omega^{\tensor k}_{\mr{can}} \tensor B) \nmto{\wh{i}_{[C]}^* } H^0(\wh{Z}_{[C], \cO}, \omega^{\tensor k}_{\mr{can}} \tensor_{\cO} B) \into Q_{C, k}(B) \]
defines an \textbf{algebraic $q$-expansion}
\[ \mr{qexp}_{[C]} \from M_k(\mf{n}, B) \to Q_{[C], k}(B). \]

When $B = \C$:
\begin{theorem}[\cite{rapoport}, \cite{lan-compare}]\label{thm:compare-q-expns}
The algebraic and complex-analytic $q$-expansions (\S \ref{subsec:q-cusplabel}) at the cusp $[C]$ are equal. 
\end{theorem}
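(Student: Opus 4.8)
The plan is to reduce the comparison to the analogous statement for the PEL-type stacks $\wt{\M}_{\mc{P}}$, where it is precisely the content cited from Rapoport \cite{rapoport} and Lan \cite{lan-compare}. Recall from \S\ref{subsec:G-moduli} that $\wt{\M}'$ is the quotient of $\coprod_{\mc{P}} \wt{\M}_{\mc{P}}$ by $U^+$, that $\wt{\M}$ is its rigidification, and that over $\C$ one has $M_k(\mf{n}, \C) = \oplus_{\mc{P}} H^0(\wt{\M}_{\mc{P}, \C}, \omega^{\tensor k})^{U^+}$ (see \S\ref{subsec:hodge-bundle}). In particular, pullback along the surjective \'etale covering $\coprod \wt{\M}_{\mc{P}} \to \wt{\M}$ is injective and realizes $M_k(\mf{n}, \C)$ as the subspace of $U^+$-invariants inside $\oplus_{\mc{P}} H^0(\wt{\M}_{\mc{P}, \C}, \omega^{\tensor k})$. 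It therefore suffices to show that both the algebraic and the complex-analytic $q$-expansion maps for $G$ at a cusp $[C]$ are the restrictions of the corresponding maps for $G^*$ at a cusp $C'$ lying above $[C]$; the desired equality then follows from the known $G^*$ comparison.

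For the algebraic side, this compatibility is built into the construction of \S\ref{sec:toroidal-cpt}. By Theorem~\ref{thm:toroidal-cpt}, $\wt{\M}^{\mr{partial}}$ is the rigidification of $\coprod \wt{\M}^{\mr{partial}}_{\mc{P}}/U^+$, its boundary divisor $Z_{[C], \cO}$ and formal neighborhood $\wh{Z}_{[C], \cO}$ arise as the quotient of the boundary data $\wh{Z}'_{C'}$ of $\wt{\M}_{\mc{P}}$ by the stabilizer $Stab_{[C]}(D_{\mf{n}}) = U_C/U'_C$, and $\omega_{\mr{can}}$ pulls back to the corresponding bundle on $\wt{\M}^{\mr{partial}}_{\mc{P}}$. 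Consequently the restriction-to-formal-neighborhood map $\wh{i}_{[C]}^*$ defining $\mr{qexp}_{[C]}$ for $G$ is exactly the $U^+$-invariant part of Rapoport's map $\wh{i}_{C'}^*$ for $\wt{\M}_{\mc{P}}$, so the algebraic $q$-expansion for $G$ is the restriction of the algebraic $q$-expansion for $G^*$.

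For the analytic side, I would unwind the definition in \S\ref{subsec:q-cusplabel}. The analytic $q$-expansion at $[C] = [(\alpha, L_{\infty})]$ was defined via the Fourier expansion of the classical function $f_{\alpha} \from \mc{H}^n \to \C$ attached to the component label, which is precisely the function obtained by pulling $f$ back along the uniformization $\mc{H}^n \to Sh_{G^*, K_{\lambda}}$ for a $\lambda$ above $[C]$. Under the equivalence $\wt{\M}^{\mr{an}}_{\mc{P}_{\alpha_{\lambda}}, \C} \isom Sh_{G^*, K_{\lambda}}$ of Theorem~\ref{thm:G*-moduli}(2), and after matching normalizations (the twist by $N_{F/\Q}(\mf{a})^{\tensor -k}$ and the inverse-transpose convention of Theorem~\ref{thm:moduli-lattice}), this is Rapoport's analytic $q$-expansion for $G^*$. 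Granting the $G^*$ comparison of \cite{rapoport}, \cite{lan-compare}, the two $q$-expansion maps for $G$ agree after restriction to each $\wt{\M}_{\mc{P}}$, and since $M_k(\mf{n}, \C)$ embeds into $\oplus_{\mc{P}} H^0(\wt{\M}_{\mc{P}, \C}, \omega^{\tensor k})$ this forces them to agree on all of $M_k(\mf{n}, \C)$.

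The main obstacle is the bookkeeping of normalizations in the third paragraph: one must verify that the analytic coordinate $q^b = e^{2\pi i \tr_{F/\Q}(b z)}$ used here, together with the canonical trivialization $t_{\alpha}$ of $\omega^{\tensor k}_{T_{\alpha}}$ and the factor $N_{F/\Q}(\mf{a})^{\tensor -k}$, matches the trivialization of $\omega_{\mr{can}}$ over $\wh{Z}_{[C]}$ coming from the toric chart $S(\beta) = \Spec(\Z[\sigma^{\dual}])$, whose analytification sends the semigroup generators to the exponentials $e^{2\pi i \tr_{F/\Q}(b z)}$. Because the paper tracks these conventions explicitly (Theorem~\ref{thm:moduli-lattice}, Lemma~\ref{lem:normalize}, and \S\ref{subsec:q-cusplabel}), this ultimately reduces to comparing two explicit trivializations of a trivial line bundle on a punctured formal/analytic neighborhood, which differ at worst by the anticipated normalizing constants.
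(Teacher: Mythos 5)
Your proposal is correct and follows essentially the same route as the paper: the paper's proof is precisely the reduction to $\coprod_{\mc{P}} \wt{\M}_{\mc{P}}$ by pullback, followed by citation of Rapoport (\S 6.13, for $\Gamma(n)$-level, descended to $\wt{\M}_{\mc{P}}$) or Lan (Thm.\ 5.3.5) for the PEL case. Your extra unwinding of why both $q$-expansion maps are compatible with the covering is detail the paper leaves implicit, but it is the same argument.
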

\begin{proof}
This was originally proven in \cite{rapoport} \S 6.13, for the level $\Gamma(n)$ stack $\M(\Gamma(n))_{\mc{P}}$, and one can deduce the result for $\wt{\M}_{\mc{P}}$ from this. Alternately, one can use \cite{lan-compare} Thm. 5.3.5, which proves a general result for PEL Shimura varieties, of which the result for $\wt{\M}_{\mc{P}}$ is a special case. The equality of $q$-expansions for $\wt{\M}$ may be checked after pullback to $\coprod_{\mc{P}} \wt{\M}_{\mc{P}}$.
\end{proof}

This immediately proves Theorem \ref{thm:cusp-defn}, that the image of the classical $q$-expansion $M_k(\mf{n}, \cO) \to Q_{[C], k}(\C)$ is contained in $Q_{[C], k}(\cO)$.

\begin{prop}\label{prop:extend-action}
The action of $G^+_{\mf{n}}$ on $(\wt{\M}_{\cO}, \omega^{\tensor k})$ extends to $(\wt{\M}^{\mr{partial}}_{\cO}, \omega_{\mr{can}}^{\tensor k})$, compatible with the action on $(\wh{Z}_p, \omega^{\tensor k})$ described in \S\ref{subsubsec:formal-model}.
\end{prop}

\begin{proof}
We begin by introducing the charts which are used in the construction of $\wt{\M}^{\mr{partial}}_{\cO}$.

The construction of the toroidal compactification of $\wt{\M}_{\mc{P}}$, via Mumford's construction of semi-abelian schemes, gives rise to the following data. Functorially associated to pairs $\beta = (C = (\alpha, L), \sigma)$, where $[C] \in \mr{Cusp}_p$ and $\sigma \in \Sigma_C$, there exists:
\begin{enumerate}
\item a semi-abelian scheme \[ A(\beta) \to \wh{S}(\beta)_{\mr{alg}} \] with real multiplication $\iota \from \cO_F \into \End_{\wh{S}(\alpha)_{\mr{alg}}}(A(\beta))$, whose restriction to $\wh{S}^0(\beta)_{\mr{alg}}$, denoted $A^0(\beta)$, is abelian, and whose restriction to $S^{\infty}(\beta)$ equals the constant torus $\G_m \tensor_{\Z} \mf{a}$;
\item a torsion point $x_{\beta} \from (\cO_F/\mf{n}(1))_{\cO} \into A^0(\beta)[\mf{n}]_{\cO}$;
\item a map $\wh{S}^0(\beta)_{\mr{alg}, \cO} \to \wt{\M}_{\mc{P}_{\alpha}, \cO}$ such that $(A^0(\beta), x_{\beta})$ is pulled back from the universal abelian scheme (with its torsion point) on $\wt{\M}_{\mc{P}_{\alpha}, \cO}$;
\item an isomorphism $\omega_{A(\beta)} \isom N_{F/\Q}(\mf{a})^{\tensor -1} \tensor \cO_{\wh{S}(\beta)_{\mr{alg}}}$.
\end{enumerate}

 The map $\wh{S}^0(C, \sigma)_{\mr{alg}, \cO} \to \wt{\M}_{\mc{P}_{\alpha}, \cO} \to \wt{\M}_{\cO}$ extends to a map $\wh{S}(C, \sigma)_{\mr{alg}, \cO} \to \wt{\M}^{\mr{partial}}_{\cO}$, such that restriction to $S^{\infty}(C, \sigma)_{\cO}$ equals $S^{\infty}(C, \sigma)_{\cO} \to Z_{C, \cO} \subset \wt{\M}^{\mr{partial}}_{\cO}$.

Let $N \in I_{p\mf{n}}$ be a prime-to-$p\mf{n}$ fractional ideal. We claim that
\[ \begin{tikzcd}
 A(C, \sigma) \tensor N \arrow{r}{\isom} \arrow{d} & A(C \tensor N, \sigma_N) \arrow{d} \\
\wh{S}(C, \sigma)_{\mr{alg}} \arrow{r}{\isom} & \wh{S}(C \tensor N, \sigma_N)_{\mr{alg}} 
\end{tikzcd}, \]
and that $x_{(C, \sigma)} \tensor N$ is identified with $x_{(C \tensor N, \sigma_N)}$. The former follows from Mumford's construction. The identification of torsion points is a bit subtle - it follows from the fact that these torsion points are \emph{chosen} in order to make Theorem \ref{thm:compare-q-expns} hold. 

This implies that $\wh{S}^0(\beta)_{\mr{alg}, \cO} \to \wt{\M}_{\cO} \nmto{[N] \cdot} \wt{\M}_{\cO}$ equals $\wh{S}^0(N \cdot \beta)_{\mr{alg}, \cO} \to \wt{\M}_{\cO}$.

We have shown that there is an action of $I_{p\mf{n}}$ on the scheme $\coprod \wh{S}(\beta)_{\mr{alg}, \cO}$ such that the morphism $\coprod \wh{S}^0(\beta)_{\mr{alg}, \cO} \to \wt{\M}_{\cO}$ is $I_{p\mf{n}}$-equivariant. Although the construction of $\wt{\M}^{\mr{partial}}_{\cO}$ involves choosing ``approximations" of these schemes and morphisms, it is not difficult to see that the resulting stack $\wt{\M}^{\mr{partial}}_{\cO}$ is independent of such choices. Hence the group $I_{p\mf{n}}$ acts on $\wt{\M}^{\mr{partial}}_{\cO}$.

The fact that $\omega^{\tensor k}_{\mr{can}}|_{\wh{Z}_{p, \cO}}$ agrees with the line bundle $\omega^{\tensor k}$ we defined directly on $\wh{Z}$ follows from property 4 of $A_{(C, \sigma)}$. We omit the proof that the line bundle $\omega^{\tensor k}_{\mr{can}}$ is $I_{p\mf{n}}$-equivariant, and that $\omega_{\mr{can}}^{\tensor k}$ is an $(I_{p\mf{n}}, (F^*)')$-equivariant line bundle (in the sense of \S\ref{subsec:equiv-line}).

\end{proof}

This implies:
\begin{corollary}\label{cor:qexp-equivar}
For any $\cO$-module $B$, the $q$-expansion at the $p$-unramified cusps, 
\[ M_{k}(\mf{n}, B) \into Q_{p, k}(B) \]
is $G^+_{\mf{n}}$-equivariant, for the action of $G^+_{\mf{n}}$ described in \S\ref{subsubsec:formal-action}.
\end{corollary}

Choosing $p$ coprime to $\mf{n}$, so that $\mr{Cusp}_{p} = \mr{Cusp}$, and taking $B = \C$, viewed as an $\cO$-module via the fixed $\cO \subset \C$, this proves Theorem \ref{thm:qexp-equivar}.

\subsubsection{Proof of the $q$-expansion principle}
The following argument is identical to the one given by Rapoport (\cite{rapoport} Thm. 6.7) for the stack $\M(\Gamma(n))_{\mc{P}}$, and we include it for the sake of exposition.

For the moment, consider the partial toroidal compactification $\wt{\M}^{\mr{partial}}$ over $\Z$, whose boundary components correspond to the unramified cusps.

We obtain, for any abelian group $B$, and any unramified cusp $[C]$, a $q$-expansion map
\begin{align*} 
M_k(\mf{n}, B) &= H^0(\wt{\M}, \omega^{\tensor k} \tensor_{\Z} B) \\
&= H^0(\wt{\M}^{\mr{partial}}, \omega^{\tensor k}_{\mr{can}} \tensor_{\Z} B) \\
& \to H^0(\wh{Z}_{[C]}, \omega^{\tensor k} \tensor_{\Z} B) \\
&= Q_{[C],k}(B). \end{align*}
This is clearly functorial in $B$. To prove the $q$-expansion principle, it remains to show that,  if the collection of cusps $\{ [C_i] \}$ meets every component, $M_k(\mf{n}, B) \to \oplus_i Q_{[C_i],k}(B)$ is injective. As $B$ is a union of its finitely generated subgroups, we reduce to the case $B = \Z$ or $B = \Z/p^n\Z$.

Suppose $R = \Z/p^n \Z$ or $R = \Z$, and consider sections of the line bundle $\omega^{\tensor k}_{\mr{can}}$ over $\wt{\M}^{\mr{partial}}_R$. If a section of a line bundle is zero in any stalk of an irreducible noetherian algebraic stack, then it is zero. The result then follows from the fact that every irreducible component of $\wt{\M}^{\mr{tor}}_{\Spec(R)}$ contains some divisor $Z_{[C_i]}$, since, by Theorem \ref{thm:G*-moduli}, the connected components of $\wt{\M}^{\mr{tor}} = (\wt{\M}^{\mr{tor}}_{\mc{P}}/D_{\mf{n}})_{\Spec(R)}$ are the same as the irreducible components.

\subsection{The minimal compactification}\label{subsec:minimal-cpt}

In this section, we define the minimal compactification of $\wt{\M}$, a projective scheme $\wt{M}^{\mr{min}}$ compactifying the coarse moduli space $\wt{M} = [\wt{\M}]$. We closely follow Chai \cite{chai}, who constructed the minimal compactification for $\wt{\M}_{\mc{P}}$ as an application of the more general techniques developed in Faltings--Chai \cite{faltings-chai}. We then define coherent sheaves on $\wt{M}^{\mr{min}}$ whose sections are Hilbert modular forms. Finally, we check a compatibility between the constant term map and the evaluation of HMFs at cusp points in $\wt{M}^{\mr{min}}$.

\subsubsection{Positivity of the Hodge bundle}

\begin{theorem}[\cite{moret-bailly} Ch. IX, Thm. 2.1]\label{thm:hodge-semiample}
Consider a semi-abelian scheme $A \to S$ over a normal, excellent, noetherian base $S$, such that $A$ is abelian over a dense open subspace of $S$. Then $\omega_A$ is a semi-ample line bundle.
\end{theorem}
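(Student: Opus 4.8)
The plan is to deduce semi-ampleness from the ampleness of the Hodge bundle on the minimal compactification of a Siegel moduli space, realizing $\omega_A$ as the pullback of that ample bundle along a classifying morphism. To set this up I would first reduce to a principally polarized situation. Let $U \subseteq S$ be the dense open over which $A$ is abelian. Zarhin's trick produces a \emph{canonical} principal polarization on $B := (A \times A^{\dual})^4$ over $U$, with no further localization; and each of $A$, its dual $A^{\dual}$, and their products remains semi-abelian over $S$ (the dual of a semi-abelian scheme is semi-abelian, and these operations preserve abelianness over $U$), so $B \to S$ is semi-abelian and abelian over $U$. Since the Hodge bundle is multiplicative in products and satisfies $\omega_{A^{\dual}} \cong \omega_A$ — which follows from $\Lie(A^{\dual}) \cong R^1\pi_*\cO_A$ together with relative Serre duality on the abelian scheme, giving $\det\Lie(A^{\dual}) \cong \omega_A^{\dual}$ — one computes $\omega_B \cong \omega_A^{\tensor 8}$. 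As semi-ampleness of a line bundle is unchanged upon passing to a positive tensor power, it suffices to treat $\omega_B$; I may therefore assume $A$ itself is principally polarized over $U$.

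Next I would invoke the moduli interpretation. Let $\sA_g$ denote the moduli stack of principally polarized abelian schemes of the relevant dimension $g$, with projective minimal compactification $\sA_g^{\mr{min}}$ (Faltings--Chai). The crucial input is that $\sA_g^{\mr{min}}$ carries the Hodge bundle $\omega$ as an ample line bundle: by construction $\sA_g^{\mr{min}}$ is the $\Proj$ of the finitely generated graded ring $\oplus_{k \geq 0} H^0(\sA_g, \omega^{\tensor k})$ of scalar-valued modular forms, so a power of $\omega$ is very ample. The principally polarized abelian scheme $A|_U$ defines a morphism $U \to \sA_g$; because $S$ is normal and noetherian, $\sA_g^{\mr{min}}$ is proper, and the semi-abelian scheme $A$ records exactly the boundary degeneration data, the theory of degeneration of abelian varieties shows that this morphism extends to $f \from S \to \sA_g^{\mr{min}}$ with $f^*\omega \cong \omega_A$ by functoriality of the Hodge bundle. (Any auxiliary level structure needed to rigidify $\sA_g$ is introduced along a finite \'etale cover and discarded at the end using Corollary~\ref{cor:semiample-descent}.)

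Finally I would conclude: some power $\omega^{\tensor k_0}$ is very ample on $\sA_g^{\mr{min}}$, hence globally generated, so $\omega_A^{\tensor k_0} = f^*(\omega^{\tensor k_0})$ is globally generated as well; when $S$ is proper over the base ring — as it is in the intended application to the proper partial toroidal compactification of \S\ref{sec:toroidal-cpt} — this is exactly semi-ampleness in the sense of \S\ref{subsec:semiample}. I expect the main obstacle to be the extension of the classifying map across the boundary in the second step: one must know that a semi-abelian scheme over a normal base, abelian over a dense open, determines a canonical point of the minimal compactification pulling $\omega$ back to $\omega_A$. This is precisely the content of the degeneration theory underlying Moret-Bailly's and Faltings--Chai's constructions; by comparison, the reduction to principal polarizations and the level-structure bookkeeping are routine.
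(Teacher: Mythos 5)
The paper does not prove this statement; it quotes it from Moret-Bailly (Ch.~IX, Thm.~2.1), with Faltings--Chai (Ch.~V, Prop.~2.1) for the algebraic-space version, so the comparison is with those sources and with the logical role the theorem plays in the paper. Your proposal has a genuine circularity at its core. The projective minimal compactification $\sA_g^{\mr{min}}$ with ample Hodge bundle that you want to map to is \emph{constructed} (in Faltings--Chai Ch.~V \S 2, and in exactly the same way for $\wt{M}^{\mr{min}}$ in \S\ref{sec:minimal-cpt} and Theorem~\ref{thm:min-cpt} of this paper) by first proving precisely Theorem~\ref{thm:hodge-semiample} for the semi-abelian scheme over a toroidal compactification, and then applying Proposition~\ref{prop:semiample} to conclude that $\oplus_k H^0(\omega^{\tensor k})$ is finitely generated, that its $\Proj$ is projective, and that the pushforward of a power of $\omega$ is ample. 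Away from characteristic $0$ (where Baily--Borel gives an independent analytic construction), there is no source for ``$\sA_g^{\mr{min}}$ is projective over $\Z_{(p)}$ with ample $\omega$'' that does not already contain the statement you are trying to prove --- and the application here is over $\Z_{(p)}$. The same objection hits the step you yourself flag as the main obstacle: extending $U \to \sA_g$ to $f \from S \to \sA_g^{\mr{min}}$ with $f^*\omega \cong \omega_A$ is not ``the content of degeneration theory.'' Mumford's construction describes the formal completion of a degenerating family along the boundary, but it does not produce a morphism of schemes to the minimal compactification; in Faltings--Chai this extension property is obtained \emph{as a consequence} of the semi-ampleness of $\omega_A$ (one essentially takes $f = f_{\omega_A}$ from Proposition~\ref{prop:semiample}), so it cannot serve as an input.

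There are also repairable secondary gaps: the classical form of Zarhin's trick requires a polarization on $A|_U$, which over a general normal base exists only \'etale-locally, so the asserted canonical global principal polarization on $(A \times A^{\dual})^4$ needs justification (or a descent argument via Corollary~\ref{cor:semiample-descent}); the dual $A^{\dual}_U$ does extend to a semi-abelian scheme over $S$ (Faltings--Chai Ch.~I), but the isomorphism $\omega_{A^{\dual}} \cong \omega_A$ is a priori given only over $U$, and normality only extends it as a rational section --- one must check it has neither zero nor pole along the boundary divisors. These could be fixed, but the circularity cannot. The cited proof works directly with the semi-abelian scheme over $S$: after reducing to the proper, polarized case, it produces sections of $\omega_A^{\tensor n}$ not vanishing at a prescribed boundary point by means of cubical/theta-group techniques and Mumford's construction over the complete local rings, with no appeal to a compactified moduli space. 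If you want a proof rather than a citation, that is the route to follow.
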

According to Faltings--Chai \cite{faltings-chai} (Ch. 5, Prop. 2.1), the same proof works more generally for $S$ an algebraic space covered by normal, excellent, noetherian schemes. We will need this stronger result. 
 
 \begin{theorem}[\cite{faltings-chai} Ch. 5, Prop. 2.2]\label{thm:hodge-nontrivial}
Suppose $S$ is a proper curve over an algebraically closed field, and $A \to S$ is a semi-abelian scheme, abelian over a dense open subspace of $S$. If $\deg(\omega_A) \leq 0$, then $A \to S$ is isotrivial and is abelian over $S$.
\end{theorem}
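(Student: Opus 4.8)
The plan is to combine the semi-ampleness of $\omega_A$ (Theorem~\ref{thm:hodge-semiample}) with a positivity analysis along the boundary and a rigidity argument on the generic fiber. We may assume $S$ is connected (arguing on each component otherwise), and, passing to the normalization if necessary, smooth. Since $\omega_A$ is semi-ample, some power $\omega_A^{\otimes k_0}$ is globally generated, and a globally generated line bundle on a connected proper curve has non-negative degree; hence $\deg(\omega_A) \geq 0$. The hypothesis $\deg(\omega_A) \leq 0$ then forces $\deg(\omega_A) = 0$, and a globally generated line bundle of degree $0$ on a connected proper curve is trivial, so $\omega_A^{\otimes k_0} \cong \cO_S$ and $\omega_A$ is numerically trivial. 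This remains true after any finite surjective base change $S' \to S$, which I am free to perform: it scales $\deg$ by a positive integer and, by descent, does not affect whether $A$ is abelian or isotrivial. It therefore suffices to prove two things: (a) $A \to S$ has no fibers of bad reduction, i.e.\ it is abelian; and (b) this abelian scheme is isotrivial.

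For (a) I would show that a point of non-abelian reduction forces $\deg(\omega_A) > 0$, contradicting $\deg(\omega_A) = 0$. After a finite base change I may assume $A$ has semistable (split multiplicative) reduction, so that near a bad point $s_0$ the semi-abelian scheme is described by Mumford's degeneration data: a split torus of rank $t \geq 1$ together with a period homomorphism, whose associated symmetric pairing on the character lattice is the Grothendieck monodromy pairing and is (up to sign) positive definite on the toric part. On the corresponding toroidal chart (\S\ref{sec:toroidal-cpt}) the Hodge bundle is generated, in the torus coordinates $t_i$, by $\bigwedge_i \tfrac{dt_i}{t_i}$, and the valuations of the period parameters produce a strictly positive local contribution to $\deg(\omega_A)$ along the boundary divisor at $s_0$. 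Summing over the finitely many bad points shows $\deg(\omega_A) > 0$ whenever some fiber is non-abelian, so $A$ must be abelian over all of $S$.

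For (b), with $A \to S$ now abelian, I would use the Kodaira--Spencer map and an Arakelov-type rigidity. The Gauss--Manin connection $\nabla$ on $H^1_{\mathrm{dR}}(A/S)$ restricts, via the Hodge filtration $\omega_{A/S} \subset H^1_{\mathrm{dR}}(A/S)$, to the Kodaira--Spencer map $\omega_{A/S} \to \Lie(A^{\vee}) \otimes \Omega^1_S$, and $\deg(\omega_A) = \deg(\det \omega_{A/S})$ controls the size of this map. Vanishing of $\deg(\omega_A)$ forces the Kodaira--Spencer map to be identically zero, so the Hodge filtration is $\nabla$-horizontal; by rigidity of the associated variation this means the classifying map of $A$ to the moduli stack is constant, i.e.\ $A \to S$ is isotrivial. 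In characteristic zero this is the statement that the Hodge metric on $\det \omega_{A/S}$ has semi-positive curvature, strictly positive unless Kodaira--Spencer vanishes; in general it is subsumed in Faltings--Chai's analysis.

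The hard part will be the boundary positivity in step (a): making precise that a non-trivial toric degeneration contributes a strictly positive amount to $\deg(\omega_A)$ requires the explicit local description of $\omega_A$ on the toroidal charts together with the positive-definiteness of the monodromy pairing, which is exactly the technical content of Faltings--Chai, Ch.~5 \S2. The rigidity in (b) is comparatively standard over $\C$, but in positive or mixed characteristic it too rests on their machinery rather than on a transcendental curvature computation.
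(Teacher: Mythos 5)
The paper does not prove this statement: it is imported verbatim from Faltings--Chai \cite{faltings-chai} (Ch.~5, Prop.~2.2), so there is no in-paper argument to compare against. Your opening reduction is sound and is indeed how the cited proof begins: normalize, invoke Theorem~\ref{thm:hodge-semiample} to get $\deg(\omega_A)\ge 0$, hence $\deg(\omega_A)=0$ and $\omega_A^{\otimes k_0}\cong\cO_S$. But the two substantive steps (a) and (b) are exactly the content of the cited proposition, and your sketch does not actually establish either; it describes them and then defers to Faltings--Chai. As written, the proposal is an annotated citation rather than a proof.

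Two concrete problems. For (a), the phrase ``a strictly positive local contribution to $\deg(\omega_A)$ along the boundary divisor'' is not an argument: the degree of a line bundle on a curve is not a sum of local invariants attached to the bundle alone, and the local trivialization of $\omega_A$ by $\bigwedge_i dt_i/t_i$ on a toric chart contributes nothing by itself. What is needed is a \emph{global} section of some power of $\omega_A$ that vanishes at each point of bad reduction and is generically nonzero (the discriminant $\Delta$ when $g=1$, suitable theta constants in general); its divisor then forces $\deg(\omega_A)>0$. Equivalently, since you already know $\omega_A^{\otimes k_0}\cong\cO_S$, every nonzero global section is nowhere vanishing, so it suffices to exhibit one nonzero section vanishing at a bad point --- and producing it is precisely the degeneration-theoretic input you have skipped. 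For (b), the Kodaira--Spencer mechanism fails over a general algebraically closed field, which is the setting of the statement: in characteristic $p$ the implication $\deg(\omega_A)=0\Rightarrow \mathrm{KS}=0$ has no curvature proof, and even granting $\mathrm{KS}=0$ one cannot conclude the classifying map is constant (it may factor through Frobenius). The Arakelov-type rigidity you invoke is a characteristic-zero argument, so step (b) is a genuine gap rather than a routine verification. If you want a self-contained proof you must either reproduce the theta-constant analysis of Faltings--Chai or Moret-Bailly, or accept the statement as a black box as this paper does.
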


\subsubsection{Minimal compactification at prime-to-$p$ level}\label{subsubsec:good-level}

In this section, assume that $\mf{n}$ is relatively prime to a fixed prime $p$. Let $\cO$ be the ring defined in \S \ref{subsec:hmf-coeff}, a $\Z_{(p)}$-algebra containing enough roots of unity. All stacks/schemes will be $\Z_{(p)}$-schemes.

Since $\mf{n}$ is relatively prime to $p$, all cusps in $\mr{Cusp}(\mf{n})$ are $p$-unramified, and so: 
\begin{theorem}[\cite{rapoport}]
$\wt{\M}^{\mr{partial}}$ is proper over $\Spec(\Z_{(p)})$.
\end{theorem}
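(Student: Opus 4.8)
The plan is to reduce properness of $\wt{\M}^{\mr{partial}}$ to the corresponding statement for the PEL compactifications $\wt{\M}^{\mr{partial}}_{\mc{P}}$ constructed by Rapoport, and then to descend properness through the finite quotient presentation of $\wt{\M}^{\mr{partial}}$ recorded in the proof of Theorem~\ref{thm:toroidal-cpt}.

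First I would observe that the hypothesis $p \nmid \mf{n}$ makes the $p$-unramified condition vacuous: the $p$-part $\mf{P}$ of $\mf{n}$ is trivial, so every cusp label is $p$-unramified and $\mr{Cusp}_p(\mf{n}) = \mr{Cusp}(\mf{n})$. Consequently the boundary of each $\wt{\M}^{\mr{partial}}_{\mc{P}}$ accounts for \emph{all} of Rapoport's cusps, so that $\wt{\M}^{\mr{partial}}_{\mc{P}}$ coincides with the full toroidal compactification $\wt{\M}^{\mr{tor}}_{\mc{P}}$, which Rapoport proves is proper over $\Spec(\Z_{(p)})$ (\cite{rapoport} Thm.~6.18). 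A finite disjoint union over $[\mc{P}] \in \Cl^+(F)$ is again proper, so $\coprod_{\mc{P}} \wt{\M}^{\mr{partial}}_{\mc{P}}$ is proper over $\Spec(\Z_{(p)})$.

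Next I would invoke the presentation from Theorem~\ref{thm:toroidal-cpt}, namely that the natural map $\coprod_{\mc{P}} \wt{\M}^{\mr{partial}}_{\mc{P}} \to \wt{\M}^{\mr{partial}}$ is a finite \'etale $D_{\mf{n}}$-torsor, exactly as for the open parts. Properness then descends along this finite surjection. Concretely, $\wt{\M}^{\mr{partial}}$ is separated and of finite type over $\Z_{(p)}$ (inherited \'etale-locally from the chart $\coprod_{\mc{P}} \wt{\M}^{\mr{partial}}_{\mc{P}}$), and for universal closedness one notes that a closed subset of any base change $\wt{\M}^{\mr{partial}}_T$ pulls back to a closed subset of the proper $(\coprod_{\mc{P}} \wt{\M}^{\mr{partial}}_{\mc{P}})_T$ with the same image in $T$; since the covering map is surjective, this image is closed. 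Hence $\wt{\M}^{\mr{partial}} \to \Spec(\Z_{(p)})$ is separated, of finite type, and universally closed, i.e. proper.

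The step I expect to require the most care is confirming that the $D_{\mf{n}}$-torsor structure genuinely persists over the boundary---that the free action of $D_{\mf{n}}$ on the cusps, verified in the proof of Theorem~\ref{thm:toroidal-cpt}, guarantees that $\coprod_{\mc{P}} \wt{\M}^{\mr{partial}}_{\mc{P}} \to \wt{\M}^{\mr{partial}}$ remains finite \'etale along the boundary divisors $Z_{p}$, and not merely over the open locus $\wt{\M}$. Granting this, the descent of properness is formal and the result follows.
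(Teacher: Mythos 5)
Your proposal is correct and follows essentially the same route as the paper: the paper likewise observes that $p \nmid \mf{n}$ makes every cusp $p$-unramified, so $\wt{\M}^{\mr{partial}}$ is the full toroidal compactification, and properness is inherited from Rapoport's $\wt{\M}^{\mr{tor}}_{\mc{P}}$ via the finite quotient presentation already set up in the proof of Theorem~\ref{thm:toroidal-cpt} (where the freeness of the $D_{\mf{n}}$-action on the boundary is checked). The descent of properness along the finite surjection is exactly the intended, though unwritten, argument.
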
 Hence we write $\wt{\M}^{\mr{tor}} := \wt{\M}^{\mr{partial}}$. Define $\wt{M}^{\mr{min}} = \Proj(\oplus_{k \geq 0} H^0(\wt{\M}^{\mr{tor}}, (\omega_{\mr{can}}^{\tensor 2})^{\tensor k}))$. 

\begin{theorem}[\cite{chai}]\label{thm:min-cpt}
\

\begin{enumerate}
\item For some $k_0 > 0$, the line bundle $\omega_{\mr{can}}^{\tensor 2 k_0}$ on $\wt{\M}^{\mr{tor}}$ is globally generated.
\item The ring $\oplus_{k \geq 0} H^0(\wt{\M}^{\mr{tor}}, (\omega_{\mr{can}}^{\tensor 2})^{\tensor k})$ is a finitely generated $\Z_{(p)}$-algebra.
\item $\wt{M}^{\mr{min}}$ is a normal, projective scheme of finite type over $\Spec(\Z_{(p)})$.
\item There is a map $\wt{f} \from \wt{\M}^{\mr{tor}} \to \wt{M}^{\mr{min}}$, such that the restriction of $\wt{f}$ to $\wt{\M}$ is identified with the coarse moduli space map $\wt{\M} \to \wt{M} := [\wt{M}]$.
\item The map $\wt{f}$ contracts the toroidal boundary $\wt{\M}^{\mr{tor}} - \wt{\M}$ to points. More precisely, after base-change to $\cO$, \[ \wt{f}((\wt{\M}^{\mr{tor}} - \wt{\M})_{\cO}) = (\wt{M}^{\mr{min}} - \wt{M})_{\cO} = \coprod_{\mr{Cusp}(\Gamma_1(\mf{n}))} \Spec(\cO). \] 
\item The push-forward $\omega_{\mr{min}}^{\tensor 2 k_0} := \wt{f}_*(\omega_{\mr{can}}^{\tensor 2 k_0})$ is an ample line bundle on $\wt{M}^{\mr{min}}$.
\end{enumerate}
\end{theorem}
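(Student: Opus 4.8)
The plan is to identify $\wt{M}^{\mr{min}}$ with the projective scheme $X_{\cL}$ produced by Proposition~\ref{prop:semiample} for the line bundle $\cL := \omega_{\mr{can}}^{\tensor 2}$ on the normal, proper $\Z_{(p)}$-stack $\wt{\M}^{\mr{tor}}$. Once $\cL$ is known to be semi-ample, statements (1)--(6) are largely formal consequences of that proposition, and the only genuine additional work is the identification of $\wt{f}$ on the open part $\wt{\M}$ and on the boundary divisors. The first, and main, task is therefore to prove that $\omega_{\mr{can}}$ is semi-ample. I would use that $\wt{\M}^{\mr{tor}}$ carries a semi-abelian scheme $\mc{A}^{\mr{tor}}$ (from the Mumford construction underlying the toroidal compactification), abelian over the dense open $\wt{\M}$, with $\omega_{\mr{can}} = \omega_{\mc{A}^{\mr{tor}}}$. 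The semi-ampleness of such a Hodge bundle is Theorem~\ref{thm:hodge-semiample} (Moret--Bailly, in the Faltings--Chai form valid over algebraic spaces), but it does not apply verbatim to a Deligne--Mumford stack. To bridge this I would first establish semi-ampleness on each $\wt{\M}^{\mr{tor}}_{\mc{P}}$ following Chai: an auxiliary prime-to-$p\mf{n}$ full level structure rigidifies the moduli problem (Lemma~\ref{lem:full-level}), producing a finite \'etale cover by a normal proper \emph{algebraic space} to which Theorem~\ref{thm:hodge-semiample} applies, after which Corollary~\ref{cor:semiample-descent} descends semi-ampleness back to $\wt{\M}^{\mr{tor}}_{\mc{P}}$. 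A second application of Corollary~\ref{cor:semiample-descent}, now to the finite \'etale $D_{\mf{n}}$-torsor $\coprod_{\mc{P}} \wt{\M}^{\mr{tor}}_{\mc{P}} \to \wt{\M}^{\mr{tor}}$, yields semi-ampleness of $\omega_{\mr{can}}$, hence of $\cL = \omega_{\mr{can}}^{\tensor 2}$, on $\wt{\M}^{\mr{tor}}$. This proves (1), with $2k_0$ the exponent making $\cL^{\tensor k_0}$ globally generated.

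With $\cL$ semi-ample, Proposition~\ref{prop:semiample} supplies the formal structure directly: its part~(1) is statement~(2); the scheme $\wt{M}^{\mr{min}} = X_{\cL} = \Proj(\oplus_{k} H^0(\wt{\M}^{\mr{tor}}, \cL^{\tensor k}))$ is normal, projective, and of finite type over $\Z_{(p)}$, which is~(3); the factorization gives the proper morphism $\wt{f} = f_{\cL} \from \wt{\M}^{\mr{tor}} \to \wt{M}^{\mr{min}}$ with connected fibers, providing the maps needed for (4) and (5); and $\wt{f}_*(\cL^{\tensor k_0}) = \wt{f}_*(\omega_{\mr{can}}^{\tensor 2k_0})$ is ample with $\wt{f}^* \wt{f}_*(\omega_{\mr{can}}^{\tensor 2k_0}) = \omega_{\mr{can}}^{\tensor 2k_0}$, which is~(6).

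It then remains to describe $\wt{f}$ geometrically. For the interior statement in (4), I would invoke Theorem~\ref{thm:hodge-nontrivial}: over $\wt{\M}$ the Hodge bundle has strictly positive degree on every complete curve that is not isotrivial-and-abelian, and such contracted families are exactly those sent to a single point of the coarse space $\wt{M} = [\wt{\M}]$. Hence $\wt{f}|_{\wt{\M}}$ is quasi-finite; combined with the connected fibers of $\wt{f}$ and the normality of $\wt{M}^{\mr{min}}$, the induced map from $\wt{M}$ is an open immersion, identifying $\wt{f}|_{\wt{\M}}$ with the coarse space map $\wt{\M} \to \wt{M}$. For (5), along the boundary the semi-abelian scheme degenerates to the constant torus $\G_m \tensor \mf{a}$, so by part~(4) of Theorem~\ref{thm:toroidal-cpt} the restriction $\omega_{\mr{can}}|_{Z_{[C]}}$ is a trivial line bundle (the constant twist $N_{F/\Q}(\mf{a})^{\tensor -1}$); every section of a power of $\cL$ is thus constant on the proper, geometrically connected divisor $Z_{[C]}$, so $\wt{f}$ contracts each $Z_{[C]}$ to one point, giving $(\wt{M}^{\mr{min}} - \wt{M})_{\cO} = \coprod_{\mr{Cusp}(\Gamma_1(\mf{n}))} \Spec(\cO)$.

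The main obstacle is the semi-ampleness step, specifically the reduction of the Hodge-bundle semi-ampleness (Theorem~\ref{thm:hodge-semiample}, available only over algebraic spaces) to the stack $\wt{\M}^{\mr{tor}}$: one must arrange a prime-to-$p\mf{n}$ full level $\mf{m}$ that simultaneously rigidifies the problem, keeps the relevant covers normal and proper, and is compatible with the two finite-\'etale descents of Corollary~\ref{cor:semiample-descent} (the second of which, through $D_{\mf{n}}$, is exactly the passage from Chai's $G^*$ setting to the group $G$). The identification of $\wt{f}|_{\wt{\M}}$ with the coarse space map is the secondary delicate point, since it rests on the non-degeneracy Theorem~\ref{thm:hodge-nontrivial} to exclude contracted curves in the interior; the remaining assertions are formal given semi-ampleness.
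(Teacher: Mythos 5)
Your proposal follows the paper's own proof essentially step for step: semi-ampleness of $\omega_{\mr{can}}^{\tensor 2}$ is obtained by adding auxiliary prime-to-$p$ full level to reduce to an algebraic space where Theorem~\ref{thm:hodge-semiample} applies and then descending via Corollary~\ref{cor:semiample-descent}, Proposition~\ref{prop:semiample} gives (2), (3), (6) and the map $\wt{f}$, Theorem~\ref{thm:hodge-nontrivial} identifies $\wt{f}|_{\wt{\M}}$ with the coarse space map, and the boundary contraction follows from the constancy of sections of $\omega_{\mr{can}}^{\tensor 2k}$ on the proper connected boundary divisors together with connectedness of the fibers of $\wt{f}$. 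This matches the paper's argument, with your two sequential applications of Corollary~\ref{cor:semiample-descent} being only a cosmetic reorganization of the single composite descent used there.
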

\begin{proof}
By Corollary \ref{cor:semiample-descent}, it suffices to test the semi-ampleness of $\omega^{\tensor 2}_{\mr{can}}$ after pullback along a surjective finite \'etale morphism, for example $\coprod_{\Cl^+(F)} \wt{\M}_{\mc{P}}^{\mr{tor}} \to \wt{\M}^{\mr{tor}}$. By adding auxiliary $\Gamma(m)$-level to the moduli stacks $\wt{\M}_{\mc{P}}$, for $m \geq 3$ coprime to $p$, we obtain an \emph{algebraic space} $\M_{\mc{P}}(m)^{\mr{tor}}$ and finite \'etale surjective morphism $\M_{\mc{P}}(m)^{\mr{tor}} \to \M_{\mc{P}}^{\mr{tor}}$. Then apply Theorem \ref{thm:hodge-semiample} to the semi-abelian space $A \to \wt{\M}_{\mc{P}}(m)^{\mr{tor}}$ to deduce that $\omega^{\tensor 2}_{\mr{can}}$ on $\wt{\M}_{\mc{P}}(m)^{\mr{tor}}$ is semi-ample.

We apply Proposition \ref{prop:semiample} to the semi-ample $\cL = \omega^{\tensor 2}_{\mr{can}}$ on $\wt{\M}^{\mr{tor}}$, and (2), (3), (6) follow immediately.

 To deduce that $\wt{f} := f_{\cL}$ (in the notation of \S\ref{sec:ample}) is the coarse moduli space map upon restriction to $\wt{\M}$, use Theorem \ref{thm:hodge-nontrivial}. To deduce that $\wt{f}$ contracts $\wt{\M}^{\mr{tor}} - \wt{\M}$ to points, use that \[ H^0((\wt{\M}^{\mr{tor}} - \wt{\M})_{\cO}, \omega_{\mr{can}}^{\tensor 2k}|_{\wt{\M}^{\mr{tor}} - \wt{\M}}) \isom \oplus_{[C] \in \mr{Cusp}(\Gamma_1(\mf{n}))} N_{F/\Q}(\mf{a})^{\tensor -k} \tensor \cO, \] hence all global sections of $\omega_{\mr{can}}^{\tensor 2k}$ are constant along each connected component of $\wt{\M}^{\mr{tor}} - \wt{\M}$. As the fibers of $\wt{f}$ are connected (by Prop. \ref{prop:semiample}), the scheme-theoretic image $\wt{f}((\wt{\M}^{\mr{tor}} - \wt{\M})_{\cO})$ equals $\coprod_{\pi_0(\wt{\M}^{\mr{tor}} - \wt{\M})} \Spec(\cO)$.
\end{proof}

In particular, for any $\mf{n}$, there is a full toroidal compactification $\wt{\M}^{\mr{tor}}_{\Q}$ over $\Q$, a projective minimal compactification $\wt{M}^{\mr{min}}_{\Q}$, and a morphism $\wt{f}_{\Q} \from \wt{\M}^{\mr{tor}}_{\Q} \to \wt{M}^{\mr{min}}_{\Q}$.

\subsubsection{Minimal compactification in general}

Now, suppose that $\gcd(\mf{n}, p) \neq 1$. We will define the minimal compactification $\wt{M}^{\mr{min}}$, a projective scheme over $\Z_{(p)}$.

Let $\M$ denote the level $\Gamma(1) = \Gamma_1(1)$ moduli stack over $\Spec(\Z_{(p)})$. By the theory of compactification at prime-to-$p$ level, there is a proper toroidal compactification $\M^{\mr{tor}}$, a proper minimal compactification $M^{\mr{min}}$, and a surjective morphism $f \from \M^{\mr{tor}} \to M^{\mr{min}}$.

Consider the $\Gamma_1(\mf{n})$ moduli stack $\wt{\M}$. Define the scheme $\wt{M}^{\mr{min}}$ as the normalization of $M^{\mr{min}}$ relative to the $G_{\mf{n}}^+$-equivariant morphism $\wt{\M}^{\mr{partial}} \to \M^{\mr{tor}} \to M^{\mr{min}}$. This gives $G_{\mf{n}}^+$-equivariant morphisms
\[ \wt{f} \from \wt{\M}^{\mr{partial}} \to \wt{M}^{\mr{min}},\  h \from \wt{M}^{\mr{min}} \to M^{\mr{min}}, \]
where $h$ is a finite morphism, and $\cO_{\wt{M}^{\mr{min}}}$ is integrally closed in $\wt{f}_*(\cO_{\wt{\M}^{\mr{partial}}})$. 
 Define $\wt{M}^{\mr{partial}} \subset \wt{M}^{\mr{min}}$ to be the scheme-theoretic image of $\wt{f}$.

As $\wt{M}^{\mr{min}}$ is a scheme, $\wt{f}|_{\wt{\M}}$ factors through the coarse moduli space $\wt{\M} \to [\wt{\M}] \to \wt{M}^{\mr{min}}$, where $[\wt{\M}] \to \wt{M}^{\mr{min}}$ is in fact an open embedding. In this way, $\wt{M}^{\mr{min}}$ is a compactification of $\wt{M} := [\wt{\M}]$.

\subsection{Integral models for Hodge sheaves}

For $k$ an admissible weight (for level $\Gamma_1(\mf{n})$), define \begin{equation}
\mc{L}_{k, \Q} := (\wt{f}_{\Q})_*((\omega_{\mr{can}}^{\tensor k})_{\Q}),
\end{equation} a coherent sheaf on $\wt{M}^{\mr{min}}_{\Q}$. There exists $k_0 > 0$ such that $\cL_{k_0, \Q}$ is an ample line bundle.

\begin{remark}
Although $(\omega_{\mr{can}}^{\tensor k_1})_{\Q} \tensor_{\cO_{\wt{\M}^{\mr{tor}}_{\Q}}} (\omega_{\mr{can}}^{\tensor k_2})_{\Q} \to (\omega_{\mr{can}}^{\tensor k_1 + k_2})_{\Q}$ is an isomorphism, the maps $\mc{L}_{k_1, \Q} \tensor_{\cO_{\wt{M}_{\Q}^{\mr{min}}}} \mc{L}_{k_2, \Q} \to \mc{L}_{k_1 + k_2, \Q}$ need not be isomorphisms.
\end{remark}

In the following lemma, we will define sheaves $\mc{L}_k$ on $\wt{M}^{\mr{min}}$ whose pull-back to the generic fiber $\wt{M}^{\mr{min}}_{\Q}$, which we will denote $\mc{L}_k \tensor \Q$, equals $\mc{L}_{k,\Q}$.

\begin{lemma}\label{lem:integral-sheaf}
For each admissible $k \geq 0$, there exists a $G^+_{\mf{n}}$-equivariant coherent sheaf $\mc{L}_k$ on $\wt{M}^{\mr{min}}$ such that:
\begin{enumerate}
\item $\mc{L}_k \tensor \Q = \mc{L}_{k, \Q}$;
\item $\mc{L}_k|_{\wt{M}^{\mr{partial}}} = \wt{f}_*(\omega_{\mr{can}}^{\tensor k})|_{\wt{M}^{\mr{partial}}}$;
\item $H^0(\wt{M}^{\mr{min}}, \mc{L}_k) \subset H^0(\wt{\M}, \omega^{\tensor k})\ (= M_k(\mf{n}, \Z_{(p)}))$;
\item if $k = i + j  k_0$, $\mc{L}_k \isom \mc{L}_i \tensor (\mc{L}_{k_0})^{\tensor j}$;
\item $\mc{L}_{k_0}$ is an ample line bundle.
\end{enumerate}
These are all maps of $G^+_{\mf{n}}$-modules/$G^+_{\mf{n}}$-equivariant sheaves.
\end{lemma}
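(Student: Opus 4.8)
The plan is to build all the $\mc{L}_k$ from a single ample line bundle together with the projection formula, reducing the general level to the good-level case already treated in Theorem~\ref{thm:min-cpt}. Fix $k_0 > 0$ even and divisible enough that, at level $\Gamma(1)$, the sheaf $\omega_{\mr{can}}^{\tensor k_0}$ descends across the coarse-space map and $\omega_{\mr{min}}^{\tensor k_0} = f_*(\omega_{\mr{can}}^{\tensor k_0})$ is an ample line bundle on $M^{\mr{min}}$ (Theorem~\ref{thm:min-cpt} (1),(6)). Since $h \from \wt{M}^{\mr{min}} \to M^{\mr{min}}$ is finite, I would set $\mc{L}_{k_0} := h^*(\omega_{\mr{min}}^{\tensor k_0})$; this is an ample line bundle because the pullback of an ample bundle along a finite morphism is ample, giving property (5) at once, and showing $\wt{M}^{\mr{min}}$ is projective over $\Z_{(p)}$. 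All of $h$, $\wt{f}$, and $\omega_{\mr{can}}$ are $G^+_{\mf{n}}$-equivariant (the last by Proposition~\ref{prop:extend-action}), so $\mc{L}_{k_0}$ inherits a canonical $G^+_{\mf{n}}$-equivariant structure.

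With $\mc{L}_{k_0}$ an ample line bundle in hand, I would present $\wt{M}^{\mr{min}} = \Proj(\bigoplus_{j \geq 0} H^0(\wt{M}^{\mr{min}}, \mc{L}_{k_0}^{\tensor j}))$ and, for each admissible residue $0 \leq i < k_0$, form the graded module $M^{(i)} := \bigoplus_{j \geq 0} M_{i + jk_0}(\mf{n}, \Z_{(p)})$ over this ring, where $M_{i+jk_0}(\mf{n}, \Z_{(p)}) = H^0(\wt{\M}, \omega^{\tensor(i+jk_0)})$. Defining $\mc{L}_{i + jk_0}$ to be the coherent sheaf associated to the shifted module $M^{(i)}(j)$ makes property (4), namely $\mc{L}_{i+jk_0} \isom \mc{L}_i \tensor \mc{L}_{k_0}^{\tensor j}$, an instance of the compatibility of Serre's $\widetilde{(-)}$ with twisting by the invertible sheaf $\mc{L}_{k_0} = \cO(1)$; with the convention $\mc{L}_0 = \cO$ this is consistent for every way of writing $k = i + jk_0$. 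Property (1) holds because over $\Q$ the scheme is the minimal compactification and this module description recovers $\mc{L}_{k, \Q} = (\wt{f}_{\Q})_*((\omega_{\mr{can}}^{\tensor k})_{\Q})$, while property (3), $H^0(\wt{M}^{\mr{min}}, \mc{L}_k) \subset M_k(\mf{n}, \Z_{(p)})$, follows by restricting a global section to the dense open $\wt{M} = [\wt{\M}] \subset \wt{M}^{\mr{min}}$: there the sheaf is $\omega^{\tensor k}$, so sections restrict into $H^0(\wt{\M}, \omega^{\tensor k}) = M_k(\mf{n}, \Z_{(p)})$, and the restriction is injective since $\mc{L}_k$ is torsion-free and $\wt{M}$ is dense in the normal scheme $\wt{M}^{\mr{min}}$.

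The remaining and most delicate point is property (2): that on the open locus $\wt{M}^{\mr{partial}} \subset \wt{M}^{\mr{min}}$ the abstractly defined $\mc{L}_k$ agrees with $\wt{f}_*(\omega_{\mr{can}}^{\tensor k})$. I expect this to be the main obstacle, as it is exactly where the global $\Proj$ description must be matched against the local structure of the boundary. For weight $k_0$ this is clean: since $\wt{f}^*\mc{L}_{k_0} = \omega_{\mr{can}}^{\tensor k_0}$ (the forgetful and compactification maps carry $\omega_{\mr{can}}$ to $\omega_{\mr{can}}$, cf.\ Theorem~\ref{thm:min-cpt} (6)), the projection formula together with connectedness of the fibers of $\wt{f}$ (Proposition~\ref{prop:semiample} (4)) gives $\wt{f}_*(\omega_{\mr{can}}^{\tensor k_0}) = \mc{L}_{k_0}|_{\wt{M}^{\mr{partial}}}$. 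For general admissible $k$ I would follow Chai \cite{chai} and Faltings--Chai \cite{faltings-chai}: over each $p$-unramified cusp one uses the toric charts $\wh{S}(\beta)$ and the trivialization $\omega^{\tensor k} \isom N_{F/\Q}(\mf{a})^{\tensor -k} \tensor \cO_{\wh{S}(\beta)}$ of \S\ref{subsec:formal-model} to compute $\wt{f}_*(\omega_{\mr{can}}^{\tensor k})$ near the boundary, and checks that $M^{(i)}$ localizes to these same sections. Over the finitely many extra cusps $\wt{M}^{\mr{min}} \setminus \wt{M}^{\mr{partial}}$ (the non-$p$-unramified cusps, of codimension $[F:\Q]$) no condition is imposed by (2), and normality of $\wt{M}^{\mr{min}}$ together with the finite map $h$ controls the extension; the one case needing separate bookkeeping is $F = \Q$, where the cusps are divisors rather than of codimension $\geq 2$. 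Throughout, the $G^+_{\mf{n}}$-equivariant structures on $\mc{L}_{k_0}$ and on the modules $M^{(i)}$ (via the action on $\mr{Cusp}(\mf{n})$ and on $q$-expansions) are compatible, so all of the above are isomorphisms of $G^+_{\mf{n}}$-equivariant sheaves.
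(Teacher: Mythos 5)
Your construction of the ample generator agrees with the paper's: the proof also sets $\mc{L}_{k_0} = r^*(f_*(\omega_{\mr{can}}^{\tensor k_0}))$ for the finite map $r=h$ down to level $\Gamma(1)$, so property (5) is handled identically. But for the remaining weights $1 \leq i < k_0$ you take a genuinely different route (a graded-module/$\Proj$ construction), and it has a gap that sits exactly where the lemma is nontrivial.

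The gap is coherence. When $\gcd(\mf{n},p) \neq 1$ --- the only case in which this lemma is not already covered by the prime-to-$p$ theory of \S\ref{subsec:good-level} --- the stack $\wt{\M}^{\mr{partial}}$ is not proper and $\wt{f}$ is not proper, so $\wt{f}_*(\omega_{\mr{can}}^{\tensor i})$ is a priori only quasi-coherent, and the graded module $M^{(i)} = \bigoplus_j M_{i+jk_0}(\mf{n},\Z_{(p)})$ is not known to be finitely generated over the section ring. (Each graded piece is a lattice in $M_{i+jk_0}(\mf{n},\Q)$, but that only bounds $M^{(i)}$ between a finitely generated module and its $\Q$-span; the quotient can fail to vanish in large degrees, in which case $\widetilde{M^{(i)}}$ is not coherent.) Without finite generation, the Serre correspondence you invoke for properties (1), (2) and (4) does not apply, and the object you have defined need not be a coherent sheaf at all. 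Producing a \emph{coherent} extension of $\wt{f}_*(\omega_{\mr{can}}^{\tensor i})|_{\wt{M}^{\mr{partial}}}$ across the non-$p$-unramified cusps is precisely the content of the lemma, so your construction is close to circular at this point. Note also that the paper states (3) as an inclusion rather than an equality: the intended $\mc{L}_k$ is \emph{not} $\wt{f}_*(\omega_{\mr{can}}^{\tensor k})$ globally, whereas your construction, if it worked, would force equality; this is a symptom of the same issue at the bad cusps.

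The paper's device for manufacturing the coherent extension is to pass to an auxiliary full level $\Gamma(m)$ with $m$ coprime to $p\mf{n}$, chosen so that $M(\Gamma(m))^{\mr{min}}$ has no elliptic points and no non-admissible cusps. There the minimal compactification is built from a \emph{proper} toroidal compactification and $\cL'_k := f'_*(\omega_{\mr{can}}^{\tensor k})$ is an honest line bundle on all of $M(\Gamma(m))^{\mr{min}}$. Pulling back along $g$ to level $\Gamma(m)\cap\Gamma_1(\mf{n})$ gives a line bundle $\cL''_k$ defined over \emph{every} cusp, including the non-$p$-unramified ones; pushing forward along the finite map $h$ and taking $\GL_2(\cO_F/m\cO_F)$-invariants then yields a coherent $\mc{L}_k$ for which (1)--(3) are verified by restricting to $\wt{M}^{\mr{partial}}$, and (4) is arranged by decree for $k > k_0$. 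If you want to salvage your approach, you would need to first prove finite generation of $M^{(i)}$ over $\bigoplus_j H^0(\wt{M}^{\mr{min}},\mc{L}_{k_0}^{\tensor j})$, and the only route I see to that is essentially the paper's auxiliary-level argument.
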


\begin{proof}
Note that, if $k = i + j  k_0$, \begin{equation}\mc{L}_{k, \Q} \isom \mc{L}_{i,\Q} \tensor (\mc{L}_{k_0, \Q})^{\tensor j}. 
\end{equation} It will suffice to define $\mc{L}_k$ for $k = 1, \ldots, k_0$, as if $k = i + j  k_0$ for $j \geq 1$, we may define $\mc{L}_k := \mc{L}_i \tensor (\mc{L}_{k_0})^{\tensor j}$, so that property (4) holds by definition.

Choose an $m > 0$, coprime to $p \mf{n}$, such that $M(\Gamma(m))^{\mr{min}}$ has no elliptic points or non-admissible cusps (see Lemma \ref{lem:full-level}). Let $\Gamma' = \Gamma(m), \Gamma'' = \Gamma(m) \cap \Gamma_1(\mf{n})$. We have morphisms $g$, $h$:
\[ M(\Gamma')^{\mr{min}} \nmleft{g} M(\Gamma'')^{\mr{min}} \nmto{h} M(\Gamma_1(\mf{n}))^{\mr{min}}. \] We also have morphisms:
\[ f' \from \M(\Gamma')^{\mr{tor}} \to M(\Gamma')^{\mr{min}}, \]
 \[ f'' \from \M(\Gamma'')^{\mr{partial}} \to M(\Gamma'')^{\mr{min}}, \]
 \[ \wt{f} \from \M(\Gamma_1(\mf{n}))^{\mr{partial}} \to M(\Gamma_1(\mf{n}))^{\mr{min}}. \]
Define $\cL'_k := f'_*(\omega^{\tensor k}_{\mr{can}})$, $\cL''_k := g^*(\cL'_k)$, $\cL_k := h_*(\cL''_k)^{\GL_2(\cO/m\cO)}$. The condition on $m$ ensures that:
\begin{itemize}
\item $\cL'_k$ is a line bundle;
\item the restriction of $g$ to  $M(\Gamma'')^{\mr{partial}} \subset  M(\Gamma'')^{\mr{min}}$ is a finite morphism, ramified only at the cusps.
\end{itemize}
One can then verify that $\cL''_{k} \tensor \Q =  \cL''_{k, \Q}$ (where $\cL''_{k, \Q}$ is the push-forward of a sheaf on $\M(\Gamma'')^{\mr{tor}}_{\Q}$) and that $f''_*(\omega^{\tensor k}_{\mr{can}})$ equals the restriction of of $\cL''_k$ to $M(\Gamma'')^{\mr{partial}}$. Taking $\GL_2(\cO/m\cO)$-invariants gives properties (1) and (2) for the sheaves $\cL_k$, and (3) follows immediately from (2).

We give an equivalent definition of $\cL_{k_0}$. Write \[ r \from M(\Gamma_1(\mf{n}))^{\mr{min}} \to M(\Gamma(1))^{\mr{min}}, \] \[ f \from \M(\Gamma(1))^{\mr{tor}} \to  M(\Gamma(1))^{\mr{min}}. \] Then \begin{equation}\label{eq:hodge-compare}
\mc{L}_{k_0} = r^*(f_*(\omega^{\tensor k_0}_{\mr{can}})).
\end{equation} Since $r$ is a finite morphism, and $f_*(\omega^{\tensor k_0}_{\mr{can}})$ is an ample line bundle, the same is true for $\cL_{k_0}$. This proves (5).

\end{proof}

\subsection{Constant terms and cusp points}

Fix a weight $k \geq 0$. For any $p$-unramified $k$-admissible cusp $[C] \in \mr{Cusp}_{p}(\mf{n})^*$, corresponding to the cusp point $x_{[C]} \from \Spec(\cO) \to \wt{M}^{\mr{min}}_{\cO}$, we consider the restriction map $x_{[C]}^* \from \mc{L}_k \to x_{[C]}^*(\mc{L}_k)$. We want to relate this to analogous map for the toroidal compactification, $i_{[C]}^* \from \omega_{\mr{can}}^{\tensor k} \to i_{[C]}^*(\omega_{\mr{can}}^{\tensor k})$, and hence to constant terms of HMFs.

\begin{lemma}\label{lem:base-change-admiss}
The base-change map $x_C^* \wt{f}_*(\omega^{\tensor k}_{\mr{can}})  \to \wt{f}_* i_C^*(\omega^{\tensor k}_{\mr{can}})$ is an isomorphism of sheaves on $\wt{M}^{\mr{partial}}$.
\end{lemma}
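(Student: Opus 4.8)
The plan is to read the asserted isomorphism as an instance of cohomology and base change for the proper morphism $\wt{f} \colon \wt{\M}^{\mr{partial}}_{\cO} \to \wt{M}^{\mr{min}}_{\cO}$ along the cusp section $x_C \colon \Spec(\cO) \to \wt{M}^{\mr{min}}_{\cO}$, and to verify it by an explicit computation in the formal neighborhood of the cusp. First I would recall that $\wt{f}$ has connected fibers and contracts the boundary component $Z_{[C],\cO}$ to the single cusp point $x_C$ (Theorem~\ref{thm:min-cpt} and Proposition~\ref{prop:semiample}); combined with normality of $\wt{M}^{\mr{min}}$ this yields the Stein identity $\wt{f}_* \cO_{\wt{\M}^{\mr{partial}}} = \cO_{\wt{M}^{\mr{partial}}}$. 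By Theorem~\ref{thm:toroidal-cpt}(3) the completion of $\wt{\M}^{\mr{partial}}_{\cO}$ along $Z_{[C],\cO}$ is the formal scheme $\wh{Z}_{[C],\cO}$, and by part~(4) the restriction of $\omega^{\tensor k}_{\mr{can}}$ to this formal neighborhood is the $q$-expansion bundle $\omega^{\tensor k}$ of \S\ref{subsec:formal-model}.

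Next I would compute the completed stalk of $\wt{f}_* \omega^{\tensor k}_{\mr{can}}$ at $x_C$. Using Grothendieck's comparison between a coherent pushforward and its formal completion (equivalently the theorem on formal functions), this completed stalk is $H^0(\wh{Z}_{[C],\cO}, \omega^{\tensor k}) = Q_{[C],k}(\cO)$, the module of formal $q$-expansions. Applied with $k=0$ together with the Stein identity above, the same computation identifies the completed local ring $\wh{\cO}_{\wt{M}^{\mr{min}}, x_C}$ with $Q_{[C],0}(\cO)$, under which the cusp section $x_C$ becomes the augmentation sending every $q^b$ with $b \in M_C^+$ to $0$. The ideal of this section is therefore the ``positive part'' $\mathfrak{m}_q$ generated by the $q^b$ with $b \neq 0$.

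Then the fiber $x_C^* \wt{f}_* \omega^{\tensor k}_{\mr{can}}$ is obtained by reducing $Q_{[C],k}(\cO)$ modulo $\mathfrak{m}_q$. Since $[C]$ is admissible, the constant-term summand $(N_{F/\Q}(\mf{a})^{\tensor -k})^{\Aut(C)}$ is nonzero, and multiplication by elements of $\mathfrak{m}_q$ sweeps out exactly the terms $q^b$ with $b \in M_C^+$; hence $Q_{[C],k}(\cO)/\mathfrak{m}_q Q_{[C],k}(\cO) = C_{[C],k}(\cO)$. On the other hand, the results recorded in \S\ref{subsec:formal-model} give $H^0(Z_{[C],\cO}, i_C^* \omega^{\tensor k}_{\mr{can}}) = C_{[C],k}(\cO)$ directly, as the reduced boundary is the locus $q = 0$. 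The base-change map respects these two descriptions, being in both cases the projection of a $q$-expansion onto its constant term, so it is an isomorphism.

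The main obstacle I anticipate is the careful interchange of completion with the non-completed pullback along $x_C$: one must justify that $(\wt{f}_* \omega^{\tensor k}_{\mr{can}})_{x_C} \tensor_{\cO_{x_C}} \cO$ is computed correctly after passing to completions, which rests on $\cO_{\wt{M}^{\mr{min}}, x_C}$ being noetherian and on $\cO$ being precisely the quotient of $\wh{\cO}_{x_C}$ by the ideal $\mathfrak{m}_q$ along which we complete. A secondary subtlety is matching the identification of $\omega_{\mr{can}}$ near the boundary with the $q$-expansion bundle (Theorem~\ref{thm:toroidal-cpt}(4)) sharply enough that the base-change map is literally the constant-term projection, and not that projection twisted by a unit; this is exactly the point where one invokes that the torsion points were chosen so that the comparison of algebraic and analytic $q$-expansions (Theorem~\ref{thm:compare-q-expns}) holds.
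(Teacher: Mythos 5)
Your proposal is correct and follows essentially the same route as the paper: pass to the formal completion along the fiber over $x_C$, identify the completed stalk of $\wt{f}_*(\omega^{\tensor k}_{\mr{can}})$ with $Q_{[C],k}(\cO)$ and the completed local ring with $Q_{[C],0}(\cO)$, and observe that reduction modulo the augmentation ideal $J_C$ yields $C_{[C],k}(\cO)$. The one place where the paper is slightly sharper is the last step: rather than arguing that $\mathfrak{m}_q$ ``sweeps out'' the positive part, it notes that admissibility gives $Q_{C,k}(\cO) \isom N_{F/\Q}(\mf{a})^{\tensor -k} \tensor Q_{C,0}(\cO)$ and $C_{C,k}(\cO) \isom N_{F/\Q}(\mf{a})^{\tensor -k} \tensor C_{C,0}(\cO)$, so the tensor product $Q_{C,k}(\cO)\tensor_{Q_{C,0}(\cO)} Q_{C,0}(\cO)/J_C$ is computed immediately.
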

\begin{proof}
After passing to the formal completion along $x_{C}$, this map is identified with $Q_{C,k}(\cO) \tensor Q_{C,0}(\cO)/J_C \to C_{C, k}(\cO)$, where $J_{C} \subset Q_{C, 0}(\cO_E)$ is ideal $J_{C} = \ker(Q_{C,0}(\cO) \to C_{C,0}(\cO))$. As $C$ is $k$-admissible, $Q_{C,k}(\cO) \isom N_{F/\Q}(\mf{a})^{\tensor -k}  \tensor Q_{C, 0}(\cO)$ and $C_{C,k}(\cO) \isom N_{F/\Q}(\mf{a})^{\tensor -k} \tensor C_{C,0}(\cO)$.
\end{proof}

It is clear that the following diagram commutes:
\[ \begin{tikzcd}
H^0(\wt{M}^{\mr{min}}_{\cO}, \mc{L}_k) \arrow{r}{x_C^*} \arrow{d} & H^0(\Spec(\cO), x_C^*\mc{L}_k) \arrow{d} \\
H^0(\wt{M}^{\mr{partial}}_{\cO}, \mc{L}_k) \arrow{r}{x_C^*} \arrow{d}{\wt{f}^*} & H^0(\Spec(\cO), x_C^*\mc{L}_k) \arrow{d}{\wt{f}^*} \\
H^0(\wt{\M}^{\mr{partial}}_{\cO}, \omega^{\tensor k}_{\mr{can}}) \arrow{r}{i_C^*} \arrow{d} & H^0(Z_{C}, i_C^*\omega_{\mr{can}}^{\tensor k}) \arrow{d} \\
M_k(\mf{n}, \cO) \arrow{r}{\mr{const}_C} & C_{C,k}(\cO) \\
\end{tikzcd}. \]
Moreover, all of the vertical arrows in the right column are isomorphisms - for $\wt{f}^*$, this follows from the above lemma.

\begin{corollary}\label{cor:const-term-min}
For any $k$-admissible $p$-unramified cusp label $C$, and any $\cO$-module $B$, there is an identification $H^0(\wt{M}^{\mr{min}}_{\cO}, x_C^*(\mc{L}_k \tensor B)) \isom C_{C,k}(B)$ such that $x_C^* \from H^0(\wt{M}^{\mr{min}}_{\cO}, \mc{L}_k \tensor B) \to C_{C,k}(B)$ factors as
\[ H^0(\wt{M}^{\mr{min}}_{\cO}, \mc{L}_k \tensor B) \subset M_k(\mf{n}, B) \nmto{\mr{const}_C} C_{C,k}(B). \]
\end{corollary}
\begin{proof}
When $B = \cO$, this follows from the above discussion. The general case uses $x_C^*(\mc{L}_k \tensor_{\cO} B) \isom x_C^*(\mc{L}_k) \tensor_{\cO} B$, which follows from the fact that $\mc{L}_k$ is locally free when localized to an admissible cusp.
\end{proof}

\begin{remark}
For cusp $[C]$ which is not admissible for the weight $k$, the evaluation map \[ x_{[C]}^* \from H^0(\wt{M}^{\mr{min}}_{\cO}, \mc{L}_k) \to x_{[C]}^*(\mc{L}_k) \] is not related to constant terms. 
\end{remark}

\section{Main results}\label{sec:main-results}

\subsection{Equivariant lifting results}\label{subsec:equiv-lift}

We now state two propositions, Prop. \ref{prop:equivariant-surj} and Prop. \ref{prop:equivariant-lift}, which are ``abstract" versions of two of our main theorems, Theorem \ref{thm:equivar-surj} and Theorem \ref{thm:equiv-lift} (i.e. not in the setting of Hilbert modular varieties). The former proposition is used in the proof of Theorem \ref{thm:equivar-surj}, and hence we will give a proof. For technical reasons, the latter proposition is not actually used in the proof of Theorem \ref{thm:equiv-lift}, and so we do not give the proof.

Let $X \to \Spec(R)$ be a projective scheme over a noetherian ring $R$, equipped with an ample line bundle $\cL$. Let $\{ x_j \}$ be a finite set of pairwise disjoint $R$-points, and let \[ i \from Z := \coprod x_j \into X\] denote the closed embedding. A classical result is:
\begin{theorem}\label{thm:ample-vanishing}
For any coherent $\cO_X$-module $\mc{F}$, there exists $k_0 > 0$ such that for all $k \geq k_0$, 
 the map \[ H^0(X, \cL^{\tensor k} \tensor \mc{F}) \to H^0(Z, i^*(\cL^{\tensor k} \tensor \mc{F})) \] is surjective.
\end{theorem}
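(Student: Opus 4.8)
The plan is to reduce the statement to Serre's vanishing theorem via the ideal sheaf of the closed subscheme $Z$. Let $\mc{I} \subset \cO_X$ denote the ideal sheaf of the embedding $i \from Z \into X$, so that $0 \to \mc{I} \to \cO_X \to i_* \cO_Z \to 0$ is exact. First I would tensor this sequence with the coherent sheaf $\mc{F}$ and pass to the image $\mc{F}' := \mc{I}\cdot\mc{F} \subset \mc{F}$; using the projection formula $i_*(\cO_Z) \otimes \mc{F} \cong i_*(i^*\mc{F})$ to identify the quotient, this produces a short exact sequence
\[ 0 \to \mc{F}' \to \mc{F} \to i_* i^* \mc{F} \to 0 \]
of coherent sheaves in which $\mc{F}'$ is coherent and, crucially, \emph{independent of $k$}.

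Next I would twist by the line bundle $\cL^{\otimes k}$. Since $\cL^{\otimes k}$ is locally free, tensoring is exact, and the projection formula again identifies the quotient term as $(i_* i^* \mc{F}) \otimes \cL^{\otimes k} \cong i_*\, i^*(\mc{F} \otimes \cL^{\otimes k})$. Writing $\mc{G} = \mc{F} \otimes \cL^{\otimes k}$, this yields
\[ 0 \to \mc{F}' \otimes \cL^{\otimes k} \to \mc{G} \to i_* i^* \mc{G} \to 0. \]
Passing to the long exact sequence in cohomology and using $H^0(X, i_* i^* \mc{G}) = H^0(Z, i^* \mc{G})$, the cokernel of the restriction map $H^0(X, \mc{G}) \to H^0(Z, i^* \mc{G})$ injects into $H^1(X, \mc{F}' \otimes \cL^{\otimes k})$.

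Finally I would invoke Serre vanishing: since $X$ is projective over the noetherian ring $R$, $\cL$ is ample, and $\mc{F}'$ is a \emph{fixed} coherent sheaf, there is $k_0 > 0$ with $H^1(X, \mc{F}' \otimes \cL^{\otimes k}) = 0$ for all $k \geq k_0$. Surjectivity then follows for all such $k$.

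The one point requiring care — and what I would regard as the crux — is ensuring that the coherent sheaf to which Serre vanishing is applied does not itself vary with $k$; this is exactly what is achieved by extracting the twist $\cL^{\otimes k}$ out of the kernel using local freeness of $\cL$, so that the kernel is literally the $k$-th twist of the single sheaf $\mc{F}'$ and one obtains a \textbf{uniform} bound $k_0$. Beyond this observation the argument is entirely formal, and no special structure of $Z$ (e.g.\ that the $x_j$ are disjoint $R$-points) is needed.
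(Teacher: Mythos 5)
Your proof is correct and is exactly the standard argument the paper has in mind: the paper states Theorem~\ref{thm:ample-vanishing} as classical without proof, but the identical mechanism (the sequence $0 \to \cL^{\tensor k}\tensor\mc{F}_0 \to \cL^{\tensor k}\tensor\mc{F} \to i^*(\cL^{\tensor k}\tensor\mc{F}) \to 0$ with the kernel being the $k$-th twist of a fixed coherent sheaf, followed by Serre vanishing of $H^1$) appears verbatim inside the proof of Proposition~\ref{prop:equivariant-surj}. Your observation that the uniformity of $k_0$ hinges on pulling $\cL^{\tensor k}$ out of the kernel via local freeness, and that no special structure of $Z$ is needed, is also accurate.
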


Suppose that a finite group $G$ acts on $X$, such that $\cL$ is a $G$-equivariant ample line bundle and $Z$ is $G$-stable. Let $\mc{I}$ be the ideal sheaf defining $Z$, and let $\wh{\cO}_{Z} := \lim \cO_X/\mc{I}^n$ denote the completion at $\mc{I}$.

\begin{prop}\label{prop:equivariant-surj}
With notation as above, let $\mc{F}$ be a $G$-equivariant coherent sheaf on $X$. Suppose that:
 \begin{enumerate}
\item $H^0(X, \mc{F} \tensor \wh{\cO}_{Z}) \surj H^0(Z, i^*(\mc{F}))$ is $G$-equivariantly split, and
\item $H^0(X, \cL \tensor \wh{\cO}_{Z}) \surj H^0(Z, i^*(\cL))$ is $G$-equivariantly split.
 \end{enumerate}

Then there exists $k_0 > 0$ such that for all $k \geq k_0$  and all characters $\psi \from G \to R^*$,
 the map \[ H^0(X, \cL^{\tensor k} \tensor \mc{F})^\psi \to H^0(Z, i^*(\cL^{\tensor k} \tensor \mc{F}))^{\psi} \] is surjective.
\end{prop}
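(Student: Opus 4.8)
The plan is to deduce the equivariant surjectivity from the non-equivariant vanishing theorem (Theorem~\ref{thm:ample-vanishing}) by analyzing the obstruction to making a lift $\psi$-isotypic, and to kill that obstruction using the two splitting hypotheses. Throughout, abbreviate $\mc{G} := \cL^{\tensor k} \tensor \mc{F}$, a $G$-equivariant coherent sheaf, and factor the constant-term map as
\[ \phi \from M := H^0(X, \mc{G}) \xrightarrow{c} P := H^0(X, \mc{G} \tensor \wh{\cO}_Z) \xrightarrow{r} Q := H^0(Z, i^* \mc{G}), \]
where $c$ is completion along $Z$ and $r$ is reduction modulo $\mc{I}$. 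These are $R[G]$-modules and the maps are $G$-equivariant. Since $Z$ is a disjoint union of $R$-points, the completed sheaves are supported on an affine thickening, so $r$ is surjective; and by Theorem~\ref{thm:ample-vanishing} the composite $\phi = r \circ c$ is surjective once $k \geq k_1$, for some $k_1$ depending only on $\mc{F}$ and \emph{not} on $\psi$. Thus for $k \geq k_1$ every $q \in Q$ admits a lift $m_0 \in M$; the task is to arrange, when $q \in Q^\psi$, that $m_0$ may be chosen in $M^\psi$.

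First I would upgrade the two hypotheses to a single $G$-equivariant splitting $\sigma \from Q \to P$ of $r$ at the level of $\mc{G}$, valid for every $k$. On the formal neighbourhood of each $R$-point, $\mc{G} \tensor \wh{\cO}_Z$ is the tensor product over $\wh{\cO}_Z$ of the completion of $\mc{F}$ with $k$ copies of the completion of $\cL$. The splitting of hypothesis~(2) carries a local generator of $i^*\cL$ to a generator of $\cL \tensor \wh{\cO}_Z$ (by Nakayama), hence trivializes that completed line bundle; its $k$-fold tensor power $G$-equivariantly splits the reduction for $\cL^{\tensor k}$, and tensoring with the splitting of hypothesis~(1) produces the desired $\sigma$. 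Because $\sigma$ is a $G$-equivariant section of $r$, the short exact sequence of $R[G]$-modules
\[ 0 \to P_0 \to P \xrightarrow{r} Q \to 0, \qquad P_0 := \ker(r) = H^0(X, \mc{I}\mc{G} \tensor \wh{\cO}_Z), \]
splits, so its $\psi$-twisted connecting homomorphism vanishes for every character $\psi$.

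Next I would phrase $\psi$-isotypic lifting as a connecting-map computation. After twisting the $G$-action so that a class $q \in Q^\psi$ becomes invariant, $q$ lifts to $M^\psi$ precisely when its image under the connecting homomorphism $\delta_M \from Q^\psi \to H^1(G, M_0)$ vanishes, where $M_0 := \ker(\phi) = H^0(X, \mc{I}\mc{G})$. The map $c$ furnishes a morphism of short exact sequences from $(M_0, M, Q)$ to $(P_0, P, Q)$ that is the identity on $Q$; naturality of connecting maps then gives $c_* \circ \delta_M = \delta_P = 0$, the last equality by the splitting just constructed. Hence $\delta_M(q)$ lies in the kernel of $c_* \from H^1(G, M_0) \to H^1(G, P_0)$, uniformly in $\psi$. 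If $c_*$ is injective, then $\delta_M = 0$, the lift can be chosen $\psi$-isotypic, and $\mr{const}_p^\psi$ is surjective for all $\psi$ and all $k \geq k_0$.

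The hard part will be precisely this injectivity of $c_*$ on group cohomology: the obstruction visibly dies on the formal completion, and one must show it therefore dies globally. This is where the \emph{non-free} action of $G$ on $\{x_j\}$, emphasized in the introduction, intervenes, since $H^1(G, P_0)$ is genuinely nonzero. The natural approach is to write $P_0$ as a product, over the $G$-orbits of the $x_j$, of modules coinduced from the point-stabilizers $\Stab_G(x_j)$ of the completed stalks, so that Shapiro's lemma reduces $H^1(G, P_0)$ to the cohomology of these stabilizers; one then compares with $H^1(G, M_0)$, realizing the $\psi$-isotypic global sections as sections of a coherent sheaf on the projective quotient $X/G$ (Lemma~\ref{lem:proj-quot}) and invoking Serre vanishing to high order along $Z$ for $k \gg 0$. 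Carrying out this comparison — equivalently, transporting the formally trivial obstruction to a globally trivial one, uniformly in $\psi$ — is the technical heart of the argument: the two hypotheses serve only to guarantee the formal triviality $\delta_P = 0$, while the ampleness of $\cL$ and the geometry of $X$ are what move that triviality back down to $M_0$.
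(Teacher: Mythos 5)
Your first half is sound and matches the paper: factoring the evaluation map through the completion along $Z$, and tensoring the two hypothesized splittings to obtain a single $G$-equivariant splitting of $H^0(X,\cL^{\tensor k}\tensor\mc{F}\tensor\wh{\cO}_Z)\surj H^0(Z,i^*(\cL^{\tensor k}\tensor\mc{F}))$ uniformly in $k$ (the paper records this as the statement that the rescaled pair $\cL,\mc{F}$ still satisfies conditions~1 and~2, ``deduced from the affine-ness of $Z$''). The gap is in the second half. You reduce the problem to the vanishing of $\delta_M(q)\in H^1(G,M_0(\psi^{-1}))$ and then to the injectivity of $c_*\from H^1(G,M_0)\to H^1(G,P_0)$, which you correctly flag as unproven --- but this is the wrong target. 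It is strictly stronger than what is needed, there is no reason for it to hold ($M_0=H^0(X,\mc{I}\mc{G})$ is a huge $R[G]$-module, $|G|$ need not be invertible in $R$, and its group cohomology has no useful relation to the ampleness of $\cL$), and the sketched repair via Shapiro's lemma plus ``Serre vanishing to high order along $Z$'' conflates two different cohomologies: Serre vanishing controls coherent sheaf cohomology on a projective scheme, not the group cohomology $H^1(G,-)$ of a module of global sections. I do not see how to complete the argument along these lines without effectively abandoning it.

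The paper avoids group cohomology of global sections entirely by working with sheaves on the quotient $Y=X/G$. After replacing $\cL$ by $\cL^{\tensor|G|}$ and $\mc{F}$ by $\cL^{\tensor i}\tensor\mc{F}$ so that $\cL=p^*(\cL^G)$ with $\cL^G$ ample on $Y$ (Lemma~\ref{lem:proj-quot}), one applies the functor $(-)^{\psi}=(p_*(-\tensor R(\psi^{-1})))^G$ to the sheaf exact sequence
\[ 0 \to \mc{L}^{\tensor k}\tensor\mc{F}_0 \to \mc{L}^{\tensor k}\tensor\mc{F} \to i^*(\mc{L}^{\tensor k}\tensor\mc{F}) \to 0 \]
on $X$. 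The only place exactness could fail is on the right, and there it is preserved precisely because of your combined splitting --- this is a purely local check at the completed stalks along $Z$, which is where the hypotheses enter and where your ``formal triviality'' observation belongs. The projection formula gives $(\mc{L}^{\tensor k}\tensor\mc{F}_0)^{\psi}=(\mc{L}^G)^{\tensor k}\tensor\mc{F}_0^{\psi}$, and the obstruction to surjectivity on global sections is then $H^1(Y,(\mc{L}^G)^{\tensor k}\tensor\mc{F}_0^{\psi})$, which vanishes for $k\gg0$ uniformly in $\psi$ by Serre vanishing on the projective scheme $Y$. In short: transport the exact sequence to $Y$ \emph{before} taking cohomology, rather than taking global sections first and trying to control a connecting map in $H^1(G,-)$.
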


\begin{proof}
As $X$ is projective, the schematic quotient $Y := X/G$ exists, with finite quotient map $p \from X \to Y$. For any coherent sheaf $\mc{F}$ on $X$, define $\mc{F}^G := p_*(\mc{F})^G$, a coherent sheaf on $Y$. Also, for any character $\psi \from G \to R^*$, define $\mc{F}^{\psi} = (\mc{F} \tensor R(\psi^{-1}))^G$.

Replacing $\cL$ with $\cL^{\tensor |G|}$ and replacing $\mc{F}$ with $\mc{L} \tensor \mc{F}, \ldots, \mc{L}^{\tensor |G| - 1} \tensor \mc{F}$, we may assume by Lemma \ref{lem:proj-quot} that
\begin{enumerate}
\item $\cL = p^*(\cL^G)$, and
\item $\cL^G$ is an ample line bundle on $Y$. 
\end{enumerate} The fact that this new pair $\cL,\ \mc{F}$ still satisfies conditions 1 and 2 may be deduced from the affine-ness of the scheme $Z$.

We consider the exact sequence of sheaves
\begin{equation} 0 \to (\mc{L}^{\tensor k} \tensor \mc{F})_0 \to \mc{L}^{\tensor k} \tensor \mc{F} \nmto{i^*} i^*(\mc{L}^{\tensor k} \tensor \mc{F}) \to 0,  \label{e:lfseq} \end{equation} where the subscript $(\cdot)_0$ indicates the kernel of $i^*$.
Note that since $\mc{L}$ is locally trivial, $(\mc{L}^{\tensor k} \tensor \mc{F})_0 = \mc{L}^{\tensor k} \tensor \mc{F}_0$.
We apply $\psi$ to the  sequence (\ref{e:lfseq}), to obtain 
\[ 0 \to (\mc{L}^{\tensor k} \tensor \mc{F})^{\psi}_0 \to (\mc{L}^{\tensor k} \tensor \mc{F})^{\psi} \to i^*(\mc{L}^{\tensor k} \tensor \mc{F})^{\psi} \to 0.  \] The final map is in fact surjective, even though $(-)^\psi$ is not a right-exact functor. Note that $(-)^\psi$ commutes with arbitrary flat base change, in particular with localization and completion of coherent sheaves.
 Since $i^*(\mc{L}^{\tensor k} \tensor \mc{F})$ is supported on the subscheme $Z$, we can check the surjectivity after tensoring with the localization $\cO_{X, Z}$ along $Z \subset X$. Since the completed semi-local ring $\wh{\cO}_Z$ is faithfully flat over $\cO_{X, Z}$, it will suffice to show that the following map is surjective:
 \begin{equation}\label{e:cmplt} H^0(X, (\mc{L}^{\tensor k} \tensor \mc{F}) \tensor \wh{\cO}_{Z})^{\psi} \to H^0(Z, i^*(\mc{L}^{\tensor k} \tensor \mc{F}))^{\psi}. \end{equation} Since $\cL$ is locally free, the surjectivity of (\ref{e:cmplt}) is equivalent to that of 
\begin{equation}\label{eqn:factor} \left(H^0(X, \cL \tensor \wh{\cO}_{Z})^{\tensor k} \tensor H^0(X, \mc{F} \tensor \wh{\cO}_{Z})\right)^{\psi} \to \left(H^0(Z, i^*(\cL))^{\tensor k} \tensor H^0(Z, i^*(\mc{F}))\right)^{\psi}. \end{equation} By assumptions (1) and (2), the surjective map \begin{equation}H^0(X, \mc{L} \tensor \wh{\cO}_{Z})^{\tensor k} \tensor H^0(X, \mc{F} \tensor \wh{\cO}_{Z}) \to H^0(Z, i^*(\cL))^{\tensor k} \tensor H^0(Z, i^*(\mc{F}))\end{equation} is $G$-equivariantly split, hence remains surjective upon passing to $\psi$-invariants, and so (\ref{eqn:factor}) is surjective. 

The projection formula implies that $(\mc{L}^{\tensor k} \tensor \mc{F}_0)^{\psi} =  (\mc{L}^G)^{\tensor k} \tensor \mc{F}^{\psi}_0$.
For $k$ sufficiently large, $H^1(Y,  (\mc{L}^G)^{\tensor k} \tensor \mc{F}^{\psi}_0) = 0$, and so the map \[ H^0(Y, (\mc{L}^{\tensor k} \tensor \mc{F})^{\psi}) \to H^0(Z/G, (i^*(\mc{L}^{\tensor k} \tensor \mc{F}))^{\psi}) \] is surjective.

\end{proof}

\begin{prop}\label{prop:equivariant-lift}
Let $X/\Spec(R),\ G,$ and $\cL$ be as above. Moreover, fix an ideal $I \subset R$ and a character $\psi \from G \to R^*$, with reduction $\bar{\psi} \from G \to (R/I)^*$. Let $H \subset G$ denote the subgroup of elements which act trivially on $X$. Suppose that $|H|$ is invertible in $R$, and suppose that any element of $G/H$ fixes only finitely many points of $X$.  

Fix a $G$-equivariant coherent sheaf $\mc{F}$ on $X$. For $k$ sufficiently large, the cokernel of
\[ H^0(X_R, \cL^{\tensor k} \tensor \mc{F})^{\psi} \to H^0(X_{R/I}, \cL^{\tensor k} \tensor \mc{F} \tensor R/I)^{\bar{\psi}} \]
equals the cokernel of
\[ \oplus_{x \in |X|} ((\cL^{\tensor k} \tensor \mc{F})_x)^{\psi|_{Stab_x(G)}} \to \oplus_{x \in |X|} ((\cL^{\tensor k} \tensor \mc{F})_x \tensor R/I)^{\bar{\psi}|_{Stab_x(G)}}. \]
This sum may be taken over the finitely many closed points $x \in |X|$ where $Stab_x(G/H) \neq 0$. 
\end{prop}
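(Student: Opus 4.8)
The plan is to adapt the strategy of Proposition~\ref{prop:equivariant-surj}: pass to the schematic quotient $Y := X/G$, reinterpret the reduction map as a map of coherent sheaves on $Y$, and show that its cokernel sheaf is supported on a finite set whose stalks are exactly the predicted local cokernels. Let $p \from X \to Y$ be the finite quotient map. As in the proof of Proposition~\ref{prop:equivariant-surj}, after replacing $\cL$ by $\cL^{\tensor |G|}$ (and $\mc F$ by the finitely many twists $\cL^{\tensor j}\tensor \mc F$, $0\le j<|G|$) we may use Lemma~\ref{lem:proj-quot} to assume $\cL = p^*(\cL^G)$ with $\cL^G$ ample on $Y$. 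For a $G$-equivariant coherent sheaf $\mc G$ write $\mc G^\psi := (p_*\mc G \tensor R(\psi^{-1}))^G$, a coherent sheaf on $Y$ with $H^0(Y,\mc G^\psi)=H^0(X,\mc G)^\psi$ (since $H^0$ and $(-)^G$ commute). The $G$-equivariant reduction $\mc G\to \mc G\tensor_R R/I$ then induces a sheaf map $\phi \from (\cL^{\tensor k}\tensor \mc F)^\psi \to ((\cL^{\tensor k}\tensor \mc F)\tensor R/I)^{\bar\psi}$ on $Y$; I denote its cokernel by $\mc C$. By the projection formula both source and target are $(\cL^G)^{\tensor k}$ tensored with a sheaf independent of $k$, and $\phi=\mathrm{id}\tensor\phi_0$.

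The crux is to show that $\mc C$ is supported on the finite set $B := p(\{x : \Stab_x(G/H)\ \text{nontrivial}\})$. Let $V\subset Y$ be the open complement, over which $G/H$ acts freely, so that $p$ restricts to a $(G/H)$-torsor $W\to V$. Over $V$ the functor $(-)^\psi$ is both exact and compatible with $-\tensor_R R/I$: the idempotent $e=|H|^{-1}\sum_{h\in H}\psi(h)^{-1}h$ (well-defined since $|H|$ is invertible) splits off the $\psi|_H$-isotypic part $\cO_X$-linearly and base-change-compatibly, and on that part the residual $G/H$-action is free, so the remaining invariants are computed by exact torsor descent. Hence $\phi|_V$ is identified with the natural surjection $\mc G^\psi|_V\surj \mc G^\psi|_V\tensor R/I$, so $\mc C|_V=0$.

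Since $\ker\phi=(\cL^G)^{\tensor k}\tensor\ker\phi_0$ and $\mathrm{im}\,\phi=(\cL^G)^{\tensor k}\tensor\mathrm{im}\,\phi_0$ with $\ker\phi_0,\mathrm{im}\,\phi_0$ independent of $k$, Serre vanishing for the ample bundle $\cL^G$ kills their $H^1$ for $k\gg 0$. Breaking $\phi$ into the two short exact sequences through its image and taking $H^0$ then yields $\coker(\Phi)\cong H^0(Y,\mc C)$, where $\Phi$ is the global reduction map. As $\mc C$ is a skyscraper on the finite set $B$, $H^0(Y,\mc C)=\bigoplus_{y\in B}\mc C_y$. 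Finally, for $y=p(x)\in B$ the fibre is the orbit of $x$, so $(p_*\mc G)_y\cong \Ind_{\Stab_x(G)}^G \mc G_x$ as a $G$-module, and Frobenius reciprocity gives $\mc C_y\cong\coker\big((\mc G_x)^{\psi|_{\Stab_x(G)}}\to (\mc G_x\tensor R/I)^{\bar\psi|_{\Stab_x(G)}}\big)$. Summing over orbit representatives recovers exactly the local cokernel in the statement.

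I expect the main obstacle to be the second paragraph, namely controlling the non-faithful subgroup $H$: one must verify that $(-)^\psi$ over the $(G/H)$-free locus is simultaneously exact and compatible with reduction mod $I$. This is precisely where the invertibility of $|H|$ enters (to split the $\psi|_H$-component) and where finiteness of the $(G/H)$-fixed loci enters (to guarantee $B$ is finite). The remaining ingredients — Serre vanishing and the Frobenius-reciprocity stalk computation — are routine once the support statement is in hand.
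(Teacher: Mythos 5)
The paper never proves this proposition (it explicitly omits the proof because the result is not used later), so there is nothing to compare against directly; the closest thing is the proof of Theorem~\ref{thm:equiv-lift}, which implements exactly your circle of ideas in the Hilbert modular setting. Your architecture — quotient to $Y=X/G$ with $\cL=p^*(\cL^G)$ ample, exactness of $(-)^\psi$ over the $(G/H)$-free locus via the idempotent $e=|H|^{-1}\sum_{h\in H}\psi(h)^{-1}h$ followed by torsor descent, Serre vanishing to convert the sheaf cokernel into the global cokernel, and a local analysis at the finitely many bad points — is sound and is the abstract version of what the paper actually does.

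The one step that fails as written is the stalk computation $(p_*\mc G)_y\cong \Ind_{\Stab_x(G)}^{G}\mc G_x$. For a finite morphism, $(p_*\mc G)_y$ is a module over the semi-local ring $(p_*\cO_X)_y$, which in general is \emph{not} a product of the local rings $\cO_{X,x_i}$ over the fibre (the fibre over $\Spec\cO_{Y,y}$ can be connected even when it has several closed points), so the Zariski stalk does not decompose as $\oplus_i\mc G_{x_i}$ and Frobenius reciprocity does not apply directly. The repair is the one the paper uses in Theorem~\ref{thm:equiv-lift}: since $\mc C$ is supported on finitely many closed points, compute $\mc C_y$ after passing to the formal completion at $y$, where the fibre ring does split as a product of complete local rings; you then still owe an argument that the resulting cokernel agrees with the cokernel formed from the uncompleted stalks $(\cL^{\tensor k}\tensor\mc F)_x$ appearing in the statement (e.g.\ by showing that cokernel is already of finite length). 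Separately, note that your computation yields \emph{one} summand $\coker\bigl((\mc G_x)^{\psi|_{\Stab_x}}\to(\mc G_x\tensor R/I)^{\bar\psi|_{\Stab_x}}\bigr)$ per $G$-orbit, whereas the proposition as literally stated sums over \emph{all} closed points with nontrivial $\Stab_x(G/H)$, i.e.\ repeats each orbit's contribution $[G:\Stab_x(G)]$ times; your phrase ``summing over orbit representatives'' silently replaces the stated index set, and you should flag that either the statement is to be read as a sum over orbits or it is off by these multiplicities.
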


As noted at the beginning of this section, we omit the proof of this result, as it will not be used later.

\subsection{Equivariant surjectivity of the constant term}\label{subsec:equivar-const}

Fix a prime $p$, and let $\cO = \Z_{(p)}[\mu_{m_p}]$ as in \S \ref{subsec:hmf-coeff}.  The ideal $\mf{n} \subset \cO_F$ need not be relatively prime to $p$. Let $R$ be a noetherian $\cO$-algebra, and let $\psi \from G^+_{\mf{n}} \to R^*$ be any character. We do not require $R$ to be $\Z$-flat. Let $B$ be an arbitrary $R$-module.

Consider the total constant term at the the $p$-unramified cusps, for modular forms with coefficients in $B$, denoted
\[ \mr{const}_p \from M_k(\mf{n}, B) \to C_{p,k}(B). \] 
By Corollary \ref{cor:qexp-equivar}, $\mr{const}_p$ is $G^+_{\mf{n}}$-equivariant. Defining $C_{p,k}(\psi,B) := C_{p,k}(B)^{\psi}$, and taking $\psi$-isotypic components, we obtain a map $\mr{const}_p \from M_k(\mf{n}, \psi, B)  \to C_{p,k}(\psi, B)$.

\begin{theorem}\label{thm:equivar-surj} With hypotheses as above, and for all $k$ sufficiently large, the map
 \[ \mr{const}_p \from M_k(\mf{n}, \psi, B)  \to C_{p,k}(\psi, B) \]
 is surjective.
 \end{theorem}

\begin{proof}
Since $B$ is a union of its finitely-generated submodules $B'$, and we have $M_k(\mf{n}, \psi, B) = \bigcup_{B'} M_k(\mf{n}, \psi, B')$ as well as $C_{p,k}(\psi, B) = \bigcup_{B'} C_{p,k}(\psi, B')$, we reduce to the case that $B$ is a finitely-generated $R$-module.

Fix a parity $\epsilon = (-1)^k$, so that we may talk about the subset of admissible cusps $\mr{Cusp}_{p}(\mf{n})^* \subset \mr{Cusp}(\mf{n})$ as we vary $k$.

Recall the line bundles $\mc{L}_k$ on $\wt{M}^{\mr{min}}$ defined by Lemma \ref{lem:integral-sheaf}. We apply Proposition \ref{prop:equivariant-surj} to the space $\wt{M}^{\mr{min}}_R$, with $G^+_{\mf{n}}$-equivariant ample line bundle $\mc{L}_{k_0}$, coherent sheaf $\mc{F} = \mc{L}_i \tensor_{R} B$, and with the finite collection of points in $\wt{M}^{\mr{min}}_R$ corresponding to the cusps $\mr{Cusp}_{p}(\mf{n})^* \subset \mr{Cusp}(\mf{n})$. The condition of $G^+_{\mf{n}}$-equivariant splittings follows from  (\ref{eqn:equivar-triv}) and Theorem \ref{thm:toroidal-cpt}.

As $\mc{L}_k \tensor_R B \isom \mc{L}_{k_0}^{\tensor n} \tensor (\mc{L}_i \tensor_R B)$ for some $i = 0,\ldots, k_0 - 1$, we conclude that
\[ H^0(\wt{M}^{\mr{min}}_R, \mc{L}_k \tensor_R B)^{\psi} \to (\oplus_{[C] \in \mr{Cusp}_p(\mf{n})^*} H^0(\Spec(R), x_{[C]}^*\mc{L}_k \tensor_R B) )^{\psi} \isom C_{p,k}(\psi, B) \]
is surjective for all $k$ sufficiently large. This map factors through $\mr{const}_p \from M_k(\mf{n}, \psi, B) \to C_{p,k}(\psi, B)$ (by Corollary \ref{cor:const-term-min}), proving the surjectivity.
\end{proof}

\subsection{Equivariant lifting of modular forms}\label{subsec:lifting}

We first prove a result on the lifting of modular forms at prime-to-$p$ level:
\begin{theorem}\label{thm:classic-lift}
Fix a prime $p$, and fix an ideal $\mf{n} \subset \cO_F$ coprime to $p$, such that $\wt{\M} = \M_1(\mf{n})$ is an algebraic space. Let $R$ be a noetherian $\Z_{(p)}$-algebra, and let $I \subset R$ be any ideal. If $k$ is sufficiently large (and, if $p = 2$, even), the reduction
\[ M_k(\mf{n}, R) \to M_k(\mf{n}, R/I) \]
is surjective.
\end{theorem}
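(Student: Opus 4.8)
The plan is to reduce everything to a base-change statement and then invoke right-exactness of the tensor product. Concretely, I claim it suffices to prove that, for $k$ sufficiently large (and, if $p=2$, even, so that $\omega^{\tensor k}$ descends to $\wt{\M}$ and the sheaf $\mc{L}_k$ of Lemma~\ref{lem:integral-sheaf} is defined), the natural map
\[ M_k(\mf{n}, \Z_{(p)}) \tensor_{\Z_{(p)}} B \longrightarrow M_k(\mf{n}, B) \]
is an isomorphism, functorially in the $\Z_{(p)}$-module $B$, and that $M_0 := M_k(\mf{n}, \Z_{(p)})$ is finite flat over $\Z_{(p)}$. Granting this, applying the isomorphism to the surjection $R \surj R/I$ identifies the reduction map $M_k(\mf{n}, R) \to M_k(\mf{n}, R/I)$ with $\mathrm{id}_{M_0} \tensor (R \surj R/I) \from M_0 \tensor_{\Z_{(p)}} R \to M_0 \tensor_{\Z_{(p)}} R/I$, which is surjective since $-\tensor_{\Z_{(p)}} M_0$ is right exact. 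Thus the entire content of the theorem is the base-change statement in large weight; the ``lifting'' is then formal.

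\textbf{Flatness and the proper model.} First I would record that, since $\mf{n}$ is coprime to $p$, the partial toroidal compactification is proper, so I write $\wt{\M}^{\mr{tor}} = \wt{\M}^{\mr{partial}}$, and the Koecher principle gives $M_k(\mf{n}, B) \isom H^0(\wt{\M}^{\mr{tor}}_{\Z_{(p)}}, \omega_{\mr{can}}^{\tensor k} \tensor_{\Z_{(p)}} B)$. The flatness of $M_0$ follows from the $q$-expansion principle (Theorem~\ref{thm:q-principle}): choosing unramified cusps $\{[C_i]\}$ meeting every component, $M_0 \into \oplus_i Q_{[C_i], k}(\Z_{(p)})$ embeds $M_0$ into a torsion-free module, so $M_0$ is torsion-free, hence flat over the principal ideal domain $\Z_{(p)}$; finiteness is properness. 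Moreover, because $\wt{\M}^{\mr{tor}}_{\Z_{(p)}}$ is irreducible and dominates $\Spec(\Z_{(p)})$ (Theorem~\ref{thm:G*-moduli}), it is flat over $\Z_{(p)}$, so $\omega_{\mr{can}}^{\tensor k}$ is a $\Z_{(p)}$-flat sheaf on a proper space. The cohomology-and-base-change formalism then provides a bounded complex of finite free $\Z_{(p)}$-modules computing $R\Gamma(\wt{\M}^{\mr{tor}}_B, \omega_{\mr{can}}^{\tensor k})$ universally in $B$, and the desired base-change isomorphism follows \emph{as soon as} $H^i(\wt{\M}^{\mr{tor}}_{\Z_{(p)}}, \omega_{\mr{can}}^{\tensor k}) = 0$ for all $i \geq 1$ and $k \gg 0$.

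\textbf{The cohomology vanishing and the main obstacle.} It remains to establish this higher-cohomology vanishing. The line bundle $\omega_{\mr{can}}$ on $\wt{\M}^{\mr{tor}}$ is only semi-ample (Theorem~\ref{thm:hodge-semiample}, Theorem~\ref{thm:min-cpt}), so I would factor through the contraction $\wt{f} \from \wt{\M}^{\mr{tor}} \to \wt{M}^{\mr{min}}$ to the projective minimal compactification, where by Proposition~\ref{prop:semiample}(5) some power $\omega_{\mr{can}}^{\tensor k_0}$ equals $\wt{f}^*\mc{L}_{k_0}$ with $\mc{L}_{k_0}$ ample. Writing $k = k_0 j + i$, the projection formula gives $R^q\wt{f}_*\omega_{\mr{can}}^{\tensor k} \isom (R^q\wt{f}_*\omega_{\mr{can}}^{\tensor i}) \tensor \mc{L}_{k_0}^{\tensor j}$, and the Leray spectral sequence for $\wt{f}$ reduces the vanishing to two inputs: the terms $H^p(\wt{M}^{\mr{min}}, \wt{f}_*\omega_{\mr{can}}^{\tensor i} \tensor \mc{L}_{k_0}^{\tensor j})$ with $p \geq 1$ vanish for $j \gg 0$ by Serre vanishing on the projective scheme $\wt{M}^{\mr{min}}$ (this is the routine half), while the remaining contributions vanish provided $R^q\wt{f}_*\omega_{\mr{can}}^{\tensor i} = 0$ for all $q \geq 1$ and $i = 0, \dots, k_0 - 1$. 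This last vanishing of the higher direct images of the contraction of the toroidal boundary is the main obstacle: it is the genuine geometric input, which I would deduce from the structure of the fibers of $\wt{f}$ over the cusps (the contracted toric strata, along which $\omega_{\mr{can}}$ is relatively trivial) together with the corresponding statement for the PEL models $\wt{\M}_{\mc{P}}$ in Faltings--Chai and \cite{chai}, pulled back along the finite étale cover $\coprod_{\mc{P}} \wt{\M}^{\mr{tor}}_{\mc{P}} \to \wt{\M}^{\mr{tor}}$ as in the proof of Theorem~\ref{thm:min-cpt}. Once this is in hand the chain closes: base change holds, $M_0$ is flat, and right-exactness of the tensor product delivers the surjectivity.
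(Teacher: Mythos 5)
Your reduction of the theorem to a cohomology-and-base-change statement is a sensible idea, but the vanishing you reduce to is false, and the gap sits exactly at the step you flag as ``the main obstacle.'' The higher direct images $R^q\wt{f}_*\omega_{\mr{can}}^{\tensor i}$ do \emph{not} vanish for $q \geq 1$. The fiber of $\wt{f}$ over a cusp point of $\wt{M}^{\mr{min}}$ is the proper, positive-dimensional boundary stratum $Z_{[C]}$ (for $F$ real quadratic, a cycle of rational curves), and the relative triviality of $\omega_{\mr{can}}$ along it works against you rather than for you: a cycle of $\PP^1$'s has $h^1(\cO)=1$, so $H^1(Z_{[C]}, \omega_{\mr{can}}^{\tensor i}|_{Z_{[C]}}) \neq 0$, and by the theorem on formal functions the completed stalk $(R^1\wt{f}_*\omega_{\mr{can}}^{\tensor i})^{\wedge}_{x_{[C]}}$ surjects onto this group. (This is the classical fact that the cusp singularities of the minimal compactification are not rational singularities.) Increasing $k$ cannot repair this: $R^1\wt{f}_*\omega_{\mr{can}}^{\tensor k} \isom (R^1\wt{f}_*\omega_{\mr{can}}^{\tensor i}) \tensor \mc{L}_{k_0}^{\tensor j}$ is supported at finitely many closed points, so twisting by the ample $\mc{L}_{k_0}$ does not change it, the Leray term $E_2^{0,1}$ is a fixed nonzero group for all $j$, and hence $H^1(\wt{\M}^{\mr{tor}}, \omega_{\mr{can}}^{\tensor k}) \neq 0$ for all large $k$. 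There is no such vanishing in Faltings--Chai to pull back. A secondary warning sign: if your base-change isomorphism held for every $B$, the theorem would follow with no parity restriction at $p=2$; but for $p=2$ and $k$ odd the map $Q_{[C],k}(R) \to Q_{[C],k}(R/I)$ already fails to be surjective at non-admissible cusps (the $\Aut(C)$-action on constant terms is through the sign $\epsilon_C^k$, and $(\mathrm{sign}\tensor \Z_{(2)})^{\pm 1}=0$ while $(\mathrm{sign}\tensor \F_2)^{\pm 1}=\F_2$), which indicates that $H^1(\wt{\M}^{\mr{tor}}_{\Z_{(2)}}, \omega_{\mr{can}}^{\tensor k})$ has $2$-torsion and base change genuinely breaks.

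The paper's proof is arranged precisely to avoid all higher direct images. It pushes forward to $\wt{M}^{\mr{min}}$ first and only ever uses cohomology of sheaves on that projective scheme: one checks that $0 \to \wt{f}_*(I\omega_{\mr{can}}^{\tensor k}) \to \wt{f}_*(\omega_{\mr{can}}^{\tensor k}) \to \wt{f}_*(\omega_{\mr{can}}^{\tensor k}\tensor R/I) \to 0$ is exact --- the only possible failure of right-exactness of $\wt{f}_*$ is at the cusp points, where it is checked on $q$-expansions, and this is exactly where the $p=2$/parity hypothesis enters --- and then $H^1(\wt{M}^{\mr{min}}_R, \wt{f}_*(I\omega_{\mr{can}}^{\tensor k}))=0$ for $k\gg 0$ by Serre vanishing, using $\wt{f}_*(I\omega_{\mr{can}}^{\tensor k}) = \wt{f}_*(I\omega_{\mr{can}}^{\tensor i})\tensor \wt{f}_*(\omega_{\mr{can}}^{\tensor k_0})^{\tensor j}$ with $\wt{f}_*(\omega_{\mr{can}}^{\tensor k_0})$ ample. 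Since $H^0(\wt{M}^{\mr{min}}, \wt{f}_*\mc{F}) = H^0(\wt{\M}^{\mr{tor}}, \mc{F})$ unconditionally, surjectivity follows without ever asserting $H^1(\wt{\M}^{\mr{tor}}, \omega_{\mr{can}}^{\tensor k})=0$. If you want to keep your framework, the correct target is not the vanishing but the $\Z_{(p)}$-flatness of $H^1(\wt{\M}^{\mr{tor}}_{\Z_{(p)}}, \omega_{\mr{can}}^{\tensor k})$, and establishing that leads you back to the same boundary $q$-expansion computation the paper performs.
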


\begin{remark}
This result holds for any prime-to-$p$ level structure $\Gamma$ such that $\M_{\Gamma}$ is an algebraic space.
\end{remark}

\begin{proof}
We may assume that $R$ is $\Z_{(p)}$-flat, as any noetherian $\Z_{(p)}$-algebra is the quotient of a flat noetherian $\Z_{(p)}$-algebra. 

Recall the morphism $\wt{f} \from \wt{\M}^{\mr{tor}} \to \wt{M}^{\mr{min}}$ of Theorem \ref{thm:min-cpt}. We have an exact sequence
\[ 0 \to I \wt{f}_*(\omega^{\tensor k}_{\mr{can}}) \to \wt{f}_*(\omega^{\tensor k}_{\mr{can}}) \to \wt{f}_*(\omega^{\tensor k}_{\mr{can}}) \tensor R/I \to 0. \] Localizing at the cusp points, we see that $\wt{f}_*(\omega^{\tensor k}_{\mr{can}}) \tensor R/I \to \wt{f}_*(\omega^{\tensor k}_{\mr{can}} \tensor R/I)$ has cokernel the same as $\oplus_{\mr{Cusps}} Q_{[C], k}(R) \to Q_{[C], k}(R/I)$, which is zero as long as either $p \neq 2$ or $k$ is even. 

As $I$ is $\Z_{(p)}$-flat, $I \wt{f}_*(\omega^{\tensor k}_{\mr{can}}) = \wt{f}_*(I \omega^{\tensor k}_{\mr{can}})$. The above exact sequence is then the same as
\[ 0 \to \wt{f}_*(I \omega^{\tensor k}_{\mr{can}}) \to \wt{f}_*(\omega^{\tensor k}_{\mr{can}}) \to \wt{f}_*(\omega^{\tensor k}_{\mr{can}} \tensor R/I) \to 0. \] Taking global sections gives:
\[ 0 \to I M_k(\mf{n}, R) \to M_k(\mf{n}, R) \to M_k(\mf{n}, R/I) \to H^1(\wt{M}^{\mr{min}}_{R}, \wt{f}_*(I \omega^{\tensor k}_{\mr{can}})). \]

There exists a $k_0 > 0$ such that $\wt{f}_*(\omega^{\tensor k_0}_{\mr{can}})$ is ample, and if $k = i + j k_0$, we have  \[ \wt{f}_*(I \omega^{\tensor k}_{\mr{can}}) = \wt{f}_*(I \omega^{\tensor i}_{\mr{can}}) \tensor \wt{f}_*(\omega^{\tensor k_0}_{\mr{can}})^{ \tensor j}. \] Hence, for $k$ sufficiently large, $H^1(\wt{M}^{\mr{min}}_R, \wt{f}_*(I \omega^{\tensor k}_{\mr{can}})) = 0$.
\end{proof}

We now prove a variant of this lifting result which takes the action of $G^+_{\mf{n}}$ into account. Fix a noetherian $\cO$-algebra $R$ and an ideal $I \subset R$.
\begin{definition}\label{defn:good-ring}A noetherian $\cO$-algebra $R$ is \textbf{good} if $2N_{F/\Q}(\mf{n})$ is invertible in $R$, as are the orders of all inertia groups of $\M$ (see Corollary \ref{cor:inertia-bound} for a bound on these orders).
\end{definition}

Fix a sign $\epsilon = \pm 1$, and consider a character $\psi \from G^+_{\mf{n}} \to R^*$ such that $\sgn(\psi) = \epsilon$. Denote the reduction of $\psi$ by $\bar{\psi} \from G^+_{\mf{n}} \to (R/I)^*$. Recall that $M_k(\mf{n}, \psi, R) = M_k(\mf{n}, R)^{\psi}$, and similarly define $C_k(\psi, R)$, $Q_k(\psi, R)$, $S_k(\mf{n}, \psi, R)$ to be the $\psi$-isotypic component of the constant terms, $q$-expansions, and cusp forms of weight $k$.
\begin{theorem}\label{thm:equiv-lift}
Assume that $R$ is good. For $k$ sufficiently large with $(-1)^k = \epsilon$,
the map 
\[ M_k(\mf{n}, \psi, R) \to M_k(\mf{n}, \bar{\psi}, R/I) \] has the same cokernel as \[ C_{k}(\psi, R) \to C_{k}(\bar{\psi}, R/I). \] In particular, the reduction of cusp forms, $S_k(\mf{n}, \psi, R) \to S_k(\mf{n}, \bar{\psi}, R/I)$, is surjective.
\end{theorem}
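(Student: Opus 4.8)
The plan is to reduce everything to a coherent-cohomology computation on the minimal compactification $\wt{M}^{\mr{min}}$, modeled on the proof of Theorem~\ref{thm:classic-lift} but carrying along the $G^+_{\mf{n}}$-action and the twist by $\psi$. First I would observe that goodness of $R$ forces $\mf{n}$ to be prime to $p$: since $N_{F/\Q}(\mf{n})$ is invertible in the $\Z_{(p)}$-algebra $R$ while $p$ is not, we must have $p \nmid \mf{n}$. Hence we are in the situation of \S\ref{subsec:good-level}, where $\wt{\M}^{\mr{tor}}$ is proper over $\Z_{(p)}$, every cusp is $p$-unramified (so $C_{p,k} = C_k$), and Theorem~\ref{thm:min-cpt} together with Lemma~\ref{lem:integral-sheaf} supply the $G^+_{\mf{n}}$-equivariant sheaves $\mc{L}_k$ on $\wt{M}^{\mr{min}}$ and the ample line bundle $\mc{L}_{k_0}$. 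As in Theorem~\ref{thm:classic-lift}, I would first reduce to the case that $R$ is $\Z_{(p)}$-flat.

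Next I would assemble the diagram that isolates the constant terms. Fixing the parity $\epsilon = (-1)^k$ and letting $i \from Z \into \wt{M}^{\mr{min}}$ be the finite $G^+_{\mf{n}}$-stable subscheme of $k$-admissible cusp points, Corollary~\ref{cor:const-term-min} identifies the restriction $\mc{L}_k \to i_* i^* \mc{L}_k$ on global sections with $\mr{const} \from M_k(\mf{n}, R) \to C_k(R)$, whose kernel sheaf $\mc{J}_k$ has global sections $S_k(\mf{n}, R)$. Applying Theorem~\ref{thm:equivar-surj} (legitimately, since all cusps are now $p$-unramified) to both $(R, \psi)$ and $(R/I, \bar{\psi})$ shows that for $k$ large both constant-term maps are surjective on isotypic components, yielding two short exact sequences
\[ 0 \to S_k(\mf{n}, \psi, R) \to M_k(\mf{n}, \psi, R) \to C_k(\psi, R) \to 0, \]
\[ 0 \to S_k(\mf{n}, \bar{\psi}, R/I) \to M_k(\mf{n}, \bar{\psi}, R/I) \to C_k(\bar{\psi}, R/I) \to 0, \]
linked by the reduction maps. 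The snake lemma then shows that the natural surjection $\coker(M_k(\mf{n}, \psi, R) \to M_k(\mf{n}, \bar{\psi}, R/I)) \surj \coker(C_k(\psi, R) \to C_k(\bar{\psi}, R/I))$ is an isomorphism as soon as $S_k(\mf{n}, \psi, R) \to S_k(\mf{n}, \bar{\psi}, R/I)$ is surjective. Thus both assertions of the theorem follow from this single surjectivity statement for cusp forms.

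To prove the surjectivity on cusp forms I would pass to the projective quotient $Y := \wt{M}^{\mr{min}}_R/G^+_{\mf{n}}$ with finite quotient map $\rho$, exactly as in Proposition~\ref{prop:equivariant-surj}: replacing $\mc{L}_{k_0}$ by a suitable norm as in Lemma~\ref{lem:proj-quot} makes the $\psi$-twisted cuspidal sheaf factor as $\mc{J}_k^\psi \isom \mc{J}_i^\psi \tensor (\mc{L}^G)^{\tensor j}$ with $\mc{L}^G$ ample on $Y$, where $\mc{J}_k^\psi := (\rho_*(\mc{J}_k) \tensor_R R(\psi^{-1}))^{G^+_{\mf{n}}}$. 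Reducing the short exact sequence $0 \to \mc{J}_k^\psi \tensor_R I \to \mc{J}_k^\psi \to \mc{J}_k^\psi \tensor_R R/I \to 0$ and invoking Serre vanishing, $H^1(Y, \mc{J}_k^\psi \tensor_R I) = 0$ for $k \gg 0$, gives the desired surjectivity, provided one knows that $\mc{J}_k^\psi \tensor_R R/I$ computes $S_k(\mf{n}, \bar{\psi}, R/I)$.

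The main obstacle is exactly this last compatibility, which is where goodness of $R$ and the explicit $q$-expansion computation enter. Since the narrow ray class number $|G^+_{\mf{n}}|$ need not be invertible in $R$, the isotypic functor $(-)^\psi = (\,\cdot \tensor R(\psi^{-1}))^{G^+_{\mf{n}}}$ is only left exact and does not commute with $\tensor_R R/I$ in general, and---as emphasized in the introduction---$G^+_{\mf{n}}$ does not act freely on the cusps, so the stabilizer characters $\sgn_{[C]}$ and $\psi_{[C]}$ govern the constant-term contributions cusp by cusp. I would resolve this using the explicit description of the $G^+_{\mf{n}}$-action on $q$-expansions at all cusps from \S\ref{subsec:formal-action} together with Definition~\ref{defn:good-ring}: invertibility of the inertia orders makes the cohomology of $\wt{M}^{\mr{min}}_R$ agree with that of its coarse quotient, and invertibility of $2N_{F/\Q}(\mf{n})$ controls the local structure at each cusp point, so that taking $\bar{\psi}$-isotypic parts of the reduced cuspidal sheaf does recover $S_k(\mf{n}, \bar{\psi}, R/I)$. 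Once this base-change compatibility is established, the cohomological argument above closes the proof.
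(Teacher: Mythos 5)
Your overall architecture is sound and close in spirit to the paper's: reduce to $\Z_{(p)}$-flat $R$ with $p\nmid \mf{n}$, work on the projective quotient of the minimal compactification, and use ampleness plus a vanishing theorem to push the problem to the boundary. (One small inaccuracy: goodness does not force $p\nmid\mf{n}$ by the argument you give — a $\Z_{(p)}$-algebra can perfectly well have $p$ invertible — but the paper's reduction "to the case that $R$ is a $\Z_{(p)}$-algebra where $p$ is relatively prime to $2N_{F/\Q}(\mf{n})$," possibly for a different auxiliary prime, achieves what you want.) Your snake-lemma reduction to the surjectivity of $S_k(\mf{n},\psi,R)\to S_k(\mf{n},\bar\psi,R/I)$ is also a legitimate reorganization: the paper instead computes $\coker(M_k(\mf{n},\psi,R)\to M_k(\mf{n},\bar\psi,R/I))$ directly and deduces the cusp-form statement, but both routes stand or fall on the same computation at the boundary.

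The genuine gap is that this computation — which you correctly flag as "the main obstacle" — is never actually carried out. You need to know that the $\psi$-isotypic cuspidal sheaf is compatible with reduction mod $I$, equivalently that the cokernel of $f'_*(\mc{F}_R)\to f'_*(\mc{F}_{R/I})$ localized at a cusp point is controlled by constant terms alone; saying that this "should follow from the explicit $q$-expansion action and Definition~\ref{defn:good-ring}" is not a proof, and goodness by itself does not do it, because $|G^+_{\mf{n}}|$ and $|Stab_{[C]}|$ need not be invertible in $R$, so $(-)^{\psi}$ genuinely fails to commute with $\tensor_R R/I$ in general. The paper's resolution is a specific structural fact: after completing along the cusp locus, the cokernel in question is that of $Q_k(\psi,R)\to Q_k(\bar\psi,R/I)$, and the stabilizer $Stab_{[C]}\subset G^+_{\mf{n}}$ acts \emph{freely} on $M_{[C]}^+/U_{[C]}\setminus\{0\}$, the index set of the non-constant Fourier coefficients. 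Freeness makes the isotypic functor exact on the induced modules carrying the non-constant terms, so $\ker(Q_k(\psi,R)\to C_k(\psi,R))\to\ker(Q_k(\bar\psi,R/I)\to C_k(\bar\psi,R/I))$ is surjective and the whole cokernel collapses onto $\coker(C_k(\psi,R)\to C_k(\bar\psi,R/I))$. Without this free-action argument (and the separate observation that goodness kills the contribution of the elliptic points, where $\wt{f}_*$ also fails to commute with base change), your final step does not close; supplying it would complete your proof along essentially the paper's lines.
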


\begin{remark}
It seems likely that the hypothesis on the primes invertible in $R$ can be weakened if the values of modular forms at elliptic points, as well as at the cusps, are taken into account.
\end{remark}

\begin{proof}
It is easy to reduce to the case that $R$ is a $\Z_{(p)}$-algebra, where $p$ is relatively prime to $2N_{F/\Q}(\mf{n})$. As in \S \ref{subsubsec:good-level}, the toroidal compactification $\wt{\M}^{\mr{tor}}_{\Z_{(p)}}$ constructed in Theorem \ref{thm:toroidal-cpt} is proper, as is the morphism $\wt{f} \from \wt{\M}^{\mr{tor}}_R \to \wt{M}^{\mr{min}}_R$. There is a map $f' \from \wt{\M}^{\mr{tor}}_R/G_{\mf{n}} \to \wt{M}^{\mr{min}}_R/G_{\mf{n}}$ where $\wt{\M}^{\mr{tor}}_R/G_{\mf{n}}$ is the quotient stack, while $\wt{M}^{\mr{min}}_R/G_{\mf{n}}$ is the schematic quotient of a projective scheme.

The line-bundle $\omega^{\tensor k}_{\mr{can}} \tensor R(\psi)$ on $\wt{\M}^{\mr{tor}}_R$ is $G^+_{\mf{n}}$-equivariant, descending to a line bundle $\mc{F}_{R}$ on $(\wt{\M}^{\mr{tor}}/G^+_{\mf{n}})_R$. If $(-1)^k = \epsilon$, this line bundle in fact is pulled back from a line bundle on the rigidification $\wt{\M}^{\mr{tor}}/G_{\mf{n}}$, which we will also denote $\mc{F}_{R}$. If we did the same construction over the ring $R/I$, we would obtain a line bundle $\mc{F}_{R/I}$ on $(\wt{\M}^{\mr{tor}}/G_{\mf{n}})_{R/I}$. They are compatible with base-change: $\mc{F}_{R} \tensor_R R/I = \mc{F}_{R/I}$, hence there is an exact sequence of sheaves
\[ 0 \to I \mc{F}_{R} \to \mc{F}_{R} \to \mc{F}_{R/I} \to 0 \]
on the stack $\wt{\M}^{\mr{tor}}_R/G_{\mf{n}}$, whose global sections give
\[ 0 \to M_k(\mf{n}, \psi, I) \to M_k(\mf{n}, \psi, R) \to M_k(\mf{n}, \psi, R/I). \]

Consider the pushforward
\[ 0 \to f'_*(I \mc{F}_{R}) \to f'_*(\mc{F}_{R}) \to f'_*(\mc{F}_{R/I}). \]

Let $\mc{G} = f'_*(\mc{F}_{R/I})$ denote the image of the final map. The same argument as in Theorem \ref{thm:classic-lift} then applies to \[ 0 \to f'_*(I \mc{F}_{R}) \to f'_*(\mc{F}_{R}) \to \mc{G} \to 0, \] using the ampleness of $f'_*(\omega^{\tensor k_0}_{\mr{can}})$ to conclude that 
$H^0( f'_*(\mc{F}_{R})) \to H^0(\mc{G})$ is surjective for $k$ sufficiently large.

It remains to show that the cokernel of \[ M_k(\mf{n}, \psi, R/I) = H^0(f'_*(\mc{F}_R)) \surj H^0(\mc{G}) \to H^0(f'_*(\mc{F}_{R/I})) = M_k(\mf{n}, \psi, R/I) \] equals the cokernel of  \[ C_{k}(\psi, R) \to C_{k}(\bar{\psi}, R/I). \] 

The map $f'$ fails to be an isomorphism at the cusp points $W := \wt{M}^{\mr{min}}/G_{\mf{n}} - \wt{M}/G_{\mf{n}}$ and at the closed points in  $|\wt{M}/G_{\mf{n}}|$ whose preimage in $|\wt{\M}/G_{\mf{n}}|$ has non-trivial inertia group, i.e. the elliptic points.

The order of the inertia groups of $\wt{\M}/G_{\mf{n}}$ are invertible in $R$, since $\wt{\M}/G_{\mf{n}} \to \M/G_{1}$ is a finite \'etale morphism, and up to stabilizers of exponent 2, $G_{\mf{1}}$ acts freely on the connected components of $\M$. We have assumed that $2$, and the orders of the inertia groups of $\M$, are invertible in $R$.

As the orders of the inertia groups of $\wt{\M}/G_{\mf{n}}$ are invertible in $R$, and $f'|_{\wt{\M}/G_{\mf{n}}} \from \wt{\M}/G_{\mf{n}} \to \wt{M}/G_{\mf{n}}$ is the coarse moduli space map, we find that $f'_*(\mc{F}_{R})|_{\wt{M}/G_{\mf{n}}} \to f'_*(\mc{F}_{R/I})|_{\wt{M}/G_{\mf{n}}}$ is surjective. Thus the cokernel of $\mc{G} \to f'_*(\mc{F}_{R/I})$ is supported on $W$, and so the cokernel of $H^0(\mc{G}) \to H^0( f'_*(\mc{F}_{R/I}))$ equals the cokernel of $f'_*(\mc{F}_R)_W \to f'_*(\mc{F}_{R/I})_W$ (the localization at $W$). 

This cokernel can be computed after passing to the formal completion along $W$. Hence the cokernel of $f'_*(\mc{F}_R)_W \to f'_*(\mc{F}_{R/I})_W$ (after tensoring with $Q_0(R)$) equals the cokernel of
\[ Q_{k}(\psi, R) \to Q_{k}(\bar{\psi}, R/I). \] The stabilizer $Stab_{[C]} \subset G^+_{\mf{n}}$ acts freely on $M_{[C]}^+/U_{[C]} - 0$. Since $M_{[C]}^+/U_{[C]}-0$ indexes the non-constant terms of the $q$-expansions in $Q_{[C], k}$, it follows that \[ \ker(Q_{k}(\psi, R) \to C_{k}(\psi, R)) \to \ker(Q_{k}(\bar{\psi}, R/I) \to C_{k}(\bar{\psi}, R/I)) \]
is surjective. Hence the cokernel of $f'_*(\mc{F}_R)_W \surj \mc{G}_W \subset f'_*(\mc{F}_{R/I})_W$ is the same as that of
\[ C_{k}(\psi, R) \to C_{k}(\psi, R/I). \]
Taking global sections of the exact sequence of sheaves
\[ 0 \to \mc{G} \to f'_*(\mc{F}_{R/I}) \to \coker(C_{k}(\psi, R) \to C_{k}(\psi, R/I)) \to 0, \]
we obtain
\[ 0 \to M_k(\mf{n}, \psi, R) \tensor R/I \to M_k(\mf{n}, \bar{\psi}, R/I) \to \coker(C_{k}(\psi, R) \to C_{k}(\psi, R/I)). \]

Theorem \ref{thm:equivar-surj} implies that $M_k(\mf{n}, \bar{\psi}, R/I) \to C_{k}(\psi, R/I)$ is surjective for $k$ sufficiently large, hence the final map in the above exact sequence is surjective. This proves the result.

\end{proof}

\subsection{Lifting the Hasse invariant} \label{s:vk}

 In this section we prove the existence of a Hilbert modular form in level 1 with trivial nebentype that is $p$-adically very close to 1.  
 Results in this direction have appeared in the literature in several places; see for instance  \cite{wilesrep}, \cite{v}, and \cite{ag}.
Unfortunately, none of these lemmas proves the exact result we need as stated---that the form we desire exists in level 1 and has trivial nebentype.  Therefore, for completeness, we give a complete proof.

We begin by describing what we mean by having $q$-expansion equal to $1$, in terms of $q$-expansions at cusps $[C] \in \mr{Cusp}(1)$. Given an even integer $k$, and a cusp label $C$, the module $N_{F/\Q}(\mf{a})^{\tensor -k}$ has a canonical generator, denoted $|N_{F/\Q}(\mf{a})|^{\tensor - k}$, given by taking the $k$-th power of either generator of $N_{F/\Q}(\mf{a}) \isom \Z$. This defines an element $|N_{F/\Q}(\mf{a})|^{-k} \tensor 1 \in C_{C,k}(\Z) \subset Q_{C, k}(\Z)$, and taking this over all cusps, define \[ f_k := (|N_{F/\Q}(\mf{a})|^{-k} \tensor 1)_{[C] \in \mr{Cusp}} \in \oplus_{\mr{Cusp}(1)} C_{[C],k}(\Z) \subset \oplus_{\mr{Cusp}(1)} Q_{[C], k}(\Z). \] This element $f_k \in Q_k(\Z)$ is $G^+_{1}$-invariant. There is also a $G^+_{1}$-invariant element \[ f_1 \in \oplus_{\mr{Cusp}} C_{[C],k}(\F_2) \subset \oplus_{\mr{Cusp}} Q_{[C], k}(\F_2),\] whose square is the reduction of $f_2$. Thus, for all even weights, and for odd weights in characteristic $2$, there is a notion of having a $q$-expansion equal to $1$.

The \textbf{Hasse invariant} $H \in M_{p-1}(1, \Z/p\Z)$ is defined as follows. We define it as an element of $H^0(\M'_{\F_p}, \omega^{\tensor p-1})$. Consider a morphism $S \to \M'_{\F_p}$, corresponding to an \'etale-locally RM-polarizable abelian scheme $A$ over an $\F_p$-scheme $S$. There is a Verscheibung isogeny $V \from A \to A^{(p)}$, giving $V^* \from \omega_A \to \omega_{A^{(p)}} \isom \omega_A^{\tensor p}$, hence an $\cO_S$-linear map $h \from \cO_S \to  \omega_A^{\tensor p - 1}$. The restriction of $H$ to $S$ is defined to be $h(1) \in H^0(S, \omega_A^{\tensor p - 1})$. 

The Hasse invariant has been defined in many contexts, and in those settings the fact that it has $q$-expansion equal to 1 is well-known. That the same is true for our $H$ can be verified via pull-back to $\coprod \M_{\mc{P}}$, adding auxiliary level, and applying (\cite{ag} 7.14). As the $q$-expansion map is injective, this verifies that $H$ has trivial nebentype, i.e. $H \in M_{p-1}(1, (1), \Z/p\Z)$.

\begin{theorem}\label{thm:lifting-hasse}
Fix a prime $p$ and an integer $m > 0$. For $k$ sufficiently large, even, and divisible by $p-1$, there exists a form $A \in M_{k}((1), 1, \Z_{(p)})$ whose $q$-expansion (at all cusps) is equivalent to $1$ modulo $p^m$.
\end{theorem}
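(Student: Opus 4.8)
The plan is to reduce the theorem to a single mod-$p$ assertion and then amplify the congruence by taking $p$-power powers. Throughout, ``$q$-expansion congruent to $1$ modulo $p^\ell$'' means congruent to the $G^+_1$-invariant element $f_k \in Q_k(\Z)$ introduced above. The two ingredients are: (i) for every sufficiently large weight $k_1$ divisible by $p-1$ (and even), there exists $A_1 \in M_{k_1}((1), 1, \Z_{(p)})$ with $\mr{qexp}(A_1) \equiv 1 \pmod p$; and (ii) if a form has $q$-expansion $\equiv 1 \pmod p$, then its $p^{m-1}$-th power has $q$-expansion $\equiv 1 \pmod{p^m}$. Granting these, for a target weight $k$ that is large and divisible by $(p-1)p^{m-1}$ I would set $k_1 = k/p^{m-1}$ and take $A = A_1^{p^{m-1}}$.

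For ingredient (i) I would lift the Hasse invariant geometrically. Since $H \in M_{p-1}((1), 1, \Z/p\Z)$ has $q$-expansion $1$ at every cusp, the power $H^{k_1/(p-1)}$ lies in $M_{k_1}((1), 1, \F_p)$ with $q$-expansion $1$, and (having a $q$-expansion) extends to a global section of the descended sheaf on the minimal compactification. Working in the trivial-nebentypus part, equivalently on the projective quotient $\wt{M}^{\mr{min}}/G^+_1$ equipped with the ample line bundle descended from $\mc{L}_{k_0}$ (Lemma~\ref{lem:proj-quot} and Lemma~\ref{lem:integral-sheaf}(5)), I use $\mc{L}_{k_1} \isom \mc{L}_i \tensor \mc{L}_{k_0}^{\tensor j}$ with $0 \le i < k_0$ (Lemma~\ref{lem:integral-sheaf}(4)) together with Serre vanishing to get $H^1 = 0$ for all $k_1 \gg 0$, uniformly over the finitely many residues $i$. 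Applying $H^0$ to
\[ 0 \to \mc{L}_{k_1} \nmto{p} \mc{L}_{k_1} \to \mc{L}_{k_1} \tensor \F_p \to 0, \]
which is exact because $\wt{M}^{\mr{min}}$ is flat over $\Z_{(p)}$ and the sheaves are torsion-free, shows that $M_{k_1}((1), 1, \Z_{(p)}) \to M_{k_1}((1), 1, \F_p)$ is surjective; I let $A_1$ be any lift of $H^{k_1/(p-1)}$. Functoriality of the $q$-expansion in the coefficient ring (Theorem~\ref{thm:q-principle}) then gives $\mr{qexp}(A_1) \equiv \mr{qexp}(H^{k_1/(p-1)}) = 1 \pmod p$.

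For ingredient (ii) I would use that the total $q$-expansion is a ring homomorphism into $\bigoplus_k Q_k$ with $f_k f_{k'} = f_{k+k'}$. Writing $\mr{qexp}(A_1) = f_{k_1} + p\eta$, the binomial expansion gives
\[ \mr{qexp}(A_1^{p^{m-1}}) - f_{p^{m-1}k_1} = \sum_{j=1}^{p^{m-1}} \binom{p^{m-1}}{j} f_{k_1}^{\,p^{m-1}-j}\,(p\eta)^j, \]
and each summand is divisible by $p^m$, since $v_p\!\left(\binom{p^{m-1}}{j} p^j\right) = (m-1) - v_p(j) + j \ge m$ for $1 \le j \le p^{m-1}$. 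Hence $A := A_1^{p^{m-1}} \in M_{p^{m-1}k_1}((1), 1, \Z_{(p)})$ has $q$-expansion $\equiv 1 \pmod{p^m}$, and is of trivial nebentypus because $A_1$ is.

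The essential difficulty is concentrated entirely in ingredient (i), and more precisely in the ampleness of $\mc{L}_{k_0}$ on $\wt{M}^{\mr{min}}$: this rests on the semi-ampleness and non-triviality of the Hodge bundle (Theorems~\ref{thm:hodge-semiample} and~\ref{thm:hodge-nontrivial}) and on the construction of the minimal compactification at level $\Gamma_1(1)$. The passage to the projective quotient $\wt{M}^{\mr{min}}/G^+_1$ is precisely what allows Serre vanishing to be run, despite $\wt{\M}$ being a stack with elliptic points rather than an algebraic space (so that Theorem~\ref{thm:classic-lift} does not apply directly); the $p$-power step, by contrast, is elementary. I would finally remark that the weights produced are divisible by $(p-1)p^{m-1}$, and that this is unavoidable, as any nonzero form congruent to $1$ modulo $p^m$ has parallel weight divisible by $(p-1)p^{m-1}$; these weights are cofinal among the large weights divisible by $p-1$, which suffices for the applications.
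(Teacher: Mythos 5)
The amplification step (your ingredient (ii)) is fine and matches what the paper does implicitly, but ingredient (i) has a genuine gap, and it is exactly the one the paper flags just before its own proof: $\Z_{(p)}$ is not a ``good'' ring for $\mf{n}=(1)$ because of the elliptic points. Your Serre-vanishing argument proves that $H^0(\wt{M}^{\mr{min}}, \mc{L}_{k_1}) \to H^0(\wt{M}^{\mr{min}}_{\F_p}, \mc{L}_{k_1}\tensor\F_p)$ is surjective for $k_1 \gg 0$, but it does not prove that $M_{k_1}((1),1,\Z_{(p)}) \to M_{k_1}((1),1,\F_p)$ is surjective, nor that $H^{k_1/(p-1)}$ lies in the image. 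The sheaf $\mc{L}_{k_1}$ is built by pushing $\omega^{\tensor k_1}$ forward from the stack to a scheme, i.e.\ by taking invariants under the inertia groups; at an elliptic point whose automorphism group has order divisible by $p$, taking invariants does not commute with $\tensor\,\F_p$, so the base-change map $\mc{L}_{k_1}\tensor\F_p \to (\wt{f}_{\F_p})_*(\omega^{\tensor k_1}_{\F_p})$ need not be surjective there. Consequently your parenthetical claim that $H^{k_1/(p-1)}$, ``having a $q$-expansion,'' extends to a global section of the descended sheaf is unjustified: near the cusps it does, but near such elliptic points there is no reason for it to come from $\mc{L}_{k_1}\tensor\F_p$. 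Your diagnosis that passing to $\wt{M}^{\mr{min}}/G^+_{1}$ is what makes Serre vanishing available is also off: Serre vanishing already runs on the projective scheme $\wt{M}^{\mr{min}}$; quotienting by $G^+_{1}$ does not remove the elliptic points (and for $p=2$ introduces new order-two stabilizers), so it does not repair the base-change failure.

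The paper circumvents exactly this point by adding auxiliary full level $N$ coprime to $p$ so that $\M(N)_{\Z[\zeta_N]}$ is an algebraic space (Lemma \ref{lem:full-level}), where Theorem \ref{thm:classic-lift} does apply and $c^*(H)^k$ lifts; it then averages over $\GL_2(\cO_F/N)/\cO_F^*$, $G^+_{1}$, and $\Gal(\Q(\zeta_N)/\Q)$ to return to level one and trivial nebentypus, paying for the resulting denominators by first arranging the congruence modulo $p^{m+\mr{ord}_p(\cdots)}$ via the same $p$-power trick you use in ingredient (ii). To salvage your argument you would need to insert this level-raising-and-averaging step (or give a separate argument that $H^{k_1/(p-1)}$ lifts through the elliptic points, e.g.\ by controlling the local invariants there). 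Your closing observation that the weights produced are necessarily divisible by $(p-1)p^{m-1}$ is correct, and applies equally to the paper's construction.
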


While this lifting of the Hasse invariant may seem like an application of Theorem \ref{thm:equiv-lift}, it is not, as $\Z_{(p)}$ is not necessarily a good ring for $\mf{n} = (1)$ due to the existence of elliptic points. The following proof, avoiding this difficulty, was suggested to us by Samit Dasgupta.

\begin{proof}

By Lemma \ref{lem:full-level}, we may choose $N > 0$, coprime to $p$, such that the full level $N$ moduli stack $\M(N)_{\Z[\zeta_N]}$ is an algebraic space. The morphism $c \from \M(N)_{\Z[\zeta_N]} \to \M_{\Z[\zeta_N]}$ is finite \'etale, with Galois group $\GL_2(\cO_F/N)/\cO_F^*$. Theorem \ref{thm:classic-lift} implies that, for $k$ sufficiently large, $c^*(H)^k$ is the reduction modulo $p$ of a form $A \in M_{k(p-1)}(\Gamma(N), \Z_{(p)}[\zeta_N]) = H^0(\M(N)_{\Z[\zeta_N]}, \omega^{\tensor p-1})$. By increasing $k$, we can further assume that \begin{equation}\label{eq:extra-cong} \mr{qexp}(A) \equiv 1 \mod p^{1 + \mr{ord}_p(|\GL_2(\cO_F/N)/\cO^*_F|) + \mr{ord}_p(|G^+_{1}|) + \mr{ord}_p(\Gal(\Q(\zeta_N)/\Q)}. \end{equation}

We will now average the form $A$ in a few ways, with the additional congruence of (\ref{eq:extra-cong}) cancelling with the $p$-power denominators incurred by averaging.  First, average $A$ over the group $\GL_2(\cO_F/N)/\cO^*_F$ to obtain an element of $M_{k}((1), \Z_{(p)}[\zeta_N])$, and then average over $G^+_{1}$ to obtain an element of $M_{k}((1), 1, \Z_{(p)}[\zeta_N])$. As all cusps of $\M$ are defined over $\Z$ (Theorem \ref{thm:cusp-defn}), we can further average by $\Gal(\Q(\zeta_N)/\Q)$ to obtain an element of $M_{k}((1), 1, \Z_{(p)})$. The $q$-expansion of this averaged lift is congruent to $1$ modulo $p^{m}$, as the element $1 \in Q_{k(p-1)}(\Z/p^m\Z) \subset Q_{k(p-1)}(\Z[\zeta_N]/(p^m))$ is invariant under the action of the groups $\GL_2(\cO_F/N)/\cO^*_F, G^+_{1}$, and  $\Gal(\Q(\zeta_N)/\Q)$.

\end{proof}

\subsection{Application to group ring forms}\label{s:gpdef}

Write $G = G^+_{\mf{n}}$. Fix a sign $\epsilon = \pm 1$. Let $\cO$ denote the ring of integers of a finite extension of $\Q_p$ containing the values of all characters $\psi \in \hat{G}$. Let $R$ denote the image of $\cO[G] \to \prod_{\sgn(\psi) = \epsilon} \cO$. We denote by
 \[ \bpsi \colon G \rightarrow R^* \]
 the canonical character. Given a character $\psi \in \hat{G}$ of $\sgn(\psi) = \epsilon$, there is a corresponding map $\psi \from R \to \cO$. Let $(\#G) e_{\psi} \in R$ denote the image of the element $\sum \psi(g)^{-1} [g] \in \cO[G]$, with \[ \psi((\#G) e_{\psi}) = \#G,\ \psi'((\#G) e_{\psi}) = 0\text{ if }\psi \neq \psi'.\]
 
A form $h(\bpsi) \in M_k(\fn, \bpsi, R)$ is a \textbf{group ring form}. For any character of $G$ of $\sgn = \epsilon$, there is a corresponding ring homomorphism $\psi \from R \to \cO$, and \emph{specialization map} $M_k(\fn, \bpsi, R) \to M_k(\fn, \psi, \cO)$, and $h(\psi)$ is the \emph{specialization} of $h(\bpsi)$, the image of $h(\bpsi)$ under $\psi \from R \to \cO$.

  An immediate consequence of Theorem \ref{thm:equivar-surj} is:
\begin{theorem}\label{t:chai} With notation as above, for $k$ sufficiently large and $(-1)^k = \epsilon$, the map
 \[ M_k(\fn, \bpsi, R) \rightarrow C_{p, k}(\bpsi, R), \]
is surjective.
\end{theorem}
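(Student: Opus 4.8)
The plan is to deduce Theorem~\ref{t:chai} directly from Theorem~\ref{thm:equivar-surj}, taking the noetherian coefficient ring there to be the group-ring quotient $R$ and the auxiliary character to be the canonical character $\bpsi \colon G \to R^*$. The entire content is the verification that $R$ and $\bpsi$ satisfy the hypotheses of Theorem~\ref{thm:equivar-surj}, together with an accounting of the role played by the fixed sign $\epsilon$.

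First I would check that $R$ is a noetherian algebra over the ring $\cO_0 := \Z_{(p)}[\mu_{m_p}]$ appearing in Theorem~\ref{thm:equivar-surj} (denoted $\cO$ there, clashing with the $p$-adic ring of integers also denoted $\cO$ in the present subsection). Enlarging the finite extension of $\Q_p$ if necessary, we may assume that $\cO$ contains $\mu_{m_p}$, so that there is a structure map $\cO_0 \to \cO$. Since $\cO[G]$ is a finite free $\cO$-module it is noetherian, and $R$ is a quotient of $\cO[G]$; composing $\cO_0 \to \cO \to \cO[G] \surj R$ exhibits $R$ as a noetherian $\cO_0$-algebra. The canonical character $\bpsi$ is by construction a homomorphism $G \to R^*$, so it is a legitimate choice of $\psi$ in Theorem~\ref{thm:equivar-surj}, and by definition $M_k(\fn, \bpsi, R) = M_k(\fn, R)^{\bpsi}$ and $C_{p,k}(\bpsi, R) = C_{p,k}(R)^{\bpsi}$.

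I would then simply invoke Theorem~\ref{thm:equivar-surj} with this $R$ and $\psi = \bpsi$ to conclude that the map $\mr{const}_p \colon M_k(\fn, \bpsi, R) \to C_{p,k}(\bpsi, R)$ is surjective for all $k$ sufficiently large. The hypothesis $(-1)^k = \epsilon$ enters precisely as in the opening line of the proof of Theorem~\ref{thm:equivar-surj}: the set of admissible $p$-unramified cusps $\mr{Cusp}_p(\fn)^*$, and hence the target $C_{p,k}(R)$, depends only on the parity of $k$, and fixing $\epsilon = (-1)^k$ is what makes every character $\psi$ with $\sgn(\psi) = \epsilon$---in particular each component of $\bpsi$---admissible for the weight $k$, so that the $\bpsi$-isotypic target is the expected constant-term module rather than zero.

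Because Theorem~\ref{thm:equivar-surj} is already formulated for an arbitrary noetherian $\cO_0$-algebra and an arbitrary character, no genuine obstacle arises; Theorem~\ref{t:chai} is a formal specialization. The only step requiring any care is the ring-theoretic bookkeeping in the first paragraph---confirming that the $p$-adic coefficient ring may be taken to contain $\mu_{m_p}$, so that the compactification theory of \S\S\ref{sec:toroidal-cpt}--\ref{sec:minimal-cpt} and the equivariant surjectivity of \S\ref{subsec:equivar-const} apply to $R$. As a sanity check one may instead argue factor by factor, using the inclusion $R \hookrightarrow \prod_{\sgn(\psi) = \epsilon} \cO$ and the compatibility of isotypic components with the specialization maps $\psi \colon R \to \cO$, applying Theorem~\ref{thm:equivar-surj} to each individual $\psi \colon G \to \cO^*$; but the direct application is cleaner.
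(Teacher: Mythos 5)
Your proposal is correct and matches the paper exactly: the paper presents Theorem~\ref{t:chai} as an immediate consequence of Theorem~\ref{thm:equivar-surj}, applied with the group-ring quotient $R$ as the coefficient ring and $\bpsi$ as the character, which is precisely what you do. Your extra bookkeeping (checking $R$ is a noetherian $\Z_{(p)}[\mu_{m_p}]$-algebra and locating where the parity hypothesis is used) is a sound elaboration of what the paper leaves implicit.
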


We will construct a specific constant term, and use the above theorem to lift it to a group ring valued form:
\begin{theorem} \label{t:auxform} 
For sufficiently large positive integers $k$ with $(-1)^k = \epsilon$, there exists a group ring valued form $G_{k}(\bpsi) \in M_k(\fn, \bpsi, R)$ with normalized constant terms 
at cusp labels $C_{(A, \lambda)} = (A^{-1} \alpha_{\lambda}, L_{\infty})$ with $[C_{(A, \lambda)}] \in \mr{Cusp}_{\infty}(\fn)$ equal to $\bpsi(\det(A)^{-1} (a+ c \mf{t}_{\lambda}^{-1} \mf{d}^{-1}))$, and normalized constant terms at cusps in $\mr{Cusp}_p(\fn) \setminus \mr{Cusp}_\infty(\fn)$ equal to $0$. \end{theorem}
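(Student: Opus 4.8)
The plan is to produce the prescribed constant terms directly as an element of the space $C_{p,k}(\bpsi, R)$ of admissible $\bpsi$-isotypic constant terms at the $p$-unramified cusps, and then to invoke Theorem~\ref{t:chai} to realize that element as the constant term of a group ring valued form. Concretely, I would first define a candidate collection of normalized constant terms $c = (c_{[C]})$ indexed by the $p$-unramified admissible cusps $[C] \in \mr{Cusp}_p(\fn)^*$: set $c_{[C]} = 0$ whenever $[C] \in \mr{Cusp}_p(\fn) \setminus \mr{Cusp}_\infty(\fn)$, and for an unramified cusp $[C_{(A,\lambda)}] = [(A^{-1}\alpha_\lambda, L_\infty)]$, with $A$ represented by a matrix $\mat{a}{b}{c}{d}$, set $c_{[C_{(A,\lambda)}]} := \bpsi(\det(A)^{-1}(a + c\,\mf{t}_\lambda^{-1}\mf{d}^{-1})) \in R$, where the argument of $\bpsi$ is the class in $G^+_\fn$ of the normalizing fractional ideal of Remark~\ref{rmk:normalizing-ideal}. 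The substance of the proof is then to verify that this collection genuinely defines an element of $C_{p,k}(\bpsi, R)$, after which Theorem~\ref{t:chai} (for $k$ sufficiently large and admissible with $(-1)^k = \epsilon$) finishes the argument immediately.

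The first point to check is well-definedness at each unramified cusp: that $\bpsi(\det(A)^{-1}(a+c\,\mf{t}_\lambda^{-1}\mf{d}^{-1}))$ is independent of the representative cusp label and of the matrix chosen to represent $A$, and that it descends to an element of $C_{[C],k}(R)$. This is the group-ring analogue of the transformation law computed in Lemma~\ref{lem:normalize}: changing representatives rescales the classical normalized constant term by $|N_{F/\Q}(J_{\alpha,g})|^{-k}$, and in the group ring setting the corresponding rescaling is multiplication by $\bpsi$ of that normalizing ideal. The $\Aut(C)$-invariance required to land in $C_{[C],k}(R)$ is exactly the admissibility of the cusp for the weight $k$, which holds for all unramified cusps once $k$ is admissible (so that $N_{F/\Q}(\mf{a})^{\tensor -k}$ is trivialized by its canonical generator).

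The second, and main, point is $\bpsi$-isotypy under the diamond action. Using the explicit action on constant terms from \S\ref{subsubsec:action-q} and Theorem~\ref{thm:qexp-equivar}, in which $[N] \in G^+_\fn$ carries the $[C]$-summand to the $[C \tensor N^{-1}]$-summand and $Stab_{[C]}$ acts through $\sgn^k_{[C]}$, I would verify $[N] \cdot c = \bpsi([N])\, c$. On the unramified part this reduces to the identity $\det(AN)^{-1}(a'+c'\,\mf{t}_\lambda^{-1}\mf{d}^{-1}) = N \cdot \det(A)^{-1}(a+c\,\mf{t}_\lambda^{-1}\mf{d}^{-1})$ in $G^+_\fn$, so that the prescribed values multiply by $\bpsi([N])$ precisely; the sign twist $\sgn^k_{[C]}$ is absorbed by the hypotheses $\sgn(\bpsi) = \epsilon$ and $(-1)^k = \epsilon$. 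On the complement I would use that $\mr{Cusp}_\infty(\fn) \subset \mr{Cusp}_p(\fn)$ is $G^+_\fn$-stable (Lemma~\ref{lem:unram-cusp}), so that the zero entries form a $G^+_\fn$-equivariant subsection and are trivially $\bpsi$-isotypic.

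Granting that $c \in C_{p,k}(\bpsi, R)$, Theorem~\ref{t:chai} provides a form $G_k(\bpsi) \in M_k(\fn, \bpsi, R)$ with $\mr{const}_p(G_k(\bpsi)) = c$, which by construction has the required constant terms. I expect the main obstacle to be the second step: correctly matching the group-ring-valued transformation law of the prescribed values against the diamond action on the individual $[C]$-summands, including carefully tracking the sign character $\sgn^k_{[C]}$ and confirming that it is exactly cancelled by the parity constraints on $\bpsi$ and $k$. This is a bookkeeping step combining Lemma~\ref{lem:normalize}, Remark~\ref{rmk:normalizing-ideal}, and \S\ref{subsubsec:action-q}, rather than a conceptual difficulty; once it is dispatched, the theorem follows formally from Theorem~\ref{t:chai}.
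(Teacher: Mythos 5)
Your proposal is correct and follows essentially the same route as the paper: define the prescribed element of $C_{p,k}(\bpsi,R)$ directly, verify well-definedness and $\bpsi$-isotypy using Lemma~\ref{lem:normalize} and the diamond action on constant terms, and then apply Theorem~\ref{t:chai}. The only (cosmetic) difference is that the paper parametrizes the unramified cusps as $C_\lambda \tensor N^{-1}$ and sets the value equal to $|N_{F/\Q}(N)|^{k}\tensor\bpsi(N)$, deducing the formula at the labels $C_{(A,\lambda)}$ afterwards via Lemma~\ref{lem:normalize}, which makes the equivariance check slightly more transparent than working with the matrix representatives throughout.
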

\begin{proof}
Write $C_{\lambda} = (\alpha_{\lambda}, L_{\infty})$ for $\lambda \in \Cl^+(F)$. The cusp label $C_{\lambda} \tensor N^{-1}$ has associated fractional ideal $\mf{a} = N$, and so $C_{C_{\lambda} \tensor N^{-1}, k}(R) = N_{F/\Q}(N)^{\tensor k} \tensor R$. By Lemma \ref{lem:unram-cusp}, every unramified cusp label is isomorphic to $C_{\lambda} \tensor N^{-1}$ for some $\lambda$ and $N$. 

We define a constant term $B = (B_{[C]})_{[C]} \in C_{p,k}(R) = \oplus_{[C] \in \mr{Cusp}_p(\mf{n})} C_{[C],k}(R)$ as follows. If $[C] \in \mr{Cusp}_p - \mr{Cusp}_{\infty}$, we let $B_{[C]} = 0$. If $C = C_{\lambda} \tensor N^{-1}$, we let \[ B_{[C]} := |N_{F/\Q}(N)|^{k} \tensor \bpsi(N) \in N_{F/\Q}(N)^{\tensor k} \tensor R = C_{C,k}(R). \]

\textbf{Well-defined:}

If $C_{\lambda_1} \tensor N_1^{-1} \isom C_{\lambda_2} \tensor N_2^{-1}$, then $N_1 = \alpha N_2$ for some $\alpha \in F^*_{1,\mf{n}}$. Thus $|N_{F/\Q}(N_1)|^{k} \tensor \bpsi(N_1)$ is sent, under $C_{C_{\lambda_1} \tensor N_1^{-1},k}(R) \isom C_{C_{\lambda_2}  \tensor N_2^{-1}, k}(R)$, to \begin{align*}
|N_{F/\Q}(N_1)|^{k} N_{F/\Q}(\alpha)^{-k} \tensor \bpsi(N_1) &=  |N_{F/\Q}(N_2)|^{k} \sgn(N_{F/\Q}(\alpha)^{-k}) \bpsi(N_1) \\ &= |N_{F/\Q}(N_2)|^{k} \bpsi(N_2), \\
\end{align*} where the final equality holds as $\sgn(N_{F/\Q}(\alpha)^{-k}) = \bpsi(\alpha)$ for $\alpha \in F_{1, \mf{n}}^*$ by assumption.

\textbf{$\bpsi$-isotypic:}

Recall the action of $G^+_{\mf{n}}$ on constant terms, described in \S \ref{subsubsec:formal-action}. Given $[N] \in G^+_{\mf{n}}$, $[N] \cdot B = (B_{[C \tensor N^{-1}]})_{[C]}$, where we consider $B_{C \tensor N^{-1}} \in C_{[C \tensor N^{-1}], k}(R)$ as an element of $C_{[C], k}(R)$ via the isomorphism $N_{F/\Q}(\mf{a} N^{-1})^{-k} \tensor R \isom N_{F/\Q}(\mf{a})^{-k} \tensor R$ given by division by $|N_{F/\Q}(N)|^k$.

For $C = C_{\lambda} \tensor N^{-1}$, \[ B_{C \tensor (N')^{-1}} = |N_{F/\Q}(N N')|^{\tensor k} \tensor \bpsi([N N']) \in N_{F/\Q}(N N')^{\tensor k} \tensor R \] corresponds to the element \[ |N_{F/\Q}(N)|^{\tensor k} \tensor \bpsi([N N'])^{-1} = \bpsi([N']) B_{[C]} \in N_{F/\Q}(N)^{\tensor k} \tensor R = C_{[C], k}(R). \] Therefore $[N'] \cdot B = \bpsi([N']) B$, as desired.

\textbf{Formula for $B$:}

We verify that \[ B_{C_{(A, \lambda)}} = |N_{F/\Q}(\mf{a})|^{\tensor -k} \tensor \bpsi(\det(A)^{-1} (a+ c \mf{t}_{\lambda}^{-1} \mf{d}^{-1})). \] We have $C_{(A, \lambda)} = C_{\lambda'} \tensor N^{-1}$ for some $[\lambda'] \in \Cl^+(F)$ and some $N$. This cusp label has associated fractional ideal $\mf{a} = N^{-1}$. For the cusp labels $C_{(A, \lambda)}$, we calculated $\mf{a} = \det(A) (a+ c \mf{t}_{\lambda}^{-1} \mf{d}^{-1})^{-1}$ in Lemma \ref{lem:normalize}. Therefore, for the constant term $B$ defined above, $B_{C_{(A, \lambda)}} = |N_{F/\Q}(\mf{a})|^{\tensor -k} \tensor \bpsi(\det(A)^{-1} (a+ c \mf{t}_{\lambda}^{-1} \mf{d}^{-1}))$.

\textbf{Construction of $G_k(\bpsi)$:}

Apply Theorem \ref{t:chai} to the constant term $B \in C_{p,k}(\bpsi, R)$ defined above to construct the form $G_k(\bpsi)$. 

\end{proof}

Recall that $\mr{const}_p \from M_k(\mf{n}, \cO) \to C_{p,k}(\cO)$ denotes the total constant term map at all $p$-unramified (admissible) cusps. Write $\mf{P} = \gcd(\mf{n}, p^{\infty})$.

\begin{theorem} \label{t:auxform2} Fix a positive integer $m \ge \ord_p(\#G)$.  The following holds for all sufficiently large odd integers $k$.
Let $f_\psi \in M_k(\fn, \psi, \cO)$ be a collection of modular forms for the characters $\psi$ of $G$ of $\sgn(\psi) = -1$,
with the property that $\mr{const}_p(f_\psi) \in p^m C_{p,k}(\psi, \cO)$. There exists a group ring form \[ h(\bpsi) \in M_k(\fn, \bpsi, R) \] such that each specialization $h(\psi)$ satisfies the property that \[ \tilde{f}_\psi = f_\psi - (p^{m}/\#G) h(\psi) \] has constant term 0 at all cusps in $\mr{Cusp}_p(\fn)$.  If $\mf{P} = 1$, then $\tilde{f}_{\psi}$ is cuspidal.  If $\mf{P} \neq 1$, then the ordinary projection $e_{\mf{P}}^{\ord}(\tilde{f}_\psi)$ is cuspidal.
\end{theorem}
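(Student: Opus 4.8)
The plan is to deduce the statement from the surjectivity of the group ring valued constant term map, Theorem~\ref{t:chai}, by packaging the given forms $f_\psi$ into a single group ring valued constant term. Set $c_\psi := p^{-m}\,\mr{const}_p(f_\psi)$, which lies in $C_{p,k}(\psi,\cO)$ precisely because of the hypothesis $\mr{const}_p(f_\psi)\in p^m C_{p,k}(\psi,\cO)$. The goal is to produce an element $B\in C_{p,k}(\bpsi,R)$ whose specialization at each $\psi$ (with $\sgn(\psi)=\epsilon$) equals $\#G\cdot c_\psi$.

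The construction of $B$ is the crux of the argument, and is exactly where the factor $\#G$ --- equivalently $p^m$ with $m\ge\ord_p(\#G)$ --- is forced upon us. Although the idempotent $e_\psi=\frac{1}{\#G}\sum_g\psi(g)^{-1}[g]$ need not lie in $R$, its multiple $\#G\,e_\psi$ is the image of $\sum_g\psi(g)^{-1}[g]\in\cO[G]$ and hence does lie in $R$; it satisfies $\bpsi(g)\cdot(\#G\,e_\psi)=\psi(g)\cdot(\#G\,e_\psi)$ and $\psi'(\#G\,e_\psi)=\#G\cdot\delta_{\psi,\psi'}$. Writing $C_{p,k}(\bpsi,R)=(C_{p,k}(\cO)\otimes_\cO R(\bpsi^{-1}))^{G}$ and $C_{p,k}(\psi,\cO)=(C_{p,k}(\cO)\otimes_\cO\cO(\psi^{-1}))^{G}$, the first displayed identity shows that $1\mapsto\#G\,e_\psi$ defines a $G$-equivariant map $\cO(\psi^{-1})\to R(\bpsi^{-1})$, inducing $\iota_\psi\colon C_{p,k}(\psi,\cO)\to C_{p,k}(\bpsi,R)$; I would then set $B:=\sum_\psi\iota_\psi(c_\psi)$. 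The second displayed identity gives $\psi(B)=\#G\,c_\psi$, so that the specialization map $C_{p,k}(\bpsi,R)\to\prod_{\sgn\psi=\epsilon}C_{p,k}(\psi,\cO)$ has image containing $\#G\cdot\prod_\psi C_{p,k}(\psi,\cO)$.

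With $B$ in hand, Theorem~\ref{t:chai} (applicable since $k$ is large and $(-1)^k=\epsilon$) produces $h(\bpsi)\in M_k(\fn,\bpsi,R)$ with $\mr{const}_p(h(\bpsi))=B$. Because $\mr{const}_p$ is compatible with specialization (Corollary~\ref{cor:qexp-equivar}), each specialization satisfies $\mr{const}_p(h(\psi))=\psi(B)=\#G\,c_\psi$. Since $m\ge\ord_p(\#G)$, the scalar $p^m/\#G$ equals a $p$-adic unit times $p^{\,m-\ord_p(\#G)}$ and so lies in $\cO$; hence $\tilde f_\psi=f_\psi-(p^m/\#G)h(\psi)$ is a well-defined element of $M_k(\fn,\psi,\cO)$, and
\[ \mr{const}_p(\tilde f_\psi)=\mr{const}_p(f_\psi)-(p^m/\#G)\cdot\#G\,c_\psi=p^m c_\psi-p^m c_\psi=0. \]
Thus $\tilde f_\psi$ has vanishing constant term at every $p$-unramified cusp.

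It remains to upgrade this to cuspidality. Because $k$ is odd, the constant-term module at any cusp not admissible for the weight $k$ is zero, so $\tilde f_\psi$ automatically has vanishing constant term at every cusp of $\mr{Cusp}_p(\fn)$, admissible or not. When $\mf{P}=1$ every cusp is $p$-unramified, so $\tilde f_\psi$ is already a cusp form. When $\mf{P}\neq1$, the cusps outside $\mr{Cusp}_p(\fn)$ are exactly those ramified at $\mf{P}$; here I would invoke the standard computation of the $U_{\mf{P}}$-action on $q$-expansions, by which the ordinary idempotent $e_{\mf{P}}^{\ord}$ annihilates the constant terms at the $\mf{P}$-ramified cusps while fixing the (already vanishing) constant terms at $p$-unramified cusps. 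Consequently $e_{\mf{P}}^{\ord}(\tilde f_\psi)$ has zero constant term at every cusp and is therefore cuspidal. The only genuinely delicate point is the construction of $B$ in the second paragraph; the lifting via Theorem~\ref{t:chai} and the final cuspidality argument are then routine, the latter modulo the standard behavior of the ordinary projector employed in \cite{dk}.
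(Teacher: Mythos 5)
Your proposal is correct and follows essentially the same route as the paper: you package the rescaled constant terms into the element $\sum_\psi (\#G\, e_\psi)\cdot p^{-m}\mr{const}_p(f_\psi) \in C_{p,k}(\bpsi,R)$ (which is exactly the paper's $\sum_\psi \frac{\#G}{p^m}e_\psi \mr{const}_p(f_\psi)$, with your $\iota_\psi$ making the role of $\#G\,e_\psi \in R$ explicit), lift it via Theorem~\ref{t:chai}, and verify the cancellation and the two cuspidality cases just as the paper does. The only difference is that the paper disposes of the $\mf{P}\neq 1$ case by citing Theorem~5.1 of \cite{dka} rather than sketching the action of $e_{\mf{P}}^{\ord}$ on constant terms, but your sketch is consistent with that reference.
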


We will not recall the definition of the ordinary projector $e_{\mf{P}}^{\ord}$ here---see \cite{dka}, \S 5 for the definition of $e_{\mf{P}}^{\ord}$.

\begin{proof}  
There is an element of $\sum_{\psi} \frac{\#G}{p^m} e_{\psi} \mr{const}_p(f_\psi) \in C_{p, k}(\bpsi, R)$ whose specialization to $C_{p, k}(\psi, R)$ equals $\frac{\#G}{p^m} \mr{const}_p(f_{\psi}) \in C_{p, k}(\psi, R)$ for all $\psi$ (of $\sgn(\psi) = -1$). Apply Theorem~\ref{t:chai} to obtain a form $h(\bpsi) \in M_k(\fn, \bpsi, R)$ such that $\mr{const}_p(h(\psi)) =\frac{\#G}{p^m}\mr{const}_p(f_{\psi}) \in C_{p, k}(\psi, R)$.

Then $h(\bpsi)$ has the desired property, that $\tilde{f}_{\psi}$ has constant term 0 at all cusps in $\mr{Cusp}_p(\fn)$.
 If $\fP = 1$, then $\mr{Cusp}_p(\fn)= \mr{Cusp}(\fn)$ so $\tilde{f}_{\psi}$ is cuspidal.
If $\fP \neq 1$, Theorem 5.1 of \cite{dka} gives the desired result.
\end{proof}

\appendix

\section{Background on stacks}\label{sec:stack}

\subsection{Stacks}\label{subsec:stacks}
Let $\AlgSp/S_0$ (resp. $\AnSp/S_0$) denote the category of algebraic spaces (resp. analytic spaces) over a fixed algebraic (resp. analytic) space $S_0$. For us, $S_0$ will always be an affine scheme (resp. $S_0 = \star$ a point). We will discuss stacks in both algebraic spaces and complex-analytic spaces over a base $S_0$, i.e  categories fibered in groupoids $\mc{X} \to \AlgSp/S_0$ or $\mc{X} \to \AnSp/S_0$, satisfying descent in the \'etale/classical topology. In particular, a stack $\mc{X}$ associates, to any space $S/S_0$, a groupoid $\mc{X}(S)$. We will define stacks by simply describing $S \mapsto \mc{X}(S)$, as the fiber structure will always be clear. To define a stack, it suffices to define it on a basis for the Grothendieck topology on $\AlgSp/S_0$ (resp. $\AnSp/S_0$), such as $\Sch/S_0 \subset \AlgSp/S$. For $S_0 = \star$, we can choose the basis consisting of the simply-connected complex analytic spaces in $\AnSp_{\star}$.

An \emph{algebraic stack}  $\mc{X}$ is a stack over $\AlgSp$ such that:
\begin{enumerate}
\item the diagonal $\Delta \from \mc{X} \to \mc{X} \times \mc{X}$ is representable, quasi-compact, and separated;
\item there exists a scheme $X$ and a surjective smooth morphism $X \to \mc{X}$.
\end{enumerate} 
A \emph{Deligne--Mumford algebraic stack} is an algebraic stack with a surjective \'etale morphism from a scheme. There is a similar notion of a Deligne--Mumford analytic stack, admitting a surjective \'etale morphism from a complex-analytic space.

There is a notion of \emph{equivalence} of stacks, $\mc{X}_1 \isom \mc{X}_2$, corresponding to an equivalence between the underlying categories fibered in groupoids. An equivalence of stacks gives rise to equivalences of categories $\mc{X}_1(S) \to \mc{X}_2(S)$ for all $S$. We will describe equivalences of stacks by describing these equivalences of categories. We will frequently treat equivalent stacks as if they are identical.

Suppose that one is given a group $U$, as well as maps $\iota_x \from U \to \Aut(x)$ for all $x \in \mc{X}(S)$, such that for all $f \from x \to y$, $\iota_x \circ f = f \circ\iota_y$ (also satisfying certain compatibilities with morphisms $S' \to S$). The \emph{rigidification} $\mc{X} \myfatslash U$ is defined by \[ \mr{Ob}(\mc{X}\myfatslash U)(S) := \mr{Ob}(\mc{X})(S),\qquad  \mr{Mor}_{(\mc{X}\myfatslash U)(S)}(x,y) := \mr{Mor}_{\mc{X}(S)}(x,y)/U. \] When $U$ is a finite group, this in fact defines a stack (\cite{romagny} 5.1). 

For the rigidifications we care about, however, $U$ is an infinite group. Nevertheless, this defines a stack -- this is a consequence of the fact that, for these rigidifications, a finite index subgroup of $U$ acts trivially on $\mc{X}$, in the sense of Definition \ref{def:triv} below. 

Most of the stacks we consider will be either Deligne--Mumford stacks (in either $\AlgSp$ or $\AnSp$), or stacks whose rigidification is Deligne--Mumford. The main exception will be the stack $\Lie(\mc{T})$ over $\AnSp$ (\S\ref{subsec:rm-ab}) and an analogous algebraic stack $\Lie(\mc{A})$ over $\AlgSp$ (\S\ref{subsec:hodge-bundle}). These stacks are the quotient of a complex-analytic space/$\Z$-scheme by the action of an infinite group $U^+$, and they fail to be Deligne--Mumford since the diagonal is not quasi-compact.

We will discuss the analytification of Deligne--Mumford algebraic stacks, as well as the analytification of quotients of such stacks by group actions. In our situation, there is no difficulty seeing that the necessary quotients commute with analytification.  (We are not aware of any general statement in this direction.)

Given an element $x \in \mc{X}(S)$, the {\em  inertia group} at $x$ is defined to be the automorphism group $\mr{Mor}_{\mc{X}(S)}(x, x)$. If a Deligne--Mumford algebraic stack has all inertia groups trivial, then it is in fact an algebraic space. In fact, it suffices to check that that all inertia groups at geometric points are trivial (\cite{conrad} Thm. 2.2.5).

\subsection{Group actions on stacks}\label{subsec:grp-act}
We will consider the action of group spaces on stacks. A reference for this is in the algebraic setting is Romagny \cite{romagny}, but the analytic setting is entirely similar. Roughly, a group space $G$ acting on a stack $\mc{X}$ is given by specifying, for all $S$ and all $g \in G(S)$, an isomorphism of groupoids $g_* \from \mc{X}(S) \to \mc{X}(S)$. There is a technical distinction between a \emph{strict} action, where $g_* h_* = (gh)_*$, and a \emph{non-strict} action, where one specifies natural isomorphisms $t_{g,h} \from g_* h_* \isom (gh)_*$ between these functors, satisfying an additional compatibility relating $t_{g_1,g_2 g_3}$ and $t_{g_1 g_2, g_3}$.

If a group $G$ acts on a stack $\mc{X}$, and we have an equivalence of stacks $\mc{X}' \to \mc{X}$, there may not be a natural action of $G$ on $\mc{X}'$.

For example, in \S\ref{subsec:lie-compare}, we will have a complex analytic space $X_{\mf{n}}$ on which a complex Lie group $H_{\mf{n}} = F_{1,\mf{n}}^* \backslash (I_{\mf{n}} \times (F \times \C)^*)$ acts (necessarily strictly). We will have an equivalence $\Lie(\mc{T})^* \isom X_{\mf{n}}$, and we will define a non-strict action of $I_{\mf{n}} \times (F \times \C)^*$ on $\Lie(\mc{T})^*$, which is \emph{equivalent} to the corresponding action on $X_{\mf{n}}$. However, it is not the case that this non-strict action literally factors through the quotient by $F_{1,\mf{n}}^*$! Nevertheless, any $\alpha \in F_{1,\mf{n}}^*$ will act by sending objects $x \in \mr{Ob}(\Lie(\mc{T})^*(S))$ to isomorphic objects, where we specify the isomorphism $x \isom \alpha_*(x)$.

We base the following definition off of the example above. 

\begin{definition}\label{def:triv}A non-strict action of a group $H$ on a stack $\mc{X}$ is \textbf{trivialized}, if for all $h \in H$, there is a natural isomorphism $v_h \from h_* \to id$ between the functors $h_* \from \mc{X} \isom \mc{X}$ and $id \from \mc{X} \isom \mc{X}$, such that:
\begin{itemize}
\item $v_h$ is compatible with the fiber structure, restricting to a natural isomorphism between the functors $h_*, id \from \mc{X}(S) \to \mc{X}(S)$ for $S/S_0$;
\item for all $g, h \in H$, $v_g \circ (v_h)^{g} \circ  t_{g,h} = v_{gh}$, where $v_h^g \from g_* h_* \to g_*$ is the natural transformation which associates to $x \in \mr{Ob}(\mc{X})$ the isomorphism $v_{h}^g(x) \from g_* h_*(x) \nmto{g_*(v_h(x))} g_*(x)$.
\end{itemize}
\end{definition}

We refer to a $G$-stack with a trivialized normal subgroup $H$ as a \textbf{$(G,H)$-stack}. It is possible to replace a $(G,H)$-stack $\mc{X}$ by an equivalent strict $G$-stack $\mc{X}'$, such that the action of $G$ on $\mc{X}'$ factors through $G/H$, as follows. First, replace $\mc{X}$ with the equivalent strict $G$-stack $\mc{X}^{str}$ (\cite{romagny} 1.5). The group $G$ acts freely on $\mr{Ob}(\mc{X}^{str}(S))$, and the action of $H$ on $\mc{X}^{str}$ is still trivialized. Given a groupoid $\mc{C}$ on which $H$ acts strictly, freely on objects, and with a trivialization, there is a quotient groupoid $\mc{C}/H$, with $\mr{Ob}(\mc{C}/H) = \mr{Ob}(\mc{C})/H$. The morphisms are defined by \[ \mr{Mor}_{\mc{C}/H}(x \mod H, y \mod H) = \mr{Mor}_{\mc{C}}(x, y) \] for any $x \in (x \mod H), y \in (y \mod H)$. To see that this is well-defined and to define a composition of morphisms, use the trivialization of the action of $H$. The functor $\mc{C} \to \mc{C}/H$ is an equivalence of categories. We define the desired stack by $\mc{X}'(S) = \mc{X}^{str}(S)/H$.

\subsection{Quotient stacks}\label{subsec:quotient-stacks}

Given a stack $\mc{X}$ in $\AlgSp$ on which a group $G$ acts strictly, the \emph{quotient stack} $\mc{X}/G$ is defined by \[ (\mc{X}/G)(S) = \{ (p, f) : p \from P \to S, f \from P \to \mc{X} \}, \]
where $p$ is an \'etale $G$-torsor over $S$ and $f$ is a $G$-equivariant morphism. Recall that an \'etale $G$-torsor is a morphism of algebraic spaces, where $G$ acts on $P$, $p \circ g = p$, and for some surjective \'etale morphism $S' \to S$, there is a $G$-equivariant isomorphism $P \times_{S} S' \isom G \times S'$.

If $G$ is a finite group, and $\mc{X}$ is an algebraic stack, then $\mc{X}/G$ is an algebraic stack (\cite{romagny} Thm 4.1).

A similar notion of quotient stack may be defined for stacks in $\AnSp$. 

If the groupoid $\mc{X}(S)$ ``is a set" (i.e. the only morphisms are the identity morphisms), and if $S$ is a space such that every $G$-torsor $P \to S$ is trivial, then $(\mc{X}/G)(S)$ equals the \emph{groupoid quotient} of the set $\mr{Ob}(\mc{X}(S))$ by the group $G$. Given a set $Y$ and a group $G$, the groupoid quotient $[Y/G]$ is the groupoid defined by \[ \mr{Ob}([Y/G]) := Y,\ \mr{Mor}_{[Y/G]}(x_1, x_2) := \{ g \in G: x_1 = g(x_2) \} \subset G. \]

For example, if $X$ is a complex-analytic space, and $G$ is a group acting on $X$, the quotient stack $[X/G]$ has $\C$-points equal to $[X(\C)/G]$. More generally, for any simply-connected complex analytic space $S$, we have $[X/G](S) = [X(S)/G]$.

\subsection{Equivariant line bundles}\label{subsec:equiv-line}
We discuss \emph{line bundles} on stacks, i.e. locally free $\cO_{\mc{X}}$-modules of rank 1. To define a line bundle on a stack $\mc{X}/S_0$, we need, for each $S_0$-space $S$, a map of groupoids \[ \mc{X}(S) \to \mc{P}ic(S) = \{ \text{invertible } \cO_S\text{-modules} \},\] compatible with $S \to S'$. In other words, for each object $x \in \mc{X}(S)$, we must specify an invertible $\cO_S$-module $\cL(x)$; given a morphism $f \from (S', x') \to (T, x)$ in $\mc{X}$, there must be a \emph{specified} base-change isomorphism $f^*(\cL(x')) \isom \cL(x)$, compatible with composition in $\mc{X}$.

\begin{definition}
Given a group $G$, a \textbf{$G$-equivariant line bundle} $\cL$ on a $G$-stack $\mc{X}$ is a line bundle $\cL$ such that, for each $g \in G$ and each $x \in \mc{X}(S)$, we specify an $\cO_S$-linear isomorphism $g^* \from \cL(g_*(x)) \isom \cL(x)$. Via the isomorphism \[  \Hom_{\cO_S}(\cL(g_*h_*(x)), \cL(x)) \isom \Hom_{\cO_S}(\cL((gh)_*(x)), \cL(x)) \] defined using $t_{g,h} \from g_* h_* \isom (gh)_*$, $h^* g^*$ must be sent to $(gh)^*$. Moreover, the isomorphisms $g^*$ must be compatible with base-change. To be precise, given a morphism $f \from (S', x') \to (T, x)$ in $\mc{X}$, the following diagram must commute:
\[ \begin{tikzcd}
\cL(g_*(x)) \arrow{r}{g^*} \arrow{d} & \cL(x) \arrow{d} \\
f^*\cL(g_*(x')) \arrow{r}{f^*(g^*)} & f^*\cL(x') \\
\end{tikzcd}, \] where the vertical arrows are the base-change isomorphisms associated to $g_*(f) \from (S', g_*(x')) \to (T, g_*(x))$ and $f \from (S', x') \to (T, x)$ respectively.
\end{definition}

If $\mc{L}$ is a $G$-equivariant line bundle, then $H^0(\mc{X}, \mc{L})$ is a $G$-module, where $g \in G$ acts on a section $s \in H^0(\mc{X}, \mc{L})$ via \begin{equation} g \cdot s := (g^{-1})^*(s). \end{equation} If $G$ were abelian, one could instead consider the action $g \circ s := g^*(s)$. Even though $G$ is abelian in our cases of interest, we do not use this action.

Given a line bundle $\cL$ on $\mc{X}$, its \emph{total space} $\mr{Tot}(\cL)$ is the stack \[ S \mapsto \{ (x, v): x \in \mc{X}(S),\ v \in \cL(x) \}. \] If $G$ acts on the line bundle $\cL$, then $G$ acts on the stack $\mr{Tot}(\cL)$, and $\mr{Tot}(\cL) \to \mc{X}$ is a morphism of $G$-stacks. 

Suppose that the action of a normal subgroup $H \subset G$ on $\mr{Tot}(\cL)$ admits a trivialization. We refer to such a line bundle as a \emph{$(G,H)$-equivariant line bundle}. Then the procedure that gives a stack $\mc{X}'$ on which $G/H$ acts strictly also gives a $G/H$-equivariant line bundle $\cL'$ on $\mc{X}'$, such that $H^0(\mc{X}', \mc{L}') = H^0(\mc{X}, \mc{L})$ as $G$-modules. In particular, $H^0(\mc{X}, \mc{L})$ is naturally a $G/H$-module.

\section{Background on ample line bundles}\label{sec:ample}

Let $R$ be a noetherian ring. 

\begin{definition}
 A line bundle $\cL$ on an algebraic stack $X \to \Spec(R)$ is:
\begin{enumerate}
\item \textbf{globally generated} if the natural map $H^0(X, \cL) \tensor_R \cO_X \to \cL$ is surjective, (or equivalently, if for all geometric points $x \from \Spec(k)  \to X$ there exists $s \in H^0(X, \cL)$ such that $x^*(s) \in x^*(\cL)$ is non-zero), 
\item \textbf{semi-ample} if $X$ is proper over $\Spec(R)$, and $\cL^{\tensor k_0}$ is globally generated for some $k_0 > 0$.
\end{enumerate}
\end{definition}

We will not consider any notion of ampleness on algebraic stacks, only the usual notion for proper schemes.

Globally generated line bundles give rise to maps $X \to \Proj(\Sym_R^*(H^0(X, \cL^{\tensor k_0})))$. If $X$ is proper and normal, semi-ample line bundles give rise to maps $f_{\cL} \from X \to X_{\cL}$, where $X_{\cL}$ is a certain projective scheme and $f_{\cL}$ has connected fibers. More precisely:
\begin{prop}[\cite{lan-compact} \S 7.2.2, 7.2.3]\label{prop:semiample}
Consider a semi-ample line bundle $\cL$ on a proper, normal algebraic stack $X \to \Spec(R)$, such that $\cL^{\tensor k_0}$ is globally generated. Then:
\begin{enumerate}
\item $\oplus_{k \geq 0} H^0(X, \cL^{\tensor k})$ is a finitely generated $R$-algebra.
\item The map $X \to \Proj(\Sym_R^*(H^0(X, \cL^{\tensor k_0})))$ factors through a map 
\[ f_{\cL} \from X \to X_{\cL} := \Proj(\oplus_{k \geq 0} H^0(X, \cL^{\tensor k})), \] and the map $X_{\cL} \to \Proj(\Sym_R^*(H^0(X, \cL^{\tensor k_0})))$ has finite fibers.
\item $X_{\cL}$ is a normal projective scheme of finite-type over $\Spec(R)$.
\item $f_{\cL}$ has connected fibers.
\item $(f_{\cL})_*(\cL^{\tensor k_0})$ is an ample line bundle on $X_{\cL}$ and $f^*_{\cL}(f_{\cL})_*(\cL^{\tensor k_0}) = \cL^{\tensor k_0}$.
\end{enumerate}
\end{prop}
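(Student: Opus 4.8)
The plan is to reduce every assertion to the scheme-theoretic theory of semi-ample bundles by passing through the morphism to projective space attached to $\cL^{\tensor k_0}$ and taking its Stein factorization; the Stein target will be the scheme $X_\cL$. First I would produce this morphism. Since $X$ is proper over the noetherian ring $R$ and $\cL^{\tensor k_0}$ is coherent, $H^0(X, \cL^{\tensor k_0})$ is a finite $R$-module, and global generation of $\cL^{\tensor k_0}$ yields a morphism $\phi \from X \to \PP := \Proj(\Sym_R^*(H^0(X, \cL^{\tensor k_0})))$ with $\phi^* \cO_\PP(1) \cong \cL^{\tensor k_0}$; here $\PP$ is projective over $R$ and $\cO_\PP(1)$ is ample. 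As $X$ is proper over $R$ and $\PP$ is separated over $R$, the morphism $\phi$ is proper.

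Next I would form the Stein factorization. The key stacky input is cohomological finiteness for the proper algebraic stack $X$: the sheaf $\phi_* \cO_X$ is a coherent sheaf of $\cO_\PP$-algebras. I then set $X_\cL := \underline{\Spec}_\PP(\phi_* \cO_X)$, so that $g \from X_\cL \to \PP$ is finite and $\phi$ factors as $\phi = g \circ f$ with $f \from X \to X_\cL$ satisfying $f_* \cO_X = \cO_{X_\cL}$. Because $\phi_*\cO_X$ is coherent, $X_\cL \to \PP$ is finite, so $X_\cL$ is a scheme that is projective and of finite type over $R$; this gives (3) apart from normality. The equality $f_* \cO_X = \cO_{X_\cL}$ gives connected fibers for $f =: f_\cL$, which is (4), while finiteness of $g$ forces the composite $X_\cL \to \PP$ to have finite fibers, which is the finite-fiber assertion in (2). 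Normality in (3) follows by descending integral closedness along $f$, using $f_*\cO_X = \cO_{X_\cL}$ together with normality of $X$.

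I would then identify $X_\cL$ with $\Proj(\bigoplus_{k\geq 0} H^0(X, \cL^{\tensor k}))$ and simultaneously prove (1). Writing $\cR^{(k_0)} := \bigoplus_{m \geq 0} H^0(X, \cL^{\tensor m k_0})$, the projection formula along the affine morphism $g$ together with $\cL^{\tensor m k_0} = \phi^*\cO_\PP(m)$ gives $\cR^{(k_0)} \cong \bigoplus_m H^0(X_\cL, g^*\cO_\PP(m))$; since $g^*\cO_\PP(1)$ is ample (a finite pullback of an ample bundle) on the projective scheme $X_\cL$, this graded ring is a finitely generated $R$-algebra and $X_\cL \cong \Proj(\cR^{(k_0)})$. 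For the full ring $\cR := \bigoplus_{k\geq 0} H^0(X, \cL^{\tensor k})$, I split into residues $0 \le r < k_0$: using the projection formula, $\bigoplus_m H^0(X, \cL^{\tensor(mk_0+r)}) \cong \bigoplus_m H^0(X_\cL, g^*\cO_\PP(m) \tensor f_* \cL^{\tensor r})$, and $f_*\cL^{\tensor r}$ is coherent by the cohomological finiteness above, so Serre's theorem shows each such summand is a finite $\cR^{(k_0)}$-module. Hence $\cR$ is finite over $\cR^{(k_0)}$, proving (1), and $\Proj(\cR) = \Proj(\cR^{(k_0)}) = X_\cL$, completing (2).

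Finally, for (5) I would apply the projection formula once more: since $\cL^{\tensor k_0} = f^* g^* \cO_\PP(1)$ and $f_*\cO_X = \cO_{X_\cL}$, we get $(f_\cL)_*(\cL^{\tensor k_0}) \cong g^*\cO_\PP(1)$, which is ample because $g$ is finite and $\cO_\PP(1)$ is ample, and $f_\cL^*(f_\cL)_*(\cL^{\tensor k_0}) \cong f^* g^* \cO_\PP(1) = \cL^{\tensor k_0}$. The main obstacle is the stacky cohomological finiteness—coherence of $\phi_*\cO_X$ and of $f_*\cL^{\tensor r}$—which is exactly what legitimizes both the Stein factorization into an honest scheme and the residue-class Serre argument; once that is in hand, the remaining steps reduce to standard facts about ample and semi-ample bundles on projective schemes.
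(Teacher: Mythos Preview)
Your proposal is correct and follows exactly the approach the paper indicates: the paper does not give a detailed proof but cites \cite{lan-compact} \S 7.2.2--7.2.3 and remarks that ``the above result is an application of Stein factorization and Zariski's connectedness theorem,'' which is precisely what you carry out. Your handling of the stacky input (cohomological finiteness to make $\phi_*\cO_X$ coherent) and the residue-class Serre argument to pass from $\cR^{(k_0)}$ to the full section ring are the right ingredients to fill in the details behind that one-line summary.
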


\begin{remark}
Note that $(f_{\cL})_*(\cL)$ is not necessarily a line bundle.
\end{remark}

The above result is an application of Stein factorization and Zariski's connectedness theorem.

\begin{lemma} \label{l:section}
Let $R$ be a (noetherian) $W(\bar{\F}_p)$-algebra. Consider a normal algebraic stack $X$, proper over $\Spec(R)$. If $\cL$ is a semi-ample line bundle on $X$, then there exists $m > 0$ such that, for all finite sets $\{x_i \}$ of geometric points of $X$, there exists $s \in H^0(X, \cL^{\tensor m})$ such that $x_i^*(s) \in x_i^*(\cL^{\tensor m})$ is non-zero for all $i$.
\end{lemma}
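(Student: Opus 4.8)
The plan is to take $m = k_0$, where $\cL^{\tensor k_0}$ is globally generated; this is exactly the power furnished by the definition of semi-ampleness, and crucially $k_0$ is independent of the set $\{x_i\}$. By Proposition~\ref{prop:semiample}(1) the algebra $\bigoplus_{k \geq 0} H^0(X, \cL^{\tensor k})$ is a finitely generated $R$-algebra, so in particular $H^0(X, \cL^{\tensor k_0})$ is a finitely generated $R$-module; fix generators $s_0, \dots, s_N$. Since these span $H^0(X, \cL^{\tensor k_0})$ over $R$ and pullback is $R$-linear, the definition of global generation implies that at every geometric point $x \from \Spec(k) \to X$ the images $x^*(s_0), \dots, x^*(s_N)$ span the one-dimensional space $x^*(\cL^{\tensor k_0})$ over $k$, and in particular are not all zero. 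Thus the whole problem reduces to producing, for a given finite set $x_1, \dots, x_r$, a single $R$-linear combination of the $s_j$ whose pullback to each $x_i$ is nonzero.

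The key device is the infinite supply of coefficients coming from $W(\bar{\F}_p)$. Let $T = \{[\alpha] : \alpha \in \bar{\F}_p\} \subset W(\bar{\F}_p) \subset R$ be the set of Teichm\"uller representatives. For distinct $\alpha, \beta$, the difference $[\alpha] - [\beta]$ reduces to $\alpha - \beta \neq 0$ modulo the maximal ideal $(p)$, hence is a unit in the local ring $W(\bar{\F}_p)$; therefore its image in any residue field $k_i$ (via $R \to k_i$) is nonzero, and the composite $T \to k_i$ is injective. Since $T$ is infinite, each $k_i$ receives an infinite set of pairwise-distinct elements indexed by $T$. I would then consider, for $t \in T$, the section $s_t := \sum_{j=0}^N t^j s_j$. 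After choosing a trivialization $x_i^*(\cL^{\tensor k_0}) \isom k_i$, the pullback $x_i^*(s_t)$ equals $P_i(\bar t)$, where $P_i(Y) = \sum_{j=0}^N x_i^*(s_j) Y^j \in k_i[Y]$ is a nonzero polynomial of degree $\leq N$ (nonzero by the previous paragraph) and $\bar t$ is the image of $t$ in $k_i$. A nonzero polynomial of degree $\leq N$ over the field $k_i$ has at most $N$ roots, and since $T \to k_i$ is injective, at most $N$ elements $t \in T$ make $x_i^*(s_t)$ vanish.

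Running this over the finite set of points, at most $rN$ elements of $T$ are excluded in total; as $T$ is infinite, some $t \in T$ avoids all exclusions, and $s := s_t$ is then nonvanishing at every $x_i$. Note that the degree bound $N$ is fixed once $m = k_0$ is fixed and does not grow with $r$, so the infinitude of $T$ absorbs an arbitrary number of points --- this is precisely the point at which the hypothesis that $R$ is a $W(\bar{\F}_p)$-algebra (rather than, say, an algebra over a finite field) is used. The main technical point to verify carefully is the reduction in the first paragraph: that a finite $R$-module generating set of $H^0(X, \cL^{\tensor k_0})$ spans each fiber of $\cL^{\tensor k_0}$, which follows from $R$-linearity of pullback together with the definition of global generation at geometric points; the finiteness of $H^0(X, \cL^{\tensor k_0})$ as an $R$-module, which I take from Proposition~\ref{prop:semiample}(1), is the only place the properness and normality of $X$ enter. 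I do not expect to need the Stein factorization $f_{\cL}$ itself, only global generation of $\cL^{\tensor k_0}$ and this finiteness.
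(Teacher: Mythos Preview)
Your proof is correct, and the overall mechanism is the same as the paper's: exploit that $W(\bar{\F}_p)$ contains an infinite set whose pairwise differences are units, so that only finitely many ``bad'' coefficients need to be avoided at each point. The differences are organizational. The paper first passes to $X_{\cL}$ via Proposition~\ref{prop:semiample}(5), reducing to a very ample line bundle on a projective \emph{scheme}; it then picks, for each $x_i$, one section $s_i$ not vanishing at $x_i$, and runs a short induction to find roots of unity $u_i$ with $\sum u_i s_i$ nonvanishing at all $x_i$. You instead stay on the original stack with $m = k_0$, fix once and for all a finite $R$-module generating set $s_0,\dots,s_N$ of $H^0(X,\cL^{\tensor k_0})$, and use the Vandermonde-type section $s_t = \sum t^j s_j$ with $t$ a Teichm\"uller lift; the nonvanishing condition at each $x_i$ becomes a nonzero polynomial of bounded degree, and the infinite supply of $t$'s finishes the job. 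Your argument avoids invoking Stein factorization and makes the counting explicit, at the cost of needing the finiteness of $H^0(X,\cL^{\tensor k_0})$ as an $R$-module (which you correctly extract from Proposition~\ref{prop:semiample}(1), or could get directly from properness). Note also that your Teichm\"uller representatives and the paper's roots of unity are essentially the same subset of $W(\bar{\F}_p)$: the nonzero Teichm\"uller lifts are exactly the prime-to-$p$ roots of unity.
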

\begin{proof}
By Proposition~\ref{prop:semiample} (5), it  suffices to check this for a very ample line bundle $\cL$ on a normal proper scheme $X \to \Spec(R)$. For any $x_i$ the map $H^0(X, \cL) \to x_i^*(\cL)$ is non-zero.  Let $s_i$ denote a section with $x_i^*(s_i) \neq 0$.

A simple inductive argument shows that there exists roots of unity $u_i \in W(\bar{\F_p})$ such that $s = \sum u_i s_i$ is the desired section.
\end{proof}

This implies:

\begin{corollary}\label{cor:semiample-descent}
Let $X$ be a normal algebraic stack, proper over $\Spec(R)$, and let $\cL$ be a line bundle on $X$. If there exists a surjective finite \'etale morphism $f \from X' \to X$ such that $f^*(\cL)$ is semi-ample, then $\cL$ is semi-ample.
\end{corollary}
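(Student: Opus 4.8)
The plan is to verify semi-ampleness of $\cL$ directly from the definition. Since $X$ is already proper over $\Spec(R)$ by hypothesis, it suffices to produce a single $k_0 > 0$ for which $\cL^{\otimes k_0}$ is globally generated, i.e. (by the geometric-point criterion in the definition of global generation) for which every geometric point $x \from \Spec(k) \to X$ admits a section $s \in H^0(X, \cL^{\otimes k_0})$ with $x^*(s) \neq 0$. The essential tool is the norm map along the finite \'etale cover $f$, which converts sections of a power of $f^*\cL$ on $X'$ into sections of a power of $\cL$ on $X$ while preserving non-vanishing at geometric points.

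First I would record that Lemma~\ref{l:section} applies to $X'$. Indeed, $X'$ is proper over $\Spec(R)$ because $f$ is finite and $X$ is proper, and $X'$ is normal because $f$ is \'etale and $X$ is normal; moreover $f^*\cL$ is semi-ample by hypothesis. Thus Lemma~\ref{l:section} furnishes a single integer $m > 0$ such that every finite set of geometric points of $X'$ is simultaneously non-vanishing for some section of $(f^*\cL)^{\otimes m}$.

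Next I would set up the norm. Passing to connected components (there are finitely many, $X$ being noetherian, and they are open and closed, so global generation may be checked one component at a time and the resulting sections extended by zero), I may assume $f$ has constant degree $d$. Choosing a smooth cover of $X$ by a scheme reduces the representable finite locally free morphism $f$ to the case of schemes, where the classical norm functor supplies $\Norm_{X'/X}\big((f^*\cL)^{\otimes m}\big) \cong \cL^{\otimes md}$ together with a multiplicative map on sections $\Norm_f \from H^0\big(X', (f^*\cL)^{\otimes m}\big) \to H^0\big(X, \cL^{\otimes md}\big)$; since the norm commutes with base change these glue over the stack $X$. The key property I would invoke is that for a geometric point $x$ of $X$ with finite \'etale fibre $f^{-1}(x) = \{y_1, \dots, y_d\}$, one has $x^*(\Norm_f s') = \prod_{i} y_i^*(s')$ under the canonical identification of fibres, so $x^*(\Norm_f s')$ is non-zero whenever $s'$ is non-vanishing at every $y_i$.

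Combining these, for an arbitrary geometric point $x$ of $X$ I would apply Lemma~\ref{l:section} to the finite set $f^{-1}(x)$ to obtain $s' \in H^0(X', (f^*\cL)^{\otimes m})$ non-vanishing at each $y_i$; then $\Norm_f(s') \in H^0(X, \cL^{\otimes md})$ is non-vanishing at $x$. Hence $\cL^{\otimes md}$ is globally generated, with $k_0 = md$ independent of $x$, and $\cL$ is semi-ample. (Across components of differing degree $d_j$ one replaces $md$ by a common multiple of the $md_j$, using that a power of a globally generated bundle is again globally generated.) The main obstacle is precisely this norm step: one must set up the norm functor and the pointwise product formula for the representable finite \'etale morphism $f$ of algebraic stacks, and handle the possibly non-constant degree across connected components. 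Everything else is a direct application of Lemma~\ref{l:section} together with the properness and normality already in hand.
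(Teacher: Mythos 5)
Your overall strategy is the paper's: apply Lemma~\ref{l:section} on $X'$ and then push the resulting section down along $f$ by a norm to get, for each geometric point $x$ of $X$, a section of a fixed power of $\cL$ not vanishing at $x$. But there is one genuine gap. Lemma~\ref{l:section} is stated only for $R$ a noetherian $W(\bar{\F}_p)$-algebra --- its proof needs enough roots of unity in $R$ to form the combination $s = \sum u_i s_i$ avoiding all the prescribed points --- whereas in the corollary $R$ is an arbitrary noetherian ring. You invoke the lemma for $X'$ over $\Spec(R)$ without addressing this hypothesis, and as written that step is not justified (over a small finite residue field the inductive choice of coefficients can fail for lack of units). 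The fix is exactly the first sentence of the paper's proof: global generation of $\cL^{\tensor k_0}$, hence semi-ampleness, can be tested after a faithfully flat base change $\coprod_i \Spec(R_i) \to \Spec(R)$ with each $R_i$ a $W(\bar{\F}_p)$-algebra, so one may assume $R$ is such an algebra before doing anything else. With that reduction inserted, your argument goes through.

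On the norm step you diverge mildly from the paper, in a way that is legitimate but costs more foundational work. You use the norm functor $\Norm_{X'/X}$ for the representable finite \'etale morphism $f$, which you must construct by descent from the scheme case, together with the identification $\Norm_{X'/X}(f^*\cL) \isom \cL^{\tensor d}$ and the fibrewise product formula; you correctly flag this as the main obstacle, and you also have to handle non-constant degree across components. The paper instead first replaces $X'$ by its Galois closure (the pullback of a semi-ample bundle is semi-ample, so the hypothesis is preserved) and then takes $\prod_{g \in G} g^*(s') \in H^0(X', f^*(\cL)^{\tensor |G|m})$, which is visibly $G$-invariant, descends to a section of $\cL^{\tensor |G|m}$ on $X$ by \'etale descent, and restricts at $x$ to a product of the $x_i^*(s')$. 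That route needs no norm formalism and gives a degree uniform over all components for free; your route avoids enlarging the cover. Either works once the base-change reduction is in place.
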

\begin{proof}
As semi-ampleness can be tested after a faithfully flat base-change \[ \coprod_i \Spec(R_i) \to \Spec(R), \] we may assume $R$ is a $W(\bar{\F}_p)$-algebra. Since the pull-back of a semi-ample line bundle is semi-ample, we can replace $X'$ by its Galois closure, and assume that $f \from X' \to X$ is a finite \'etale Galois morphism with Galois group $G$.

Given a geometric point $x$ of $X$, write $f^{-1}(x) = \{ x_1, \ldots, x_n \}$. By Lemma~\ref{l:section}, there is a section $s '\in H^0(X' , f^*(\cL)^{\tensor m})$ such that $x_i^*(s') \neq 0$. Then the norm \[ \prod_{g \in G} g^*(s') \in H^0(X', f^*(\cL)^{\tensor |G| m}) \] is $G$-invariant,  and thereby descends to a section $s \in H^0(X, \cL^{\tensor |G| m})$. Clearly $x^*(s) \neq 0$. 
\end{proof}

We will also want the following facts about quotients of projective schemes. Recall that if $X \to \Spec(R)$ is a quasiprojective scheme on which a finite group $G$ acts, there exists a quotient scheme $X/G$ and a finite morphism $p \from X \to X/G$. The map $p$ induces the quotient map on the underlying topological spaces, and $\cO_{X/G} = p_*(\cO_X)^G$. More can be said when $X$ is projective:

\begin{lemma}[\cite{sga} 4.1, 5.1]\label{lem:proj-quot}
Let $X \to \Spec(R)$ be a projective scheme on which a finite group $G$ acts, and let $\cL$ be an ample line bundle on $X$. Define $N_G(\cL) = \bigotimes_{g \in G} g^*(\cL)$. Then:
\begin{enumerate}
\item $X/G$ is projective;
\item $\cL' := (p_*(N_G(\cL)))^G$ is an ample line bundle on $X/G$;
\item $N_G(\cL) = p^* (\cL')$.
\end{enumerate}
\end{lemma}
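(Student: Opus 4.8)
The plan is to make part (3)---the descent of the norm line bundle along $p$---the heart of the argument, and to deduce parts (1) and (2) from it together with standard descent of properness and ampleness along finite surjective morphisms. The key structural observation is that $N_G(\cL) = \bigotimes_{g \in G} g^*(\cL)$ carries a \emph{canonical} $G$-linearization: for $h \in G$ the reindexing $g \mapsto hg$ furnishes an isomorphism $h^* N_G(\cL) = \bigotimes_g (gh)^*(\cL) \isom N_G(\cL)$, and these satisfy the cocycle condition on the nose. Thus $N_G(\cL)$ is a $G$-equivariant line bundle on $X$, and the content of (3) is that this linearization is \emph{effective}, i.e. that $N_G(\cL)$ descends to a line bundle $\cL'$ on $X/G$ with $p^*\cL' \isom N_G(\cL)$, and that $\cL'$ may be recovered as $(p_*N_G(\cL))^G$.

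To prove the descent I would work affine-locally on the base $X/G$. Using that $p$ is finite and that $G$-orbits in the quasiprojective $X$ lie in affine opens, one covers $X/G$ by affines $U = \Spec(B)$ with $p^{-1}(U) = \Spec(A)$ and $B = A^G$; over such $U$ the bundle $N_G(\cL)$ becomes an invertible $A$-module $M$ equipped with a semilinear $G$-action, and one must show that $M^G$ is an invertible $B$-module with $M^G \tensor_B A \isom M$. This is exactly the situation of Kempf's descent criterion: a $G$-linearized line bundle descends precisely when, at each geometric point $x$, the stabilizer $G_x$ acts trivially on the fiber. The fiber of the norm at $x$ is $\bigotimes_{g \in G} \cL_{g \cdot x}$, on which $h \in G_x$ acts by permuting tensor factors (grouped into $|G_x|$ copies of each point of the orbit $G \cdot x$) composed with the linearization isomorphisms. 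Because the tensor product ranges over the \emph{entire} group with equal multiplicities, this action is a symmetric ``norm'' of the local linearization scalars and is therefore trivial on the one-dimensional fiber. \textbf{This fiber-triviality is the crux of the whole lemma}: it is what fails for a single ample $\cL$, whose stabilizer action on fibers is generally nontrivial, and it is precisely the reason the norm construction is forced upon us.

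With (3) in hand, parts (1) and (2) follow formally. First, $N_G(\cL)$ is ample on $X$, being a tensor product of the line bundles $g^*(\cL)$, each ample since $g$ is an automorphism and $\cL$ is ample. Next, $p \from X \to X/G$ is finite, hence proper, and surjective; since $X$ is proper over $\Spec(R)$, the standard descent of properness along a surjection shows $X/G$ is proper over $\Spec(R)$. Finally, ampleness descends along finite surjective morphisms (\cite{sga}): as $p^*\cL' = N_G(\cL)$ is ample and $p$ is finite surjective, $\cL'$ is ample on $X/G$. A proper scheme over $\Spec(R)$ carrying an ample line bundle is projective, giving (1), and the ampleness of $\cL'$ is exactly (2). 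I anticipate no serious difficulty in the deductions of (1) and (2); the only genuinely delicate point, and the one I would spell out with care, is verifying the hypothesis of Kempf's criterion---the triviality of the stabilizer action on the fibers of the norm---together with the compatibility of the formation of invariants with the affine covering, which is what makes the local descent glue to a global line bundle $\cL'$ on $X/G$.
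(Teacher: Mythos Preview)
The paper does not prove this lemma: it is stated with a citation to SGA~3, Expos\'e~V (Gabriel), and no argument is supplied. So there is nothing in the paper to compare your proposal against.

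That said, your sketch is sound and is one of the standard routes to the result. Two minor remarks. First, invoking Kempf's descent criterion as a black box is slightly heavier than necessary here, and its usual formulations assume a field or a good quotient in the GIT sense; over a general noetherian base you would want to be explicit. For the norm bundle you can bypass this entirely: on a $G$-stable affine open $U$ on which $\cL$ is trivial with generator $e$, the element $\bigotimes_{g\in G} g^*(e)$ is a $G$-invariant generator of $N_G(\cL)|_U$ by the very reindexing you describe, so $N_G(\cL)$ is $G$-equivariantly trivial on $U$ and descends to $\cO$ on $U/G$. Such $U$ exist and cover $X$ because $\cL$ is ample and $G$-orbits are finite (take the non-vanishing locus of a product $\prod_g g^*(s)$ for a suitable section $s$ of a power of $\cL$). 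This direct construction is essentially what underlies the SGA argument and avoids any appeal to averaging, hence works uniformly in all characteristics. Second, your deduction of (1) and (2) from (3) is correct; the only input beyond what you wrote is that a tensor product of ample line bundles is ample and that ampleness descends along finite surjective morphisms, both of which you cite.
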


Note that if $\cL$ is $G$-equivariant, then $N_G(\cL) \isom \cL^{\tensor |G|}$.

\end{document}